%File: formatting-instructions-latex-2025.tex
%release 2025.0
\documentclass[letterpaper]{article} % DO NOT CHANGE THIS
\usepackage{aaai25}  % DO NOT CHANGE THIS
\usepackage{times}  % DO NOT CHANGE THIS
\usepackage{helvet}  % DO NOT CHANGE THIS
\usepackage{courier}  % DO NOT CHANGE THIS
\usepackage[hyphens]{url}  % DO NOT CHANGE THIS
\usepackage{graphicx} % DO NOT CHANGE THIS
\urlstyle{rm} % DO NOT CHANGE THIS
  % DO NOT CHANGE THIS
\usepackage{natbib}  % DO NOT CHANGE THIS AND DO NOT ADD ANY OPTIONS TO IT
\usepackage{caption} % DO NOT CHANGE THIS AND DO NOT ADD ANY OPTIONS TO IT
\frenchspacing  % DO NOT CHANGE THIS
\setlength{\pdfpagewidth}{8.5in}  % DO NOT CHANGE THIS
\setlength{\pdfpageheight}{11in}  % DO NOT CHANGE THIS
%
% These are recommended to typeset algorithms but not required. See the subsubsection on algorithms. Remove them if you don't have algorithms in your paper.
\usepackage{algorithm}

%
% These are are recommended to typeset listings but not required. See the subsubsection on listing. Remove this block if you don't have listings in your paper.
\usepackage{newfloat}
\usepackage{listings}
\DeclareCaptionStyle{ruled}{labelfont=normalfont,labelsep=colon,strut=off} % DO NOT CHANGE THIS
\lstset{%
	basicstyle={\footnotesize\ttfamily},% footnotesize acceptable for monospace
	numbers=left,numberstyle=\footnotesize,xleftmargin=2em,% show line numbers, remove this entire line if you don't want the numbers.
	aboveskip=0pt,belowskip=0pt,%
	showstringspaces=false,tabsize=2,breaklines=true}
\floatstyle{ruled}
\newfloat{listing}{tb}{lst}{}
\floatname{listing}{Listing}
%
% Keep the \pdfinfo as shown here. There's no need
% for you to add the /Title and /Author tags.
\pdfinfo{
/TemplateVersion (2025.1)
}

% Keep the \pdfinfo as shown here. There's no need
% for you to add the /Title and /Author tags.
\usepackage[utf8]{inputenc} % allow utf-8 input
\usepackage[T1]{fontenc}    % use 8-bit T1 fonts
\usepackage{url}            % simple URL typesetting
\usepackage{booktabs}       % professional-quality tables
\usepackage{amsfonts}       % blackboard math symbols
\usepackage{nicefrac}       % compact symbols for 1/2, etc.
\usepackage{microtype}      % microtypography
\usepackage{xcolor}         % colors
\usepackage{amsmath,amsfonts,amssymb,amsthm,array}
\usepackage{algorithm}
\usepackage[font=small]{caption}
\usepackage{subcaption}
\usepackage{algpseudocode}
\usepackage{bm}
\usepackage{color}
\usepackage{graphicx}
\usepackage{enumitem}
\usepackage{mathtools}
\usepackage{appendix}
\usepackage{cleveref}
\usepackage[makeroom]{cancel}
\usepackage{multirow}
\usepackage{threeparttable}
\usepackage{makecell}
\usepackage{colortbl}
\usepackage{natbib}

\usepackage{indentfirst}
\captionsetup{compatibility=false}

\allowdisplaybreaks

\newtheorem{theorem}{Theorem}
\newtheorem{proposition}{Proposition}
\newtheorem{lemma}{Lemma}
\newtheorem{corollary}{Corollary}
\newtheorem{definition}{Definition}
\newtheorem{assumption}{Assumption}

\setcounter{secnumdepth}{2} %May be changed to 1 or 2 if section numbers are desired.

% Your title must be in mixed case, not sentence case.
% That means all verbs (including short verbs like be, is, using,and go),
% nouns, adverbs, adjectives should be capitalized, including both words in hyphenated terms, while
% articles, conjunctions, and prepositions are lower case unless they
% directly follow a colon or long dash
\title{Accelerated Methods with Compressed Communications for Distributed Optimization Problems under Data Similarity}
\author {
    % Authors
    Dmitry Bylinkin\textsuperscript{\rm 1,\rm 2},
    Aleksandr Beznosikov\textsuperscript{\rm 2, \rm 1, \rm 3, \rm 4}
}
\affiliations {
    % Affiliations
    \textsuperscript{\rm 1}Moscow Institute of Physics and Technology\\
    \textsuperscript{\rm 2}Ivannikov Institute for System Programming of the Russian Academy of Sciences\\
    \textsuperscript{\rm 3}Sber AI Lab\\
    \textsuperscript{\rm 4}Skoltech\\
    da.bylinkin@gmail.com, anbeznosikov@gmail.com
}

%\date{September 9, 1985}	% Here you can change the date presented in the paper title
%\date{} 					% Or removing it

% Uncomment to remove the date
%\date{}

% Uncomment to override  the `A preprint' in the header
\newcommand*{\E}{\mathbb{E}}

\newcommand{\R}{\mathbb{R}^d}

%%% Add PDF metadata to help others organize their library
%%% Once the PDF is generated, you can check the metadata with
%%% $ pdfinfo template.pdf

\usepackage{graphicx} % in real document delete demo option
\usepackage{subcaption}

\begin{document}
\maketitle

\begin{abstract}
    In recent years, as data and problem sizes have increased, distributed learning has become an essential tool for training high-performance models. However, the communication bottleneck, especially for high-dimensional data, is a challenge. Several techniques have been developed to overcome this problem. These include communication compression and implementation of local steps, which work particularly well when there is similarity of local data samples. In this paper, we study the synergy of these approaches for efficient distributed optimization. We propose the first theoretically grounded accelerated algorithms utilizing unbiased and biased compression under data similarity, leveraging variance reduction and error feedback frameworks. In terms of communication time our theory gives $\tilde{\mathcal{O}} \left(  1+\left[ M^{-\nicefrac{1}{4}} + \omega^{-\nicefrac{1}{2}} \right]\sqrt{\nicefrac{\delta}{\mu}}  \right)$ complexity for unbiased compressors and $\tilde{\mathcal{O}}\left(1+\beta^{\nicefrac{1}{4}}\sqrt{\nicefrac{\delta}{\mu}}\right)$ for biased ones, where $M$ is the number of computational nodes, $\beta$ is the compression power, $\delta$ is the similarity measure and $\mu$ is the parameter of strong convexity of the objective. Our theoretical results are of record and confirmed by experiments on different average losses and datasets.
\end{abstract}

\section{Introduction}
Conventional machine/deep learning algorithms often struggle to handle the scale and complexity of modern datasets, resulting in long training times and limited scalability. Distributed optimization \citep{verbraeken2020survey} has witnessed remarkable progress, driven by rising demand for efficient methods across diverse applications such as medical image analysis \citep{intro_medical}, chemical physics \citep{intro_chem}, predictive maintenance \citep{intro_maintenance} and natural language processing \citep{intro_nlp}. Distributed learning addresses emerging challenges by spreading the training process across multiple nodes, allowing scientists and engineers to process and analyze data that would be impractical to handle on a single machine \citep{intro_distr_comparison}. In terms of optimization, we have the following problem statement:
\begin{align}\label{prob_form}
    \min_{x\in\R}&\left[f(x) = \frac{1}{M}\sum_{m=1}^Mf_m(x)\right]\\&\text{with}\quad f_m(x)=\frac{1}{n_m} \sum_{j=1}^{n_m} \ell(x, z_j^m)\notag,
\end{align}
where $M$ refers to the number of nodes/devices/clients/agents/machines, $n_m$ is the size of the local dataset on $m$-th machine, $x$ is the vector representation of the model using $d$ features, $z_j^m$ is the $j$-th data point on the $m$-th node and $\ell$ is the loss function. $z^m_j$ is an ordered pair of feature description $a^i_j$ and label $b^m_j$. We consider an architecture with a star-network topology, that is, a server, represented by $f_1$, plays a crucial role as the main computational hub. The other nodes act as users that can communicate with the server but not directly with each other. This hierarchical structure allows for easy coordination through the first node within the system. When training modern distributed models, the bottleneck is often the cost of communicating information \citep{konevcny2016federated, intro_bottle}. It is a known fact that client-to-server communication is much more resource-intensive than server-to-client \citep{diana, kairouz2021advances}. As a result, considerable effort has been devoted to development of distributed optimization methods that would be efficient in terms of nodes to server communication. Significant success has been achieved with the development of compression techniques, that help to reduce the amount of data transferred between nodes during training. In this paper, we deal with two main classes of compressors: unbiased and biased \citep{EF_proof_1}. 
\begin{definition}\label{unbiased_def}
    We call the mapping $Q:\R\rightarrow\R$ an unbiased compressor, if there exists a constant $\omega>1$ such that
    \begin{align*}
        \E_Q\left[Q(x)\right]=x,\quad \E_Q\left[\|Q(x)-x\|^2\right]\leq\omega\|x\|^2,\quad\forall x\in\R.
    \end{align*}
\end{definition}
\begin{definition}\label{biased_def}
    We call the mapping $C:\R\rightarrow\R$ a biased compressor, if there exists a constant $\beta>1$ such that
    \begin{align*}
        \E_C\left[\|C(x)-x\|^2\right]\leq\left(1-\frac{1}{\beta}\right)\|x\|^2,\quad\forall x\in\R.
    \end{align*}
\end{definition}
Let us denote by $\gamma_{\omega}$ the value showing how much the operator $Q(z)$ compresses the input vector on average. Let $b$ be the number of bits needed to represent a single float, and $\|\cdot\|_{bits}$ be the number of bits needed to represent the input vector. We define $\gamma_{\omega}^{-1}=\frac{1}{bd}\E\|Q(z)\|_{bits}$. Similarly, we introduce the compressive force $\gamma_{\beta}$ for $C(z)$. For practical compressors, we have $\gamma_{\omega}\geq\omega$ and $\gamma_{\beta}\geq\beta$ \citep{ef_empirical, EF_proof_1}. While methods with unbiased compressors may theoretically provide more optimistic estimates \citep{gorbunov2020unified, marina, adiana}, biased ones show a notable advantage in practical applications \citep{biased_superior}. Consequently, there is a growing interest in biased compression techniques. However, analyzing biased compressors is challenging, and there is limited understanding of their behavior \citep{EF_proof_1, ef21, EF_proof_2}.\\
Another technique that addresses the communication bottleneck in distributed optimization is utilizing local steps under the Hessian similarity condition \citep{Sim_ohad, intro_sim_decrease, aeg}. In this scenario, a single node represents the average nature of the data across all ones.

\begin{definition}\label{sim_def}
    We say that there is the Hessian similarity ($\delta$-relatedness) between $f_i$ and $f$, if there exists a constant $\delta>0$ such that
    \begin{align*}
        \lVert \nabla^2 f_i(x) - \nabla^2 f(x) \rVert \leq \delta, \quad \forall x\in\R.
    \end{align*}
\end{definition}

Consider $f$ to be $L$-smooth. As the size $n$ of data, distributed uniformly across the nodes, increases, the losses become more statistically similar. In the quadratic case, it is shown that $\delta\thicksim\nicefrac{L}{n}$. Otherwise, we have $\delta\thicksim\nicefrac{L}{\sqrt{n}}$ for any non-quadratic losses \citep{intro_sim_decrease}. As a result, the server gets to contact the clients less often to compute the full gradient. Despite the fact that research on distributed optimization via compression or similarity has been going on for quite some time, there are still open questions:
\begin{enumerate}
\item \textit{How to build accelerated methods that use both compression and similarity?}
\item \textit{Would such methods be superior to existing SOTAs in terms of communication efficiency?}
\end{enumerate}

\section{Notation}
When we talk about the communication efficiency of an algorithm, it is important to choose the right definition of its communication complexity. This can be done in several ways.
\begin{enumerate}
    \item \textit{Number of communication rounds (CC-1).} The number of times the server initiates communication with clients is used as a complexity measure. This does not take into account the number of involved machines. Even if the server has communicated with all the devices within a communication round, this counts the same as if it only has communicated with one.
    \item \textit{Number of client-server communications (CC-2).} It arises when we recognize that the number of rounds of communication is not sufficient to adequately compare distributed methods. For example, if the nodes operate asynchronously. In this case, the more appropriate metric is the total number of communications rather than the number of rounds. Using this approach, we begin to experience the superiority of methods that turn out to be bad in the sense of \textit{CC-1}.
    \item \textit{Communication time (CC-3).} We consider a synchronous setup. Let all the devices and their communication channels to the server be equivalent. Let transmission time of one unit of information from the devices to the server take $\tau$ time units. Also, if the clients start communicating with the server at same time, the channel between them is initialized in negligible time. Then, the total time of one such round of communication is $\tau K$, where $K$ is the amount of information transmitted from each machine to the server. This definition allows us to see the strengths of methods with compressed communications, since they can change $K$.
\end{enumerate}

\section{Related Works}

\subsection{Distributed Learning via Hessian Similarity}
Now that the communication complexity is suitably defined, let us move on to the survey on effective communication techniques. The first work around similarity was the Newton-type \texttt{DANE} method, developed for quadratic strongly convex functions \citep{Sim_ohad}. For this class of problems, a \textit{CC-1} lower bound was proved in \citep{sim_lower_bound}. \texttt{DANE} did not reach it. This raised the question of how to fill this gap. History of work on this problem counts many papers. Nevertheless, all of them either did not reach exactly the bound or considered special cases \citep{zhang2015disco, lu2018relatively, yuan2020convergence, beznosikov2021distributed, tian2022acceleration}. Recently, \texttt{Accelerated ExtraGradient} enjoying optimal \textit{CC-1} communication complexity under the Hessian similarity condition was constructed in \citep{aeg}. Exploiting the similarity is not the only approach to effective communication. There are also the compression techniques, to which we devote the next subsection.

\subsection{Distributed Learning via Compression}
There has been a significant amount of research done on communication compression. Especially on unbiased operators, due to their ease of analysis. The first family of compression schemes with convergence guarantees was proposed in one-device setup by \citet{QSGD}. It was suggested to perform gradient descent steps using compressed gradients. An extension of the proposed approach to multiple nodes was made a year later, when the first method \texttt{DQGD} for the distributed setup appeared \citep{DQGD}. The following was suggested: the node gets the current point, calculates the local gradient, then compresses and sends it to the server to perform the gradient step. This scheme is quite simple and easy to analyze, but it has a number of drawbacks, which is discussed below. A comprehensive review of unbiased compression can be found in \citep{ADIANA_strong}. As noted before, the nature of unbiased compression is quite simple and one can design methods simply by replacing the stochastic gradient with a compressed one. At the same time, biased compression shows better empirical results \citep{ef_empirical}, but its nature is non-trivial and requires a special approach. In 2014, a framework for constructing methods with biased compression was proposed \citep{EF_first}. The key idea is that each node remembers the "error" it made when compressing the local gradient, and then takes it into account in a special way in the next rounds. This approach is called Error Feedback. There were no theoretically proven results without unnatural assumptions until \citep{EF_proof_2, EF_proof_1}. In all mentioned papers both for unbiased and biased compression a similar problem arisen -- none of the named algorithms converged to the true optimum. The reason is uncontrolled variance of compressed gradient difference, which results in its approximation not tending to zero as the algorithm runs. After reaching some neighborhood of the solution, the method loses the ability to take small enough steps to avoid “overshooting” the optimum.
This could be solved by the variance reduction (VR) technique. 

\subsection{Variance Reduction}
Originally, variance reduction was designed to solve the convergence problem of \texttt{SGD} \citep{robbins1951stochastic}. Classical stochastic optimization methods such as \texttt{SGD} face the same problem as the simplest distributed methods with compression: approximation of the gradient does not tend to zero when searching for the optimum. Thus, there is convergence to its neighborhood only. The variance reduction framework allows to correct this drawback of naive methods. There are two main approaches: \texttt{SAGA} \citep{saga} and \texttt{SVRG} \citep{johnson2013accelerating}. The first stores the history of evoked gradients by mini-batches, resulting in a more accurate approximation of the full gradient. The second has no "memory", but occasionally recalculates the full gradient. The optimal stochastic optimization algorithm with variance reduction is \texttt{KATYUSHA} \citep{katyusha}. Its various modifications are of great interest \citep{kovalev2020don, katyushaX}. These ideas could be developed to solve problems arising in the analysis of methods utilizing unbiased compression. For example, see \texttt{DIANA} \citep{diana} and its accelerated version \texttt{ADIANA} \citep{adiana}. In the non-convex case, the current state-of-the-art \texttt{MARINA} is also based on the VR idea \citep{marina}. Variance reduction can be exploited in biased compression methods as well, see \texttt{EF21} \citep{ef21}, \texttt{ECLK} \citep{eclk}. It is also known how to construct variance reduction schemes for a more general class of problems than minimization -- variational inequalities (VIs) \citep{alacaoglu2022stochastic}. This approach  is useful for developing methods that combine compression with other approaches, such as local steps and similarity \citep{pillars, masha}. Since our goal is to utilize the synergy of multiple techniques to develop methods for effective communication, we devote the next subsection to a brief overview and comparison of the best existing methods.

\subsection{Effective Communications via Combination of Techniques}
One of the modern methods combining different techniques to communicate efficiently is \texttt{LoCoDL} \citep{condat2024locodl}. Using compression and local steps, it is possible to construct a method that, in the case of $\omega>M$, repeats the result of \texttt{ADIANA} in the sense of \textit{CC-1}, and outperforms it in the sense of \textit{CC-2} and \textit{CC-3}. The authors of \citep{condat2023tamuna} managed to add client sampling to this combination of techniques. Their \texttt{TAMUNA} gives stronger results in some special cases. The main weakness of the above methods is that they do not exploit data similarity. Thus, if the ratio $\nicefrac{\delta}{L}$ is small enough, these methods lose significantly to SOTAs that use similarity. \texttt{Accelerated Extragradient} \citep{aeg} is unbeatable in the sense of \textit{CC-1} and superior to \texttt{LoCoDL} and \texttt{TAMUNA} in the sense of \textit{CC-2} and \textit{CC-3}. These results are achieved by combining similarity and local steps. \texttt{AccSVRS} \citep{svrs}, which uses client sampling in addition to these techniques, loses in the sense of \textit{CC-1}. However, in the sense of \textit{CC-2}, this method is optimal. One can note the current best methods in terms of \textit{CC-3} are \texttt{Accelerated ExtraGradient} and \texttt{Three Pillars Algorithm}. The first one is accelerated, but does not use compression. The second one uses unbiased compression but does not use acceleration. Thus, which of these two algorithms performs better depends on the ratio of the similarity constant to the number of machines. 

\section{Our Contributions}
In this paper, we investigate whether it is possible to construct communication-efficient (in terms of \textit{CC-3}) algorithms for distributed optimization problems. By taking state-of-the-art techniques for handling similarity, compression and local steps into a single method, we bridge the existing gap and design a SOTA in the sense of \textit{CC-3}.

\begin{enumerate}
\item \textbf{Combination of compression and similarity.} It can be seen from Table \ref{tab:comparison0} that there are methods that combine similarity with either compression or acceleration -- but not both techniques at the same time. However, there was no success in combining all three approaches into a single algorithm. \textit{We propose \textbf{the} \textbf{first} accelerated method for \textbf{both} unbiased and biased compression under similarity condition}. 

\item \textbf{Best communication time}. To the best of our knowledge, in the case of $\delta\ll L$, depending on the number of machines, either \texttt{Accelerated ExtraGradient} or \texttt{Three Pillars Algorithm} \citep{pillars} gives the top \textit{CC-3} result. It can be seen from Table \ref{tab:comparison0} that our \texttt{OLGA} \textit{significantly \textbf{dominates} both methods}. 

\item \textbf{Numerical experiments}. The constructed theory is verified experimentally on different problems. Experiments confirm the superiority of our method and show its robustness to changes in the number of computational nodes and different values of the problem constants.
\end{enumerate}

\newcommand{\myred}[1]{{\color{red}#1}}
\definecolor{bgcolor}{rgb}{0.8,1,1}
\definecolor{bgcolor2}{rgb}{0.8,1,0.8}
\newcommand{\myblue}[1]{{\color{blue}#1}}
\begin{table*}[h]
    \centering
    \small
%    \scriptsize  
    \scriptsize
    \resizebox{\linewidth}{!}{
  \begin{threeparttable}
    \begin{tabular}{|c|c|c|c|c|c|}
    \cline{1-6}
     Method & \textbf{\quad\quad\quad\quad\quad\quad Approach \quad\quad\quad\quad\quad\quad} & \textbf{$\#$ of client-server communications rounds (\textit{CC-1})} & \textbf{$\#$ of client-server communications (\textit{CC-2})} & \textbf{Communication (\textit{CC-3})} & \textbf{Weaknesses} \\
    \hline
    \texttt{ADIANA} &
    \makecell{Unbiased compression\\Acceleration}
    & $\mathcal{O} \left( \left( \omega + \left(1 + M^{-\nicefrac{1}{2}}\omega\right)\sqrt{\frac{L}{\mu}} \right) \log \frac{1}{\varepsilon} \right)$ & $\mathcal{O} \left( \left( M\omega + \left(M + M^{\nicefrac{1}{2}}\omega\right)\sqrt{\frac{L}{\mu}} \right) \log \frac{1}{\varepsilon} \right)$ & $\mathcal{O} \left( \left( 1 + \left(\omega^{-1} + M^{-\nicefrac{1}{2}}\right)\sqrt{\frac{L}{\mu}} \right) \log \frac{1}{\varepsilon} \right)$ & \makecell{{Only unbiased compressor}\\{All nodes are involved in communication round}\\{Does not account for similarity}}
    \\ \cline{1-6} 
    \texttt{ECLK} & \makecell{Biased compression\\Acceleration} & $\mathcal{O} \left( \left( \beta + \beta^{\nicefrac{3}{2}}\sqrt{\frac{L}{\mu}} \right) \log \frac{1}{\varepsilon} \right)$ & $\mathcal{O} \left( \left( M\beta + M\beta^{\nicefrac{3}{2}}\sqrt{\frac{L}{\mu}} \right) \log \frac{1}{\varepsilon} \right)$ & $\mathcal{O} \left( \left( 1 + \beta^{\nicefrac{1}{2}}\sqrt{\frac{L}{\mu}} \right) \log \frac{1}{\varepsilon} \right)$ & \makecell{{Bad constants in estimation}\\{All nodes are involved in communication round}\\{Does not account for similarity}}
    \\ \cline{1-6}
    \texttt{LoCoDL} & \makecell{Unbiased compression\\Local steps\\Acceleration} & $\mathcal{O}\left(\left( \omega + (1+\omega^{\nicefrac{1}{2}})\sqrt{\frac{L}{\mu}} \right)\log\frac{1}{\varepsilon}\right)$
    & $\mathcal{O}\left(\left( M\omega + (M+M\omega^{\nicefrac{1}{2}})\sqrt{\frac{L}{\mu}} \right)\log\frac{1}{\varepsilon}\right)$ & $\mathcal{O}\left(\left( 1 + (\omega^{-1}+\omega^{-\nicefrac{1}{2}})\sqrt{\frac{L}{\mu}} \right)\log\frac{1}{\varepsilon}\right)$ & \makecell{Does not account for similarity\\All nodes are involved in communication round}
    \\ \cline{1-6}
    \texttt{TAMUNA} & \makecell{Unbiased compression\\Client sampling\\Local steps} & $\mathcal{O}\left(\left( 
M+\sqrt{\frac{L}{\mu}} \right)\log\frac{1}{\varepsilon}\right)$ & $\mathcal{O}\left(\left( 
M+\sqrt{\frac{L}{\mu}} \right)\log\frac{1}{\varepsilon}\right)$ & $\mathcal{O}\left(\left( 
M+\sqrt{\frac{L}{\mu}} \right)\log\frac{1}{\varepsilon}\right)$ & \makecell{Does not account for similarity}\\ \cline{1-6}
    \texttt{AccExtraGradient} & \makecell{Similarity\\Local steps\\Acceleration} & $\mathcal{O}\left( \sqrt{\frac{\delta}{\mu}}\log\frac{1}{\varepsilon} \right)$ & $\mathcal{O}\left( M\sqrt{\frac{\delta}{\mu}}\log\frac{1}{\varepsilon} \right)$ & $\mathcal{O}\left( \sqrt{\frac{\delta}{\mu}}\log\frac{1}{\varepsilon} \right)$ & \makecell{All nodes are involved in communication round\\Communication is inefficient} \\
    \cline{1-6}
    \texttt{AccSVRS} & \makecell{Similarity\\Client sampling\\Local steps\\Acceleration} & $\mathcal{O}\left(\left( M + M^{\nicefrac{3}{4}}\sqrt{\frac{\delta}{\mu}} \right)\log\frac{1}{\varepsilon}\right)$ & $\mathcal{O}\left(\left( M + M^{\nicefrac{3}{4}}\sqrt{\frac{\delta}{\mu}} \right)\log\frac{1}{\varepsilon}\right)$ & $\mathcal{O}\left(\left( M + M^{\nicefrac{3}{4}}\sqrt{\frac{\delta}{\mu}} \right)\log\frac{1}{\varepsilon}\right)$ & Communication is inefficient
    \\
    \cline{1-6}
    \texttt{Optimistic MASHA} 
    & \makecell{Similarity\\Unbiased compression}
    & $\mathcal{O} \left( \left( 
M+\frac{L}{\mu}+M^{\nicefrac{1}{2}}\frac{\delta}{\mu} \right)\log\frac{1}{\varepsilon}\right)$ & $\mathcal{O} \left( \left(
M^2+M\frac{L}{\mu}+M^{\nicefrac{3}{2}}\frac{\delta}{\mu} \right)\log\frac{1}{\varepsilon}\right)$ & $\mathcal{O} \left( 1 + \left(
M^{-1}\frac{L}{\mu}+M^{-\nicefrac{1}{2}}\frac{\delta}{\mu} \right)\log\frac{1}{\varepsilon}\right)$ & \makecell{{All nodes are involved in communication round}\\{Lipschitz constant is included in the estimation} \\ {No acceleration}\\{Only permutation compressor}}
    \\ \cline{1-6} \texttt{Three Pillars Algorithm}& \makecell{Unbiased compression\\Similarity\\Local steps}  & $\mathcal{O} \left(\left(M+M^{\nicefrac{1}{2}}\frac{\delta}{\mu}\right)\log\frac{1}{\varepsilon}\right)$ & $\mathcal{O} \left(\left(M^2+M^{\nicefrac{3}{2}}\frac{\delta}{\mu}\right)\log\frac{1}{\varepsilon}\right)$ & $\mathcal{O} \left(\left(1+M^{-\nicefrac{1}{2}}\frac{\delta}{\mu}\right)\log\frac{1}{\varepsilon}\right)$ & \makecell{{All nodes are involved in communication round} \\ {No acceleration}\\{Only permutation compressor}}
    \\ \cline{1-6}
    \cellcolor{bgcolor2}{\texttt{OLGA}} & \cellcolor{bgcolor2}\colorbox{bgcolor2}{\makecell{Similarity\\Unbiased compression\\Local steps\\Acceleration}}  & \cellcolor{bgcolor2}{$\mathcal{O} \left( \left( \omega+\left[ \omega M^{-\nicefrac{1}{4}} + \omega^{\nicefrac{1}{2}} \right]\sqrt{\frac{\delta}{\mu}} \right) \log \frac{1}{\varepsilon} \right)$} & \cellcolor{bgcolor2}{$\mathcal{O} \left( \left( M\omega+\left[ \omega M^{\nicefrac{3}{4}} + M\omega^{\nicefrac{1}{2}} \right]\sqrt{\frac{\delta}{\mu}} \right) \log \frac{1}{\varepsilon} \right)$} & \cellcolor{bgcolor2}{$\mathcal{O} \left( \left( 1+\left[ M^{-\nicefrac{1}{4}} + \omega^{-\nicefrac{1}{2}} \right]\sqrt{\frac{\delta}{\mu}} \right) \log \frac{1}{\varepsilon} \right)$} & \cellcolor{bgcolor2}\colorbox{bgcolor2}{\makecell{Only unbiased compressor\\All nodes are involved in communication round}}
    \\ \cline{1-6}
    \cellcolor{bgcolor2}{\texttt{EF-OLGA}} & \cellcolor{bgcolor2}\colorbox{bgcolor2}{\makecell{Similarity\\Biased compression\\Local steps\\Acceleration}}  & \cellcolor{bgcolor2}{$\mathcal{O} \left( \left( \beta + \beta^{\nicefrac{5}{4}}\sqrt{\frac{\delta}{\mu}} \right) \log \frac{1}{\varepsilon} \right)$} & \cellcolor{bgcolor2}{$\mathcal{O} \left( \left( M\beta + M\beta^{\nicefrac{5}{4}}\sqrt{\frac{\delta}{\mu}} \right) \log \frac{1}{\varepsilon} \right)$} & \cellcolor{bgcolor2}{$\mathcal{O} \left( \left( 1 + \beta^{\nicefrac{1}{4}}\sqrt{\frac{\delta}{\mu}} \right) \log \frac{1}{\varepsilon} \right)$} & \cellcolor{bgcolor2}{All nodes are involved in communication round}
    \\\hline
    %%%%%%%%%%%%%%
    \end{tabular}    
    \end{threeparttable}
    }
    \captionof{table}{Summary of different communication complexities of SOTAs for distributed optimization.\\\textit{Notation:} $\omega,\beta=$ compression constants, $M=$ number of computational nodes, $\delta=$ similarity (relatedness) constant, $\mu=$ constant of strong convexity of the objective $f$, $L=$ Lipschitz constant of the gradient of $f$.}
    \label{tab:comparison0} 
\end{table*}

\section{Problem Formulation and Assumptions}
Real problems arising in practice are known to behave better than in theory. Therefore, constructing methods for idealized setups can be useful \citep{woodworth2023two}. Our work relies on the strong convexity of the mean risk and the homogeneity of the data on all nodes. 
\begin{assumption}\label{assumption}
    Every $f_m$ is $\delta$-related to $f$ (Definition \ref{sim_def}) and $f\colon \mathbb{R}^d \rightarrow \mathbb{R}$ is $\mu$-strongly convex on $\mathbb{R}^d$:
    \begin{align}\label{strong_convexity}
        f(x)\geq f(y)+\langle\nabla f(y),x-y\rangle + \frac{\mu}{2}&\|x-y\|^2,\\ &\forall x,y\in\R.\notag
    \end{align}
\end{assumption}
Note that Assumption \ref{assumption} allows local functions to be non-convex. The only requirement imposed on them is the Hessian similarity. We consider the distributed optimization problem of the form (\ref{prob_form}) , where the data is drawn from a single distribution. It is also worth noting that Definition \ref{sim_def} implies $\delta$-smoothness of  $f_m-f$ for all $m\in[1,M]$:
\begin{align}\label{sim_cor}
    \|\nabla(f_m-f)(x)-\nabla(f_m-f)(y)\|^2\leq\delta^2&\|x-y\|^2,\\&\forall x,y\in\R\notag.
\end{align}
In fact, the Hessian similarity assumption does not dramatically reduce the generality of our analysis. Indeed, if the data is heterogeneous, it is sufficient to put $\delta=L$ in our results.

\section{Unbiased Compression via \texttt{OLGA}}
As mentioned above, compression can be implemented through variance reduction \citep{eclk, marina, masha}. The approach proposed in \citep{svrs} provides a way to construct an optimal method for the variance reduction technique under the similarity condition. The idea is to try to naturally generalize this approach to schemes with compression. We first consider an unbiased compressor (Definition \ref{unbiased_def}).
\begin{algorithm}[H]
	\caption{}
	\begin{algorithmic}[1]
		\State {\bf Input:} $x_0 \in \R, p\in(0,1), \theta>0$
            \State Set $N\sim$ Geom$(p)$\label{unb:sample_N}
            \State Send $x_0$ and $\nabla f_1(x_0)$ to each device 
            \State Collect $\nabla f(x_0)=\frac{1}{M}\sum_{m=1}^M\nabla f_m(x_0)$ on server\label{unb_epoch:grad_eval}
            \For{$k=0,1,2,\ldots, N-1$}:
                \For{each device $m$ in parallel}:\label{unbiased:line_6}
                    \State Calculate $\hat{g}_k^m$ using formula:
                    \begin{align*}
                        \hat{g}_k^m=\nabla f_m(x_k)-\nabla f_1(x_k) - \nabla f_m(x_0) + \nabla f_1(x_0)
                    \end{align*}\label{unbiased:line_7}
                    \State 
                        Send $g_k^m=Q\left(\hat{g}_k^m\right)$ to server\label{unbiased:line_8}
                \EndFor
                \State Collect $g_k = \frac{1}{M}\sum_{m=1}^Mg_k^m$ on server
                \State Calculate $t_k=g_k - \nabla f_1(x_0) + \nabla f(x_0)$\label{unbiased:line_11}
                \State Update $x_{k+1} = \arg\min_{x\in\R}q(x),$\label{unbiased_epoch:task} where
                \begin{align*}q(x)= \langle t_k,x-x_k \rangle + \frac{1}{2\theta}\|x-x_k\|^2 + f_1(x)\end{align*}
                    \State Send $x_{k+1}$ and $\nabla f_1(x_{k+1})$ to each device
            \EndFor
            \State{\bf Output:} $x_N$
	\end{algorithmic}
        \label{alg:unbiased_compr_epoch}
\end{algorithm}
In Algorithm \ref{alg:unbiased_compr_epoch}, the full gradient is called only once per iteration before entering the loop (Line \ref{unb_epoch:grad_eval}). At each iteration, it is required to communicate with each node and solve the subproblem (Line \ref{unbiased_epoch:task} of Algorithm \ref{alg:unbiased_compr_epoch}). The number of iterations of Algorithm \ref{alg:unbiased_compr_epoch} is a random variable (see Line \ref{unb:sample_N}) that is not bounded from above, and hence the effectiveness of communication is important. Algorithm \ref{alg:unbiased_compr_epoch} has no acceleration. We build it to use as an integral part of more efficient Algorithm \ref{alg:unbiased_compr}. From the point of view of theory, we are only interested in the descent lemma for Algorithm \ref{alg:unbiased_compr_epoch}.
\begin{lemma}\label{lem:4}
    Consider an epoch of Algorithm \ref{alg:unbiased_compr_epoch}. Let $h(x)=f_1(x)-f(x)+\frac{1}{2\theta}\|x\|^2$, where $\theta\leq\min\left\{\frac{\sqrt{p}\sqrt{M}}{8\delta\sqrt{\omega}}, \frac{1}{2\delta}\right\}$. Then the following inequality holds for every $x\in\R$:
    \begin{align}\label{lem:unbiased_estimation}
        \begin{split}\E \left[f(x_N)-f(x)\right] \leq& \E \Big[\langle x-x_0, \nabla h(x_N)-\nabla h(x_0) \rangle\\ &- \frac{p}{2}D_{h}(x_0,x_N)-\frac{\mu}{2}\|x_N-x\|^2\Big].\end{split}
    \end{align}
\end{lemma}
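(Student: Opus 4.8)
The plan is to recognize Line~\ref{unbiased_epoch:task} as a \emph{noisy Bregman proximal step} with distance-generating function $h$ and then aggregate the one-step guarantees over the geometric epoch. Writing $\E_k$ for the expectation conditioned on the history up to iteration $k$, the first step is to compute the conditional mean of the reported direction: since $Q$ is unbiased and the compressors act independently across nodes,
\begin{align*}
\E_k[t_k] = \nabla f(x_k) - \nabla f_1(x_k) = \tfrac1\theta x_k - \nabla h(x_k),
\end{align*}
so with $\xi_k \defeq t_k - \E_k[t_k]$ we have $\E_k[\xi_k]=0$, and the subproblem $q$ rewrites (after discarding $x$-independent constants) as $x_{k+1}=\arg\min_x\{f(x)+D_h(x,x_k)+\langle\xi_k,x\rangle\}$, with optimality condition
\begin{align*}
\nabla f(x_{k+1}) + \nabla h(x_{k+1}) - \nabla h(x_k) + \xi_k = 0.
\end{align*}
Here the bound $\theta\le\frac1{2\delta}$ is essential: by \eqref{sim_cor} the map $f_1-f$ has $\delta$-Lipschitz gradient, so $h$ is $\alpha$-strongly convex with $\alpha\defeq\frac1\theta-\delta\ge\delta>0$, which legitimizes the Bregman reading and the inequality $D_h(u,v)\ge\frac\alpha2\|u-v\|^2$.

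Second, I would derive the per-step descent inequality. Substituting the optimality condition into $\mu$-strong convexity of $f$, namely $f(x_{k+1})-f(x)\le\langle\nabla f(x_{k+1}),x_{k+1}-x\rangle-\frac\mu2\|x_{k+1}-x\|^2$, and applying the three-point identity $\langle\nabla h(x_k)-\nabla h(x_{k+1}),x_{k+1}-x\rangle = D_h(x,x_k)-D_h(x,x_{k+1})-D_h(x_{k+1},x_k)$ yields, pathwise,
\begin{align*}
f(x_{k+1})-f(x) \le D_h(x,x_k)-D_h(x,x_{k+1}) - D_h(x_{k+1},x_k) - \langle\xi_k,x_{k+1}-x\rangle - \tfrac\mu2\|x_{k+1}-x\|^2.
\end{align*}
Splitting $-\langle\xi_k,x_{k+1}-x\rangle=-\langle\xi_k,x_{k+1}-x_k\rangle-\langle\xi_k,x_k-x\rangle$, the second piece is mean-zero under $\E_k$, while Young's inequality with weight $\alpha$ lets $D_h(x_{k+1},x_k)\ge\frac\alpha2\|x_{k+1}-x_k\|^2$ absorb the first, leaving only the variance term $\frac1{2\alpha}\|\xi_k\|^2$.

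Third comes the variance estimate, where similarity and compression cooperate. Using $\hat g_k^m=\nabla(f_m-f_1)(x_k)-\nabla(f_m-f_1)(x_0)$ together with the fact that $f_m-f_1=(f_m-f)-(f_1-f)$ has $2\delta$-Lipschitz gradient,
\begin{align*}
\E_k\|\xi_k\|^2 = \tfrac1{M^2}\sum_{m=1}^M\E_k\|Q(\hat g_k^m)-\hat g_k^m\|^2 \le \tfrac{\omega}{M^2}\sum_{m=1}^M\|\hat g_k^m\|^2 \le \tfrac{4\omega\delta^2}{M}\|x_k-x_0\|^2.
\end{align*}
The second step-size threshold $\theta\le\frac{\sqrt p\sqrt M}{8\delta\sqrt\omega}$ is calibrated precisely so that $\frac1{2\alpha}\E_k\|\xi_k\|^2\le\frac p2\,D_h(x_k,x_0)$ (via $\|x_k-x_0\|^2\le\frac2\alpha D_h(x_k,x_0)$); this is the variance-reduction effect, with the residual noise governed by the Bregman distance to the epoch anchor $x_0$.

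Finally—and this is where I expect the genuine work to lie—I would aggregate over the random horizon $N\sim\mathrm{Geom}(p)$. The telescoping part $\sum_{k=0}^{N-1}[D_h(x,x_k)-D_h(x,x_{k+1})]=D_h(x,x_0)-D_h(x,x_N)$ retains coefficient one, and I would recast it into the stated inner-product form through
\begin{align*}
\langle x-x_0,\nabla h(x_N)-\nabla h(x_0)\rangle = D_h(x,x_0)-D_h(x,x_N)+D_h(x_0,x_N),
\end{align*}
so that the claim becomes $\E[f(x_N)-f(x)]\le\E[D_h(x,x_0)-D_h(x,x_N)+(1-\tfrac p2)D_h(x_0,x_N)-\tfrac\mu2\|x_N-x\|^2]$, revealing the $\frac p2$ discount on $D_h(x_0,x_N)$ as exactly the budget spent on the noise. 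The delicate point is the random summation range: I would invoke the geometric-sum identity $\E[\sum_{k=0}^{N-1}c_k]=\frac{1-p}p\E[c_N]$ together with its memoryless companion $\E[a_N]=p\,a_0+(1-p)\E[a_{N+1}]$ to collapse $\frac p2\E[\sum_k D_h(x_k,x_0)]$ and the stopped objective into single-index quantities, while tracking the index shifts relating $x_N$ to $x_{N+1}$. Matching the constants exactly—keeping the coefficient of $D_h(x,x_0)-D_h(x,x_N)$ equal to one while forcing the variance leak to be precisely $\frac p2$—is the main obstacle, and it is what simultaneously pins down both step-size bounds in the hypothesis.
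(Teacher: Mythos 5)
Your first three steps are correct and coincide in substance with the paper's own proof: the same unbiasedness-plus-independence computation yields $\E_k\|\xi_k\|^2\le\frac{4\omega\delta^2}{M}\|x_k-x_0\|^2$, and the same strong-convexity/smoothness facts about $h$ (with $\alpha=\frac1\theta-\delta\ge\frac{1}{2\theta}$ and the two-sided bound $\frac{1-\theta\delta}{2\theta}\|u-v\|^2\le D_h(u,v)\le\frac{1+\theta\delta}{2\theta}\|u-v\|^2$) are what the paper uses. The genuine gap is exactly where you place it: the passage from the per-step bound to the statement about $x_N$. The paper does not telescope over the epoch at all. It writes the one-step inequality only at the single random index $k=N-1$ and invokes Lemma \ref{lem:prob} twice: with $D_k=D_h(x,x_k)$ it gives $\E[D_h(x,x_{N-1})-D_h(x,x_N)]=p\,\E[D_h(x,x_0)-D_h(x,x_N)]$, and with $D_k=D_h(x_0,x_k)$ (for which $D_0=0$) it gives $\E[D_h(x_0,x_{N-1})]=(1-p)\E[D_h(x_0,x_N)]\le\E[D_h(x_0,x_N)]$. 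Hence the noise term enters exactly once, already anchored at $x_0$ and in the correct argument order, and the claim follows from one application of \eqref{three-point} — no random-range summation, no index shift toward $x_{N+1}$, no constant-matching problem.

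Your telescoping plan, as sketched, does not close. Summing your per-step inequality over $k=0,\dots,N-1$, using $\E\big[\sum_{k=0}^{N-1}c_k\big]=\frac{1-p}{p}\E[c_N]$ (valid since $c_0=0$) for the noise and $\E\big[\sum_{k=1}^{N}(f(x_k)-f(x))\big]=\frac1p\E[f(x_N)-f(x)]$ for the objective, and then multiplying by $p$, the accumulated noise is $\frac{p(1-p)}{2}\E[D_h(x_N,x_0)]$ under your claimed calibration $\frac1{2\alpha}\E_k\|\xi_k\|^2\le\frac p2 D_h(x_k,x_0)$, while the telescoped Bregman budget is only $-p\,D_h(x_0,x_N)$. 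Since the divergence is asymmetric, you must pay $D_h(x_N,x_0)\le\frac{1+\theta\delta}{1-\theta\delta}D_h(x_0,x_N)\le3D_h(x_0,x_N)$, and the net coefficient $-p+\frac{3p(1-p)}{2}$ is \emph{positive} whenever $p<\nicefrac13$ — precisely the regime $p=\nicefrac1{\gamma_\omega}$ the method targets, so the extracted bound is vacuous. The route is repairable: the threshold on $\theta$ actually yields the tighter per-step constant $\nicefrac p4$ (not $\nicefrac p2$), and if you state the noise bound with the anchor first, $\frac1{2\alpha}\E_k\|\xi_k\|^2\le\frac p4 D_h(x_0,x_k)$, the geometric sum collapses to $\frac{p(1-p)}{4}\E[D_h(x_0,x_N)]$ with no asymmetry loss, giving net coefficient $-\frac{3p}{4}\le-\frac p2$. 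Finally, the ``coefficient one'' you identify as the main obstacle is unattainable and should not be your target: both the paper's argument and any corrected version of yours produce $p\langle x-x_0,\nabla h(x_N)-\nabla h(x_0)\rangle$, and that $p$-version is what Theorem \ref{th:unbiased} actually consumes, since there $-G_{k+1}=p\left(\nabla h(y_{k+1})-\nabla h(x_{k+1})\right)$ carries the factor $p$; the missing $p$ in the lemma's display is an internal inconsistency of the paper, compensated later, not something a proof can deliver.
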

\begin{algorithm}[H]
	\caption{\texttt{OLGA}}
	\begin{algorithmic}[1]
		\State {\bf Input:} $z_0=y_0 \in \R, p\in(0,1), \theta>0, \tau\in(0,1), \alpha>0$
            \For{$k=0,1,2,\ldots, K-1$}:
                \State Update $x_{k+1}=\tau z_k + (1-\tau)y_k$\label{unbiased:3}
                \State Update $y_{k+1} =\text{Alg.1}(x_{k+1},\theta,p)$
                \State Send $y_{k+1}$ to each device
                \State Collect $\nabla f(y_{k+1})=\frac{1} {M}\sum_{m=1}^M\nabla f_m(y_{k+1})$ on server
                \State Calculate $t_k=\nabla(f_1-f)(x_{k+1}) - \nabla(f_1-f)(y_{k+1})$\label{unbiased_full:line_6}
                \State Calculate $G_{k+1}=p\left(t_k + \frac{x_{k+1}-y_{k+1}}{\theta}\right)$\label{unbiased_G}
                \State Update $z_{k+1}= \arg\min_{z\in\R}q(z)$, where  \label{unbiased:12}
                \begin{align*}q(z)=\frac{1}{2\alpha}\|z-z_k\|^2 + \langle G_{k+1},z\rangle +\frac{\mu}{2}\|z-y_{k+1}\|^2\end{align*}
            \EndFor
            \State{\bf Output:} $y_K$
	\end{algorithmic}
        \label{alg:unbiased_compr}
\end{algorithm}

Next, we employ interpolation framework motivated by \texttt{KatyushaX} \citep{katyushaX} to obtain the final version of proposed algorithm. The key difference between our approach and \texttt{KatyushaX} is the choice of a suitable Bregman divergence as a metric function instead of the Euclidean distance used in the mentioned method.

Note that Line \ref{unbiased:12} of Algorithm \ref{alg:unbiased_compr} can be solved analytically and does not require expensive computations. The full gradient is computed twice per iteration, whereas without compression it could be invoked potentially infinitely often, resulting in significant communication costs. Algorithm \ref{alg:unbiased_compr} calls the full gradient a second time on the outer iteration. \texttt{OLGA} calls the full gradient a constant rather than potentially infinite number of times. For the sake of brevity of description, we introduce two potential functions:
\begin{gather*}
    Y_k = \frac{\alpha}{\tau}[f(y_k)-f(x_*)],\quad
    Z_k = \frac{1+\mu\alpha}{2}\|z_k-x_*\|^2,
\end{gather*}
where $x_*$ is the solution of the problem (\ref{prob_form}). 

\begin{theorem}\label{th:unbiased}
    Let the problem (\ref{prob_form}) be solved by Algorithm \ref{alg:unbiased_compr} with $\theta\leq\min\left\{\frac{\sqrt{p}\sqrt{M}}{8\delta\sqrt{\omega}}, \frac{1}{2\delta}\right\}$ and tuning parameters such that $4\alpha p\tau \leq \theta$. Then the following inequality holds:
    \begin{align*}
        \E\left[ Y_{k+1} + Z_{k+1} \right] \leq& \E\left[ (1-\tau)Y_k + (1+\mu\alpha)^{-1}Z_k  \right].
    \end{align*}
\end{theorem}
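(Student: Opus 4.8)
The plan is to read Lemma~\ref{lem:4} as a per-epoch descent oracle and to wrap a \texttt{KatyushaX}-style coupling around it, the only change being that the Euclidean mirror step is replaced by the subproblem in Line~\ref{unbiased:12}. The structural fact that glues the two algorithms together is that, combining Lines~\ref{unbiased_full:line_6}--\ref{unbiased_G} with $\nabla h(x)=\nabla(f_1-f)(x)+\tfrac{x}{\theta}$, the direction built by the method satisfies $G_{k+1}=-p\big(\nabla h(y_{k+1})-\nabla h(x_{k+1})\big)$; that is, $G_{k+1}$ is, up to the factor $-p$, precisely the gradient difference that governs the right-hand side of \eqref{lem:unbiased_estimation}. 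Hence the $z$-step is a prox step driven by the same vector that drives the descent, which is what lets the two estimates telescope against each other.

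First I would apply Lemma~\ref{lem:4} to the epoch producing $y_{k+1}$ twice, at $x=x_*$ and at $x=y_k$, and take the convex combination with weights $\tau$ and $1-\tau$. Since the bound is affine in $x$ through $\langle x-x_{k+1},\nabla h(y_{k+1})-\nabla h(x_{k+1})\rangle$, the coupling $x_{k+1}=\tau z_k+(1-\tau)y_k$ collapses the two inner products: $(1-\tau)(y_k-x_{k+1})+\tau(x_*-x_{k+1})=\tau(x_*-z_k)$, so the entire inner-product contribution reduces to one term proportional to $\langle z_k-x_*,\nabla h(y_{k+1})-\nabla h(x_{k+1})\rangle$. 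On the left the same combination gives exactly $[f(y_{k+1})-f(x_*)]-(1-\tau)[f(y_k)-f(x_*)]=\tfrac{\tau}{\alpha}\big(Y_{k+1}-(1-\tau)Y_k\big)$, which already isolates the target $(1-\tau)Y_k$.

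The surviving inner product is then converted into the $Z$-telescope. Rewriting it through $G_{k+1}$ and splitting $z_k-x_*=(z_{k+1}-x_*)+(z_k-z_{k+1})$, I would use the first-order optimality of Line~\ref{unbiased:12}, namely $\tfrac1\alpha(z_{k+1}-z_k)+G_{k+1}+\mu(z_{k+1}-y_{k+1})=0$, together with the identity $\langle a-b,a-c\rangle=\tfrac12(\|a-b\|^2+\|a-c\|^2-\|b-c\|^2)$. This turns $\langle G_{k+1},z_{k+1}-x_*\rangle$ into $\tfrac1{2\alpha}\|z_k-x_*\|^2-\tfrac{1+\mu\alpha}{2\alpha}\|z_{k+1}-x_*\|^2$ minus the nonnegative quantities $\tfrac1{2\alpha}\|z_{k+1}-z_k\|^2$ and $\tfrac\mu2\|z_{k+1}-y_{k+1}\|^2$, plus a spurious $\tfrac\mu2\|y_{k+1}-x_*\|^2$. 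The strong-convexity piece $\tfrac\mu2$ is exactly what lifts $\tfrac1{2\alpha}$ to $\tfrac{1+\mu\alpha}{2\alpha}$, so these two terms become the $(1+\mu\alpha)^{-1}Z_k$ and $Z_{k+1}$ of the claim, while the spurious $\tfrac\mu2\|y_{k+1}-x_*\|^2$ is annihilated by the $-\tfrac\mu2\|y_{k+1}-x_*\|^2$ produced by the $x=x_*$ instance of Lemma~\ref{lem:4}.

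I expect the delicate part to be the residual cross term $\langle z_k-z_{k+1},\nabla h(y_{k+1})-\nabla h(x_{k+1})\rangle$ and, intertwined with it, the bookkeeping of the epoch parameter $p$, which both multiplies $G_{k+1}$ and weights the Bregman term $-\tfrac p2 D_h(x_{k+1},y_{k+1})$ in \eqref{lem:unbiased_estimation}. My plan is to control the cross term by Young's inequality and then to absorb the resulting $\|z_{k+1}-z_k\|^2$ into the negative quadratic supplied by the prox step and the resulting $\|\nabla h(y_{k+1})-\nabla h(x_{k+1})\|^2$ into $-\tfrac p2 D_h$, using that $\theta\le\tfrac1{2\delta}$ together with \eqref{sim_cor} makes $h$ at most $\tfrac2\theta$-smooth, so that $D_h(x_{k+1},y_{k+1})\ge\tfrac\theta4\|\nabla h(y_{k+1})-\nabla h(x_{k+1})\|^2$. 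The requirement that this absorption leaves a nonpositive remainder, after every power of $p$, $\alpha$, $\tau$ and $\theta$ has been reconciled, is precisely the stated step-size restriction $4\alpha p\tau\le\theta$; checking that the coefficients then collapse to exactly $(1-\tau)$ and $(1+\mu\alpha)^{-1}$ is the crux of the computation. Taking total expectation and substituting the definitions of $Y_k$ and $Z_k$ closes the argument.
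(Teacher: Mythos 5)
Your proposal is correct and follows essentially the same route as the paper's proof: apply Lemma~\ref{lem:4} at $x=x_*$ and $x=y_k$, collapse the two inner products via the coupling $x_{k+1}=\tau z_k+(1-\tau)y_k$ and the identification $G_{k+1}=-p\left(\nabla h(y_{k+1})-\nabla h(x_{k+1})\right)$, convert the surviving term into the $Z$-telescope through the prox step of Line~\ref{unbiased:12}, absorb $\frac{\alpha}{2}\|G_{k+1}\|^2$ into $-\frac{p}{2\tau}D_h(x_{k+1},y_{k+1})$ using the $\mathcal{O}(\nicefrac{1}{\theta})$-smoothness of $h$, and close with $4\alpha p\tau\le\theta$. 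The only cosmetic difference is that you re-derive the prox inequality from the first-order optimality condition plus Young's inequality, whereas the paper cites it directly as Lemma~\ref{lem:subtask} from \texttt{KatyushaX}; incidentally, your explicit tracking of the factor $p$ in $G_{k+1}$ is more careful than the paper's shorthand.
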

As discussed above, the iteration of Algorithm \ref{alg:unbiased_compr} invokes the full gradient twice and the compressed gradient another $N$ times. $N$ is a random variable depending on the parameter $p$. On average, we have $\mathcal{O}\left(\nicefrac{1}{\gamma_{\omega}}+p\right)$ \textit{CC-3} for a single iteration of \texttt{OLGA}. It is obvious that one should choose $p=\nicefrac{1}{\gamma_{\omega}}$. It has been discussed above that for practical compressors $\gamma_{\omega}\geq\omega$ holds. Let us select $\alpha,\tau$ values more carefully and formulate the following corollary.

\begin{corollary}\label{cor_unbiased}
    Let the problem (\ref{prob_form}) be solved by Algorithm \ref{alg:unbiased_compr}. Choose $$p=\frac{1}{\gamma_{\omega}},\quad\theta\leq\min\left\{\frac{\sqrt{p}\sqrt{M}}{8\delta\sqrt{\omega}}, \frac{1}{2\delta}\right\},$$ $$\tau=\min\left\{ \frac{\sqrt{\mu}\theta^{\nicefrac{1}{2}}p^{-\nicefrac{1}{2}}}{4},\frac{1}{4} \right\},\quad\alpha=\frac{\theta p^{-1}}{8\tau},$$ then Algorithm \ref{alg:unbiased_compr} has
    $$\tilde{\mathcal{O}}\left( \gamma_{\omega}+\sqrt{\frac{\delta}{\mu}}\left[ \gamma_{\omega}M^{-\nicefrac{1}{4}} + \gamma_{\omega}^{\nicefrac{1}{2}} \right] \right) \textit{ CC-1},$$
    $$\tilde{\mathcal{O}}\left( M\gamma_{\omega}+\sqrt{\frac{\delta}{\mu}}\left[ \gamma_{\omega}M^{\nicefrac{3}{4}} + M\gamma_{\omega}^{\nicefrac{1}{2}} \right] \right) \textit{ CC-2},$$
    and
    $$\tilde{\mathcal{O}}\left( 1+\sqrt{\frac{\delta}{\mu}}\left[ M^{-\nicefrac{1}{4}} + \gamma_{\omega}^{-\nicefrac{1}{2}} \right] \right) \textit{ CC-3}.$$
\end{corollary}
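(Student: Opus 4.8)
The plan is to promote the one-step contraction of Theorem~\ref{th:unbiased} to a linear rate, optimize that rate over the stated parameter choices, and then multiply the resulting number of outer iterations by the per-iteration cost under each of the three communication metrics.

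First I would collapse the two-term recursion into a single factor. Setting $\Phi_k = Y_k + Z_k$ and using $Y_k,Z_k\ge 0$, Theorem~\ref{th:unbiased} yields $\E[\Phi_{k+1}]\le \max\{1-\tau,(1+\mu\alpha)^{-1}\}\,\E[\Phi_k]$, so everything hinges on balancing the two candidate factors. With $\alpha=\theta p^{-1}/(8\tau)$ one has $\mu\alpha=\mu\theta/(8p\tau)$; substituting the active branch $\tau=\sqrt{\mu}\,\theta^{1/2}p^{-1/2}/4$ gives exactly $\mu\alpha=2\tau$, whereas in the saturated branch $\tau=1/4$ the defining minimum forces $\mu\theta\ge p$ and hence $\mu\alpha\ge 1/2$. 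Since $\tau\le 1/4<1/2$, the elementary bound $(1+2\tau)^{-1}\le 1-\tau$ (and $(1+\tfrac12)^{-1}\le\tfrac34$ in the saturated branch) shows $(1+\mu\alpha)^{-1}\le 1-\tau$, so the contraction factor is simply $1-\tau$. Unrolling gives $\E[\Phi_K]\le(1-\tau)^K\Phi_0$, so $K=\mathcal{O}(\tau^{-1}\log(1/\varepsilon))$ outer iterations suffice; I would also check that the choice is admissible for Theorem~\ref{th:unbiased}, since $4\alpha p\tau=\theta/2\le\theta$.

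Next I would evaluate $\tau^{-1}$ at $p=1/\gamma_\omega$ and the largest admissible $\theta=\min\{\sqrt{p}\sqrt{M}/(8\delta\sqrt{\omega}),\,1/(2\delta)\}$. In the non-saturated branch $\tau^{-1}=4\sqrt{p/(\mu\theta)}$, and because $\theta^{-1}=\max\{8\delta\sqrt{\omega}/(\sqrt{p}\sqrt{M}),\,2\delta\}$ I would split on which term attains the maximum and use $\max\{a,b\}\le a+b$. The first case produces $\tau^{-1}\lesssim\sqrt{\delta/\mu}\,M^{-1/4}(\omega/\gamma_\omega)^{1/4}$, which I bound by $\sqrt{\delta/\mu}\,M^{-1/4}$ via the practical-compressor relation $\gamma_\omega\ge\omega$; the second case gives $\tau^{-1}\lesssim\sqrt{\delta/\mu}\,\gamma_\omega^{-1/2}$. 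Taking the maximum with the saturated branch $\tau^{-1}=4$ reinstates the leading constant, so $\tau^{-1}=\tilde{\mathcal{O}}\big(1+\sqrt{\delta/\mu}\,[M^{-1/4}+\gamma_\omega^{-1/2}]\big)$.

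Finally I would multiply the outer-iteration count $K$ by the expected per-iteration communication cost in each metric. A single outer step runs Algorithm~\ref{alg:unbiased_compr_epoch} once, which performs two full-gradient collections together with an expected $\E[N]=1/p=\gamma_\omega$ compressed exchanges, each transmitting only a $\gamma_\omega^{-1}$ fraction of a full vector. Hence, in expectation, one outer iteration costs $\mathcal{O}(\gamma_\omega)$ rounds (\textit{CC-1}), $\mathcal{O}(M\gamma_\omega)$ client messages (\textit{CC-2}), and $\mathcal{O}(1+p^{-1}\gamma_\omega^{-1})=\mathcal{O}(1)$ communication time (\textit{CC-3}); multiplying by $K$ and composing the per-step expectations through the tower property over the random counts $N_0,\dots,N_{K-1}$ delivers the three stated bounds. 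The main obstacle I anticipate is not a single hard estimate but the bookkeeping: confirming the $\mu\alpha\asymp\tau$ balance across both nested minima (over $\tau$ and over $\theta$) so that the effective rate is exactly $1-\tau$, and controlling the expectation over the unbounded geometric counts $N_k$ when translating the outer-iteration bound into communication complexity.
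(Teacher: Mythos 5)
Your proposal is correct and follows essentially the same route as the paper: the same two-branch case analysis on $\tau$ (showing $\mu\alpha=2\tau$ in the active branch and $\mu\alpha\ge\nicefrac{1}{2}$ in the saturated one, so the contraction factor is $1-\tau$), the same evaluation of $\tau^{-1}$ via the two terms of the maximum defining $\theta^{-1}$, the substitution $p=\nicefrac{1}{\gamma_{\omega}}$ with $\gamma_{\omega}\ge\omega$, and the same per-outer-iteration cost accounting ($\mathcal{O}(\gamma_{\omega})$ rounds, $\mathcal{O}(M\gamma_{\omega})$ messages, $\mathcal{O}(1)$ time). Your write-up is in fact slightly more complete than the paper's, which only derives \textit{CC-3} explicitly and leaves the admissibility check $4\alpha p\tau=\nicefrac{\theta}{2}\le\theta$ and the \textit{CC-1}/\textit{CC-2} multiplications implicit.
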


It is worth noting that in \textit{CC-3} if $\gamma_{\omega}$ is too large, the term with $M$ dominates and the communication time result stops improving. If $\gamma_{\omega}$ is too small, the compression effect is not as strong as it could be. Therefore, the best effect is given by $\gamma_{\omega}=\Theta\left(\sqrt{M}\right)$. To simplify the appearance of the obtained results, the complexities are shown in Table \ref{tab:comparison0} under consideration that $\gamma_{\omega}\sim\omega$. This holds for a number of common used compressors \citep{EF_proof_1, alistarh2018convergence}.

\subsection{Discussion}
Let us compare Algorithm \ref{alg:unbiased_compr} with distributed optimization SOTAs. It makes sense to make comparisons only with methods that utilize similarity, because superiority over other ones depends mainly on how much $\delta$ is less than $L$. The optimal choice $\gamma_{\omega}=\Theta\left(\sqrt{M}\right)$ is assumed below.
\begin{enumerate}
    \item \textbf{\texttt{Accelerated ExtraGradient}} outperforms our method in the sense of \textit{CC-1} and \textit{CC-2}. However, \texttt{OLGA} has $\tilde{\mathcal{O}} \left( 1+ M^{-\nicefrac{1}{4}}\sqrt{\nicefrac{\delta}{\mu}}\right)$ \textit{CC-3} vs. $\tilde{\mathcal{O}}\left( \sqrt{\nicefrac{\delta}{\mu}}\right)$ for its competitor and hence turns out to be better by a factor $M^{\nicefrac{1}{4}}$. Thus, the difference in the running time of methods with a substantially large number of machines is enormous.
    \item \textbf{\texttt{AccSVRS}} loses to our method in terms of \textit{CC-1}: $\tilde{\mathcal{O}} \left( M^{\nicefrac{1}{2}}+M^{\nicefrac{1}{4}}\sqrt{\nicefrac{\delta}{\mu}} \right)$ vs. $\tilde{\mathcal{O}}\left( M + M^{\nicefrac{3}{4}}\sqrt{\nicefrac{\delta}{\mu}} \right)$; and \textit{CC-3}: $\tilde{\mathcal{O}} \left( 1+M^{-\nicefrac{1}{4}}\sqrt{\nicefrac{\delta}{\mu}} \right)$ vs. $\tilde{\mathcal{O}}\left( M + M^{\nicefrac{3}{4}}\sqrt{\nicefrac{\delta}{\mu}} \right)$; but wins by \textit{CC-2} due to client sampling.
    \item \textbf{\texttt{Three Pillars Algorithm}} loses to our method by \textit{CC-1}: $\tilde{\mathcal{O}} \left( M^{\nicefrac{1}{2}}+M^{\nicefrac{1}{4}}\sqrt{\nicefrac{\delta}{\mu}} \right)$ vs. $\mathcal{O} \left(M+M^{\nicefrac{1}{2}}\cdot\nicefrac{\delta}{\mu}\right)$; and \textit{CC-2}: $\tilde{\mathcal{O}} \left( M^{\nicefrac{3}{2}}+M^{\nicefrac{5}{4}}\sqrt{\nicefrac{\delta}{\mu}} \right)$ vs. $\mathcal{O} \left(M^2+M^{\nicefrac{3}{2}}\cdot\nicefrac{\delta}{\mu}\right)$. In terms of \textit{CC-3}, it has a better $M^{-\nicefrac{1}{2}}$ factor. However, \texttt{Three Pillars Algorithm} is not accelerated method. Thus, it loses to \texttt{OLGA} in a wide range of practical problems.
\end{enumerate}

\section{Biased Compression via \texttt{EF-OLGA}}
In this section, we consider a biased compressor (Definition \ref{biased_def}). As noted above, biased compressors are difficult to analyze and hence compressing gradient differences without introducing additional sequences will not yield results. Here we have to add "error" terms $e^m_k$. 
\begin{algorithm}[H]
	\caption{}
	\begin{algorithmic}[1]
		\State {\bf Input:} $x_0 \in \R, p\in(0,1), \theta>0, e_0^m=0$
            \State Set $N\in$ Geom$(p)$
            \State Send $x_0$ and $\nabla f_1(x_0)$ to each device 
            \State Collect $\nabla f(x_0)=\frac{1}{M}\sum_{m=1}^M\nabla f_m(x_0)$ on server
            \For{$k=0,1,2,\ldots, N-1$}:
                \For{each device $m$ in parallel}:
                    \State Calculate $\hat{g}_k^m$ using formula: \begin{align*}\hat{g}_k^m = \nabla f_m(x_k)-\nabla f_1(x_k) - \nabla f_m(x_0) + \nabla f_1(x_0)\end{align*}
                    \State 
                        Send $g_k^m=C\{e_k^m + \theta\hat{g}_k^m\}$ to server\label{epoch:stoch_grad}
                        \State Update $e_{k+1}^m = e_k^m - g_k^m + \theta\hat{g}_k^m$\label{epoch:error}
                        \If{$k=N-1$}:
                            \State Send $e_{k+1}^m=e_N^m$ to server
                        \EndIf
                \EndFor
                \State Collect $g_k = \frac{1}{M}\sum_{m=1}^Mg_k^m$ on server
                \State Calculate $t_k=\frac{1}{\theta}g_k - \nabla f_1(x_0) + \nabla f(x_0)$
                \State
                    Update $x_{k+1} = \arg\min_{x\in\R}q(x)$, where \begin{align*}
                        q(x)=\langle t_k,x-x_k \rangle + \frac{1}{2\theta}\|x-x_k\|^2 + f_1(x)
                    \end{align*}\label{epoch:task}
                    \State Send $x_{k+1}$ and $\nabla f_1(x_{k+1})$ to each device
            \EndFor
            \State{\bf Output:} $x_N$, $\frac{1}{M}\sum_{m=1}^Me^m_N$
	\end{algorithmic}
        \label{alg:biased_compr_epoch}
\end{algorithm}
Algorithm \ref{alg:biased_compr_epoch} is obtained by combining Algorithm \ref{alg:unbiased_compr_epoch} and error feedback framework \citep{EF_first}. It is proposed to introduce an additional sequence $e_k^m$ at each node $m$ that will “remember” how much the gradient sent to the server differs from the true gradient computed on the machine (Line \ref{epoch:error}). Unlike the analysis in the previous section, it is not possible to introduce the appropriate metric immediately. After extensive analysis of virtual sequences, the Bregman divergence of the smooth function on "real" arguments and the Euclidean distance on "virtual" ones appear independently. This requires some manipulation to analyze correctly.
\begin{algorithm}[H]
	\caption{\texttt{EF-OLGA}}
	\begin{algorithmic}[1]
		\State {\bf Input:} $z_0=y_0 \in \R, p\in(0,1), \theta>0, \tau\in(0,1), \alpha>0$ and $e_m^0 \in \R$ for every $m\in[1,M]$
            \For{$k=0,1,2,\ldots, K-1$}:
                \State Update $x_{k+1}=\tau z_k + (1-\tau)y_k$\label{biased:3}
                \State Update $y_{k+1}, e_{k+1} =\text{Alg.3}(x_{k+1},\theta,p)$
                \State Send $y_{k+1}$ to each device
                \State Collect $\nabla f(y_{k+1})=\frac{1}{M}\sum_{m=1}^M\nabla f(y_{k+1})$ on server
                \State Calculate $t_k=\nabla(f_1-f)(x_{k+1}) - \nabla(f_1-f)(y_{k+1})$
                \State Calculate $G_{k+1}=p\left(t_k + \frac{x_{k+1}-\tilde{y}_{k+1}}{\theta}\right)$, where \begin{align*}
                    \tilde{y}_{k+1}=y_{k+1}-e_{k+1}
                \end{align*}\label{biased:10}
                \State Update $z_{k+1} = \arg\min_{z\in\R}q(z)$, where \begin{align*}
                    q(z)=\frac{1}{2\alpha}\|z-z_k\|^2 + \langle G_{k+1},z\rangle +\frac{\mu}{2}\|z-y_{k+1}\|^2
                \end{align*} \label{biased:12}
            \EndFor
            \State{\bf Output:} $y_K$
	\end{algorithmic}
        \label{alg:biased_compr}
\end{algorithm}

Note that Line \ref{biased:10} of Algorithm \ref{alg:biased_compr} utilizes "error" terms from the last iteration of Algorithm \ref{alg:biased_compr_epoch}. Without such a modification, variance reduction can not be performed. Thus, in the biased case, a stronger connection between the inner and outer algorithms is formed. As in the case of Algorithm \ref{alg:unbiased_compr}, we also do not sample the stochastic gradient at the outer iteration because it does not affect the result and complicates the analysis. Therefore, \texttt{EF-OLGA} calls the full gradient twice per iteration. 

\begin{theorem}\label{th:biased}
    Let the problem (\ref{prob_form}) be solved by Algorithm \ref{alg:biased_compr} with $\theta\leq\frac{p^{\nicefrac{3}{2}}}{24\delta}\leq\frac{1}{6\delta}$ and tuning parameters such that $28\alpha p \tau \leq\theta$. Then the following inequality holds:
    \begin{align*}
        \E\left[ Y_{k+1} + Z_{k+1} \right] \leq& \E\left[ (1-\tau)Y_k + (1+\mu\alpha)^{-1}Z_k  \right].
    \end{align*}
\end{theorem}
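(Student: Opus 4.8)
The plan is to reproduce the potential-function contraction already proved for Theorem~\ref{th:unbiased}, but with the additional bookkeeping forced by the error-feedback sequence $e_{k+1}$. The central input is a biased analogue of the descent Lemma~\ref{lem:4} for the inner routine (Algorithm~\ref{alg:biased_compr_epoch}), which I would establish first. Writing $h(x)=f_1(x)-f(x)+\frac{1}{2\theta}\|x\|^2$ as before, the epoch started at $x_{k+1}$ and returning $(y_{k+1},e_{k+1})$ should satisfy, for every $x\in\R$,
\begin{align*}
    \E\left[f(y_{k+1})-f(x)\right] \leq \E\Big[ & \langle x-x_{k+1},\, \nabla h(y_{k+1})-\nabla h(x_{k+1}) \rangle \\
    & -\tfrac{p}{2}D_h(x_{k+1},y_{k+1}) -\tfrac{\mu}{2}\|y_{k+1}-x\|^2 + R_k \Big],
\end{align*}
where $R_k$ gathers the residual terms in $e_{k+1}$ produced by the compression error. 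Its derivation copies Lemma~\ref{lem:4} but replaces the unbiased variance bound by the contraction property of $C$ (Definition~\ref{biased_def}) and uses the telescoping of the error update in Line~\ref{epoch:error}, $e_{k+1}^m=e_k^m-g_k^m+\theta\hat g_k^m$; this telescoping is precisely why the virtual, error-corrected point $\tilde y_{k+1}=y_{k+1}-e_{k+1}$ enters $G_{k+1}$ in Line~\ref{biased:10}.

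Next I would run the acceleration coupling. Substituting $x=\tau x_*+(1-\tau)y_k$ into the descent lemma and using the interpolation $x_{k+1}=\tau z_k+(1-\tau)y_k$ (Line~\ref{biased:3}) gives the identity $x-x_{k+1}=\tau(x_*-z_k)$, while convexity and $\mu$-strong convexity of $f$ (Assumption~\ref{assumption}) rewrite the left side as $Y_{k+1}-(1-\tau)Y_k$ after scaling by $\alpha/\tau$. Using $G_{k+1}=p\big(\nabla h(x_{k+1})-\nabla h(y_{k+1})+\tfrac1\theta e_{k+1}\big)$, which is exactly what Line~\ref{biased:10} computes, the surviving inner product splits as $\frac{\tau}{p}\langle z_k-x_*,G_{k+1}\rangle-\frac{\tau}{\theta}\langle z_k-x_*,e_{k+1}\rangle$. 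The first piece is handled by the prox-step of Line~\ref{biased:12}, whose $(\tfrac1\alpha+\mu)$-strong convexity together with the three-point identity $\langle a-b,a-c\rangle=\tfrac12(\|a-b\|^2+\|a-c\|^2-\|b-c\|^2)$ converts $\langle z_k-x_*,G_{k+1}\rangle$ into a telescoping $Z$ pair with the contraction factor $(1+\mu\alpha)^{-1}$, plus a leftover $\langle G_{k+1},z_k-z_{k+1}\rangle$ and the negative squares $-\frac{1}{2\alpha}\|z_{k+1}-z_k\|^2$, $-\frac{\mu}{2}\|z_{k+1}-y_{k+1}\|^2$.

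The cross-term is then Young-split against $\frac{1}{2\alpha}\|z_{k+1}-z_k\|^2$, leaving a multiple of $\|\nabla h(x_{k+1})-\nabla h(y_{k+1})\|^2$; since $\theta\le\frac{1}{6\delta}$ makes $h$ convex and $O(1/\theta)$-smooth (by the $\delta$-smoothness of $f_1-f$ from (\ref{sim_cor})), co-coercivity gives $\|\nabla h(x_{k+1})-\nabla h(y_{k+1})\|^2\lesssim\frac1\theta D_h(x_{k+1},y_{k+1})$, so this residual is swallowed by the reservoir $-\frac{p}{2}D_h(x_{k+1},y_{k+1})$. The hypothesis $28\alpha p\tau\le\theta$ is exactly the slack that makes every such Young splitting close with room to spare, and the definition of $Y_k$ (carrying the factor $\alpha/\tau$) is what matches the coefficients of the $f$-descent terms to those of the $Z$ telescoping.

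The hard part is the error feedback: the cross-term $\frac{\tau}{\theta}\langle z_k-x_*,e_{k+1}\rangle$ and the residual $R_k$. This is where the paper's remark that "the Bregman divergence on real arguments and the Euclidean distance on virtual ones appear independently" becomes operative — I would carry the potential argument simultaneously on the real pair $(x_{k+1},y_{k+1})$ and on the virtual pair involving $\tilde y_{k+1}$, and then reconcile the two. The quantitative fact needed is that the averaged error $e_{k+1}$ stays of order $\theta$ times a gradient-difference norm, a consequence of the compressor contraction and the stricter cap $\theta\le\frac{p^{3/2}}{24\delta}$ (note the extra power of $p$ relative to the unbiased cap $\theta\le\frac{\sqrt p\sqrt M}{8\delta\sqrt\omega}$), so that each $e_{k+1}$ contribution is smaller by a factor $O(\theta\delta)$ than the negative Bregman and squared-distance terms it must hide behind. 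Once these are absorbed, collecting terms and multiplying through by $\alpha/\tau$ yields exactly $\E[Y_{k+1}+Z_{k+1}]\le\E[(1-\tau)Y_k+(1+\mu\alpha)^{-1}Z_k]$. I expect this error-feedback reconciliation to be the principal obstacle; the remainder is a careful reprise of the unbiased acceleration argument of Theorem~\ref{th:unbiased}.
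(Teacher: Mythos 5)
Your high-level architecture matches the paper's: a biased descent lemma for the inner loop built on the virtual sequence $\tilde y_{k+1}=y_{k+1}-e_{k+1}$, a moment bound on the averaged error (the paper proves $\E\|e_{k+1}\|^2\leq\frac{24\theta^2\delta^2}{p^2}\E\|y_{k+1}-x_{k+1}\|^2$, see \eqref{lem:error_estimation}), absorption of the error terms under the stricter cap $\theta\leq\frac{p^{3/2}}{24\delta}$, and closure of the recursion via $28\alpha p\tau\leq\theta$. The genuine gap is in your acceleration coupling. You instantiate the descent lemma at the single point $x=\tau x_*+(1-\tau)y_k$ and extract the $(1-\tau)Y_k$ term by convexity of $f$. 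But the factor $(1+\mu\alpha)^{-1}$ on $Z_k$ comes from Lemma~\ref{lem:subtask} applied at $x_*$, and that lemma leaves on the right-hand side the positive term $\frac{\mu\tau}{2}\|x_*-y_{k+1}\|^2$ (this is $-g(x_*)$ moved across, since the prox objective in Line~\ref{biased:12} is centered at $y_{k+1}$). In the paper this term is cancelled \emph{exactly} by the descent lemma's $-\frac{\mu}{2}\|y_{k+1}-x\|^2$, because $x$ is kept general and only set to $x_*$ at the very end. In your route the only available negative $\mu$-term is $-\frac{\mu}{2}\|y_{k+1}-\tau x_*-(1-\tau)y_k\|^2$, and it does not dominate the leftover: take $y_{k+1}=\tau x_*+(1-\tau)y_k$, then your term vanishes while the leftover equals $\frac{\mu\tau(1-\tau)^2}{2}\|x_*-y_k\|^2>0$. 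Patching with strong convexity at the optimum, $\frac{\mu\tau}{2}\|x_*-y_{k+1}\|^2\leq\tau\left(f(y_{k+1})-f(x_*)\right)=\frac{\tau^2}{\alpha}Y_{k+1}$, degrades the coefficient of $Y_{k+1}$ from $1$ to $1-\tau$, yielding only $\E[(1-\tau)Y_{k+1}+Z_{k+1}]\leq\E[(1-\tau)Y_k+(1+\mu\alpha)^{-1}Z_k]$ — a monotonicity statement, not the contraction claimed in Theorem~\ref{th:biased}. The paper's device is structural and you need it: keep $x$ general in the first application of its Lemma~\ref{lem:2}, split $\langle x-x_{k+1},-G_{k+1}\rangle$ through $z_k$ using Line~\ref{biased:3} (so that $z_k-x_{k+1}=\frac{1-\tau}{\tau}(x_{k+1}-y_k)$), and generate the $(1-\tau)[f(y_k)-f(x)]$ term by a \emph{second} application of the same lemma at $x=y_k$, which bounds $\langle x_{k+1}-y_k,-G_{k+1}\rangle$ by $f(y_k)-f(y_{k+1})$ plus negative terms.

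A secondary inconsistency: you handle the compression error twice. Since Line~\ref{biased:10} builds $e_{k+1}$ into $G_{k+1}$ through $\tilde y_{k+1}$, the inner product delivered by the paper's Lemma~\ref{lem:2} is exactly $\langle x-x_{k+1},-G_{k+1}\rangle$; there is no separate cross-term $\frac{\tau}{\theta}\langle z_k-x_*,e_{k+1}\rangle$ left to "reconcile". The error resurfaces only once, inside $\|G_{k+1}\|^2$, where one splits $\frac{\alpha}{2}\|G_{k+1}\|^2\leq\alpha p^2\|\nabla h(x_{k+1})-\nabla h(y_{k+1})\|^2+\frac{\alpha p^2}{\theta^2}\|e_{k+1}\|^2$ and uses \eqref{lem:error_estimation} together with $\delta\leq\frac{p^{3/2}}{24\theta}$ to conclude $\frac{\alpha}{2}\|G_{k+1}\|^2\leq\frac{4\alpha p^2}{\theta}D_h(x_{k+1},y_{k+1})$, which the reservoir $-\frac{p}{7\tau}D_h(x_{k+1},y_{k+1})$ absorbs under $28\alpha p\tau\leq\theta$. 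Your separate cross-term would instead force a Young split against $\|z_k-x_*\|^2$, i.e.\ against the potential $Z_k$ itself, and the margin $\mu\alpha$ available there injects $\nicefrac{\delta^2}{\mu}$-type factors into the parameter conditions that the theorem's hypotheses do not contain. Both problems disappear if you keep the error packaged inside $G_{k+1}$ all the way through, which is precisely what the virtual-point formulation of the inner lemma is designed for.
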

Again, the \textit{CC-3} of the iteration is $\mathcal{O}\left(\frac{1}{\gamma_{\beta}}+p\right)$. Thus, $p=\nicefrac{1}{\gamma_{\beta}}$.

\begin{corollary}
    Let the problem (\ref{prob_form}) be solved by Algorithm \ref{alg:biased_compr}. Choose
    $$p=\frac{1}{\gamma_{\beta}}, \quad \tau=\min\left\{\frac{\theta^{\nicefrac{1}{2}}p^{-\nicefrac{1}{2}}}{18}, \frac{1}{18}\right\},$$
    $$\theta\leq\frac{p^{\nicefrac{3}{2}}}{12\delta},\quad \alpha=\frac{\theta p^{-1}}{36\tau},$$
    then Algorithm \ref{alg:biased_compr} has
    $$\tilde{\mathcal{O}}\left( \gamma_{\beta}+\gamma_{\beta}^{\nicefrac{5}{4}}\sqrt{\frac{\delta}{\mu}}\right) \textit{ CC-1}, \quad  \tilde{\mathcal{O}}\left( M\gamma_{\beta}+M\gamma_{\beta}^{\nicefrac{5}{4}}\sqrt{\frac{\delta}{\mu}} \right) \textit{ CC-2},$$
    and
    $$\tilde{\mathcal{O}}\left( 1+\gamma_{\beta}^{\nicefrac{1}{4}}\sqrt{\frac{\delta}{\mu}}\right) \textit{ CC-3}.$$
\end{corollary}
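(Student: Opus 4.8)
The plan is to convert the one-step Lyapunov contraction of Theorem~\ref{th:biased} into an iteration count for the outer loop of \texttt{EF-OLGA}, and then multiply that count by the \emph{expected} per-epoch communication cost under each of the three metrics \textit{CC-1}, \textit{CC-2}, \textit{CC-3}.

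First I would collapse the two-term recursion into a single geometric rate. Writing $\Psi_k = Y_k + Z_k$, Theorem~\ref{th:biased} gives $\E[\Psi_{k+1}] \le \rho\,\E[\Psi_k]$ with $\rho = \max\{1-\tau,\,(1+\mu\alpha)^{-1}\}$, so by the tower property $\E[\Psi_K] \le \rho^K \Psi_0$. Since $(1+\mu\alpha)^{-1} \le 1 - \tfrac{\mu\alpha}{1+\mu\alpha}$, we have $\rho \le 1 - \min\{\tau,\tfrac{\mu\alpha}{1+\mu\alpha}\}$; both $Y_K = \tfrac{\alpha}{\tau}[f(y_K)-f(x_*)] \ge 0$ and $Z_K \ge 0$ lower-bound the residual, so in the regime $\mu\alpha \le 1$ reaching $\E[f(y_K)-f(x_*)] \le \varepsilon$ requires $K = \tilde{\mathcal{O}}\!\big(1/\min\{\tau,\mu\alpha\}\big)$ outer iterations.

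Next I would substitute the parameter choices. Taking $\theta$ at its ceiling $\theta = \Theta(p^{3/2}/\delta)$, the prescribed $\alpha = \theta p^{-1}/(36\tau)$ yields $\mu\alpha = \Theta(\mu\theta p^{-1}/\tau)$; the role of this choice is precisely to balance the two contraction factors, i.e. to enforce $\mu\alpha \asymp \tau$, which forces $\tau \asymp \sqrt{\mu\theta/p} = \Theta\!\big(p^{1/4}\sqrt{\mu/\delta}\big)$ --- matching the prescribed $\tau$. Consequently $\min\{\tau,\mu\alpha\} = \Theta\!\big(p^{1/4}\sqrt{\mu/\delta}\big)$ and $K = \tilde{\mathcal{O}}\!\big(1 + p^{-1/4}\sqrt{\delta/\mu}\big)$; setting $p = 1/\gamma_\beta$ gives $K = \tilde{\mathcal{O}}\!\big(1 + \gamma_\beta^{1/4}\sqrt{\delta/\mu}\big)$. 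Along the way I would check admissibility: $28\alpha p\tau = \tfrac{28}{36}\theta \le \theta$ holds automatically, and $\theta = \Theta(p^{3/2}/\delta)$ respects $\theta \le 1/(6\delta)$ since $p<1$.

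Finally I would account for the communication of one outer iteration. The inner epoch (Algorithm~\ref{alg:biased_compr_epoch}) runs a random $N \sim \mathrm{Geom}(p)$ number of rounds with $\E[N] = 1/p = \gamma_\beta$; each round sends one \emph{compressed} vector (weight $1/\gamma_\beta$ in \textit{CC-3}), while the two full-gradient collections and the single uncompressed error-feedback transmission per epoch each cost $\mathcal{O}(1)$ in \textit{CC-3}. Hence, by Wald's identity, the expected per-epoch costs are $\mathcal{O}(\gamma_\beta)$ rounds (\textit{CC-1}), $\mathcal{O}(M\gamma_\beta)$ client-server messages (\textit{CC-2}), and $\gamma_\beta\cdot\tfrac{1}{\gamma_\beta} + \mathcal{O}(1) = \mathcal{O}(1)$ communication time (\textit{CC-3}). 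Multiplying by $K$ yields the three stated bounds $\tilde{\mathcal{O}}(\gamma_\beta + \gamma_\beta^{5/4}\sqrt{\delta/\mu})$, $\tilde{\mathcal{O}}(M\gamma_\beta + M\gamma_\beta^{5/4}\sqrt{\delta/\mu})$ and $\tilde{\mathcal{O}}(1 + \gamma_\beta^{1/4}\sqrt{\delta/\mu})$. The main obstacle I anticipate is the bookkeeping around the random epoch length: one must argue (via the tower property together with Wald's identity) that the expected total communication factorizes as $K$ times the expected per-epoch cost, and simultaneously confirm that the balancing $\tau \asymp \mu\alpha$ is not accidental but forced by the prescribed $\alpha$, since any mismatch there would degrade the $\gamma_\beta^{1/4}$ exponent in \textit{CC-3}.
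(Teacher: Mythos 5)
Your proposal is correct and follows essentially the same route as the paper: the paper's own proof is a one-line reference stating that it ``repeats the proof of Corollary~\ref{cor_unbiased} with other constants,'' and that proof is exactly your argument --- a two-regime analysis of the contraction factor (your balancing $\min\{\tau,\mu\alpha\}\asymp\tau$, versus the paper's explicit case split on which term of the $\min$ defines $\tau$), followed by multiplying the resulting iteration count $\tilde{\mathcal{O}}\left(1+\gamma_{\beta}^{\nicefrac{1}{4}}\sqrt{\nicefrac{\delta}{\mu}}\right)$ by the expected per-epoch communication cost under $p=\nicefrac{1}{\gamma_{\beta}}$. Your explicit Wald-identity bookkeeping for the random epoch length and the admissibility check $28\alpha p\tau=\frac{28}{36}\theta\le\theta$ merely make explicit what the paper leaves implicit.
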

This corollary repeats the proof of Corollary \ref{cor_unbiased} with other constants. 

\section{Numerical Experiments}
Our theoretical findings are confirmed numerically on various tasks. 
In particular, we consider the ridge regression problem \citep{mcdonald2009ridge}: 
\begin{equation} 
    \label{eq:quadr}
    f(x) = \frac{1}{M}\sum_{m=1}^M \frac{1}{n}\sum_{j=1}^n \left(\langle x, a^m_j\rangle -b^m_j\right)^2+ \lambda\|x\|^2,
\end{equation}
and the logistic regression problem:
\begin{equation}
    \label{eq:logloss}
    f(x) = \frac{1}{M}\sum_{m=1}^M \frac{1}{n}\sum_{j=1}^n \ln\left(1+e^{-b^m_j \left\langle x, a^m_j \right\rangle}\right) + \lambda\|x\|^2.
\end{equation}
We set the penalty parameter $\lambda$ to $\nicefrac{L}{100}$, where $L$ is a Lipschitz constant of the gradient of the main objective. Also we consider different datasets from LibSVM \citep{chang2011libsvm}: \texttt{a9a} and \texttt{mushrooms} (Appendix). Since we consider illustrative experiments with linear models, it is not difficult to calculate $\mu$, $L$, $\delta$.  Therefore, the parameters of algorithms are chosen in the same way as in the theory without any tuning. We also vary the number of workers $M$. As competitors we take state-of-the-art methods from Table \ref{tab:comparison0}: \texttt{AccSVRS}, \texttt{Accelerated ExtraGradient}, \texttt{ADIANA}, \texttt{LoCoDL}. For algorithms with compression, we use a random sparsification operators Rand$K$, where we vary the number of coordinates $K$ (and hence $\omega$ since $\omega = \nicefrac{d}{K}$).

See the experiments with \texttt{OLGA} on \texttt{a9a} dataset on Figures \ref{fig:1} -- \ref{fig:6}. For additional experiments, see Appendix \ref{app:add_exp}. 

% We briefly list the parameters of the experiments below:
% \begin{enumerate}
%     \item \textit{quadratic a9a:} $\omega\in\{123.00,10.25,7.24,4.10\}$, $M\in\{20,50,80\}, \nicefrac{\delta}{\mu}\in\{0.0033,0.0062,0.0064\}$.
%     \item \textit{quadratic mushrooms:} $\omega\in\{112.00,6.59,3.73\}$, $M\in\{25,50,75\}, \nicefrac{\delta}{\mu}\in\{0.32,\}$.
% \end{enumerate}

\begin{figure}[h!] % <---
   \begin{subfigure}{0.155\textwidth}
       \centering
       \includegraphics[width=\linewidth]{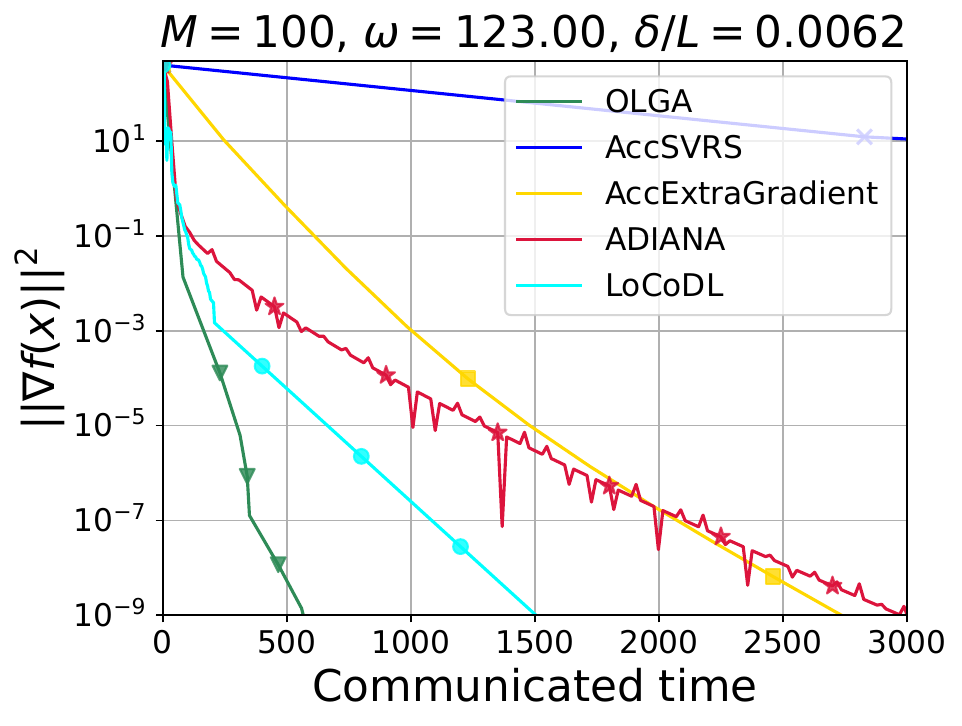}
   \end{subfigure}
   \begin{subfigure}{0.155\textwidth}
        \centering
       \includegraphics[width=\linewidth]{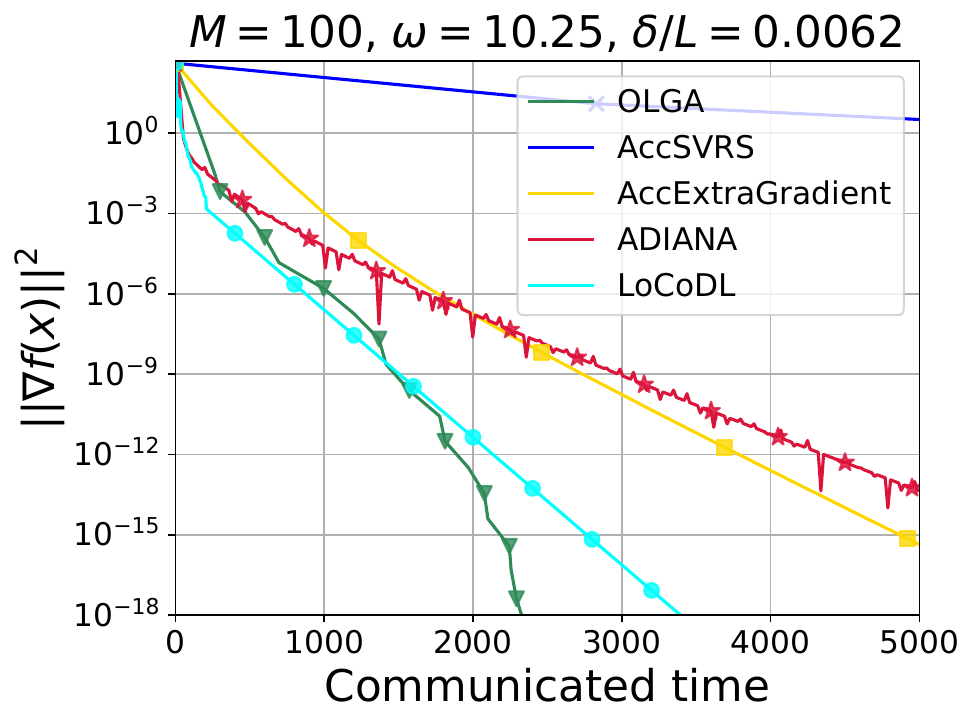}
   \end{subfigure}
   \begin{subfigure}{0.155\textwidth}
   \centering
       \includegraphics[width=\linewidth]{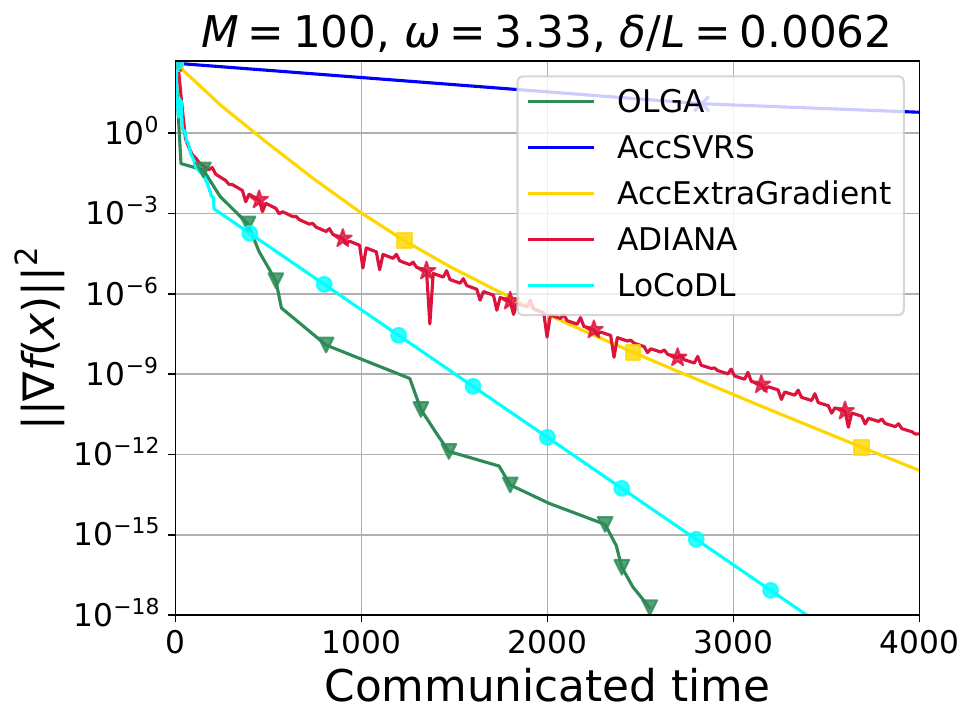}
   \end{subfigure}
   \begin{subfigure}{0.155\textwidth}
       \centering
       (a) $\omega = 123.00$
   \end{subfigure}
   \begin{subfigure}{0.155\textwidth}
       \centering
       (b) $\omega = 10.25$
   \end{subfigure}
   \begin{subfigure}{0.155\textwidth}
       \centering
       (c) $\omega = 3.33$
   \end{subfigure}
   \caption{Comparison of state-of-the-art distributed methods. The comparison is made on \eqref{eq:quadr} with $M=100$ and \texttt{a9a} dataset. The criterion is the communication time (\textit{CC-3}). For methods with compression we vary the power of compression $\omega$.}
   \label{fig:1}
\end{figure}
\begin{figure}[h!] % <---
   \begin{subfigure}{0.155\textwidth}
       \includegraphics[width=\linewidth]{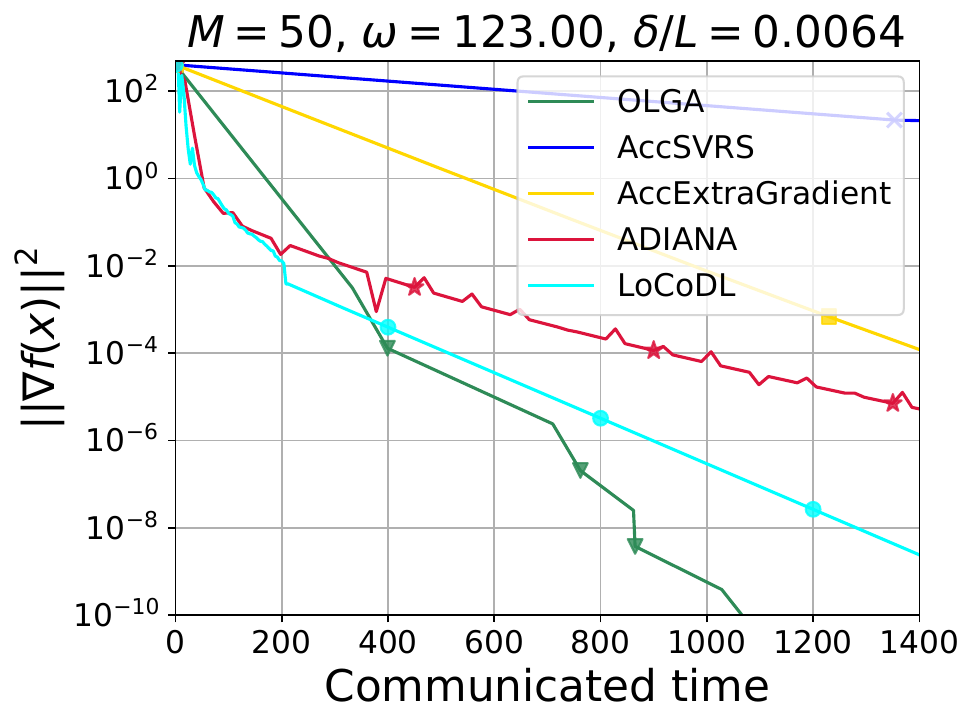}
   \end{subfigure}
   \begin{subfigure}{0.155\textwidth}
       \includegraphics[width=\linewidth]{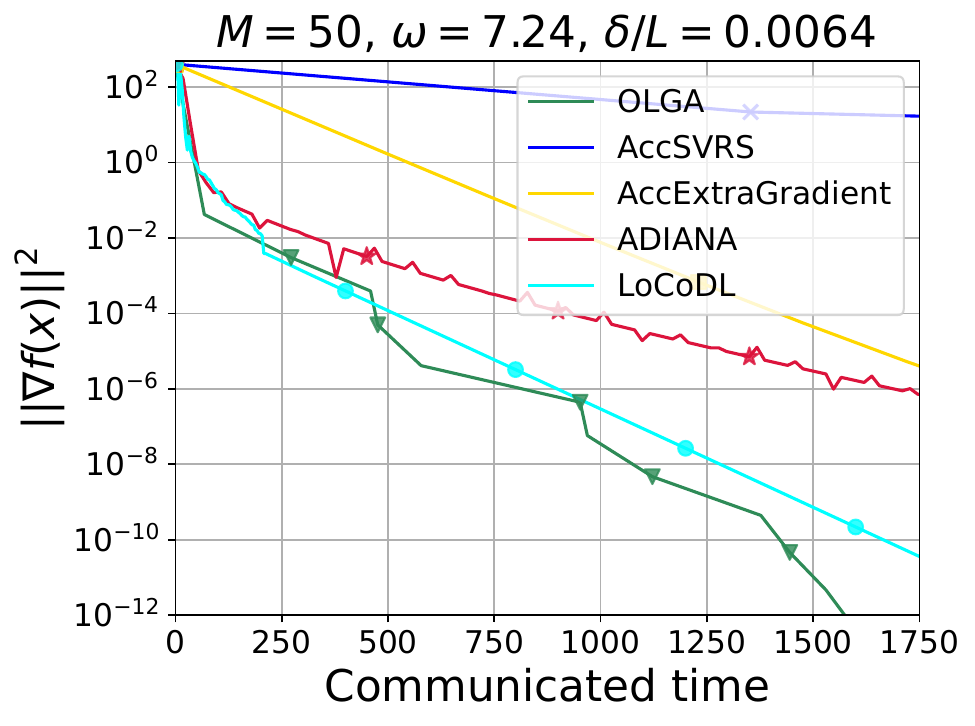}
   \end{subfigure}
   \begin{subfigure}{0.155\textwidth}
       \includegraphics[width=\linewidth]{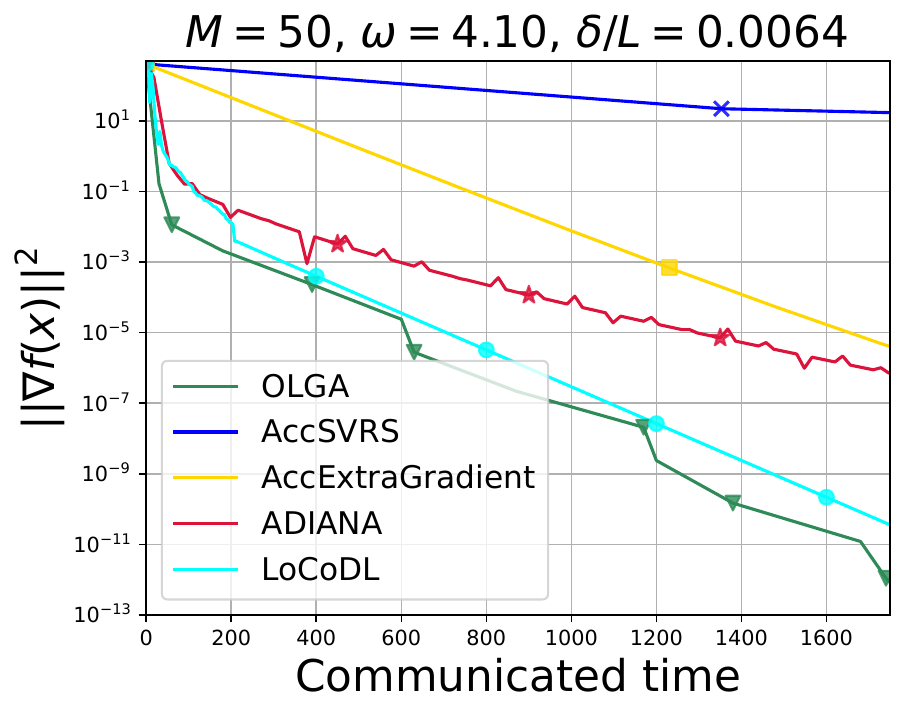}
   \end{subfigure}
   \begin{subfigure}{0.155\textwidth}
       \centering
       (a) $\omega = 123.00$
   \end{subfigure}
   \begin{subfigure}{0.155\textwidth}
       \centering
       (b) $\omega = 7.24$
   \end{subfigure}
   \begin{subfigure}{0.155\textwidth}
       \centering
       (c) $\omega = 4.10$
   \end{subfigure}
   \caption{Comparison of state-of-the-art distributed methods. The comparison is made on \eqref{eq:quadr} with $M=50$ and \texttt{a9a} dataset. The criterion is the communication time (\textit{CC-3}). For methods with compression we vary the power of compression $\omega$.}
\end{figure}
\begin{figure}[h!] % <---
   \begin{subfigure}{0.155\textwidth}
       \includegraphics[width=\linewidth]{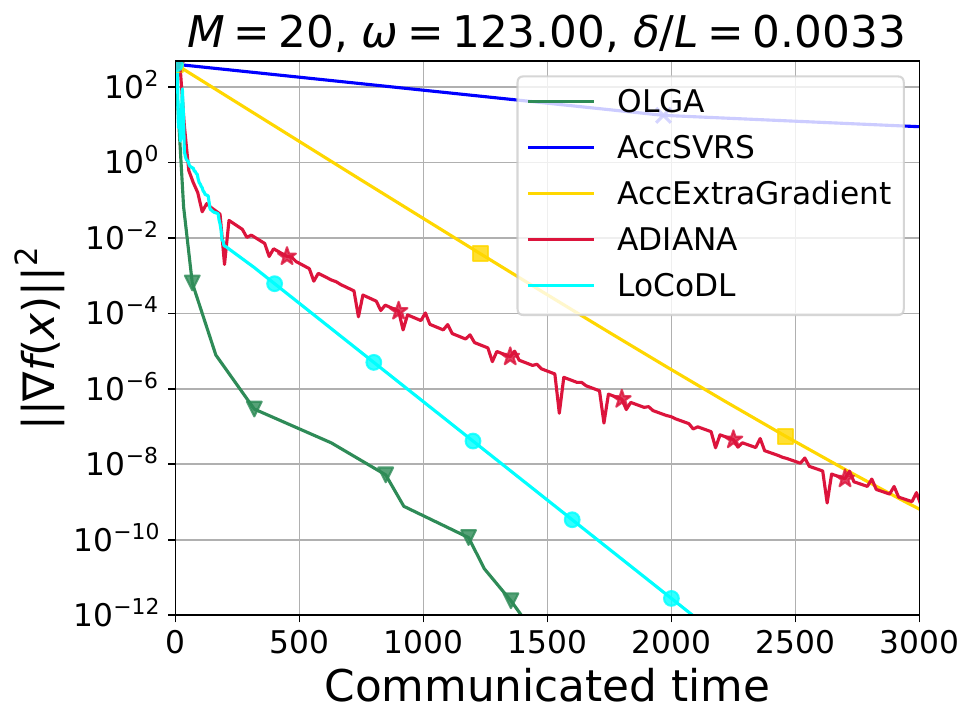}
   \end{subfigure}
   \begin{subfigure}{0.155\textwidth}
       \includegraphics[width=\linewidth]{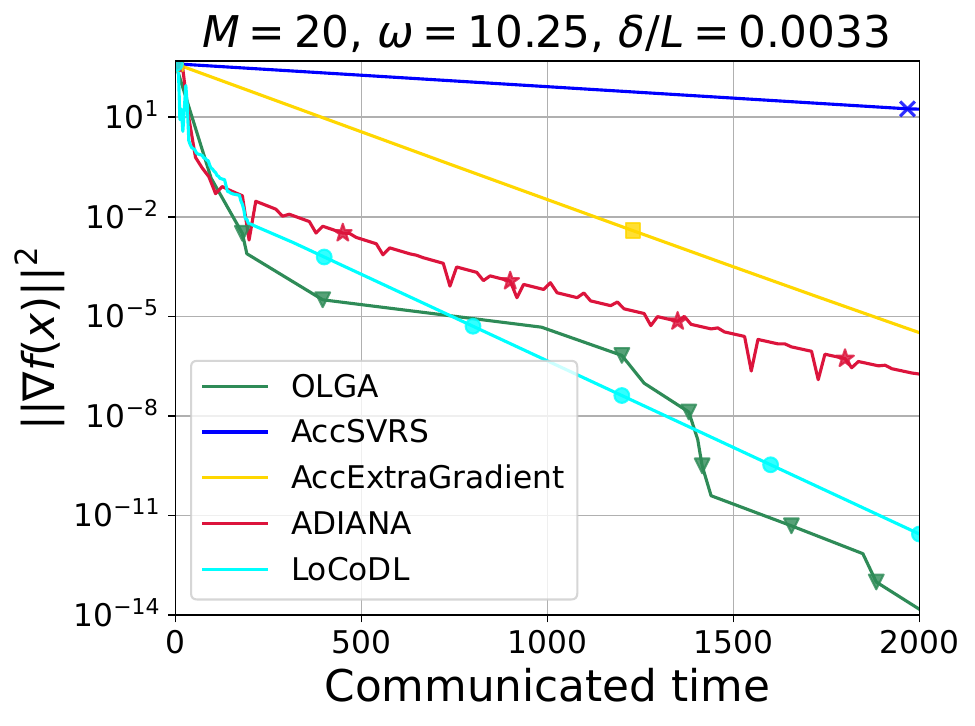}
   \end{subfigure}
   \begin{subfigure}{0.155\textwidth}
       \includegraphics[width=\linewidth]{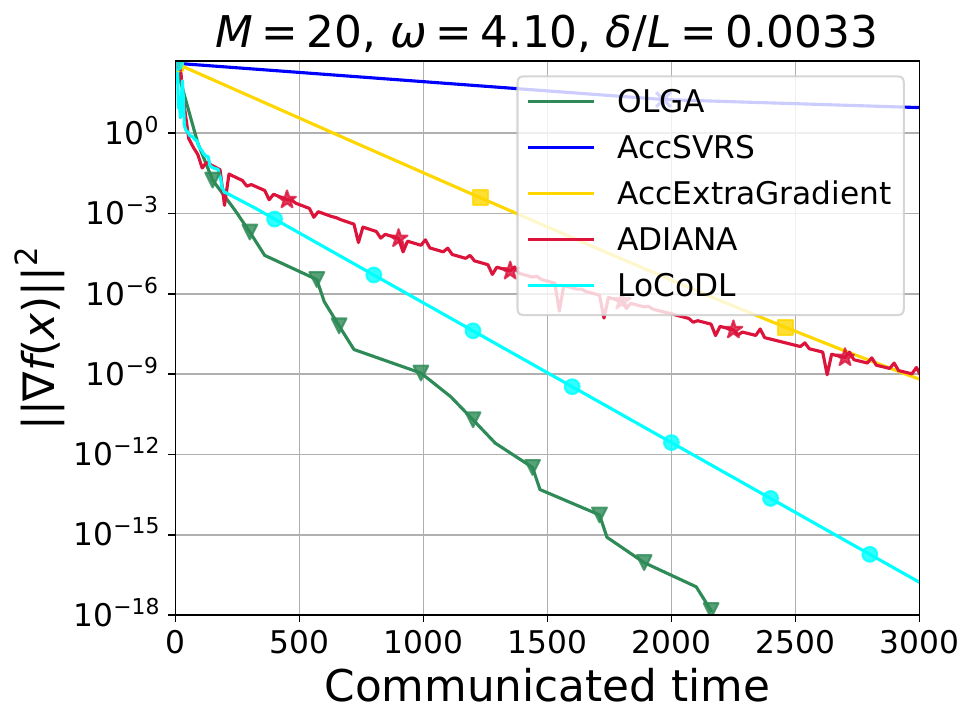}
   \end{subfigure}
   \begin{subfigure}{0.155\textwidth}
       \centering
       (a) $\omega = 123.00$
   \end{subfigure}
   \begin{subfigure}{0.155\textwidth}
       \centering
       (b) $\omega = 10.25$
   \end{subfigure}
   \begin{subfigure}{0.155\textwidth}
       \centering
       (c) $\omega = 4.10$
   \end{subfigure}
   \caption{Comparison of state-of-the-art distributed methods. The comparison is made on \eqref{eq:quadr} with $M=20$ and \texttt{a9a} dataset. The criterion is the communication time (\textit{CC-3}). For methods with compression we vary the power of compression $\omega$.}
\end{figure}

\begin{figure}[h!] % <---
   \begin{subfigure}{0.155\textwidth}
       \includegraphics[width=\linewidth]{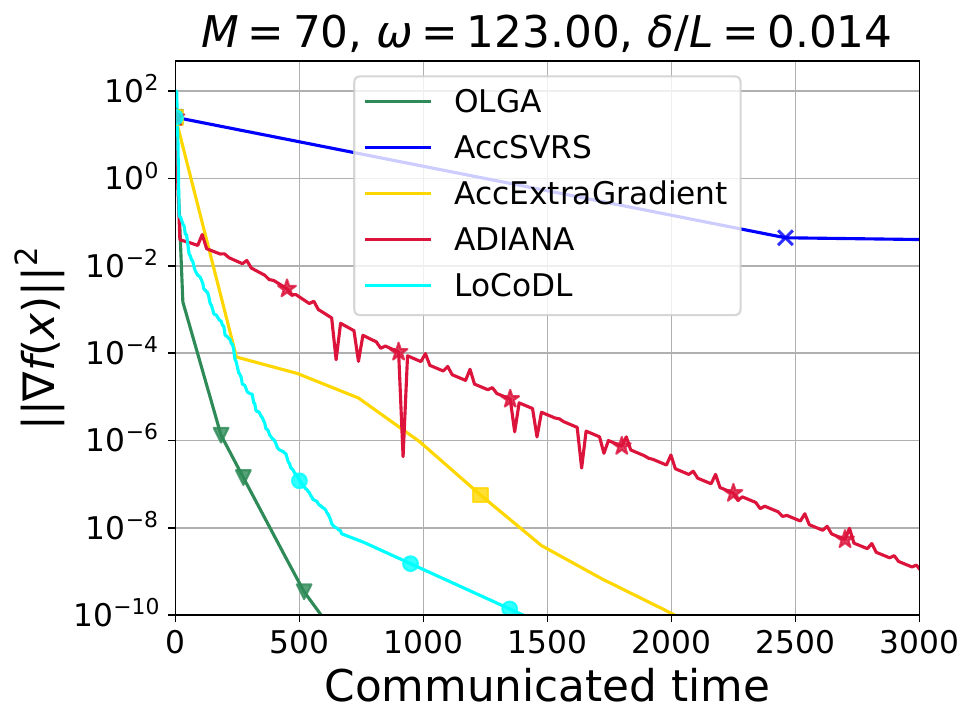}
   \end{subfigure}
   \begin{subfigure}{0.155\textwidth}
       \includegraphics[width=\linewidth]{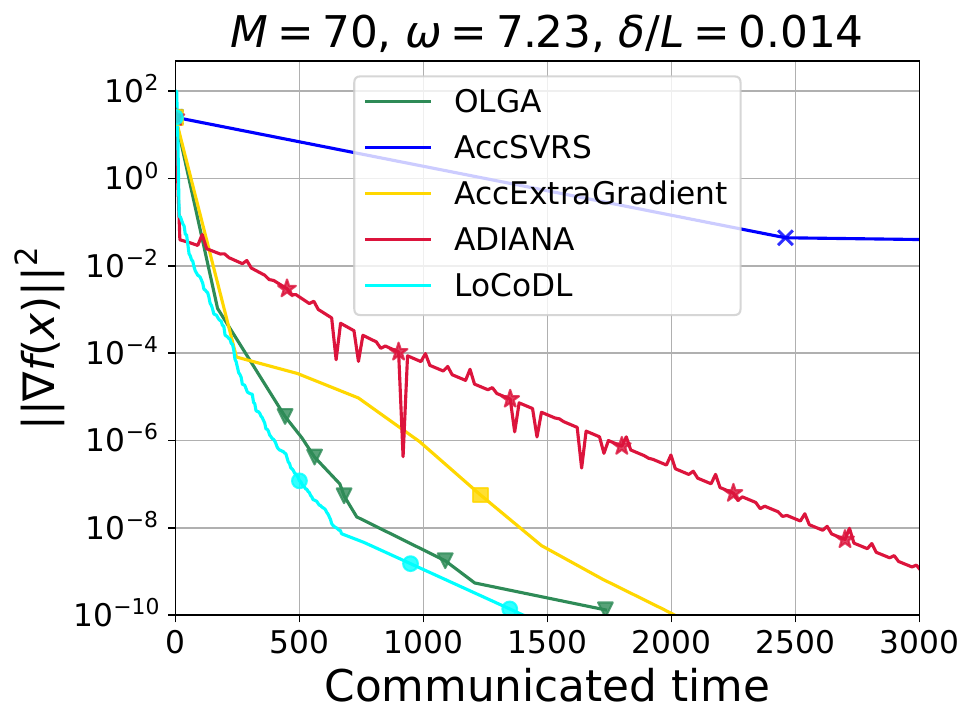}
   \end{subfigure}
   \begin{subfigure}{0.155\textwidth}
       \includegraphics[width=\linewidth]{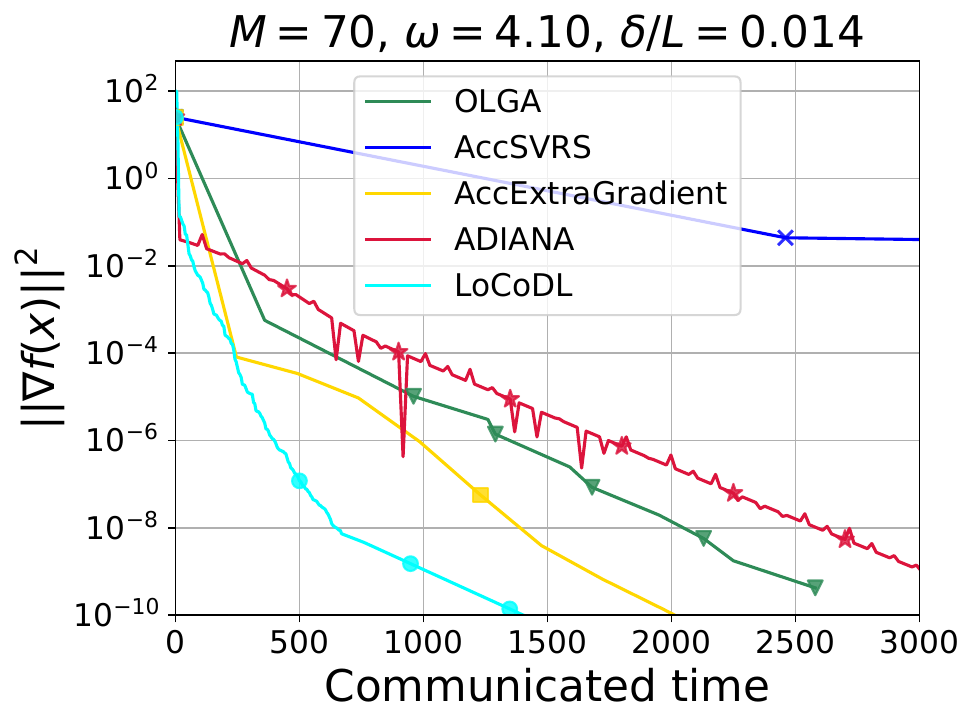}
   \end{subfigure}
   \begin{subfigure}{0.155\textwidth}
       \centering
       (a) $\omega = 123.00$
   \end{subfigure}
   \begin{subfigure}{0.155\textwidth}
       \centering
       (b) $\omega = 7.23$
   \end{subfigure}
   \begin{subfigure}{0.155\textwidth}
       \centering
       (c) $\omega = 4.10$
   \end{subfigure}
   \caption{Comparison of state-of-the-art distributed methods. The comparison is made on \eqref{eq:logloss} with $M=70$ and \texttt{a9a} dataset. The criterion is the communication time (\textit{CC-3}). For methods with compression we vary the power of compression $\omega$.}
\end{figure}
\begin{figure}[h!] % <---
   \begin{subfigure}{0.155\textwidth}
       \includegraphics[width=\linewidth]{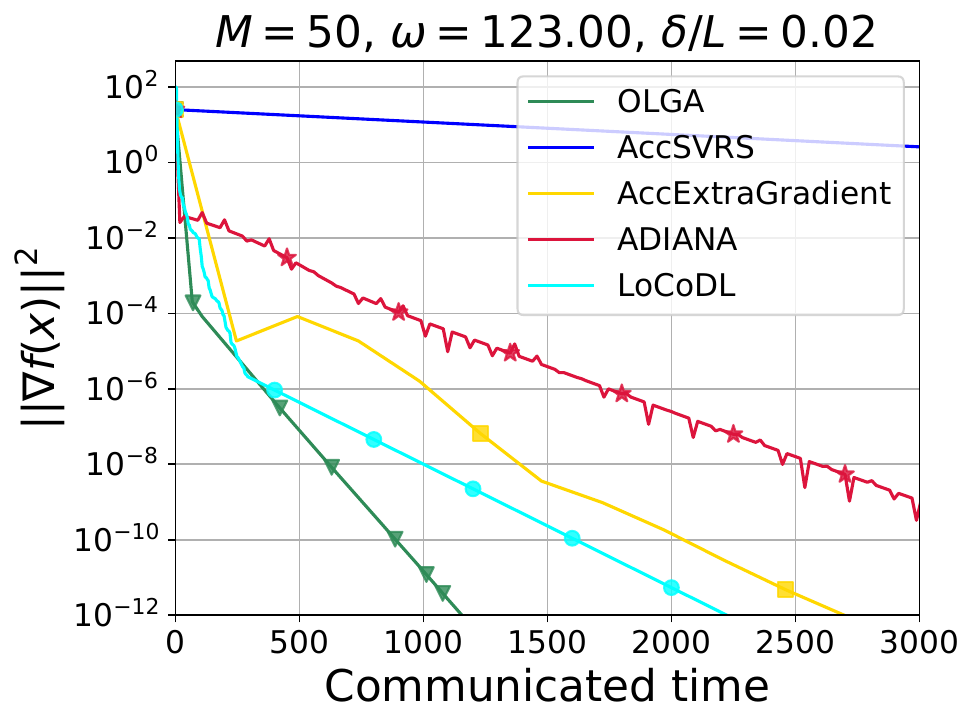}
   \end{subfigure}
   \begin{subfigure}{0.155\textwidth}
       \includegraphics[width=\linewidth]{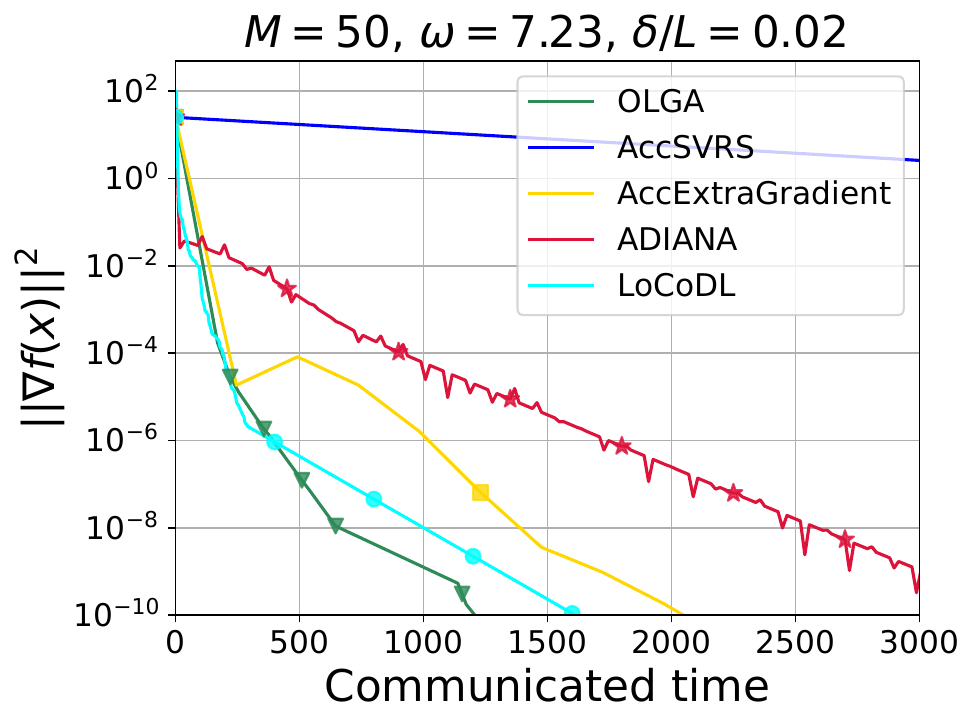}
   \end{subfigure}
   \begin{subfigure}{0.155\textwidth}
       \includegraphics[width=\linewidth]{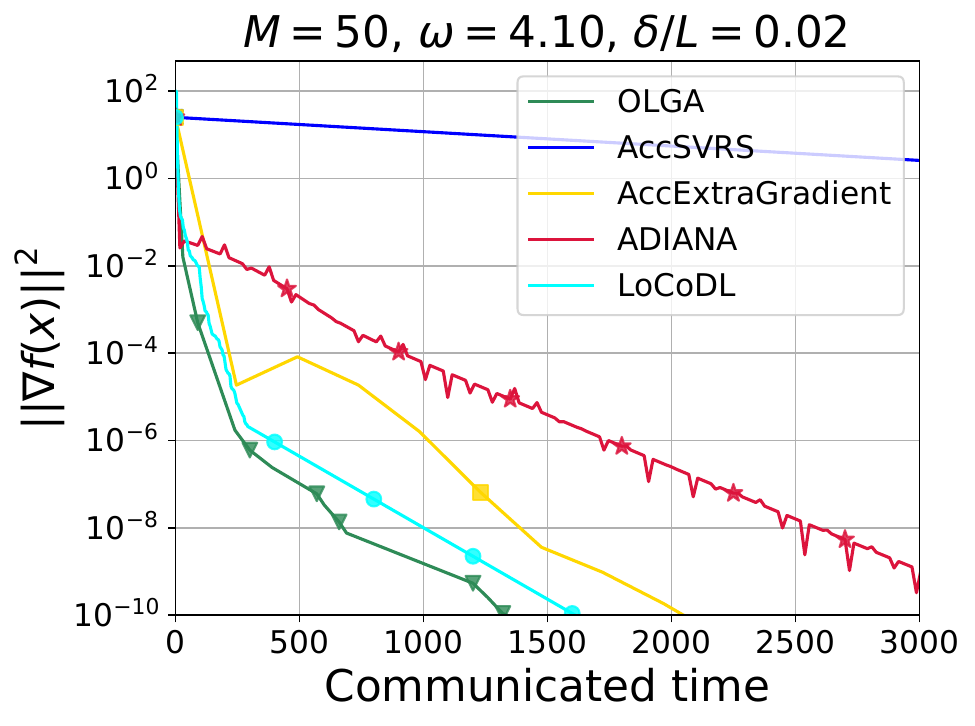}
   \end{subfigure}
   \begin{subfigure}{0.155\textwidth}
       \centering
       (a) $\omega = 123.00$
   \end{subfigure}
   \begin{subfigure}{0.155\textwidth}
       \centering
       (b) $\omega = 7.23$
   \end{subfigure}
   \begin{subfigure}{0.155\textwidth}
       \centering
       (c) $\omega = 4.10$
   \end{subfigure}
   \caption{Comparison of state-of-the-art distributed methods. The comparison is made on \eqref{eq:logloss} with $M=50$ and \texttt{a9a} dataset. The criterion is the communication time (\textit{CC-3}). For methods with compression we vary the power of compression $\omega$.}
\end{figure}
\begin{figure}[h!] % <---
   \begin{subfigure}{0.155\textwidth}
       \includegraphics[width=\linewidth]{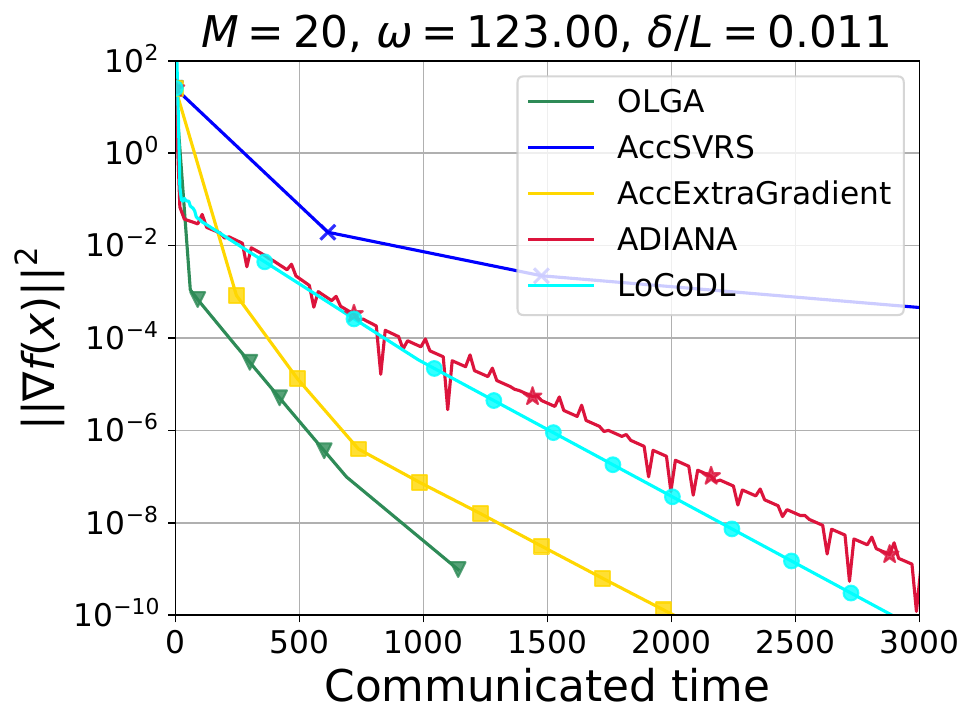}
   \end{subfigure}
   \begin{subfigure}{0.155\textwidth}
       \includegraphics[width=\linewidth]{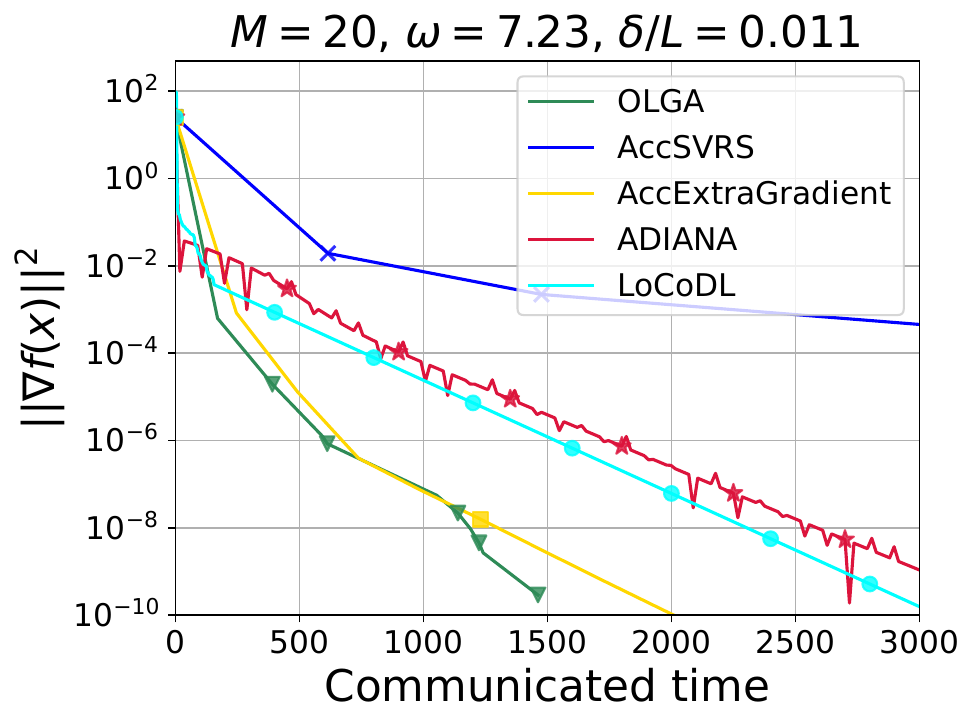}
   \end{subfigure}
   \begin{subfigure}{0.155\textwidth}
       \includegraphics[width=\linewidth]{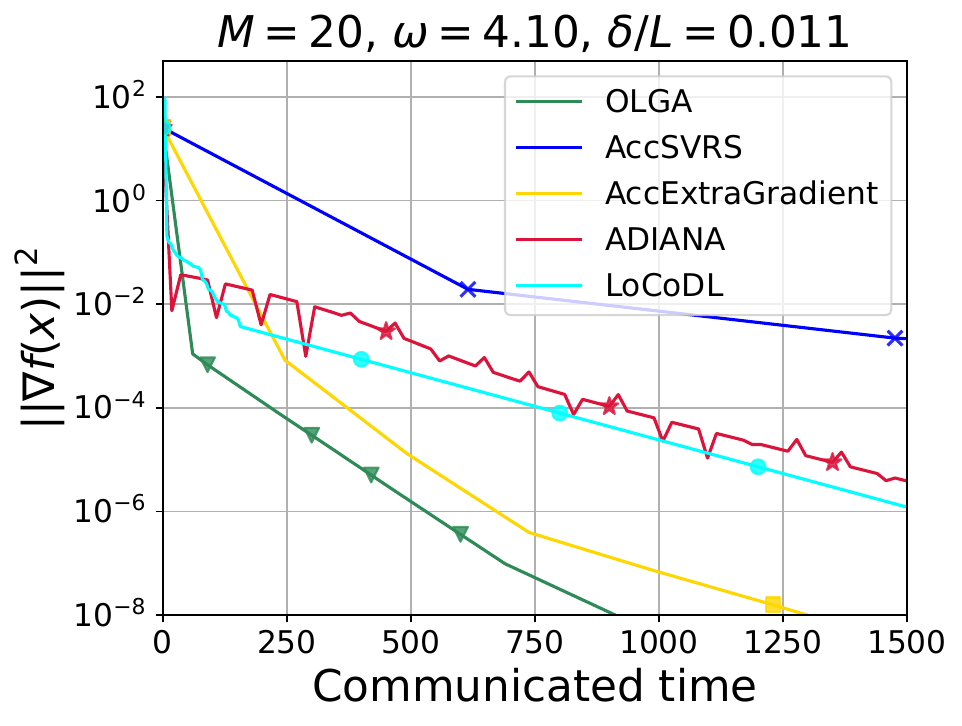}
   \end{subfigure}
   \begin{subfigure}{0.155\textwidth}
       \centering
       (a) $\omega = 123.00$
   \end{subfigure}
   \begin{subfigure}{0.155\textwidth}
       \centering
       (b) $\omega = 7.23$
   \end{subfigure}
   \begin{subfigure}{0.155\textwidth}
       \centering
       (c) $\omega = 4.10$
   \end{subfigure}
   \caption{Comparison of state-of-the-art distributed methods. The comparison is made on \eqref{eq:logloss} with $M=20$ and \texttt{a9a} dataset. The criterion is the communication time (\textit{CC-3}). For methods with compression we vary the power of compression $\omega$.}
   \label{fig:6}
\end{figure}

\section{Conclusion}
In this paper, we pioneered two algorithms, \texttt{OLGA} and \texttt{EF-OLGA}, that bridge the gap between acceleration, similarity and compression. For unbiased compressors, our theory guarantees an advantage over SOTAs in terms of communication time. 
Numerical experiments support our theoretical findings. Nevertheless, a number of questions remain open. For biased compressors, a lower bound on the communication complexity under the similarity condition is unknown: can we design an accelerated algorithm that enjoys a better complexity? A part of the discussion of our results that does not fit into the main text, can be found in Appendix \ref{Future Extensions}.

\section*{Acknowledgments}

The work was done in the Laboratory of Federated Learning Problems of the ISP RAS (Supported by Grant App. No. 2 to Agreement No. 075-03-2024-214).

%%%%%%%%%%%%%%%%%%
%%%%%%%%%%%%%%%%%%%
%%%%%%%%%%%%%%%%%%%

\onecolumn
\appendixpage
\appendix
\section{Auxillary Definitions and Lemmas}

\begin{definition}\label{def_bregman}
    Consider a continiously differentiable $h\colon\R\rightarrow\mathbb{R}$ and such $D_h(\cdot,\cdot)$ that
    \begin{align*}
        D_h(x,y)=h(x)-h(y)-\langle\nabla h(y),x-y\rangle.
    \end{align*}
    $D_h(\cdot,\cdot)$ is called the Bregman divergence associated with $h(\cdot)$.
\end{definition}

\begin{lemma}\citep{katyushaX}\label{lem:prob}
    Given sequence $D_0,D_1,...,D_N\in\R,$ where $N\in \text{Geom}(p)$. Then
    \begin{equation}\label{lem:geom}
        \E_N[D_{N-1}]=pD_0 + (1-p)\E_N[D_N].
    \end{equation}
\end{lemma}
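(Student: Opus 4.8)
The plan is to prove the identity by directly expanding both expectations as series against the geometric probability mass function and then reindexing. First I would fix the convention that $N\sim\text{Geom}(p)$ means $\Pr(N=k)=p(1-p)^{k-1}$ for $k=1,2,3,\ldots$, so that $N$ takes positive integer values and $D_{N-1}$ ranges over $D_0,D_1,\ldots$, exactly as required by the statement.

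Next I would write
\[
\E_N[D_{N-1}]=\sum_{k=1}^{\infty}p(1-p)^{k-1}D_{k-1},
\]
and perform the index shift $j=k-1$ to obtain $\E_N[D_{N-1}]=\sum_{j=0}^{\infty}p(1-p)^{j}D_{j}$. The third step is to peel off the $j=0$ term, which contributes exactly $pD_0$, and factor a single power of $(1-p)$ out of the tail:
\[
\E_N[D_{N-1}]=pD_0+(1-p)\sum_{j=1}^{\infty}p(1-p)^{j-1}D_j.
\]
Finally, I would recognize the remaining series as $\E_N[D_N]=\sum_{j=1}^{\infty}p(1-p)^{j-1}D_j$, which yields the claimed identity $\E_N[D_{N-1}]=pD_0+(1-p)\E_N[D_N]$.

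The argument is essentially a one-line reindexing, so there is no deep obstacle. The only points requiring care are, first, committing to the correct support convention for the geometric law so that the off-by-one between $D_{N-1}$ and $D_N$ lines up correctly, and second, ensuring the series converge absolutely so that splitting the sum and factoring term-by-term are justified; this holds automatically whenever the $D_k$ grow subgeometrically, which is the case for the potential and Bregman-divergence sequences to which the lemma is later applied.
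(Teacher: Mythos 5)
Your proof is correct, and since the paper does not prove this lemma itself --- it is imported verbatim, with citation, from the KatyushaX paper --- there is no in-paper argument to compare against; your expand-and-reindex computation is the standard proof of this fact. Your care over the support convention is warranted rather than pedantic: the identity as stated is only well-posed if $\Pr(N=k)=p(1-p)^{k-1}$ for $k\ge 1$, because under the alternative convention supported on $\{0,1,2,\dots\}$ (which, incidentally, the paper itself slips into in the proof of its Lemma 5, writing $\mathbb{E}_{C,N}[\|e_N\|^2]=\mathbb{E}_C\left[\sum_{k\ge 0}p(1-p)^k\|e_k\|^2\right]$) the quantity $D_{N-1}$ is undefined on the event $\{N=0\}$, and the boundary term produced by your peeling step would be $pD_{-1}$ rather than $pD_0$. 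Under your convention, the two uses the paper makes of the lemma --- with $D_k=D_h(x,x_k)$ and with $D_k=\|e_k\|^2$, in both cases exploiting knowledge of $D_0$ --- follow exactly as stated, so nothing further is needed.
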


\begin{lemma}\citep{katyushaX}\label{lem:subtask}
    If $g(\cdot)$ is proper $\sigma$-strongly convex and $z_{k+1}=\arg\min_{z\in\R}\left[ \frac{1}{2\alpha}\|z-z_k\|^2 + \langle G_{k+1},z\rangle +\frac{\mu}{2}\|z-y_{k+1}\|^2  \right]$, then for every $x\in\R$ we have
    \begin{equation}\label{lem:3}
        \langle G_{k+1},z_k-x\rangle + g(z_{k+1})-g(x) \leq \frac{\alpha}{2}\|G_{k+1}\|^2 + \frac{\|z_k-x\|^2}{2\alpha} - \frac{(1+\sigma\alpha)\|z_{k+1}-x\|^2}{2\alpha}.
    \end{equation}
\end{lemma}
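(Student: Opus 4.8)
The plan is to treat Lemma~\ref{lem:subtask} as the standard proximal (three-point) inequality underlying accelerated mirror-descent schemes. I read the minimization as a composite problem $z_{k+1}=\arg\min_{z\in\R}\{\frac{1}{2\alpha}\|z-z_k\|^2+\langle G_{k+1},z\rangle+g(z)\}$, where $g$ is the $\sigma$-strongly convex summand (in the application $g(z)=\frac{\mu}{2}\|z-y_{k+1}\|^2$, so that $\sigma=\mu$ and the resulting coefficient $1+\sigma\alpha$ matches the potential $Z_k$). Since the sum of a strongly convex function and a quadratic is strongly convex, the minimizer is unique, and the first-order optimality condition reads $0\in\frac{1}{\alpha}(z_{k+1}-z_k)+G_{k+1}+\partial g(z_{k+1})$; equivalently, $s:=-\frac{1}{\alpha}(z_{k+1}-z_k)-G_{k+1}$ is a subgradient of $g$ at $z_{k+1}$.

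First I would invoke $\sigma$-strong convexity of $g$ at $z_{k+1}$ with the subgradient $s$, tested against an arbitrary $x\in\R$:
\begin{equation*}
g(x)\geq g(z_{k+1})+\langle s,\,x-z_{k+1}\rangle+\frac{\sigma}{2}\|x-z_{k+1}\|^2.
\end{equation*}
Substituting the explicit $s$ and rearranging produces an upper bound on $g(z_{k+1})-g(x)$ in terms of $\frac{1}{\alpha}\langle z_{k+1}-z_k,\,x-z_{k+1}\rangle$, $\langle G_{k+1},\,x-z_{k+1}\rangle$, and $-\frac{\sigma}{2}\|x-z_{k+1}\|^2$. Adding $\langle G_{k+1},z_k-x\rangle$ to both sides assembles the left-hand side of the claim, and the two $G_{k+1}$ inner products collapse to $\langle G_{k+1},\,z_k-z_{k+1}\rangle$.

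Next I would dispatch the two remaining inner products. For the quadratic cross term I apply the polarization identity
\begin{equation*}
\langle z_{k+1}-z_k,\,x-z_{k+1}\rangle=\tfrac{1}{2}\left(\|x-z_k\|^2-\|x-z_{k+1}\|^2-\|z_{k+1}-z_k\|^2\right),
\end{equation*}
and for the gradient term I apply Young's inequality $\langle G_{k+1},\,z_k-z_{k+1}\rangle\leq\frac{\alpha}{2}\|G_{k+1}\|^2+\frac{1}{2\alpha}\|z_{k+1}-z_k\|^2$. Summing these, the two $\frac{1}{2\alpha}\|z_{k+1}-z_k\|^2$ contributions cancel exactly, leaving $\frac{\alpha}{2}\|G_{k+1}\|^2+\frac{1}{2\alpha}\|z_k-x\|^2-\frac{1}{2\alpha}\|z_{k+1}-x\|^2-\frac{\sigma}{2}\|x-z_{k+1}\|^2$; merging the last two distance terms into $-\frac{1+\sigma\alpha}{2\alpha}\|z_{k+1}-x\|^2$ yields exactly \eqref{lem:3}.

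The argument is entirely routine, so I do not anticipate a genuine obstacle: this is the canonical prox-lemma, and the only step requiring care is the bookkeeping of the quadratic terms --- ensuring the $\|z_{k+1}-z_k\|^2$ pieces cancel after Young's inequality, and that the strong-convexity term $-\frac{\sigma}{2}\|x-z_{k+1}\|^2$ merges cleanly with the proximal distance term to recover the $(1+\sigma\alpha)$ factor. If one wishes to avoid subgradients altogether, the same conclusion follows by writing the optimality of $z_{k+1}$ as the variational inequality $\langle\frac{1}{\alpha}(z_{k+1}-z_k)+G_{k+1}+\nabla g(z_{k+1}),\,x-z_{k+1}\rangle\geq0$ in the smooth case and proceeding identically.
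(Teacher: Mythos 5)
Your proof is correct: the subgradient optimality condition, $\sigma$-strong convexity of $g$ at $z_{k+1}$, the polarization identity, and Young's inequality combine exactly as you describe, with the two $\frac{1}{2\alpha}\|z_{k+1}-z_k\|^2$ terms cancelling to yield \eqref{lem:3}. The paper states this lemma without proof, citing \citep{katyushaX}; your argument is the standard prox-lemma derivation from that source, and your identification of $g(z)=\frac{\mu}{2}\|z-y_{k+1}\|^2$ with $\sigma=\mu$ is exactly how the lemma is invoked in the proofs of Theorems \ref{th:unbiased} and \ref{th:biased}.
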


\begin{proposition}\citep{nesterov2013introductory}\label{nest_smooth}
    Let $f:\R\to\mathbb{R}$ be a convex function with a $L$-Lipschitz gradient. The following inequality holds for every $x,y\in\R$:
    \begin{align*}
        \|\nabla f(x)-\nabla f(y)\|^2\leq 2L\left( f(y)-f(x)-\langle 
\nabla f(x),y-x \rangle \right).
    \end{align*}
\end{proposition}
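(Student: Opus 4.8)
The plan is to reduce the claim to the standard quadratic upper bound (descent lemma) for smooth functions by translating the problem to an auxiliary function that is minimized at the point $x$. First I would fix $x \in \R$ and introduce the shifted function $\phi(y) = f(y) - \langle \nabla f(x), y \rangle$. Since $f$ is convex and I have only subtracted a linear term, $\phi$ is convex; its gradient $\nabla\phi(y) = \nabla f(y) - \nabla f(x)$ is again $L$-Lipschitz, and crucially $\nabla\phi(x) = 0$, so $x$ is a \emph{global} minimizer of $\phi$.

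Next I would invoke the descent lemma, which is the direct consequence of the $L$-Lipschitz gradient: for all $u, v \in \R$,
$$\phi(v) \leq \phi(u) + \langle \nabla\phi(u), v - u\rangle + \frac{L}{2}\|v - u\|^2.$$
Applying this with $u = y$ and with $v = y - \tfrac{1}{L}\nabla\phi(y)$, the minimizer of the right-hand quadratic in $v$, yields
$$\phi\left(y - \tfrac{1}{L}\nabla\phi(y)\right) \leq \phi(y) - \frac{1}{2L}\|\nabla\phi(y)\|^2.$$
Because $x$ minimizes $\phi$ globally, the left-hand side is bounded below by $\phi(x)$, giving
$$\phi(x) \leq \phi(y) - \frac{1}{2L}\|\nabla\phi(y)\|^2.$$

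Finally I would substitute the definitions back: $\phi(x) = f(x) - \langle \nabla f(x), x\rangle$, $\phi(y) = f(y) - \langle\nabla f(x), y\rangle$, and $\nabla\phi(y) = \nabla f(y) - \nabla f(x)$. Rearranging the resulting inequality isolates $\tfrac{1}{2L}\|\nabla f(y) - \nabla f(x)\|^2$ on one side and $f(y) - f(x) - \langle\nabla f(x), y - x\rangle$ on the other; multiplying through by $2L$ gives exactly the claimed bound. The only genuine content is the descent lemma (provable by integrating $\nabla f$ along the segment $[u,v]$ and bounding the increment via Cauchy--Schwarz and the Lipschitz estimate), and the only subtle points are verifying that $\phi$ inherits both convexity and $L$-smoothness from $f$ -- both immediate, since adding an affine term alters neither the Hessian nor the Lipschitz modulus of the gradient. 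I would not expect a real obstacle; the step to watch is that the minimizing choice $v = y - \tfrac{1}{L}\nabla\phi(y)$ is precisely what converts the descent lemma into the desired bound, and that it is the \emph{global} minimality of $x$ (not mere stationarity) which licenses $\phi(x) \le \phi(y - \tfrac{1}{L}\nabla\phi(y))$ -- this is exactly where convexity enters, since for a nonconvex smooth function a stationary point need not be a global minimizer.
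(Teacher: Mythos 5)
Your proof is correct and is precisely the classical argument from \citet{nesterov2013introductory} (Theorem 2.1.5 there), which the paper simply cites rather than reproving: shift by the linear term so that $x$ becomes a global minimizer of $\phi$, then apply the descent lemma at the gradient step $y - \tfrac{1}{L}\nabla\phi(y)$. Since the paper offers no proof of its own for this auxiliary proposition, there is nothing it does differently from you, and your closing remark correctly identifies global minimality (hence convexity) as the essential ingredient.
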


\begin{proposition}
    \textbf{(Three-point equality)} \citep{svrs}. Given a differentiable function $h\colon\R\rightarrow\mathbb{R}$. We have
    \begin{align}\label{three-point}
        \langle x-y,\nabla h(y)-\nabla h(z)\rangle = D_h(x, z) - D_h(x, y) - D_h(y, z).
    \end{align}
\end{proposition}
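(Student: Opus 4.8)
The plan is to prove the identity by direct substitution of the definition of the Bregman divergence (Definition \ref{def_bregman}) into the right-hand side, followed by elementary algebraic simplification. No convexity or smoothness of $h$ is needed; beyond differentiability (so that $\nabla h$ is defined), the statement is a purely algebraic identity, and every step will be an exact equality rather than an estimate.

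First I would expand each of the three Bregman divergences appearing on the right-hand side using $D_h(a,b)=h(a)-h(b)-\langle\nabla h(b),a-b\rangle$, obtaining
\begin{align*}
D_h(x,z)&=h(x)-h(z)-\langle\nabla h(z),x-z\rangle,\\
D_h(x,y)&=h(x)-h(y)-\langle\nabla h(y),x-y\rangle,\\
D_h(y,z)&=h(y)-h(z)-\langle\nabla h(z),y-z\rangle.
\end{align*}
Next I would form the combination $D_h(x,z)-D_h(x,y)-D_h(y,z)$ and collect terms by type. The function-value contributions $h(x)-h(z)-h(x)+h(y)-h(y)+h(z)$ cancel identically, so only the inner-product terms survive.

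It then remains to simplify the surviving inner products. Grouping the gradient of $h$ evaluated at $y$ and at $z$ separately leaves $\langle\nabla h(y),x-y\rangle$ together with $\langle\nabla h(z),-(x-z)+(y-z)\rangle$. Using the collapse $-(x-z)+(y-z)=y-x$, the $\nabla h(z)$ contribution becomes $\langle\nabla h(z),y-x\rangle=-\langle\nabla h(z),x-y\rangle$, and combining this with the first term yields $\langle\nabla h(y)-\nabla h(z),x-y\rangle=\langle x-y,\nabla h(y)-\nabla h(z)\rangle$, which is exactly the left-hand side of \eqref{three-point}.

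Because the argument is a chain of exact equalities, there is no estimate to tighten and hence no substantive obstacle; the only point requiring care is the sign bookkeeping induced by $z$ occurring as the second (base-point) argument in two of the three divergences, which is why I would explicitly verify the cancellation $-(x-z)+(y-z)=y-x$ that collapses the $\nabla h(z)$ terms. This completes the proof.
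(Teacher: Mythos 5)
Your proof is correct: expanding the three Bregman divergences via Definition \ref{def_bregman}, the function values $h(x),h(y),h(z)$ cancel and the remaining inner products collapse to $\langle x-y,\nabla h(y)-\nabla h(z)\rangle$ exactly as you compute, using only differentiability of $h$. The paper itself gives no proof of this proposition -- it is imported by citation from \citep{svrs} -- and your direct algebraic verification is the standard (essentially the only) argument for this identity, so there is nothing to reconcile.
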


\section{Proof of Lemma 1}
\begin{lemma}\textbf{(Lemma \ref{lem:4}).}
    Consider an epoch of Algorithm \ref{alg:unbiased_compr_epoch}. Let $h(x)=f_1(x)-f(x)+\frac{1}{2\theta}\|x\|^2$, where $\theta\leq\min\left\{\frac{\sqrt{p}\sqrt{M}}{8\delta\sqrt{\omega}}, \frac{1}{2\delta}\right\}$. Then the following inequality holds for every $x\in\R$:
    \begin{align*}
        \begin{split}\E \left[f(x_N)-f(x)\right] \leq& \E \left[\langle x-x_0, \nabla h(x_N)-\nabla h(x_0) \rangle- \frac{p}{2}D_{h}(x_0,x_N)-\frac{\mu}{2}\|x_N-x\|^2\right].\end{split}
    \end{align*}
\end{lemma}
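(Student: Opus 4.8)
The plan is to read one epoch of Algorithm \ref{alg:unbiased_compr_epoch} as a sequence of Bregman–proximal steps for the auxiliary function $h$ and then average over the geometric stopping time with Lemma \ref{lem:prob}. First I would record the geometry of $h$. Since each $f_m-f$ is $\delta$-smooth by \eqref{sim_cor} and $f$ is $\mu$-strongly convex, the choice $\theta\le\frac1{2\delta}$ makes $h=f_1-f+\frac1{2\theta}\|\cdot\|^2$ strongly convex with modulus at least $\frac1\theta-\delta\ge\frac1{2\theta}$, and it makes the subproblem objective $q$ strongly convex as well. Writing the optimality condition of Line \ref{unbiased_epoch:task}, $t_k+\frac1\theta(x_{k+1}-x_k)+\nabla f_1(x_{k+1})=0$, substituting $\nabla f_1(x)=\nabla h(x)+\nabla f(x)-\frac1\theta x$ and $\E_Q[t_k]=\nabla f(x_k)-\nabla f_1(x_k)=\frac1\theta x_k-\nabla h(x_k)$, I obtain the exact identity $\nabla f(x_{k+1})=\nabla h(x_k)-\nabla h(x_{k+1})-\xi_k$, where $\xi_k:=t_k-\E_Q[t_k]=\frac1M\sum_m(Q(\hat g_k^m)-\hat g_k^m)$ is the zero-mean compression error.

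Next I would produce the per-step descent inequality. Applying $\mu$-strong convexity of $f$ at the pair $(x_{k+1},x)$ and inserting the identity above gives
\begin{align*}
f(x_{k+1})-f(x)\le \langle \nabla h(x_k)-\nabla h(x_{k+1}),\,x_{k+1}-x\rangle-\langle \xi_k,\,x_{k+1}-x\rangle-\tfrac\mu2\|x_{k+1}-x\|^2 .
\end{align*}
The compression term is the crux. I would split $x_{k+1}-x=(x_{k+1}-\tilde x_{k+1})+(\tilde x_{k+1}-x)$ against a virtual iterate $\tilde x_{k+1}$ defined by the same subproblem with the uncompressed direction $\E_Q[t_k]$: the second piece is $\xi_k$-measurable-independent and vanishes in conditional expectation, while $\|x_{k+1}-\tilde x_{k+1}\|$ is controlled by the strong convexity of $q$ (Lipschitzness of the prox map). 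Using unbiasedness and the variance bound of Definition \ref{unbiased_def} together with the $2\delta$-smoothness of $f_m-f_1$ (again \eqref{sim_cor}), I would bound $\E_Q\|\xi_k\|^2\le \frac{\omega}{M^2}\sum_m\|\hat g_k^m\|^2\le \frac{4\omega\delta^2}{M}\|x_k-x_0\|^2$, and then convert $\|x_k-x_0\|^2$ into $D_h(x_0,x_k)$ via strong convexity of $h$.

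Finally I would assemble the epoch. Rewriting $x_{k+1}-x=(x_{k+1}-x_0)-(x-x_0)$ isolates a telescoping piece $\langle x-x_0,\nabla h(x_{k+1})-\nabla h(x_k)\rangle$ with a fixed direction, while the three-point identity \eqref{three-point} turns the remainder $\langle\nabla h(x_k)-\nabla h(x_{k+1}),x_{k+1}-x_0\rangle$ into $D_h(x_0,x_k)-D_h(x_0,x_{k+1})-D_h(x_{k+1},x_k)$. Summing over the epoch telescopes the gradient piece into $\langle x-x_0,\nabla h(x_N)-\nabla h(x_0)\rangle$ and the Bregman pieces into $-D_h(x_0,x_N)$ minus a nonnegative remainder $\sum_k D_h(x_{k+1},x_k)$; taking expectation over $N\sim\mathrm{Geom}(p)$ with Lemma \ref{lem:prob} (equivalently $\E[\sum_{k=1}^N a_k]=\tfrac1p\E[a_N]$) collapses these epoch sums to their values at the random endpoint and yields \eqref{lem:unbiased_estimation}.

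The hard part is making the two halves compatible \emph{at once}: the stochastic bookkeeping of the error step and the geometric averaging of the telescoping step both feed constants into the same inequality, and the stated window $\theta\le\min\{\frac{\sqrt p\sqrt M}{8\delta\sqrt\omega},\frac1{2\delta}\}$ is precisely what forces the accumulated compression contribution below the $\tfrac p2 D_h(x_0,x_N)$ budget while leaving the $\mu$-term and the telescoped Bregman terms intact. I expect the delicate point to be tracking how the factor $p$ enters through the geometric identity and showing that the residual $\sum_k D_h(x_{k+1},x_k)$ and the variance bound combine to exactly the coefficients appearing in the claimed estimate, rather than a weaker $p$-discounted version.
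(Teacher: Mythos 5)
Your proposal is correct in substance, and its assembly of the epoch is genuinely different from the paper's. The paper never telescopes: it writes the one-step inequality, specializes it to the single random step $k=N-1$, and invokes Lemma \ref{lem:prob} in the form $\E[D_h(x,x_{N-1})-D_h(x,x_N)]=p\,\E[D_h(x,x_0)-D_h(x,x_N)]$ (together with $\E[D_h(x_0,x_{N-1})]\leq\E[D_h(x_0,x_N)]$), and only afterwards applies the three-point identity to manufacture the endpoint inner product. You instead split off the fixed direction $\langle x-x_0,\nabla h(x_{k+1})-\nabla h(x_k)\rangle$, telescope over the epoch, and use the summed identity $\E[\sum_{k=1}^N a_k]=\frac1p\E[a_N]$; the two geometric facts are equivalent, and your bound $\E[\sum_{k=0}^{N-1}D_h(x_0,x_k)]\leq\frac1p\E[D_h(x_0,x_N)]$ reproduces exactly the paper's budget $\frac{32\theta^2\omega\delta^2}{M}\leq\frac p2$ enforced by the choice of $\theta$. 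Your handling of the stochastic cross term also differs: the paper uses $\E_Q[t_k]=0$ to replace $x$ by $x_k$ and then Young's inequality, absorbing the resulting $\frac{1-\theta\delta}{4\theta}\|x_{k+1}-x_k\|^2$ into $-D_h(x_{k+1},x_k)$, whereas you compare $x_{k+1}$ with a virtual uncompressed prox step and invoke Lipschitzness of the prox map of the $\frac{1}{2\theta}$-strongly convex subproblem. Both are valid and give constants of the same order.

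The one point to settle is the factor $p$ you flag at the end, and your worry is exactly right --- but it is a defect of the statement, not of your argument. Carried through, your telescoping (after multiplying by $p$) yields
\begin{align*}
\E\left[f(x_N)-f(x)\right]\leq\E\left[p\langle x-x_0,\nabla h(x_N)-\nabla h(x_0)\rangle-\frac p2 D_h(x_0,x_N)-\frac\mu2\|x_N-x\|^2\right],
\end{align*}
i.e.\ the inner product is $p$-discounted. This is also what the paper's own derivation produces: its penultimate display reads $p\langle x-x_0,\nabla h(x_0)-\nabla h(x_N)\rangle-pD_h(x_0,x_N)+\frac{32\theta^2\omega\delta^2}{M}D_h(x_0,x_N)+\dots$ (modulo a sign slip in how \eqref{three-point} is quoted), and the factor $p$ disappears only in the unjustified final line. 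Moreover, the $p$-weighted form is the one actually consumed downstream: in Theorem \ref{th:unbiased}, $G_{k+1}=p\left(t_k+\frac{x_{k+1}-y_{k+1}}{\theta}\right)=p\left[\nabla h(x_{k+1})-\nabla h(y_{k+1})\right]$, so $-G_{k+1}$ pairs with $p\left[\nabla h(y_{k+1})-\nabla h(x_{k+1})\right]$, not with the unweighted difference as the paper asserts. So your route is sound; you should simply state the conclusion with the factor $p$ on the inner product, and note that the missing $p$ in \eqref{lem:unbiased_estimation} is an inconsistency in the paper's statement rather than a gap your proof must close.
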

\begin{proof}\label{proof_4}
    Our goal at this point is to get an evaluation within the single epoch. Let us start with the definition of strong convexity:
    \begin{align}\label{strong_convexity:lemma_1}
        \E[f(x_{k+1})-f(x)]&\leq\E\left[ \langle x-x_{k+1},-\nabla f(x_{k+1})\rangle -\frac{\mu}{2}\|x_{k+1}-x\|^2 \right].
    \end{align}
    Writing the optimality condition of subproblem in Line \ref{unbiased_epoch:task} of Algorithm \ref{alg:unbiased_compr_epoch}, we obtain
    \begin{align*}
        g_k-\nabla f_1(x_0)+\nabla f(x_0) + \frac{x_{k+1}-x_k}{\theta}+\nabla f_1(x_{k+1})=0.
    \end{align*}    
    Let us rewrite it in the following form:
    \begin{align}\label{proof:unbiased_prox_rewrite}
        \nabla f_1(x_{k+1}) = \frac{x_k-x_{k+1}}{\theta}+\nabla(f_1-f)(x_0)-g_k.
    \end{align}
    It is easy to note that 
    \begin{align*}
        \nabla f(x_{k+1}) = \nabla f_1(x_{k+1}) + \nabla f(x_{k+1}) - \nabla f_1(x_{k+1}).
    \end{align*}
    By substituting \eqref{proof:unbiased_prox_rewrite} to this expression, we obtain
    \begin{align*}
        \nabla f(x_{k+1}) = \frac{x_k-x_{k+1}}{\theta} + \nabla(f_1-f)(x_0)-\nabla(f_1-f)(x_{k+1}) - g_k.
    \end{align*}
    Thus, we can rewrite the scalar product of \eqref{strong_convexity:lemma_1} in the following way:
    \begin{align*}
        \langle x-x_{k+1}, -\nabla f(x_{k+1})\rangle =& \frac{1}{\theta}\langle x-x_{k+1}, x_{k+1}-x_k \rangle + \langle x-x_{k+1}, g_k \rangle 
        \\
        &+ \langle x-x_{k+1}, \nabla(f_1-f)(x_{k+1}) -\nabla(f_1-f)(x_0)\rangle
        \\
        =& \frac{1}{\theta}\langle x-x_{k+1}, x_{k+1}-x_k \rangle 
        \\
        &+ \langle x-x_{k+1}, g_k - \nabla(f_1-f)(x_0) + \nabla(f_1-f)(x_k) \rangle 
        \\
        &+ \langle x-x_{k+1}, \nabla(f_1-f)(x_{k+1}) -\nabla(f_1-f)(x_k)\rangle 
        \\
        =& \left\langle x-x_{k+1}, \frac{x_{k+1}-x_k}{\theta} + \nabla(f_1-f)(x_{k+1}) - \nabla(f_1-f)(x_k) \right\rangle
        \\
        &+ \langle x-x_{k+1}, g_k-(\nabla(f_1-f)(x_0)-\nabla(f_1-f)(x_k))\rangle.
    \end{align*}
    With the notation of $g_k$, representing $g_k-(\nabla(f_1-f)(x_0)-\nabla(f_1-f)(x_k))$ as a sum
    \begin{align*}
        g_k-\left(\nabla(f_1-f)(x_0)-\nabla(f_1-f)(x_k)\right) = \frac{1}{M}\sum_{m=1}^M\left(g_k^m - \left(\nabla(f_1-f_m)(x_0)-\nabla(f_1-f_m)(x_k)\right)\right),
    \end{align*}
    and denoting $h(x)=f_1(x)-f(x)+\frac{1}{2\theta}\|x\|^2$ (as in the statement of Lemma \ref{lem:4}), we obtain
    \begin{align}\label{unbiased_epoch:almost_ready}
        \begin{split}
        \langle x-x_{k+1}, -\nabla f(x_{k+1})\rangle =& \langle x-x_{k+1},\nabla h(x_{k+1})-\nabla h(x_k) \rangle
        \\
        &+ \left\langle x-x_{k+1}, \frac{1}{M}\sum_{m=1}^Mg_k^m - \{\nabla(f_1-f_m)(x_0)-\nabla(f_1-f_m)(x_k)\} \right\rangle.
        \end{split}
    \end{align}
    Applying  \eqref{three-point} to the first summand of \eqref{unbiased_epoch:almost_ready}, we get
    \begin{align*}
    \begin{split}
        \langle x-x_{k+1}, -\nabla f(x_{k+1})\rangle =& D_h(x,x_k)-D_h(x,x_{k+1})-D_h(x_{k+1},x_k)
        \\
        &+ \left\langle x-x_{k+1}, \frac{1}{M}\sum_{m=1}^Mg_k^m - \{\nabla(f_1-f_m)(x_0)-\nabla(f_1-f_m)(x_k)\} \right\rangle.
    \end{split}
    \end{align*}
    Now we are ready to calculate the expectation from both sides of the inequality. Note that $\E_Q\left[\frac{1}{M}\sum_{m=1}^Mg_k^m - \{\nabla(f_1-f_m)(x_0)-\nabla(f_1-f_m)(x_k)\}\right]=0$ due to Definition \ref{unbiased_def}. Let us apply tower property to the second summand of \eqref{unbiased_epoch:almost_ready} and replace $x$ with $x_k$ (here we introduce $t_k=\frac{1}{M}\sum_{m=1}^Mg_k^m - \{\nabla(f_1-f_m)(x_0)-\nabla(f_1-f_m)(x_k)\}$):
    \begin{align*}
        \E\left[ \langle x-x_{k+1}, t_k \rangle \right] =& \E\E_Q\left[ \langle x-x_{k+1}, t_k \rangle \right] = \E\E_Q\left[ \langle x-x_{k}, t_k \rangle \right] + \E\E_Q\left[ \langle x_k-x_{k+1}, t_k \rangle \right] = \E\left[ \langle x_k-x_{k+1}, t_k \rangle \right].
    \end{align*}
    In the last step, we used the independence of $x_k$, $x$ and output of $Q$ on the $k$-th iteration. Thus, we have
    \begin{align}\label{unbiased_epoch:almost_ready_2}
    \begin{split}
        \E\left[\langle x-x_{k+1}, -\nabla f(x_{k+1})\rangle\right] =& \E\left[D_h(x,x_k)-D_h(x,x_{k+1})-D_h(x_{k+1},x_k)\right]
        \\
        &+ \E\left[\left\langle x_k-x_{k+1}, \frac{1}{M}\sum_{m=1}^M[g_k^m - \{\nabla(f_1-f_m)(x_0)-\nabla(f_1-f_m)(x_k)\}] \right\rangle\right].
    \end{split}
    \end{align}
    Applying the Cauchy-Schwartz inequality to $\frac{1}{M}\sum_{m=1}^M\left\langle x_k-x_{k+1}, g_k - \{\nabla(f_1-f_m)(x_0)-\nabla(f_1-f_m)(x_k)\} \right\rangle$, we get
    \begin{align*}
        &\E\left[\left\langle x_k-x_{k+1}, \frac{1}{M}\sum_{m=1}^Mg_k^m - \{\nabla(f_1-f_m)(x_0)-\nabla(f_1-f_m)(x_k)\} \right\rangle\right]
        \\
        &\leq \frac{1-\theta\delta}{4\theta}\E\left[\|x_{k+1}-x_k\|^2\right] + \frac{\theta}{1-\theta\delta}\E\left[\left\|\frac{1}{M}\sum_{m=1}^Mg_k^m - \{\nabla(f_1-f_m)(x_0)-\nabla(f_1-f_m)(x_k)\}\right\|^2\right].
    \end{align*}
    One can estimate the second term using Definition \ref{unbiased_def} and independence of the operators $Q$ on different devices. Writing out the expectation of the compressor action, we obtain
    \begin{align*}
        \begin{split}
        \E_Q\Bigg[&\left\|\frac{1}{M}\sum_{m=1}^M[g_k^m - \{\nabla(f_1-f_m)(x_0)-\nabla(f_1-f_m)(x_k)\}]\right\|^2\Bigg] 
        \\
        \leq& \E_Q\left[\frac{2}{M^2}\sum_{m_i<m_j=1}^M\left\langle g_k^{m_i} -\nabla(f_1-f_{m_i})(x_0)+\nabla(f_1-f_{m_i})(x_k), g_k^{m_j} - \nabla(f_1-f_{m_j})(x_0)+\nabla(f_1-f_{m_j})(x_k)\right\rangle\right]
        \\
        &+ \E_Q\left[\frac{1}{M^2}\sum_{m=1}^M\E_Q \| g_k^m - \{\nabla(f_1-f_m)(x_0)-\nabla(f_1-f_m)(x_k)\} \|^2\right]
        \\
        =&\frac{1}{M^2}\sum_{m=1}^M\E_Q \| g_k^m - \{\nabla(f_1-f_m)(x_0)-\nabla(f_1-f_m)(x_k)\} \|^2.
        \end{split}
    \end{align*}
    Here the summand with scalar products is equal to zero due to independence of the operator $Q$ actions. Definition \ref{unbiased_def} implies $\E_Q \| g_k^m - \{\nabla(f_1-f_m)(x_0)-\nabla(f_1-f_m)(x_k)\} \|^2\leq\omega\|\nabla(f_1-f_m)(x_0)-\nabla(f_1-f_m)(x_k)\|^2$. Taking the full expectation, we obtain
    \begin{align}\label{unbiased:compr_unrolled}
        \E\Bigg[&\left\|\frac{1}{M}\sum_{m=1}^M[g_k^m - \{\nabla(f_1-f_m)(x_0)-\nabla(f_1-f_m)(x_k)\}]\right\|^2\Bigg]\leq \frac{\omega}{M^2}\sum_{m=1}^M\E\left[\|\nabla(f_1-f_m)(x_0)-\nabla(f_1-f_m)(x_k)\|^2\right].
    \end{align}
    The next step is to apply similarity definition to \eqref{unbiased:compr_unrolled}: under the norm in the right-hand side we use “smart zeros” $\pm \nabla f(x_0)$ and $\pm \nabla f(x_k)$. We divide the obtained expression into two summands using the properties of the norm. Then we apply similarity (Definition \ref{sim_cor}) to each one.
    \begin{align}\label{abc}
    \begin{split}
        \frac{\omega}{M^2}\sum_{m=1}^M\E\left[\|\nabla(f_1-f_m)(x_0)-\nabla(f_1-f_m)(x_k)\|^2\right] \leq& \frac{2\omega}{M^2}\sum_{m=1}^M\E\left[\|\nabla(f_1-f)(x_0)-\nabla(f_1-f)(x_k)\|^2\right] \\&+ \frac{2\omega}{M^2}\sum_{m=1}^M\E\left[\|\nabla(f-f_m)(x_0)-\nabla(f-f_m)(x_k)\|^2\right]
        \\
        \leq& \frac{4\omega\delta^2}{M}\E\left[ \|x_k-x_0\|^2 \right].
    \end{split}
    \end{align}
    We substitute \eqref{abc} into \eqref{unbiased_epoch:almost_ready_2}:
    \begin{align}\label{unbiased_epoch:almost_ready_3}
    \begin{split}
        \E\left[\langle x-x_{k+1}, -\nabla f(x_{k+1})\rangle\right] =& \E\left[D_h(x,x_k)-D_h(x,x_{k+1})-D_h(x_{k+1},x_k)+\frac{1-\theta\delta}{4\theta}\|x_{k+1}-x_k\|^2\right]\\&+\E\left[\frac{4\theta\omega\delta^2}{(1-\theta\delta)M}\|x_k-x_0\|^2\right].
    \end{split}
    \end{align}
    Since $f_m-f$ is $\delta$-smooth (see \eqref{sim_cor}), one can note that $h(x)$ is $\left(\frac{1}{\theta}-\delta\right)$-strongly convex for $\theta\leq\nicefrac{1}{\delta}$. Our choice $\theta\leq\nicefrac{1}{2\delta}$ is appropriate. Moreover, $h(x)$ is $\left(\delta+\frac{1}{\theta}\right)$-smooth. \eqref{strong_convexity} gives $D_h(x,y)\geq\frac{1-\theta\delta}{2\theta}\|x-y\|^2$. Proposition \ref{nest_smooth} gives $D_{h}(x,y)\leq\frac{1+\theta\delta}{2\theta}\|x-y\|^2$. Let us write it down in a more convenient form:
    \begin{align}\label{lem:unbiased_div_estimation}
        0\leq\frac{1-\theta\delta}{2\theta}\|x-y\|^2\leq D_{h}(x,y)\leq\frac{1+\theta\delta}{2\theta}\|x-y\|^2.
    \end{align}
    Next, we use \eqref{lem:unbiased_div_estimation} and $\theta\leq\frac{1}{2\delta}$ to obtain:
    \begin{align}\label{values_1}
        \frac{1-\theta\delta}{4\theta}\|x_{k+1}-x_k\|^2\leq\frac{2\theta}{1-\theta\delta}\frac{1-\theta\delta}{4\theta}D_h(x_{k+1},x_k) = \frac{1}{2}D_h(x_{k+1},x_k),
    \end{align}
    and
    \begin{align}\label{values_2}
        \frac{4\theta\omega\delta^2}{(1-\theta\delta)M}\|x_k-x_0\|^2\leq\frac{2\theta}{1-\theta\delta}\frac{4\theta\omega\delta^2}{(1-\theta\delta)M}D_h(x_0,x_k)\leq\frac{32\theta^2\omega\delta^2}{M}D_h(x_0,x_k).
    \end{align}
    Let $k=N-1$. We can substitute \eqref{values_1}, \eqref{values_2} into \eqref{unbiased_epoch:almost_ready_3}, and then into \eqref{strong_convexity:lemma_1}:
    \begin{align*}
        \E[f(x_{N})-f(x)]\leq&\E\left[D_h(x,x_{N-1})-D_h(x,x_{N})-\frac{\mu}{2}\|x_{N}-x\|^2+\frac{32\theta^2\omega\delta^2}{M}D_h(x_0,x_{N-1})\right].
    \end{align*}
    Since $D_h(x_0,x_0)=0$, Lemma \ref{lem:prob} implies $\E\left[D_h(x_0,x_{N-1})\right]\leq\E\left[D_h(x_0,x_{N})\right]$ and $\E\left[D_h(x,x_{N-1}-D_h(x,x_N))\right]=p\E\left[D_h(x,x_{0})-D_h(x,x_N)\right]$. Thus, we obtain the following:
    \begin{align*}
        \E[f(x_{N})-f(x)]\leq&\E\left[pD_h(x,x_{0})-pD_h(x,x_{N})-\frac{\mu}{2}\|x_{N}-x\|^2+\frac{32\theta^2\omega\delta^2}{M}D_h(x_0,x_{N})\right].
    \end{align*}
    Let us apply Proposition \ref{three-point} in the form $D_h(x,x_0)+D_h(x_0,x_N)-D_h(x,x_N)=\langle x-x_0,\nabla h(x_0)-\nabla h(x_N) \rangle$:
    \begin{align*}
        \E[f(x_{N})-f(x)]\leq&\E\left[p\langle x-x_0,\nabla h(x_0)-\nabla h(x_N) \rangle-pD_h(x_0,x_N)-\frac{\mu}{2}\|x_{N}-x\|^2+\frac{32\theta^2\omega\delta^2}{M}D_h(x_0,x_{N})\right].
    \end{align*}
    Since $\theta\leq\min\left\{ \frac{\sqrt{p}\sqrt{M}}{8\delta\sqrt{\omega}}, \frac{1}{2\delta} \right\}$, we have $p-\frac{32\theta^2\omega\delta^2}{M}\leq-\frac{p}{2}$ and thus
    \begin{align*}
        \begin{split}\E \left[f(x_N)-f(x)\right] \leq& \E \left[\langle x-x_0, \nabla h(x_N)-\nabla h(x_0) \rangle- \frac{p}{2}D_{h}(x_0,x_N)-\frac{\mu}{2}\|x_N-x\|^2\right].\end{split}
    \end{align*}
    This completes the proof of the lemma.
\end{proof}

\section{Proof of Theorem 1}
\begin{theorem}\textbf{(Theorem 1)}
    Let the problem \eqref{prob_form} be solved by Algorithm \ref{alg:unbiased_compr} with $\theta\leq\min\left\{\frac{\sqrt{p}\sqrt{M}}{8\delta\sqrt{\omega}}, \frac{1}{2\delta}\right\}$ and such tuning of parameters that $4\alpha p\tau \leq \theta$. Then the following inequality holds:
    \begin{align*}
        \E\left[ Y_{k+1} + Z_{k+1} \right] \leq& \E\left[ (1-\tau)Y_k + (1+\mu\alpha)^{-1}Z_k  \right].
    \end{align*}
\end{theorem}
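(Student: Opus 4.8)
The plan is to run the linear-coupling / \texttt{KatyushaX} argument, but with the Euclidean proximal metric replaced by the Bregman divergence $D_h$ of $h(x)=f_1(x)-f(x)+\frac{1}{2\theta}\|x\|^2$, feeding in the per-epoch guarantee of Lemma~\ref{lem:4} in place of a one-step gradient inequality. The first move is to record the identity linking the inner and outer algorithms: since $\nabla h(x)=\nabla(f_1-f)(x)+x/\theta$, Lines~\ref{unbiased_full:line_6}--\ref{unbiased_G} give $\nabla h(x_{k+1})-\nabla h(y_{k+1})=t_k+\frac{x_{k+1}-y_{k+1}}{\theta}=\tfrac1p G_{k+1}$. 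Invoking Lemma~\ref{lem:4} on the inner epoch with input $x_0=x_{k+1}$ and output $x_N=y_{k+1}$ then turns its conclusion into, for every $x\in\R$,
\[
\E[f(y_{k+1})-f(x)]\le \E\left[\tfrac1p\langle x_{k+1}-x,G_{k+1}\rangle-\tfrac p2 D_h(x_{k+1},y_{k+1})-\tfrac\mu2\|y_{k+1}-x\|^2\right].
\]
All inner randomness is already integrated out, so the remaining steps only manipulate this inequality together with the $z$-update (deterministic given $x_{k+1},y_{k+1}$).

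Next I would instantiate this bound at $x=x_*$ and at $x=y_k$, take the convex combination with weights $\tau$ and $1-\tau$, and use the coupling identity from Line~\ref{unbiased:3}, namely $x_{k+1}-\tau x_*-(1-\tau)y_k=\tau(z_k-x_*)$, to collapse the two linear terms into $\tfrac\tau p\langle z_k-x_*,G_{k+1}\rangle$. The weighted left-hand side is exactly $f(y_{k+1})-\tau f(x_*)-(1-\tau)f(y_k)$, which equals $\tfrac\tau\alpha\bigl(Y_{k+1}-(1-\tau)Y_k\bigr)$ by the definition of $Y_k$ alone (no convexity needed). Multiplying through by $\alpha/\tau$ therefore isolates $Y_{k+1}-(1-\tau)Y_k$ on the left and leaves an inner product $\langle z_k-x_*,G_{k+1}\rangle$, a $D_h$ term, and strong-convexity squares on the right.

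To the inner product I would apply the subtask Lemma~\ref{lem:subtask} with $g(z)=\tfrac\mu2\|z-y_{k+1}\|^2$ and $\sigma=\mu$ for the $z$-update of Line~\ref{unbiased:12}: this telescopes $\|z_k-x_*\|^2$ and $\|z_{k+1}-x_*\|^2$ into $Z_k$ and $Z_{k+1}$ (here the factor $1+\mu\alpha$ appears, matching the claimed contraction $(1+\mu\alpha)^{-1}$) and emits $\tfrac\alpha2\|G_{k+1}\|^2$ plus $\tfrac\mu2\|x_*-y_{k+1}\|^2$. I would then close the estimate via the smoothness of $h$: because $h$ is $(\delta+\tfrac1\theta)$-smooth, Proposition~\ref{nest_smooth} bounds $\|G_{k+1}\|^2=p^2\|\nabla h(x_{k+1})-\nabla h(y_{k+1})\|^2$ by a multiple of $D_h(x_{k+1},y_{k+1})$, so the $\|G_{k+1}\|^2$ surplus is swallowed by $-\tfrac p2 D_h$ once $\alpha$ is small; the hypotheses $\theta\le\min\{\tfrac{\sqrt p\sqrt M}{8\delta\sqrt\omega},\tfrac1{2\delta}\}$ and $4\alpha p\tau\le\theta$ are precisely what render this and the leftover strong-convexity squares non-positive.

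The main obstacle will be the bookkeeping of the probability $p$. The factor $p$ inside $G_{k+1}$ (Line~\ref{unbiased_G}) together with the $p$ that Lemma~\ref{lem:4} inherits from the geometric expectation (Lemma~\ref{lem:prob}) must be tracked so that, after the $\alpha/\tau$ scaling, the mirror step contributes exactly $-Z_{k+1}+(1+\mu\alpha)^{-1}Z_k$ and the residual $\|y_{k+1}-x_*\|^2$ coming from the mismatch between the $+\tfrac\mu2\|x_*-y_{k+1}\|^2$ of Lemma~\ref{lem:subtask} and the $-\tfrac\mu2\|y_{k+1}-x_*\|^2$ of Lemma~\ref{lem:4} is absorbed; reconciling these $p$-dependent coefficients is what pins down the scaling $\alpha=\theta p^{-1}/(8\tau)$ and the constraint $4\alpha p\tau\le\theta$, and it is the delicate part of the argument rather than any single inequality.
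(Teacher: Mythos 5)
Your architecture coincides with the paper's: Lemma~\ref{lem:4} instantiated at $x_*$ and at $y_k$ (your convex combination with weights $\tau$, $1-\tau$ is algebraically identical to the paper's splitting $\langle x-x_{k+1},-G_{k+1}\rangle=\frac{1-\tau}{\tau}\langle x_{k+1}-y_k,-G_{k+1}\rangle+\langle z_k-x,G_{k+1}\rangle$ followed by a second use of the lemma at $x=y_k$), then Lemma~\ref{lem:subtask} for the update of Line~\ref{unbiased:12}, then the smoothness bound $\|G_{k+1}\|^2\leq\frac{3p^2}{\theta}D_h(x_{k+1},y_{k+1})$ via Proposition~\ref{nest_smooth}. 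The gap is the factor $\frac{1}{p}$ you attach to the inner product. Your identity $G_{k+1}=p\left[\nabla h(x_{k+1})-\nabla h(y_{k+1})\right]$ is correct, but you combined it with the statement of Lemma~\ref{lem:4} taken literally, arriving at $\frac{1}{p}\langle x_{k+1}-x,G_{k+1}\rangle$. The paper's own proof of that lemma in fact produces a factor $p$ in front of the inner product (see the display right after the three-point identity is applied, $\E[f(x_N)-f(x)]\leq\E\left[p\langle x-x_0,\cdot\rangle-\dots\right]$), which is silently dropped in the lemma's statement; the version that must be fed into the theorem is the one with the $p$, for which $p\langle x-x_{k+1},\nabla h(y_{k+1})-\nabla h(x_{k+1})\rangle=\langle x-x_{k+1},-G_{k+1}\rangle$, i.e.\ coefficient exactly one — this is what the paper's line ``$\nabla h(y_{k+1})-\nabla h(x_{k+1})=-G_{k+1}$'' really encodes.

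With your $\frac{1}{p}$ in place, the bookkeeping you defer to the final step cannot be made to work, for three concrete reasons. First, after weighting and multiplying by $\alpha/\tau$, the inner product enters as $\frac{\alpha}{p}\langle z_k-x_*,G_{k+1}\rangle$, while Lemma~\ref{lem:4} only supplies $-\frac{\alpha\mu}{2}\|y_{k+1}-x_*\|^2$; the scaled Lemma~\ref{lem:subtask} emits $+\frac{\alpha\mu}{2p}\|x_*-y_{k+1}\|^2$, so the positive residue $\frac{\alpha\mu}{2}\left(\frac{1}{p}-1\right)\|y_{k+1}-x_*\|^2$ survives and nothing in the inequality can absorb it. Second, the telescoping squares come out as $\frac{1}{p}\left[(1+\mu\alpha)^{-1}Z_k-Z_{k+1}\right]$ rather than with unit coefficient, so at best you would bound the potential $Y_k+\frac{1}{p}Z_k$, not $Y_k+Z_k$ as claimed. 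Third, the $\|G_{k+1}\|^2$-versus-$D_h$ balance becomes $\frac{3\alpha^2 p}{2\theta}\leq\frac{\alpha p}{2\tau}$, i.e.\ $3\alpha\tau\leq\theta$ with no $p$: this is not implied by the hypothesis $4\alpha p\tau\leq\theta$, and it is violated by the intended choice $\alpha=\frac{\theta p^{-1}}{8\tau}$ of Corollary~\ref{cor_unbiased} whenever $p<\frac{3}{8}$, so the $p^{-1}$ gain in the complexity would be lost. Restoring the factor $p$ removes all three problems at once — the $\mu$-squares cancel identically, the $Z$'s appear with coefficient one, and $4\alpha p\tau\leq\theta$ is exactly the condition that kills the $D_h$ surplus — after which your outline becomes precisely the paper's proof.
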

\begin{proof}\label{proof_th_2}
    Let us move from one epoch to the method as a whole. Since the output point of Algorithm \ref{alg:unbiased_compr_epoch} was used to calculate $y_{k+1}$, let us re-designate $x_0\to x_{k+1}$ and $x_N\to y_{k+1}$ in Lemma \ref{lem:4}: 
    \begin{align*}
        \begin{split}\E \left[f(y_{k+1})-f(x)\right] \leq& \E \left[\langle x-x_{k+1}, \nabla h(y_{k+1})-\nabla h(x_{k+1}) \rangle- \frac{p}{2}D_{h}(x_{k+1},y_{k+1})-\frac{\mu}{2}\|y_{k+1}-x\|^2\right].\end{split}
    \end{align*}
    One can see that $G_{k+1}$ from Line \ref{unbiased_G} is almost the same as $\nabla h(y_{k+1})-\nabla h(x_{k+1})$ in the expression above. Namely, $\nabla h(y_{k+1})-\nabla h(x_{k+1})=-G_{k+1}$. 
    Hence, using (\ref{lem:unbiased_estimation}) for the iteration of Algorithm \ref{alg:unbiased_compr}, we get
    \begin{align*}
        \E[f(y_{k+1})-f(x)] \leq& \E\left[ \langle x-x_{k+1},-G_{k+1} \rangle -\frac{p}{2}D_h(x_{k+1},y_{k+1})-\frac{\mu}{2}\|y_{k+1}-x\|^2 \right] 
        \\
        =& \E\left[ \langle z_k-x_{k+1},-G_{k+1} \rangle +  \langle x-z_k,-G_{k+1} \rangle -\frac{p}{2}D_h(x_{k+1},y_{k+1})\right]
        \\
        &-\E\left[\frac{\mu}{2}\|y_{k+1}-x\|^2 \right]
        \\
        =& \E\left[ \langle z_k-x_{k+1},-G_{k+1} \rangle +  \langle z_k-x,G_{k+1} \rangle -\frac{p}{2}D_h(x_{k+1},y_{k+1})\right]
        \\
        &-\E\left[\frac{\mu}{2}\|y_{k+1}-x\|^2 \right].
    \end{align*}
    Let us apply Line \ref{unbiased:3} of Algorithm \ref{alg:unbiased_compr} to the first term $\left(z_k-x_{k+1}=\frac{1-\tau}{\tau}(x_{k+1}-y_k)\right)$:
    \begin{align}\label{th_almost_1}
    \begin{split}
        \E[f(y_{k+1})-f(x)] \leq& \E\left[ 
\frac{1-\tau}{\tau}\langle x_{k+1}-y_k, -G_{k+1} \rangle +\langle z_k-x,G_{k+1} \rangle -\frac{p}{2}D_h(x_{k+1},y_{k+1})\right]
        \\
        &-\E\left[\frac{\mu}{2}\|y_{k+1}-x\|^2 \right].
    \end{split}
    \end{align}
    Using Lemma \ref{lem:4} (see Lemma \eqref{lem:unbiased_estimation}) with $x=y_k$:
    \begin{align*}
        \begin{split}\E \left[f(y_{k+1})-f(y_k)\right] \leq& \E \left[\langle y_k-x_{k+1}, -G_{k+1} \rangle- \frac{p}{2}D_{h}(x_{k+1},y_{k+1})-\frac{\mu}{2}\|y_{k+1}-y_k\|^2\right].\end{split}
    \end{align*}
    We rewrite this inequality in the following form:
    \begin{align*}
        \begin{split}\E \left[ \langle x_{k+1}-y_k, -G_{k+1} \rangle \right] \leq& \E \left[\left(f(y_{k})-f(y_{k+1})\right) -\frac{p}{2}D_{h}(x_{k+1},y_{k+1})-\frac{\mu}{2}\|y_{k+1}-y_k\|^2\right].\end{split}
    \end{align*}
    and substitute it into \eqref{th_almost_1}:
    \begin{align*}
    \begin{split}
        \E\left[\frac{1}{\tau}\left(f(y_{k+1})-f(x)\right)\right] \leq& \E\left[ 
\frac{1-\tau}{\tau}\left(f(y_k)-f(x)\right)+\langle z_k-x,G_{k+1} \rangle -\frac{p}{2\tau}D_h(x_{k+1},y_{k+1})\right]
        \\
        &-\E\left[\frac{\mu}{2}\|y_{k+1}-x\|^2 \right].
    \end{split}
    \end{align*}
    Applying Lemma \ref{lem:subtask} to the $\langle z_k-x,G_{k+1} \rangle - \frac{\mu}{2}\|y_{k+1}-x\|^2$ taking into account the fact that $\|y_{k+1}-z_{k+1}\|^2\geq0$, we obtain:
    \begin{align}\label{cbd}
    \begin{split}
        \E\left[\frac{1}{\tau}\left(f(y_{k+1})-f(x)\right)\right] \leq& \E\left[ \frac{1-\tau}{\tau}[f(y_k)-f(x)] - \frac{p}{2\tau}D_h(x_{k+1},y_{k+1}) + \frac{\alpha}{2}\|G_{k+1}\|^2  \right]
        \\
        &+ \E\left[ \frac{1}{2\alpha}\| z_k-x \|^2 - \frac{1+\mu\alpha}{2\alpha}\|z_{k+1}-x\|^2 \right].
    \end{split}
    \end{align}
    The last step left to do to complete the proof is to estimate $\|G_{k+1}\|^2$. Let us use \eqref{lem:unbiased_div_estimation} and Proposition \ref{nest_smooth} and the fact that $\theta\leq\nicefrac{1}{2\delta}$ here:
    \begin{align*}
        \frac{\alpha}{2}\|G_{k+1}\|^2 =& \frac{\alpha p^2}{2}\|\nabla h(y_{k+1})-\nabla h(x_{k+1})\|^2 \leq \frac{\alpha p^2}{2}\frac{2(1+\theta\delta)}{\theta}D_h(x_{k+1},y_{k+1})
        \\
        \leq& \frac{3\alpha p^2}{2\theta}D_h(x_{k+1},y_{k+1}) \leq \frac{2\alpha p^2}{\theta}D_h(x_{k+1},y_{k+1}).
    \end{align*}
%%%%%%%%%%%%%%%%%%%%%
    Thus, we can rewrite \eqref{cbd} as follows:
    \begin{align*}
        \E\left[\frac{1}{\tau}(f(y_{k+1})-f(x))\right] \leq& \E\left[ \frac{1-\tau}{\tau}[f(y_k)-f(x)] + p\left(\frac{2\alpha p}{\theta}-\frac{1}{2\tau}\right)D_h(x_{k+1},y_{k+1})  \right]
        \\
        &+ \E\left[ \frac{1}{2\alpha}\| z_k-x \|^2 - \frac{1+\mu\alpha}{2\alpha}\|z_{k+1}-x\|^2 \right].
    \end{align*}
    Since $4\alpha p\tau\leq\theta$, we have
    \begin{align*}
        \E\left[\frac{1}{\tau}(f(y_{k+1})-f(x))+\frac{1+\mu\alpha}{2\alpha}\|z_{k+1}-x\|^2\right] \leq& \E\left[ \frac{1-\tau}{\tau}[f(y_k)-f(x)] + \frac{1}{2\alpha}\| z_k-x \|^2 \right].
    \end{align*}
\end{proof}

\section{Proof of Corollary 1}
\begin{corollary}\textbf{(Corollary 1).}
    Let the problem (\ref{prob_form}) be solved by Algorithm \ref{alg:unbiased_compr}. Choose $$p=\frac{1}{\gamma_{\omega}},\quad\theta\leq\min\left\{\frac{\sqrt{p}\sqrt{M}}{8\delta\sqrt{\omega}}, \frac{1}{2\delta}\right\},$$ $$\tau=\min\left\{ \frac{\sqrt{\mu}\theta^{\nicefrac{1}{2}}p^{-\nicefrac{1}{2}}}{4},\frac{1}{4} \right\},\quad\alpha=\frac{\theta p^{-1}}{8\tau},$$ then Algorithm \ref{alg:unbiased_compr} has
    $$\tilde{\mathcal{O}}\left( \gamma_{\omega}+\sqrt{\frac{\delta}{\mu}}\left[ \gamma_{\omega}M^{-\nicefrac{1}{4}} + \gamma_{\omega}^{\nicefrac{1}{2}} \right] \right) \textit{ CC-1},\quad\tilde{\mathcal{O}}\left( M\gamma_{\omega}+\sqrt{\frac{\delta}{\mu}}\left[ \gamma_{\omega}M^{\nicefrac{3}{4}} + M\gamma_{\omega}^{\nicefrac{1}{2}} \right] \right) \textit{ CC-2},$$
    and
    $$\tilde{\mathcal{O}}\left( 1+\sqrt{\frac{\delta}{\mu}}\left[ M^{-\nicefrac{1}{4}} + \gamma_{\omega}^{-\nicefrac{1}{2}} \right] \right) \textit{ CC-3}.$$
\end{corollary}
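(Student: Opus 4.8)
The plan is to convert the one-step estimate of Theorem~\ref{th:unbiased} into a geometric contraction of the joint potential $\Phi_k \defeq Y_k + Z_k$, read off the number of outer iterations $K$ needed for $\varepsilon$-accuracy, and then multiply $K$ by the per-iteration communication cost in each of the three senses. Concretely, Theorem~\ref{th:unbiased} yields
\[
\E[\Phi_{k+1}] \le \max\left\{1-\tau,\,(1+\mu\alpha)^{-1}\right\}\,\E[\Phi_k],
\]
so everything reduces to showing that the contraction factor is $1-\tau$ and then unrolling.

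First I would verify the hypotheses and pin down the factor. The condition $4\alpha p\tau \le \theta$ of Theorem~\ref{th:unbiased} holds because $\alpha = \theta p^{-1}/(8\tau)$ gives $4\alpha p\tau = \theta/2 \le \theta$. Substituting the same $\alpha$ into $\mu\alpha = \mu\theta p^{-1}/(8\tau)$ and using the chosen $\tau$ shows $\mu\alpha = 2\tau$ in the regime $\tau = \sqrt{\mu}\theta^{1/2}p^{-1/2}/4$; since $(1+2\tau)(1-\tau) = 1+\tau-2\tau^2 \ge 1$ for $\tau \le 1/4$, we get $(1+\mu\alpha)^{-1} \le 1-\tau$. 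In the clipped regime $\tau = 1/4$ one checks $\mu\alpha > 1/2$, whence $(1+\mu\alpha)^{-1} < 2/3 < 3/4 = 1-\tau$. Either way $\E[\Phi_{k+1}] \le (1-\tau)\E[\Phi_k]$, so $\E[\Phi_K] \le (1-\tau)^K \Phi_0$ and $K = \tilde{\mathcal{O}}(1/\tau)$ iterations suffice.

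Next I would compute $1/\tau$ explicitly. From the definition, $1/\tau = \max\{4p^{1/2}\mu^{-1/2}\theta^{-1/2},\,4\} = \mathcal{O}(1 + p^{1/2}\mu^{-1/2}\theta^{-1/2})$, and substituting the largest admissible $\theta$ (the $\min$ of the two bounds) splits into two sub-cases with $p = 1/\gamma_\omega$: the choice $\theta = \sqrt{M}/(8\delta\sqrt{\omega\gamma_\omega})$ contributes $\mathcal{O}(\sqrt{\delta/\mu}\,M^{-1/4})$ after using $\omega \sim \gamma_\omega$ to cancel $\omega^{1/4}/\gamma_\omega^{1/4}$, while $\theta = 1/(2\delta)$ contributes $\mathcal{O}(\sqrt{\delta/\mu}\,\gamma_\omega^{-1/2})$. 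Because $\theta$ is the minimum, $\theta^{-1/2}$ is the maximum of the two reciprocals, and the two contributions add, giving
\[
\frac{1}{\tau} = \mathcal{O}\!\left(1 + \sqrt{\frac{\delta}{\mu}}\left[M^{-1/4} + \gamma_\omega^{-1/2}\right]\right),
\]
which is precisely the claimed CC-3. Finally I would translate iterations into communication: each outer iteration runs one epoch of Algorithm~\ref{alg:unbiased_compr_epoch} with $N \sim \text{Geom}(p)$ compressed rounds ($\E[N] = \mathcal{O}(1/p) = \mathcal{O}(\gamma_\omega)$) plus $\mathcal{O}(1)$ full-gradient rounds. Counting rounds yields per-iteration CC-1 $= \mathcal{O}(\gamma_\omega)$, hence CC-1 $= \gamma_\omega\cdot(1/\tau)$; each round involves all $M$ nodes, so CC-2 $= M\gamma_\omega\cdot(1/\tau)$; and for communication time each compressed round costs $\gamma_\omega^{-1}$ whereas each full-gradient round costs $\mathcal{O}(1)$, so the expected per-iteration CC-3 is $\E[N]\gamma_\omega^{-1} + \mathcal{O}(1) = \mathcal{O}(1)$, the balance occurring exactly at $p = 1/\gamma_\omega$. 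Distributing $\gamma_\omega$ and $M\gamma_\omega$ across the bracket in $1/\tau$ then reproduces the three displayed bounds.

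The main obstacle is bookkeeping rather than any single inequality: correctly propagating the nested $\min/\max$ in $\theta$ and $\tau$ through the square roots, and above all arranging the three communication counters so that the full-gradient cost (paid $\mathcal{O}(1)$ times per epoch) and the compressed cost (paid $\E[N] = \mathcal{O}(\gamma_\omega)$ times) combine to the advertised $\mathcal{O}(1)$ CC-3 per iteration only when $p = 1/\gamma_\omega$. I would also flag the $\omega \sim \gamma_\omega$ assumption explicitly, since it is what lets us drop the $\omega^{1/4}/\gamma_\omega^{1/4}$ factor and report the bounds in terms of $\gamma_\omega$ alone.
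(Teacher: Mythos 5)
Your proposal is correct and takes essentially the same route as the paper: the same two-case analysis of $\tau$ yielding the $(1-\tau)$ contraction (the paper writes $(1+\mu\alpha)^{-1}\le 1-\mu\alpha/2$ with $\mu\alpha/2=\tau$ where you use $(1+2\tau)(1-\tau)\ge 1$, an equivalent computation), the same evaluation of $1/\tau$ over the two branches of the $\min$ defining $\theta$ with $p=1/\gamma_\omega$, and the same per-iteration communication accounting. The only nuance: to absorb the factor $\omega^{\nicefrac{1}{4}}\gamma_\omega^{-\nicefrac{1}{4}}$ you need only the one-sided bound $\omega\le\gamma_\omega$ (which the paper invokes for practical compressors), not the two-sided relation $\omega\sim\gamma_\omega$ that you flag.
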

\begin{proof}
    Let us enumerate the cases:
    \begin{enumerate}
        \item Let $\tau=\min\left\{ \frac{1}{4}, \frac{\sqrt{\mu}\theta^{\nicefrac{1}{2}}p^{-\nicefrac{1}{2}}}{4} \right\} = \frac{1}{4}$. In this case, we have
        \begin{align*}
            (1-\tau)(1+\mu\alpha)=& (1-\tau)\left(1+\frac{\mu\theta p^{-1}}{8\tau}\right) \geq (1-\tau)\left(1+\frac{1}{8\tau}\right) = \frac{3}{4}\cdot\frac{3}{2}\geq 1.
        \end{align*}
         This implies $\E[Z_{k+1}+Y_{k+1}]\leq(1-
    \frac{1}{4})\E[Z_{k}+Y_{k}]$. Thus, \textit{CC-3} of Algorithm \ref{alg:unbiased_compr_epoch} is $\tilde{\mathcal{O}}\left( 1 \right)$. 
        \item Let  $\tau = \min\left\{ \frac{1}{4}, \frac{\sqrt{\mu}\theta^{\nicefrac{1}{2}}p^{-\nicefrac{1}{2}}}{4} \right\} = \frac{\sqrt{\mu}\theta^{\nicefrac{1}{2}}p^{-\nicefrac{1}{2}}}{4}$. In this case, we have
        \begin{align*}
            \mu\alpha=\frac{\mu\theta p^{-1}}{8\tau}=\frac{\sqrt{\mu}\theta^{\nicefrac{1}{2}}p^{-\nicefrac{1}{2}}}{2}\leq\frac{1}{2}<1.
        \end{align*}
        Thus, $(1+\mu\alpha)^{-1}\leq1-\frac{\mu\alpha}{2}$. This implies $\E[Z_{k+1}+Y_{k+1}]\leq(1-\tau)\E[\Phi_k]$. Or, in \textit{CC-3} terms: \begin{align*}
            \tilde{\mathcal{O}}\left( 1+\frac{1}{\tau} \right)=&\tilde{\mathcal{O}}\left( 1+\sqrt{\frac{\delta}{\mu}}\left[M^{-\nicefrac{1}{4}}\omega^{\nicefrac{1}{4}}p^{\nicefrac{1}{4}}+p^{\nicefrac{1}{2}}\right] \right).
        \end{align*}
    Substitute $p=\nicefrac{1}{\gamma_{\omega}}$:
    \begin{align*}
        \tilde{\mathcal{O}}\left( 1+\frac{1}{\tau} \right)=&\tilde{\mathcal{O}}\left( 1+\sqrt{\frac{\delta}{\mu}}\left[M^{-\nicefrac{1}{4}}\omega^{\nicefrac{1}{4}}\gamma_{\omega}^{-\nicefrac{1}{4}}+\gamma_{\omega}^{-\nicefrac{1}{2}}\right] \right).
    \end{align*}
    \end{enumerate}
    Summing both cases and using $\omega\leq\gamma_{\omega}$, we obtain $\tilde{\mathcal{O}}\left( 1+\sqrt{\frac{\delta}{\mu}}\left[M^{-\nicefrac{1}{4}}+\gamma_{\omega}^{-\nicefrac{1}{2}}\right] \right)$ of \textit{CC-3}.
\end{proof}

\section{Proof of Lemma 5}
Before proceeding to the proof of the Theorem \ref{th:biased}, we introduce an auxiliary lemma.
\begin{lemma}\label{lem:2}
    Consider an epoch of Algorithm \ref{alg:biased_compr_epoch}. Consider $h(x)=f_1(x)-f(x)+\frac{1}{2\theta}\|x\|^2$, where $\theta\leq\frac{p^{\nicefrac{3}{2}}}{24\delta}$. The following inequality holds:
    \begin{equation*}
        \E[f(x_{N})-f(x)] \leq \E\left[ p\left\langle x-x_0,\nabla(f_1-f)(x_N)-\nabla(f_1-f)(x_0)+\frac{\tilde{x}_N-x_0}{\theta}\right\rangle -\frac{p}{7}D_h(x_0,x_N)-\frac{\mu}{2}\|x_{N}-x\|^2 \right].
    \end{equation*}
\end{lemma}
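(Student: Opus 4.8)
The plan is to mirror the proof of Lemma~\ref{lem:4}, but to run the whole argument on the \emph{virtual} iterate $\tilde x_k \defeq x_k - \bar e_k$, where $\bar e_k = \frac1M\sum_{m=1}^M e_k^m$ is the averaged error, and to control the error-feedback residuals through the contraction of Definition~\ref{biased_def}. First I would start from strong convexity, exactly as in \eqref{strong_convexity:lemma_1}, and rewrite the optimality condition of the subproblem in Line~\ref{epoch:task} of Algorithm~\ref{alg:biased_compr_epoch}, namely $t_k + \tfrac{x_{k+1}-x_k}{\theta} + \nabla f_1(x_{k+1}) = 0$ with $t_k = \tfrac1\theta g_k - \nabla(f_1-f)(x_0)$. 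The new ingredient is the error identity obtained by averaging Line~\ref{epoch:error}: $\bar e_{k+1}-\bar e_k = \theta \hat g_k - g_k$ with $\hat g_k = \nabla(f_1-f)(x_0)-\nabla(f_1-f)(x_k)$, so that $\tfrac1\theta g_k = \tfrac1\theta(\bar e_k-\bar e_{k+1}) + \nabla(f_1-f)(x_0)-\nabla(f_1-f)(x_k)$. Substituting this into the optimality condition collapses to the clean relation $\nabla f_1(x_{k+1}) = \tfrac1\theta(\tilde x_k-\tilde x_{k+1}) + \nabla(f_1-f)(x_k)$, whence
\[
 -\nabla f(x_{k+1}) = \big(\nabla(f_1-f)(x_{k+1})-\nabla(f_1-f)(x_k)\big) + \tfrac1\theta(\tilde x_{k+1}-\tilde x_k).
\]
This is the biased analogue of the identity used in Lemma~\ref{lem:4}: the Euclidean piece now lives on the virtual sequence, and the smooth part $f_1-f$ stays on the real one.

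Next I would insert this into $\langle x - x_{k+1}, -\nabla f(x_{k+1})\rangle$ and treat the two pieces independently. The ``real'' part $\langle x-x_{k+1}, \nabla(f_1-f)(x_{k+1})-\nabla(f_1-f)(x_k)\rangle$ is handled by the three-point equality \eqref{three-point} applied to $f_1-f$ (which is a pure identity, so non-convexity of the local functions is harmless), producing a telescoping $D_{f_1-f}(x,x_k)-D_{f_1-f}(x,x_{k+1})$. The ``virtual'' part $\tfrac1\theta\langle x-x_{k+1}, \tilde x_{k+1}-\tilde x_k\rangle$ I would split by writing $x-x_{k+1}=(x-\tilde x_{k+1})-\bar e_{k+1}$; the first summand gives the telescoping Euclidean distances $\tfrac1{2\theta}(\|x-\tilde x_k\|^2-\|x-\tilde x_{k+1}\|^2)$ together with a self-term $-\tfrac1{2\theta}\|\tilde x_{k+1}-\tilde x_k\|^2$, while the second produces the error-coupling term $-\tfrac1\theta\langle \bar e_{k+1}, \tilde x_{k+1}-\tilde x_k\rangle$. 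It is precisely the appearance of $\tilde x_k$ in the telescoped Euclidean distances, together with $\tilde x_0 = x_0$ (since $e_0^m=0$), that will yield the $\tfrac{\tilde x_N-x_0}{\theta}$ term in the conclusion after the final three-point step.

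The hard part will be controlling the error. I would bound the coupling term by Young's inequality against the self-term $-\tfrac1{2\theta}\|\tilde x_{k+1}-\tilde x_k\|^2$, leaving a residual $O(\tfrac1\theta)\|\bar e_{k+1}\|^2$; the non-sign-definite curvature $D_{f_1-f}(x_{k+1},x_k)$ and the mismatch between $\|\tilde x_{k+1}-\tilde x_k\|^2$ and $\|x_{k+1}-x_k\|^2$ I would absorb using $\delta$-smoothness \eqref{sim_cor} and the Bregman two-sided bounds \eqref{lem:unbiased_div_estimation}, turning everything into multiples of $D_h(x_0,\cdot)$. To bound the accumulated $\|\bar e_k\|^2$ I would unroll the error feedback: the contraction $\E_C\|e_{k+1}^m\|^2\le(1-\tfrac1\beta)\|e_k^m+\theta\hat g_k^m\|^2$ gives, via $\|a+b\|^2\le(1+\tfrac1{2\beta})\|a\|^2+\cdots$, a recursion $\E\|e_{k+1}^m\|^2\le(1-\tfrac1{2\beta})\E\|e_k^m\|^2 + 2\beta\theta^2\|\hat g_k^m\|^2$, and since $\|\hat g_k^m\|^2\le 4\delta^2\|x_k-x_0\|^2$ by similarity, summing the geometric tail yields $\|\bar e_k\|^2\lesssim \beta^2\theta^2\delta^2\max_j\|x_j-x_0\|^2$. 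In the operating regime $p\le 1/\beta$ (consistent with $p=1/\gamma_\beta$) together with $\theta\le \tfrac{p^{3/2}}{24\delta}$, this makes the residual error $O(p)\,D_h(x_0,x_N)$, which is exactly why the step-size restriction carries a $p^{3/2}$ rather than the $\sqrt{pM}/\sqrt\omega$ of the unbiased case. Finally I would set $k=N-1$, apply the geometric lemma (Lemma~\ref{lem:prob}) to the $D_h$ and the error sequences (with $D_h(x_0,x_0)=0$) to pull out the factor $p$ and relate $x_{N-1}$ to $x_0,x_N$, and close with the three-point equality \eqref{three-point} to reach $p\langle x-x_0, \nabla(f_1-f)(x_N)-\nabla(f_1-f)(x_0)+\tfrac{\tilde x_N-x_0}{\theta}\rangle - \tfrac{p}{7}D_h(x_0,x_N) - \tfrac{\mu}{2}\|x_N-x\|^2$; the degraded constant $\tfrac17$ (versus $\tfrac12$) is the slack consumed by the several Young steps and the error bound. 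The main obstacle throughout is that, unlike the unbiased case where the compression noise is mean-zero and vanishes, the real-argument Bregman divergence and the virtual-argument Euclidean distance never merge into a single potential, so the error must be tracked as an auxiliary quantity and reabsorbed by hand.
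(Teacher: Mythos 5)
Your structural skeleton coincides with the paper's proof almost line by line: the same virtual sequence $\tilde{x}_k=x_k-e_k$ with the averaged errors, the same collapsed identity $\nabla f_1(x_{k+1})=\frac{1}{\theta}(\tilde{x}_k-\tilde{x}_{k+1})+\nabla(f_1-f)(x_k)$, the same split of $\langle x-x_{k+1},-\nabla f(x_{k+1})\rangle$ into a real-argument Bregman part (three-point equality on $f_1-f$) and a virtual-argument Euclidean part, the same Young step against $-\frac{1}{2\theta}\|\tilde{x}_{k+1}-\tilde{x}_k\|^2$ leaving an $O(\frac{1}{\theta})\|e_{k+1}\|^2$ residual, and the same closing device of applying the three-point equality \emph{twice} (once for $D_{f_1-f}$, once for $\frac{1}{2\theta}\|\cdot\|^2$) to produce the term $p\langle x-x_0,\nabla(f_1-f)(x_N)-\nabla(f_1-f)(x_0)+\frac{\tilde{x}_N-x_0}{\theta}\rangle$. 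The one place where you genuinely depart from the paper --- the control of the accumulated error --- is exactly where the argument breaks.

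Two concrete problems. First, your bound $\|\bar{e}_k\|^2\lesssim\beta^2\theta^2\delta^2\max_j\|x_j-x_0\|^2$ cannot be absorbed into the negative term: absorption requires the error to be dominated (in expectation over $N$) by $\E\left[\|x_N-x_0\|^2\right]$ itself, and since $\max_{j\le N}\|x_j-x_0\|^2\ge\|x_N-x_0\|^2$ pointwise, the inequality runs the wrong way --- $\E\left[\max_j\|x_j-x_0\|^2\right]$ can be arbitrarily larger than $\E\left[\|x_N-x_0\|^2\right]$. The paper never bounds $\|e_k\|^2$ pathwise by a maximum; instead it unrolls the recursion, weights it by the $\mathrm{Geom}(p)$ law of $N$, and regroups the double sum $\sum_{k}p(1-p)^k\sum_{j<k}(1+c)^{k-j}\|x_j-x_0\|^2$ by $j$ with the Young parameter $c=\frac{p}{2}$; this resummation is what produces \eqref{lem:error_estimation}, i.e. $\E\left[\|e_N\|^2\right]\le\frac{24\theta^2\delta^2}{p^2}\E\left[\|x_N-x_0\|^2\right]$, an expectation-level bound against precisely the quantity the term $-\frac{p}{6\theta}\|x_0-x_N\|^2$ can eat. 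Second, your recursion keeps the compressor contraction $(1-\frac{1}{\beta})$ and therefore needs the extra regime assumption $p\le\frac{1}{\beta}$ to close, but this hypothesis appears nowhere in Lemma \ref{lem:2} (which assumes only $\theta\le\frac{p^{3/2}}{24\delta}$ for arbitrary $p\in(0,1)$, $\beta>1$) nor in Theorem \ref{th:biased} which consumes it. The paper's proof in fact \emph{discards} the factor $(1-\frac{1}{\beta})$ outright: the exponent $\nicefrac{3}{2}$ in the step-size restriction comes entirely from the geometric averaging (one factor of $p$ from $(1+\frac{1}{c})\le\frac{3}{p}$, another from the resummation), not from the compressor's contraction. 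So even with the first issue repaired, your argument establishes only a weaker statement --- sufficient for the final corollary where $p=\frac{1}{\gamma_\beta}\le\frac{1}{\beta}$, but not for the lemma and theorem as stated.
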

\begin{proof}\label{proof_lem_2}
    Same as in the unbiased case, our goal at the begining is to get some sort of evaluation within a single epoch. Let us start with strong convexity definition:
    \begin{align}\label{proof:convexity}
        \E[f(x_{k+1})-f(x)]&\leq\E\left[ \langle x-x_{k+1},-\nabla f(x_{k+1})\rangle -\frac{\mu}{2}\|x_{k+1}-x\|^2 \right].
    \end{align}
    Denote $e_k=\frac{1}{M}\sum_{m=1}^Me_k^m$ and introduce virtual sequence $\tilde{x}_k=x_k-e_k$. We write the optimality condition for Line \ref{epoch:task} of Algorithm \ref{alg:biased_compr_epoch}:
    \begin{align*}
        \frac{1}{\theta}g_k-\nabla f_1(x_0) + \nabla f(x_0)+\frac{x_{k+1}-x_k}{\theta}=0.
    \end{align*}
    Next we are going to use $x_{k+1}$, so let us express it:
    \begin{align}\label{proof:prox_rewrite}
        x_{k+1}=x_k - g_k - \theta[\nabla f_1(x_{k+1})+\nabla f(x_0) - \nabla f_1(x_0)].
    \end{align}
    For virtual sequence $\tilde{x}_k$ we have $\tilde{x}_{k+1}=x_{k+1}-e_{k+1}$. Let us obtain an expression for it using (\ref{proof:prox_rewrite}) and Line \ref{epoch:error} of Algorithm \ref{alg:biased_compr_epoch}.
    \begin{align*}
        \tilde{x}_{k+1}=&x_{k+1}-e_{k+1} 
        \\
        =& x_k - \theta[\nabla f_1(x_{k+1})+\nabla f(x_0) - \nabla f_1(x_0)] - g_k - e_k + g_k
        \\
        &+\theta[\nabla f(x_0) - \nabla f_1(x_0) + \nabla f_1(x_k)-\nabla f(x_k)] 
        \\
        =& \tilde{x}_k - \theta[\nabla f(x_k) - \nabla f_1(x_k) + \nabla f_1(x_{k+1})].
    \end{align*}
    After re-arranging terms, we write
    \begin{align}\label{biased_serv_grad}
        \nabla f_1(x_{k+1})=\frac{\tilde{x}_k-\tilde{x}_{k+1}}{\theta} + \nabla f_1(x_k) - \nabla f(x_k).
    \end{align}
    It is quite easy to note that
    \begin{align*}
        \nabla f(x_{k+1}) = \nabla f_1(x_{k+1}) + \nabla f(x_{k+1}) - \nabla f_1(x_{k+1}).
    \end{align*}
    Substitute \eqref{biased_serv_grad} to this expression and obtain
    \begin{align}\label{proof:grad}
        \nabla f(x_{k+1}) = \frac{\tilde{x}_k-\tilde{x}_{k+1}}{\theta} + \nabla(f_1-f)(x_k)-\nabla(f_1-f)(x_{k+1}).
    \end{align}
    Now we are ready to rewrite scalar product of \eqref{proof:convexity} using \eqref{proof:grad} in the following way:
    \begin{align*}
        \langle x-x_{k+1},-\nabla f(x_{k+1})\rangle =& \frac{1}{\theta}\langle 
        x-x_{k+1}\pm \tilde{x}_{k+1}, \tilde{x}_{k+1}-\tilde{x}_k \rangle 
        \\
        &+ \langle x-x_{k+1}, \nabla(f_1-f)(x_{k+1})-\nabla(f_1-f)(x_k)\rangle
        \\
        =& \langle x-x_{k+1}, \nabla(f_1-f)(x_{k+1})-\nabla(f_1-f)(x_k)\rangle 
        \\&+\frac{1}{\theta}\langle 
        x-\tilde{x}_{k+1}, \tilde{x}_{k+1}-\tilde{x}_k \rangle +\frac{1}{\theta}\langle 
        \tilde{x}_{k+1}-x_{k+1}, \tilde{x}_{k+1}-\tilde{x}_k \rangle 
    \end{align*}
    Next apply Proposition \ref{three-point} to the first summand, square of the norm formula to the second summand and Cauchy-Schwartz inequality to the third one.
    \begin{align*}
        \langle x-x_{k+1},-\nabla f(x_{k+1})\rangle =& D_{f_1-f}(x,x_k)-D_{f_1-f}(x,x_{k+1})-D_{f_1-f}(x_{k+1},x_k) 
        \\
        &+ \frac{1}{2\theta}\| \tilde{x}_k-x \|^2
        - \frac{1}{2\theta}\|\tilde{x}_{k+1}-x\|^2 - \frac{1}{2\theta}\|\tilde{x}_{k+1}-\tilde{x}_k\|^2 + \frac{1}{\theta}\|e_{k+1}\|^2 
        \\
        &+ \frac{1}{4\theta}\|\tilde{x}_{k+1}-\tilde{x}_k\|^2.
    \end{align*}
    Note that $-\|a-b\|^2\geq-\|(a-c)+c-(b-d)-d\|^2\geq-3\|(a-c)-(b-d)\|^2-3\|c\|^2-3\|e\|^2$ and therefore $$-\|\tilde{x}_{k+1}-\tilde{x}_k\|^2\leq -\frac{1}{3}\|x_{k+1}-x_k\|^2 + \|e_{k+1}\|^2 + \|e_k\|^2.$$ Thus, we can rewrite (\ref{proof:convexity}) in the following form:
    \begin{align*}
    \begin{split}
        \E[f(x_{k+1})-f(x)] \leq& \E\left[ D_{f_1-f}(x,x_k) - D_{f_1-f}(x,x_{k+1}) - D_{f_1-f}(x_{k+1},x_k) + \frac{1}{2\theta}\|\tilde{x}_k-x\|^2 \right]
        \\
        &+\E\left[- \frac{1}{2\theta}\|\tilde{x}_{k+1}-x\|^2 - \frac{1}{12\theta}\|x_{k+1}-x_k\|^2 + \frac{5}{4\theta}\|e_{k+1}\|^2 + \frac{1}{4\theta}\|e_k\|^2\right] 
        \\
        &- \E\left[\frac{\mu}{2}\|x_{k+1}-x\|^2 \right]. 
    \end{split}
    \end{align*}    
    Let $k=N-1$. We can change previous inequality:
    \begin{align*}
    \begin{split}
        \E[f(x_{N})-f(x)] \leq& \E\left[ D_{f_1-f}(x,x_{N-1}) - D_{f_1-f}(x,x_{N}) - D_{f_1-f}(x_{N},x_{N-1}) + \frac{1}{2\theta}\|\tilde{x}_{N-1}-x\|^2 \right]
        \\
        &+\E\left[- \frac{1}{2\theta}\|\tilde{x}_{N}-x\|^2 - \frac{1}{12\theta}\|x_{N}-x_{N-1}\|^2 + \frac{5}{4\theta}\|e_{N}\|^2 + \frac{1}{4\theta}\|e_{N-1}\|^2\right] 
        \\
        &- \E\left[\frac{\mu}{2}\|x_{k+1}-x\|^2 \right]. 
    \end{split}
    \end{align*}
    Since $D_h(x_0,x_0)=0$, Lemma \ref{lem:prob} implies $\E\left[D_h(x,x_{N-1})-D_h(x,x_N)\right]=p\E\left[D_h(x,x_{0})-D_h(x,x_N)\right]$, $\frac{1}{2\theta}\|\tilde{x}_{N-1}-x\|^2-\frac{1}{2\theta}\|\tilde{x}_{N}-x\|^2=\frac{p}{2\theta}\|\tilde{x}_{0}-x\|^2-\frac{p}{2\theta}\|\tilde{x}_{N-1}-x\|^2$ and $\E_N\left[\|e_{N-1}\|^2\right]=(1-p)\E_N\left[\|e_{N}\|^2\right]\leq\E_N\left[\|e_{N}\|^2\right]$. Take into account that $e_0=0$, thus $\tilde{x}_0=x_0$, and obtain the following:
    \begin{align}\label{biased_almost_ready_1}
    \begin{split}
        \E[f(x_{N})-f(x)] \leq& \E\left[ pD_{f_1-f}(x,x_{0}) - pD_{f_1-f}(x,x_{N}) + \frac{p}{2\theta}\|x_{0}-x\|^2 - \frac{p}{2\theta}\|\tilde{x}_{N}-x\|^2\right]
        \\
        &+\E\left[- D_{f_1-f}(x_{N},x_{N-1}) - \frac{1}{12\theta}\|x_{N}-x_{N-1}\|^2 + \frac{3}{2\theta}\|e_{N}\|^2- \frac{\mu}{2}\|x_{N}-x\|^2 \right]. 
    \end{split}
    \end{align}
    It is not as easy to work in this setting as with the unbiased compressor, since it is not possible to generate the Bregman divergence by a strongly convex function. For this purpose, we will use a workaround. We use Proposition \ref{three-point} twice: for the divergence generated by $f_1-f$ and $\frac{1}{2\theta}\|x\|^2$. Let us write down the equations of interest:
    \begin{align*}
        pD_{f_1-f}(x,x_0)+pD_{f_1-f}(x_0,x_N)-pD_{f_1-f}(x,x_N)=p\langle x-x_0,\nabla(f_1-f)(x_N)-\nabla(f_1-f)(x_0)\rangle,
    \end{align*}
    \begin{align*}
        \frac{p}{2\theta}\|x_0-x\|^2+\frac{p}{2\theta}\|x_0-\tilde{x}_N\|^2-\frac{p}{2\theta}\|\tilde{x}_N-x\|^2 = \frac{p}{\theta}\langle x-x_0,\tilde{x}_N-x_0 \rangle
    \end{align*}
    Transform \eqref{biased_almost_ready_1} according to the written out expressions and get
    \begin{align}\label{biased_almost_ready_3}
    \begin{split}
        \E[f(x_{N})-f(x)] \leq& \E\left[ p\left\langle x-x_0,\nabla(f_1-f)(x_N)-\nabla(f_1-f)(x_0)+\frac{\tilde{x}_N-x_0}{\theta}\right\rangle -pD_{f_1-f}(x_0,x_N)-\frac{p}{2\theta}\|x_0-\tilde{x}_N\|^2\right]
        \\
        &+\E\left[- D_{f_1-f}(x_{N},x_{N-1}) - \frac{1}{12\theta}\|x_{N}-x_{N-1}\|^2 + \frac{3}{2\theta}\|e_{N}\|^2- \frac{\mu}{2}\|x_{N}-x\|^2 \right]. 
    \end{split}
    \end{align}   
    It is known that the function $f_1-f$ $\delta$-smooth. This means that we have an upper bound $D_{f_1-f}(x,y)\leq\frac{\delta}{2}\|x-y\|^2$ for every $x,y\in\R$. Thus, $\theta\leq\frac{1}{6\delta}$ is sufficient to fulfill the inequality $- D_{f_1-f}(x_{N},x_{N-1}) - \frac{1}{12\theta}\|x_{N}-x_{N-1}\|^2\leq0$. For $-pD_{f_1-f}(x_0,x_N)-\frac{p}{2\theta}\|x_0-\tilde{x}_N\|^2$ we perform a more careful analysis.
    \begin{align*}
        -pD_{f_1-f}(x_0,x_N)-\frac{p}{2\theta}\|x_0-\tilde{x}_N\|^2 \leq& -pD_{f_1-f}(x_0,x_N)-\frac{p}{4\theta}\|x_0-x_N\|^2 + \frac{1}{2\theta}\|e_N\|^2 \\\leq& \frac{p\delta}{2}\|x_0-x_N\|^2-\frac{p}{4\theta}\|x_0-x_N\|^2 + \frac{1}{2\theta}\|e_N\|^2.
    \end{align*}
    $\theta\leq\frac{1}{6\delta}$. It follows that we can estimate $\delta\leq\frac{1}{6\theta}$. Thus,
    \begin{align*}
        -pD_{f_1-f}(x_0,x_N)-\frac{p}{2\theta}\|x_0-\tilde{x}_N\|^2 \leq&  \frac{1}{2\theta}\|e_N\|^2-\frac{p}{6\theta}\|x_0-x_N\|^2.
    \end{align*}
    Using this facts, rewrite \eqref{biased_almost_ready_3}:
    \begin{align}\label{biased_almost_ready_4}
    \begin{split}
        \E[f(x_{N})-f(x)] \leq& \E\left[ p\left\langle x-x_0,\nabla(f_1-f)(x_N)-\nabla(f_1-f)(x_0)+\frac{\tilde{x}_N-x_0}{\theta}\right\rangle -\frac{p}{6\theta}\|x_0-x_N\|^2\right]
        \\
        &+\E\left[\frac{2}{\theta}\|e_{N}\|^2- \frac{\mu}{2}\|x_{N}-x\|^2 \right]. 
    \end{split}
    \end{align}
    Now let us deal with the "error' term $\|e_{k+1}\|^2$. Firstly, we use Definition \ref{biased_def} and write
        \begin{align*}
        \E[\|e_{k+1}\|^2] \leq& \E\left[\frac{1}{M}\sum_{m=1}^M\|e_{k+1}^m\|^2\right] \leq \frac{1}{M}\sum_{m=1}^M \left( 1-\frac{1}{\beta} \right)\E\left[\|e_k^m + \theta[\nabla f_m(x_k)-\nabla f_1(x_k) - \nabla f_m(x_0) + \nabla f_1(x_0)] \|^2\right].
        \end{align*}
        Next, we ommit $(1-\frac{1}{\beta})$-factor and use the Cauchy-Schwarz inequality:
        \begin{align*}
        \E[\|e_{k+1}\|^2] \leq& (1+c)\E\left[\frac{1}{M}\sum_{m=1}^M\|e_k^m\|^2\right] 
        \\
        &+ \left(1+\frac{1}{c}\right)\theta^2\frac{1}{M}\sum_{m=1}^M\E\left[\|\nabla(f_1-f_m)(x_0)-\nabla(f_1-f_m)(x_{k})\|^2\right].
        \end{align*}
    Next, we enroll the recursion and take into account the fact that $e_0^m=0$. Thus, we obtain
    \begin{align*}
        \E[\|e_{k+1}\|^2] \leq& 4\theta^2\left(1+\frac{1}{c}\right)\sum_{j=1}^{k}(1+c)^{k-j}\E\left[\|\nabla(f_1-f_m)(x_0)-\nabla(f_1-f_m)(x_{j})\|^2\right].
    \end{align*}
    $\delta$-smoothness (see \eqref{sim_cor}) of $f_m-f$ gives
    \begin{align*}
        \E[\|e_{k+1}\|^2] \leq& 4\theta^2\delta^2\left(1+\frac{1}{c}\right)\sum_{j=1}^{k}(1+c)^{k-j}\E\left[\|x_j-x_0\|^2\right].
    \end{align*}
    The number of iterations of the Algorithm \ref{alg:biased_compr_epoch} $N$ is a random variable. Let us calculate the expectation on the action of the compressive operator and on the random variable $N$ at once. We are specifically interested in the “error” at the last iteration of the Algorithm \ref{alg:biased_compr_epoch}. Since $N\in\text{Geom}(p)$, we obtain 
    \begin{align*}
        \E_{C,N}[\|e_N\|^2] =& \E_{C}\left[\sum_{k\geq0}p(1-p)^k\|e_k\|^2\right]
        \\
        \leq& 4\theta^2\delta^2\left(1+\frac{1}{c}\right)\sum_{k\geq0}p(1-p)^k\sum_{j=1}^{k-1}(1+c)^{k-j}\|x_j-x_0\|^2.
    \end{align*}
    Choose $c=\frac{p}{2}$:
    \begin{align*}
        \sum_{k\geq0}p(1-p)^k\sum_{j=1}^{k-1}(1+c)^{k-j}\|x_j-x_0\|^2 =& p[\{(1-p)^2(1+c) + (1-p)^3(1+c)^2+...\}\|x_1-x_0\|^2
        \\
        &+ \{(1-p)^3(1+c) + (1-p)^4(1+c)^2+...\}\|x_2-x_0\|^2
        \\
        &+...]
        \\
        \leq& p\left[\frac{2}{p}(1-p)\|x_1-x_0\|^2
        + \frac{2}{p}(1-p)^2\|x_2-x_0\|^2+...\right]
        \\
        =& \frac{2}{p}\E_N[\|x_N-x_0\|^2].
    \end{align*}
    Note that $\left(1+\frac{1}{c}\right)\leq\frac{3}{p}$. Thus, we obtain
    \begin{align}\label{lem:error_estimation}
        \E[\|e_{N}\|^2] \leq \frac{24\theta^2\delta^2}{p^2}\E_N[\|x_N-x_0\|^2].
    \end{align}
    Substitute into \eqref{biased_almost_ready_4}:
    \begin{align}\label{biased_almost_ready_5}
    \begin{split}
        \E[f(x_{N})-f(x)] \leq& \E\left[ p\left\langle x-x_0,\nabla(f_1-f)(x_N)-\nabla(f_1-f)(x_0)+\frac{\tilde{x}_N-x_0}{\theta}\right\rangle -\frac{p}{6\theta}\|x_0-x_N\|^2\right]
        \\
        &+\E\left[\frac{48\theta\delta^2}{p^2}\|x_N-x_0\|^2 - \frac{\mu}{2}\|x_{N}-x\|^2 \right]. 
    \end{split}
    \end{align}
    Since $f_m-f$ is $\delta$-smooth (see \eqref{sim_cor}), one can note that $h(x)$ is $\left(\frac{1}{\theta}-\delta\right)$-strongly convex for $\theta\leq\nicefrac{1}{\delta}$. Our choice $\theta\leq\nicefrac{1}{6\delta}$ is appropriate. Moreover, $h(x)$ is $\left(\delta+\frac{1}{\theta}\right)$-smooth. \eqref{strong_convexity} gives $D_h(x,y)\geq\frac{1-\theta\delta}{2\theta}\|x-y\|^2$. Proposition \ref{nest_smooth} gives $D_{h}(x,y)\leq\frac{1+\theta\delta}{2\theta}\|x-y\|^2$. Let us write it down in a more convenient form:
    \begin{align}\label{lem:new_div_estimation}
        0\leq\frac{1-\theta\delta}{2\theta}\|x-y\|^2\leq D_{h}(x,y)\leq\frac{1+\theta\delta}{2\theta}\|x-y\|^2.
    \end{align}
    Choose $\theta\leq\frac{p^{\nicefrac{3}{2}}}{24\delta}$. With such a choice we have
    \begin{align*}
        \left(\frac{48\theta\delta^2}{p^2}-\frac{p}{6\theta}\right)\|x_N-x_0\|^2\leq&-\frac{p}{12}\|x_N-x_0\|^2\leq-\frac{2\theta}{1+\theta\delta}\frac{p}{12}D_h(x_0,x_N)\leq-\frac{p}{7}D_h(x_0,x_N).
    \end{align*}
    Finally, substituting it into \eqref{biased_almost_ready_5} we obtain
    \begin{align*}
    \begin{split}
        \E[f(x_{N})-f(x)] \leq& \E\left[ p\left\langle x-x_0,\nabla(f_1-f)(x_N)-\nabla(f_1-f)(x_0)+\frac{\tilde{x}_N-x_0}{\theta}\right\rangle -\frac{p}{7}D_h(x_0,x_N)-\frac{\mu}{2}\|x_{N}-x\|^2 \right]. 
    \end{split}
    \end{align*}
\end{proof}

\section{Proof of Theorem 2}
\begin{theorem}\textbf{(Theorem \ref{th:biased})}
    Let the problem (\ref{prob_form}) be solved by Algorithm \ref{alg:biased_compr} with $\theta\leq\frac{p^{\nicefrac{3}{2}}}{24\delta}\leq\frac{1}{6\delta}$ and such tuning of parameters that $28\alpha p \tau \leq\theta$. Then the following inequality holds:
    \begin{align*}
        \E\left[ Y_{k+1} + Z_{k+1} \right] \leq& \E\left[ (1-\tau)Y_k + (1+\mu\alpha)^{-1}Z_k  \right].
    \end{align*}
\end{theorem}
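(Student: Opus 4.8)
The plan is to mirror the proof of Theorem \ref{th:unbiased}, using the inner-epoch descent estimate of Lemma \ref{lem:2} as the workhorse and then feeding it through the same interpolation/momentum machinery. First I would relabel Lemma \ref{lem:2} by setting $x_0\to x_{k+1}$, $x_N\to y_{k+1}$, and $\tilde{x}_N\to\tilde{y}_{k+1}=y_{k+1}-e_{k+1}$, since $y_{k+1}$ (together with the averaged error) is exactly the output of Algorithm \ref{alg:biased_compr_epoch} invoked at the outer step. The crucial observation is that the inner product appearing in Lemma \ref{lem:2} then matches $G_{k+1}$ from Line \ref{biased:10}: writing $t_k=\nabla(f_1-f)(x_{k+1})-\nabla(f_1-f)(y_{k+1})$, one has $G_{k+1}=p\bigl(t_k+\tfrac{x_{k+1}-\tilde{y}_{k+1}}{\theta}\bigr)=-p\bigl(\nabla(f_1-f)(y_{k+1})-\nabla(f_1-f)(x_{k+1})+\tfrac{\tilde{y}_{k+1}-x_{k+1}}{\theta}\bigr)$, so the relabeled estimate reads $\E[f(y_{k+1})-f(x)]\leq\E[\langle x-x_{k+1},-G_{k+1}\rangle-\tfrac{p}{7}D_h(x_{k+1},y_{k+1})-\tfrac{\mu}{2}\|y_{k+1}-x\|^2]$. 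This has the same shape as in the unbiased case, except the Bregman coefficient is $p/7$ rather than $p/2$.

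From here the momentum manipulations are identical to Theorem \ref{th:unbiased}. I would split $x-x_{k+1}=(z_k-x_{k+1})+(x-z_k)$, use Line \ref{biased:3} to write $z_k-x_{k+1}=\tfrac{1-\tau}{\tau}(x_{k+1}-y_k)$, and then apply the relabeled Lemma \ref{lem:2} once more with $x=y_k$ to bound $\langle x_{k+1}-y_k,-G_{k+1}\rangle$ by a telescoping function-value term minus $\tfrac{p}{7}D_h(x_{k+1},y_{k+1})$ and a nonnegative $\|y_{k+1}-y_k\|^2$ term that is discarded. After collecting coefficients (using $1+\tfrac{1-\tau}{\tau}=\tfrac1\tau$, so the $D_h$ coefficient becomes $-\tfrac{p}{7\tau}$), I would apply the subproblem Lemma \ref{lem:subtask} with $\sigma=\mu$ to the quantity $\langle z_k-x,G_{k+1}\rangle-\tfrac{\mu}{2}\|y_{k+1}-x\|^2$, dropping the nonnegative $\tfrac{\mu}{2}\|z_{k+1}-y_{k+1}\|^2$ term; this produces the $\tfrac{\alpha}{2}\|G_{k+1}\|^2$ term and the $\tfrac{1}{2\alpha}\|z_k-x\|^2-\tfrac{1+\mu\alpha}{2\alpha}\|z_{k+1}-x\|^2$ pair that will form the $Z$ potentials.

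The genuinely new and hardest step is bounding $\|G_{k+1}\|^2$, because $G_{k+1}$ is no longer a pure gradient difference: it carries the compression-error term. Noting $G_{k+1}=-p(\nabla h(y_{k+1})-\nabla h(x_{k+1}))+\tfrac{p}{\theta}e_{k+1}$, I would split $\|G_{k+1}\|^2\leq 2p^2\|\nabla h(y_{k+1})-\nabla h(x_{k+1})\|^2+\tfrac{2p^2}{\theta^2}\|e_{k+1}\|^2$. The first summand is handled as in the unbiased analysis: by Proposition \ref{nest_smooth} and the smoothness bound in \eqref{lem:new_div_estimation} it is at most $\tfrac{2(1+\theta\delta)}{\theta}D_h(x_{k+1},y_{k+1})$, and $\theta\delta\leq 1/6$ keeps the constant small. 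The second summand is controlled by the (relabeled) error estimate \eqref{lem:error_estimation}, $\E\|e_{k+1}\|^2\leq\tfrac{24\theta^2\delta^2}{p^2}\E\|y_{k+1}-x_{k+1}\|^2$, followed by the lower Bregman bound in \eqref{lem:new_div_estimation} to convert $\|y_{k+1}-x_{k+1}\|^2$ into a multiple of $D_h(x_{k+1},y_{k+1})$. The restriction $\theta\leq p^{3/2}/(24\delta)$ is exactly what makes this error contribution a small fraction of $D_h$; combining both summands gives $\tfrac{\alpha}{2}\|G_{k+1}\|^2\leq\tfrac{4\alpha p^2}{\theta}D_h(x_{k+1},y_{k+1})$.

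Finally, the net coefficient of $D_h(x_{k+1},y_{k+1})$ is $\tfrac{4\alpha p^2}{\theta}-\tfrac{p}{7\tau}$, which is nonpositive precisely under the hypothesis $28\alpha p\tau\leq\theta$; the $D_h$ term then drops out. Multiplying the surviving inequality through by $\tau$ and identifying $\tfrac{\alpha}{\tau}[f(y_{k+1})-f(x_*)]$ with $Y_{k+1}$ and $\tfrac{1+\mu\alpha}{2}\|z_{k+1}-x_*\|^2$ with $Z_{k+1}$ (taking $x=x_*$) yields $\E[Y_{k+1}+Z_{k+1}]\leq\E[(1-\tau)Y_k+(1+\mu\alpha)^{-1}Z_k]$. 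I expect Step~three — the error-term control inside $\|G_{k+1}\|^2$ — to be the main obstacle, both because it is where the biased analysis departs from the unbiased one and because it forces careful bookkeeping of expectations: $e_{k+1}$ and $y_{k+1}$ are jointly generated by the inner algorithm, so the recursive error bound must be applied inside a single expectation, and its looseness is what degrades the constants ($p/7$ and $28$ versus $p/2$ and $4$) and drives the stronger $\theta\lesssim p^{3/2}/\delta$ requirement.
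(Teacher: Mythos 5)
Your proposal is correct and follows essentially the same route as the paper's proof: relabeling Lemma \ref{lem:2}, identifying the bracket with $-G_{k+1}$, the momentum split with Lemma \ref{lem:subtask}, the two-part bound $\frac{\alpha}{2}\|G_{k+1}\|^2\leq\frac{4\alpha p^2}{\theta}D_h(x_{k+1},y_{k+1})$ via \eqref{lem:error_estimation} and \eqref{lem:new_div_estimation}, and the cancellation of the $D_h$ term under $28\alpha p\tau\leq\theta$ (you are in fact more careful than the paper about the factor of $p$ in the identification of $G_{k+1}$). The only nit is the final rescaling: to recover $Y_{k+1}+Z_{k+1}$ from the surviving inequality you multiply through by $\alpha$ (with $x=x_*$), not by $\tau$.
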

\begin{proof}\label{proof_th_biased}
    Let us move from one epoch to the method as a whole. Since the output point of Algorithm \ref{alg:biased_compr_epoch} was used to calculate $y_{k+1}$, let us re-designate $x_0=x_{k+1}$ and $x_N=y_{k+1}$ in Lemma \ref{lem:2}:
    \begin{align*}
    \begin{split}
        \E[f(y_{k+1})-f(x)] \leq& \E\left[ p\left\langle x-x_{k+1},\nabla(f_1-f)(y_{k+1})-\nabla(f_1-f)(x_{k+1})+\frac{\tilde{y}_{k+1}-x_{k+1}}{\theta}\right\rangle -\frac{p}{7}D_h(x_{k+1},y_{k+1})\right]\\&-\E\left[\frac{\mu}{2}\|y_{k+1}-x\|^2 \right]. 
    \end{split}
    \end{align*}
    See that $G_{k+1}$ from Line \ref{biased:10} is almost the same as $\nabla(f_1-f)(y_{k+1})-\nabla(f_1-f)(x_{k+1})+\frac{\tilde{y}_{k+1}-x_{k+1}}{\theta}$ in the expression above. Namely, $\nabla(f_1-f)(y_{k+1})-\nabla(f_1-f)(x_{k+1})+\frac{\tilde{y}_{k+1}-x_{k+1}}{\theta}=-G_{k+1}$.  
    \begin{align}\label{qwert}
        \E[f(y_{k+1})-f(x)] \leq& \E\left[ \langle x-x_{k+1},-G_{k+1} \rangle -\frac{p}{7}D_h(x_{k+1},y_{k+1})-\frac{\mu}{2}\|y_{k+1}-x\|^2 \right]\notag 
        \\
        =& \E\left[ \langle z_k-x_{k+1},-G_{k+1} \rangle +  \langle x-z_k,-G_{k+1} \rangle -\frac{p}{7}D_h(x_{k+1},y_{k+1})\right]\notag
        \\
        &-\E\left[\frac{\mu}{2}\|y_{k+1}-x\|^2 \right]\notag
        \\
        =& \E\left[ \langle z_k-x_{k+1},-G_{k+1} \rangle +  \langle z_k-x,G_{k+1} \rangle -\frac{p}{7}D_h(x_{k+1},y_{k+1})\right]\notag
        \\
        &-\E\left[\frac{\mu}{2}\|y_{k+1}-x\|^2 \right].
    \end{align}
    Let us rewrite Line \ref{biased:3} of Algorithm \ref{alg:biased_compr}
    \begin{align*}
        (1-\tau)x_{k+1}=\tau(z_k-x_{k+1})+(1-\tau)y_k\Rightarrow z_k-x_{k+1}=\frac{1-\tau}{\tau}[x_{k+1}-y_k].
    \end{align*}
    and substitute it into (\ref{qwert}):
    \begin{align*}
    \begin{split}
        \E[f(y_{k+1})-f(x)] \leq& \E\left[ \frac{1-\tau}{\tau}\langle x_{k+1}-y_k,-G_{k+1} \rangle +  \langle z_k-x,G_{k+1} \rangle -\frac{p}{2}D_h(x_{k+1},y_{k+1})\right]\notag
        \\
        &-\E\left[\frac{\mu}{2}\|y_{k+1}-x\|^2 \right].
    \end{split}
    \end{align*}
    Next, apply Lemma \ref{lem:subtask}:
    \begin{align*}
        \langle z_k-x,G_{k+1}\rangle-\frac{\mu}{2}\|y_{k+1}-x\|^2\leq&\langle z_k-x,G_{k+1}\rangle-\frac{\mu}{2}\|y_{k+1}-x\|^2+\frac{\mu}{2}\|y_{k+1}-z_{k+1}\|^2\\\leq&\frac{\alpha}{2}\|G_{k+1}\|^2+\frac{1}{2\alpha}\| z_k-x \|^2 - \frac{1+\mu\alpha}{2\alpha}\|z_{k+1}-x\|^2.
    \end{align*}
    Combining the obtained results, we have
    \begin{align}\label{biased_almost_ready_6}
    \begin{split}
        \E[f(y_{k+1})-f(x)] \leq& \E\left[ \frac{1-\tau}{\tau}\langle x_{k+1}-y_k,-G_{k+1} \rangle -\frac{p}{2}D_h(x_{k+1},y_{k+1})\right]
        \\
        &+\E\left[-\frac{\mu}{2}\|y_{k+1}-x\|^2 +\frac{\alpha}{2}\|\tilde{G}_{k+1}\|^2\right]\\
        &+ \E\left[ \frac{1}{2\alpha}\| z_k-x \|^2 - \frac{1+\mu\alpha}{2\alpha}\|z_{k+1}-x\|^2 \right].
    \end{split}
    \end{align}
    Let us write Lemma \ref{lem:2} with $x=y_k$:
    \begin{align*}
    \begin{split}
        \E[f(y_{k+1})-f(y_k)] \leq& \E\left[ p\left\langle y_k-x_{k+1},-G_{k+1}\right\rangle -\frac{p}{7}D_h(x_{k+1},y_{k+1})\right]\\&-\E\left[\frac{\mu}{2}\|y_{k+1}-y_k\|^2 \right]. 
    \end{split}
    \end{align*}
    Rewrite it in the following form
    \begin{align*}
    \begin{split}
        \E[p\left\langle x_{k+1}-y_k,-G_{k+1}\right\rangle] \leq& \E\left[ (f(y_{k+1})-f(y_k)) -\frac{p}{7}D_h(x_{k+1},y_{k+1})\right]\\&-\E\left[\frac{\mu}{2}\|y_{k+1}-y_k\|^2 \right]. 
    \end{split}
    \end{align*}
    and substitute into \eqref{biased_almost_ready_6}. We get
    \begin{align*}
        \E\left[\frac{1}{\tau}(f(y_{k+1})-f(x))\right] \leq& \E\left[ \frac{1-\tau}{\tau}(f(y_k)-f(x)) - \frac{p}{7\tau}D_h(x_{k+1},y_{k+1}) + \frac{\alpha}{2}\|G_{k+1}\|^2  \right]
        \\
        &+ \E\left[ \frac{1}{2\alpha}\| z_k-x \|^2 - \frac{1+\mu\alpha}{2\alpha}\|z_{k+1}-x\|^2 \right].
    \end{align*}
    The final hurdle to proof is the need to evaluate $\|G_{k+1}\|^2$. The key idea is to put the "error" term out of $\|G_{k+1}\|^2$ and work with two terms.
    \begin{align*}
        \frac{\alpha}{2}\|G_{k+1}\|^2 \leq& \frac{\alpha p^2}{\theta^2}\|e_N\|^2 + \alpha p^2\|\nabla h(x_{k+1})-\nabla h(y_{k+1})\|^2.
    \end{align*}
    Let us estimate it by parts. 
    \begin{align*}
        \frac{\alpha p^2}{\theta^2}\|e_N\|^2 \leq& \frac{\alpha p^2}{\theta^2}\frac{24\theta^2\delta^2}{p^2}\frac{2\theta}{1-\theta\delta}D_h(x_{k+1},y_{k+1}) \leq \frac{288\alpha\theta\delta^2}{5}D_h(x_{k+1},y_{k+1}).
    \end{align*}
    Since $\theta\leq\frac{p^{\nicefrac{3}{2}}}{24\delta}$, we can estimate $\delta\leq\frac{p^{\nicefrac{3}{2}}}{24\theta}$ and obtain:
    \begin{align*}
        \frac{\alpha p^2}{\theta^2}\|e_N\|^2\leq\frac{\alpha p^3}{10\theta}D_h(x_{k+1},y_{k+1}).
    \end{align*}
    Let us move on to the second term:
    \begin{align*}
         \alpha p^2\|\nabla h(x_{k+1})-\nabla h(y_{k+1})\|^2 \leq \alpha p^2\frac{2(1+\theta\delta)}{\theta}D_h(x_{k+1},y_{k+1})\leq\frac{3\alpha p^2}{\theta}D_h(x_{k+1},y_{k+1}).
    \end{align*}
    Here we used (\ref{lem:error_estimation}), (\ref{lem:new_div_estimation}) and smoothness of $h(x)$. Put two terms together and get
    \begin{align*}
        \frac{\alpha}{2}\|G_{k+1}\|^2\leq\frac{4\alpha p^2}{\theta}D_h(x_{k+1},y_{k+1}).
    \end{align*}
    \begin{align*}
        \E\left[\frac{1}{\tau}(f(y_{k+1})-f(x))\right] \leq& \E\left[ \frac{1-\tau}{\tau}(f(y_k)-f(x)) + p\left(\frac{4\alpha p}{\theta} - \frac{1}{7\tau} \right)D_h(x_{k+1},y_{k+1})  \right]
        \\
        &+ \E\left[\frac{1}{2\alpha}\| z_k-x \|^2- \frac{1+\mu\alpha}{2\alpha}\|z_{k+1}-x\|^2 \right].
    \end{align*}
    Note that $28\alpha p\tau\leq\theta$ because of our choice of parameters. We obtain
    \begin{align*}
        \E\left[\frac{1}{\tau}(f(y_{k+1})-f(x))+\frac{1+\mu\alpha}{2\alpha}\|z_{k+1}-x\|^2\right] \leq& \E\left[ \frac{1-\tau}{\tau}(f(y_k)-f(x) + \frac{1}{2\alpha}\| z_k-x \|^2\right].
    \end{align*}
\end{proof}

\section{Additional Experiments} \label{app:add_exp}
This section presents more runs of \texttt{OLGA}. First, we solve the problems \eqref{eq:quadr} and \eqref{eq:logloss} using the \texttt{mushrooms} dataset \citep{chang2011libsvm} (Figures \ref{fig:begin_mushrooms}-\ref{fig:end_mushrooms}). This allows us to further show the robustness of the method to varying the parameters $\mu$, $L$, $\delta$ over a wider range. Second, we present measurements of the training time of the distributed models (Figures \ref{fig:begin_cluster}-\ref{fig:end_cluster}). Computation time depends on system we work with. Therefore, we provide two runs of experiments: on local cluster with cable connection (fast connect), on remote CPUs with Internet connection (slow connect). Note that compression plays a more essential role in the first case.

\section{Future Extensions}
\label{Future Extensions}
The methods we propose could be widely extended. Next we provide a few observations in this regard. In modern distributed learning, local data is often private and should not be compromised. As a result, one cannot use the full gradient, as data could be recovered \citep{weeraddana2017privacy}. This refers to the very popular federated learning setting. Since compression is a kind of stochasticity, we can add local additive noise to \texttt{OLGA} or \texttt{EF-OLGA} to preserve differential privacy \citep{nozari2016differentially}. We go into more detail below. Let us start with Algorithm \ref{alg:unbiased_compr_epoch}. There are two options: to noise gradient before or after compression. The first means the use of
\begin{align*}
    \nabla f_m(x,\xi^m)=\nabla f_m(x)+\xi^m,~\text{where }\xi^m\sim N(0,\sigma_m^2)
\end{align*}
Then in Line \ref{unbiased:line_7}, we have 
\begin{align*}
    \widehat{g}_k^m=\nabla f_m(x_k)-\nabla f_1(x_k) - \nabla f_m(x_0) + \nabla f_1(x_0)+\xi_k^m-\xi_0^m,
\end{align*}
and Line \ref{unbiased:line_11} is 
\begin{align*}
    t_k=g_k-\nabla f_1(x_0)+\nabla f(x_0)+\frac{1}{M}\sum_{m=1}^M\xi_0^m.
\end{align*}
The second option involves adding noise to compressed gradient. This means using \begin{align*}
    g_k^m=Q\left( \widehat{g}_k^m \right) + \xi_k^m
\end{align*}
in Line \ref{unbiased:line_8}. In Algorithm \ref{alg:unbiased_compr} is an accelerating framework that does not use compression. Therefore, there is only one option of noising available:
\begin{align*}
    \nabla f(y_{k+1})=\frac{1} {M}\sum_{m=1}^M\nabla f_m(y_{k+1})+\frac{1} {M}\sum_{m=1}^M\xi_{k+1}^m
\end{align*}
in Line \ref{unbiased_full:line_6}. Algorithms \ref{alg:biased_compr_epoch} and \ref{alg:biased_compr} differ only in their handling of the compression error, and could be modified in a similar way. Proofs in the case of noisy gradients repeats our ones with addition of well-known techniques \citep{bylinkin2024accelerated}.

We note again that variance reduction helps us to design methods with compression. The same approach is used to implement client sampling. This means that this technique could also be easily exploited in our algorithms. In Algorithm \ref{alg:unbiased_compr_epoch}, Line \ref{unbiased:line_6} should be changed to 
\begin{align*}
    \text{For chosen } m_{i_k}\sim U[1,M].
\end{align*} 
Then in Algorithm \ref{alg:unbiased_compr} the gradient difference is approximated by
\begin{align*}
    t_k=\nabla(f_1-f_{m_{i_k}})(x_{k+1}) - \nabla(f_1-f_{m_{i_k}})(y_{k+1}),~m_{i_k}\sim U[1,M].
\end{align*}
The same can be done in Algorithms \ref{alg:biased_compr_epoch} and \ref{alg:biased_compr}. This approach opens the door to asynchronous setting, where the communication channels between devices and servers cannot be opened simultaneously.

\section{Reproducibility Checklist}

    This paper:

    $\bullet$ Includes a conceptual outline and/or pseudocode description of AI methods introduced. \textbf{Yes}
    
    $\bullet$ Clearly delineates statements that are opinions, hypothesis, and speculation from objective facts and results. \textbf{Yes}

    $\bullet$ Provides well marked pedagogical references for less-familiare readers to gain background necessary to replicate the paper. \textbf{Yes}

    Does this paper make theoretical contributions? \textbf{Yes}

    $\bullet$ All assumptions and restrictions are stated clearly and formally. \textbf{Yes}
    
    $\bullet$ All novel claims are stated formally (e.g., in theorem statements). \textbf{Yes}
    
    $\bullet$ Proofs of all novel claims are included. \textbf{Yes}
    
    $\bullet$ Proof sketches or intuitions are given for complex and/or novel results. \textbf{Partial}
    
    $\bullet$ Appropriate citations to theoretical tools used are given. \textbf{Yes}
    
    $\bullet$ All theoretical claims are demonstrated empirically to hold. \textbf{Yes}
    
    $\bullet$ All experimental code used to eliminate or disprove claims is included. \textbf{No}

    Does this paper rely on one or more datasets? \textbf{Yes}

$\bullet$ A motivation is given for why the experiments are conducted on the selected datasets \textbf{No}, classical datasets for the theory validation.

$\bullet$ All novel datasets introduced in this paper are included in a data appendix. \textbf{NA}, no new data

$\bullet$ All novel datasets introduced in this paper will be made publicly available upon publication of the paper with a license that allows free usage for research purposes. \textbf{NA}, no new data

$\bullet$ All datasets drawn from the existing literature (potentially including authors’ own previously published work) are accompanied by appropriate citations. \textbf{Yes}

$\bullet$ All datasets drawn from the existing literature (potentially including authors’ own previously published work) are publicly available. \textbf{Yes}

$\bullet$ All datasets that are not publicly available are described in detail, with explanation why publicly available alternatives are not scientifically satisficing. \textbf{NA}, no non-public data

    Does this paper include computational experiments? \textbf{Yes}

    $\bullet$ Any code required for pre-processing data is included in the appendix. \textbf{No}, simple experiments without pre-processing.
    
    $\bullet$ All source code required for conducting and analyzing the experiments is included in a code appendix. \textbf{No}, simple experiments - easy to reproduce.
    
    $\bullet$ All source code required for conducting and analyzing the experiments will be made publicly available upon publication of the paper with a license that allows free usage for research purposes. \textbf{No}, simple experiments - easy to reproduce.
    
    $\bullet$ All source code implementing new methods have comments detailing the implementation, with references to the paper where each step comes from. \textbf{No}, simple experiments - easy to reproduce.
    
    $\bullet$ If an algorithm depends on randomness, then the method used for setting seeds is described in a way sufficient to allow replication of results. \textbf{No}, simple experiments - easy to reproduce.
    
    $\bullet$ This paper specifies the computing infrastructure used for running experiments (hardware and software), including GPU/CPU models; amount of memory; operating system; names and versions of relevant software libraries and frameworks. \textbf{No}, simple experiments - easy to reproduce on laptops.
    
    $\bullet$ This paper formally describes evaluation metrics used and explains the motivation for choosing these metrics. \textbf{No}, simple experiments - easy to reproduce.
    
    $\bullet$ This paper states the number of algorithm runs used to compute each reported result. \textbf{No}, simple experiments - easy to reproduce.
    
    $\bullet$ Analysis of experiments goes beyond single-dimensional summaries of performance (e.g., average; median) to include measures of variation, confidence, or other distributional information. \textbf{No}, simple experiments - easy to reproduce.
    
    $\bullet$ The significance of any improvement or decrease in performance is judged using appropriate statistical tests (e.g., Wilcoxon signed-rank). \textbf{No}, simple experiments - easy to reproduce.
    
    $\bullet$ This paper lists all final (hyper-)parameters used for each model/algorithm in the paper’s experiments. \textbf{No}, simple experiments - easy to reproduce.
    
    $\bullet$ This paper states the number and range of values tried per (hyper-) parameter during development of the paper, along with the criterion used for selecting the final parameter setting. \textbf{No}, simple experiments - easy to reproduce.

\begin{figure}[h!] % <---
   \begin{subfigure}{0.310\textwidth}
       \includegraphics[width=\linewidth]{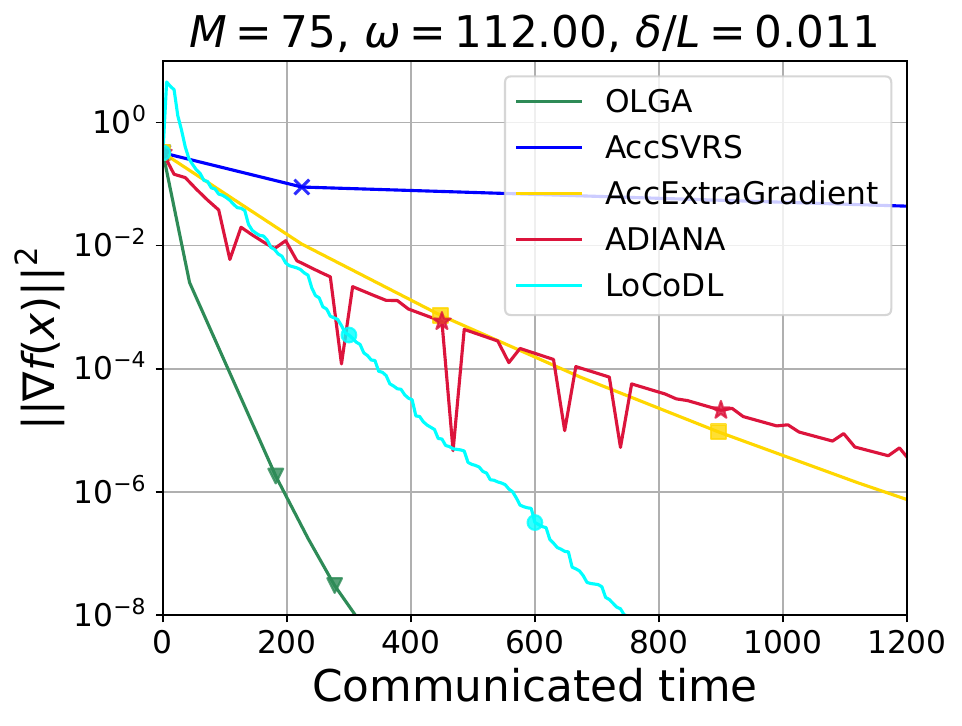}
   \end{subfigure}
   \begin{subfigure}{0.310\textwidth}
       \includegraphics[width=\linewidth]{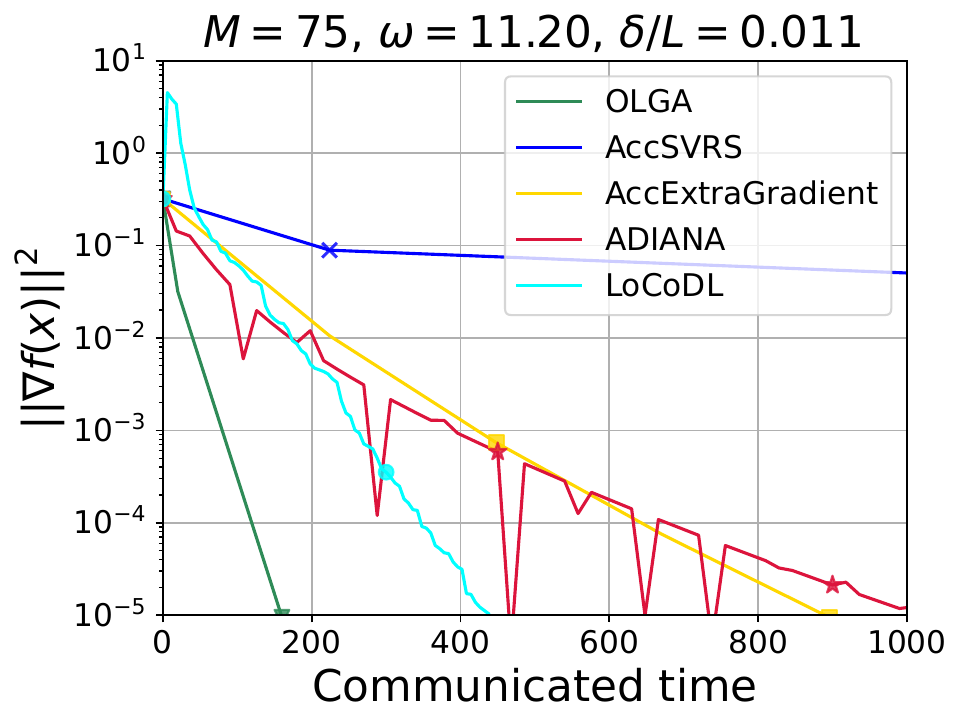}
   \end{subfigure}
   \begin{subfigure}{0.310\textwidth}
       \includegraphics[width=\linewidth]{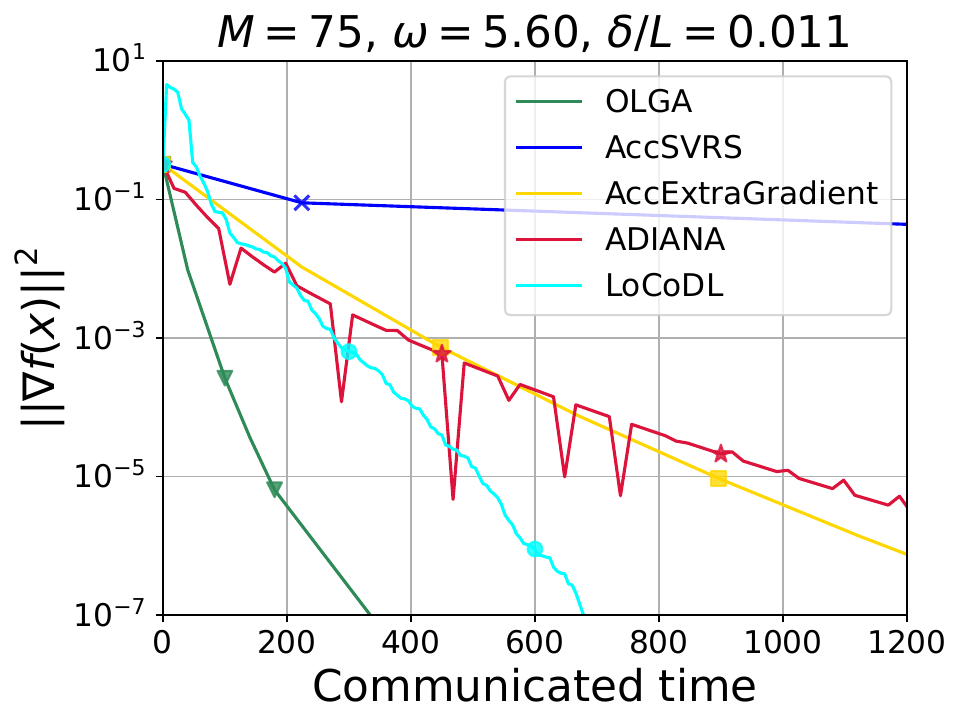}
   \end{subfigure}\\
   \begin{subfigure}{0.310\textwidth}
       \centering
       (a) $\omega = 112.00$
   \end{subfigure}
   \begin{subfigure}{0.310\textwidth}
       \centering
       (b) $\omega = 11.20$
   \end{subfigure}
   \begin{subfigure}{0.310\textwidth}
       \centering
       (c) $\omega = 5.6$
   \end{subfigure}
   \caption{Comparison of state-of-the-art distributed methods. The comparison is made on \eqref{eq:logloss} with $M=75$ and \texttt{mushrooms} dataset. The criterion is the communication time (\textit{CC-3}). For methods with compression we vary the power of compression $\omega$.}
   \label{fig:begin_mushrooms}
\end{figure}

\begin{figure}[h!] % <---
   \begin{subfigure}{0.310\textwidth}
       \includegraphics[width=\linewidth]{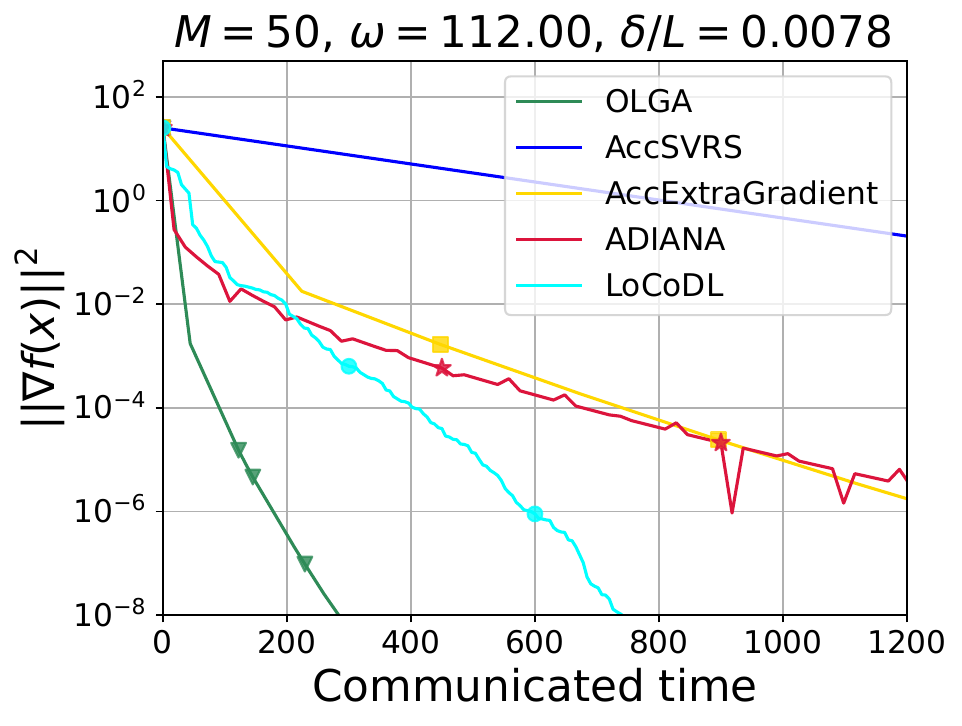}
   \end{subfigure}
   \begin{subfigure}{0.310\textwidth}
       \includegraphics[width=\linewidth]{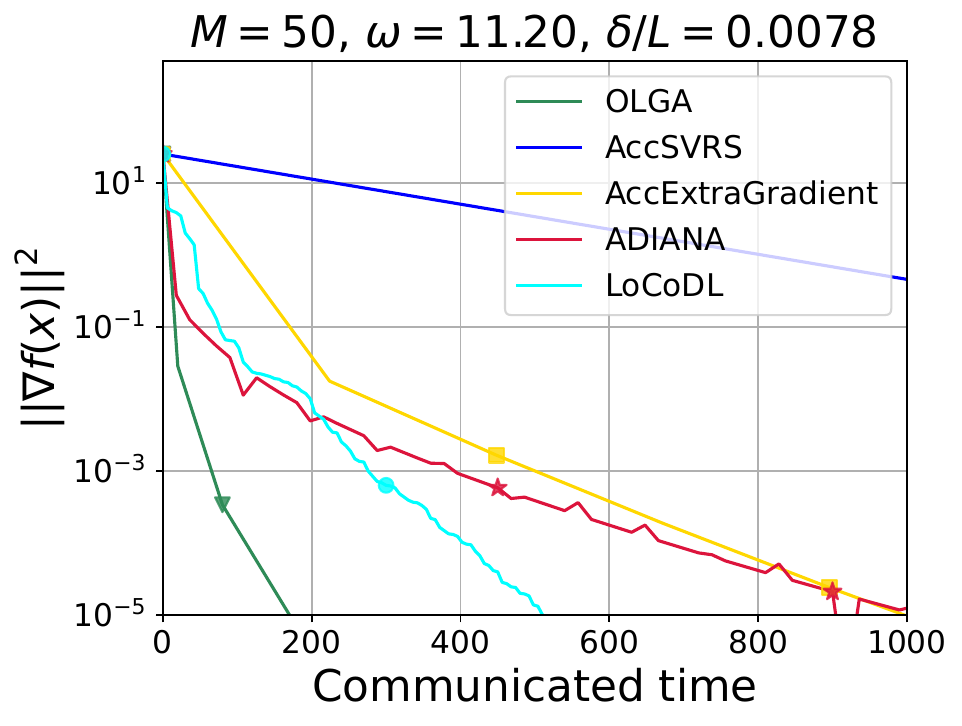}
   \end{subfigure}
   \begin{subfigure}{0.310\textwidth}
       \includegraphics[width=\linewidth]{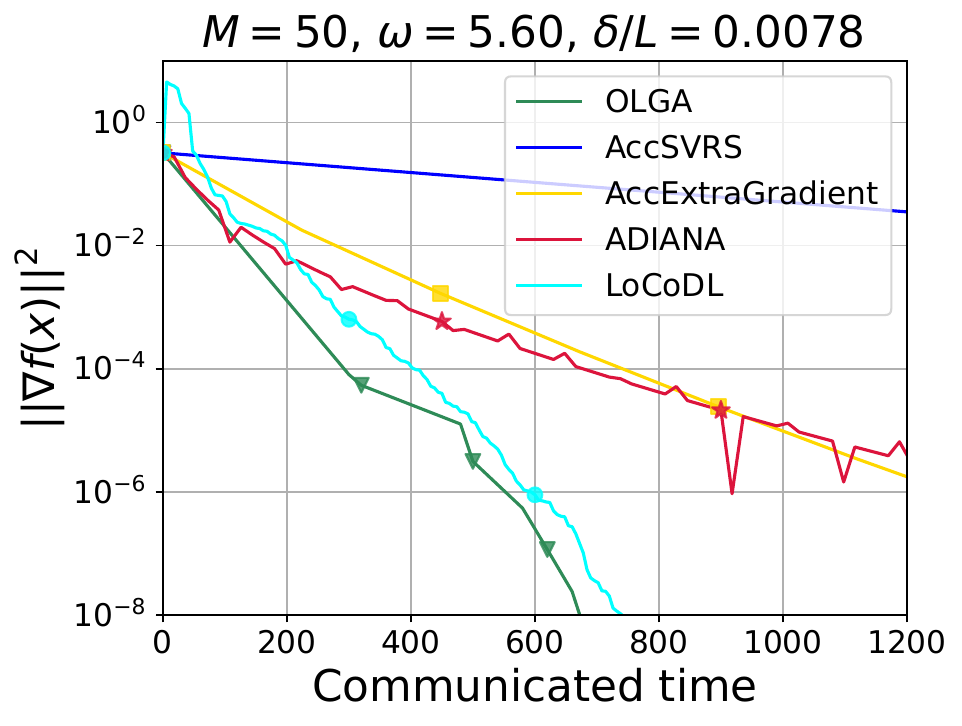}
   \end{subfigure}\\
   \begin{subfigure}{0.310\textwidth}
       \centering
       (a) $\omega = 112.00$
   \end{subfigure}
   \begin{subfigure}{0.310\textwidth}
       \centering
       (b) $\omega = 11.20$
   \end{subfigure}
   \begin{subfigure}{0.310\textwidth}
       \centering
       (c) $\omega = 5.6$
   \end{subfigure}
   \caption{Comparison of state-of-the-art distributed methods. The comparison is made on \eqref{eq:logloss} with $M=50$ and \texttt{mushrooms} dataset. The criterion is the communication time (\textit{CC-3}). For methods with compression we vary the power of compression $\omega$.}
\end{figure}

\begin{figure}[h!] % <---
   \begin{subfigure}{0.310\textwidth}
       \includegraphics[width=\linewidth]{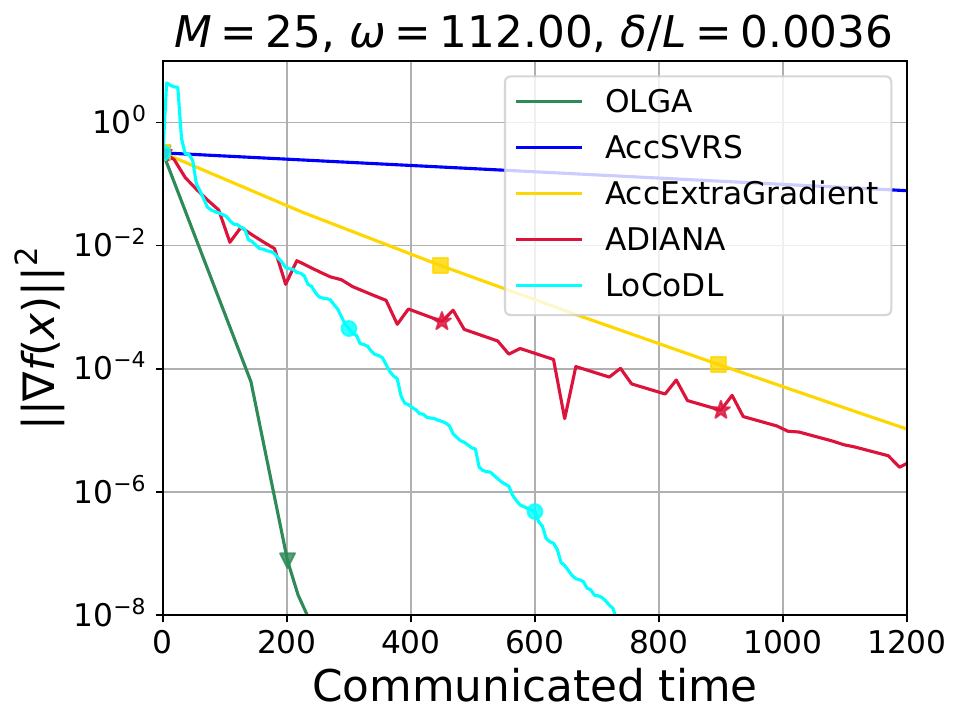}
   \end{subfigure}
   \begin{subfigure}{0.310\textwidth}
       \includegraphics[width=\linewidth]{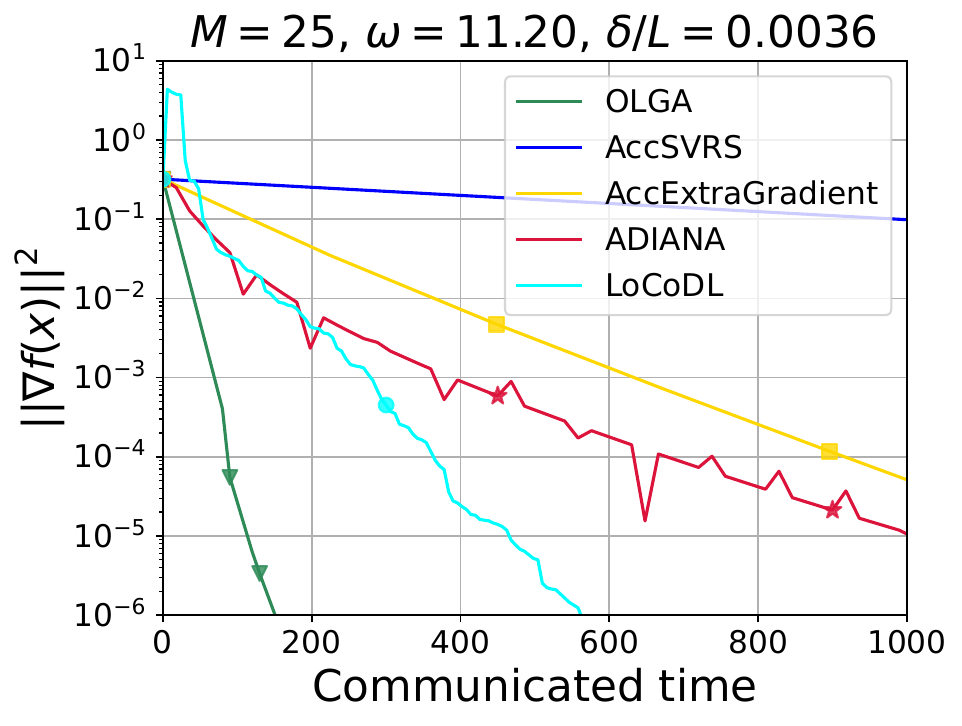}
   \end{subfigure}
   \begin{subfigure}{0.310\textwidth}
       \includegraphics[width=\linewidth]{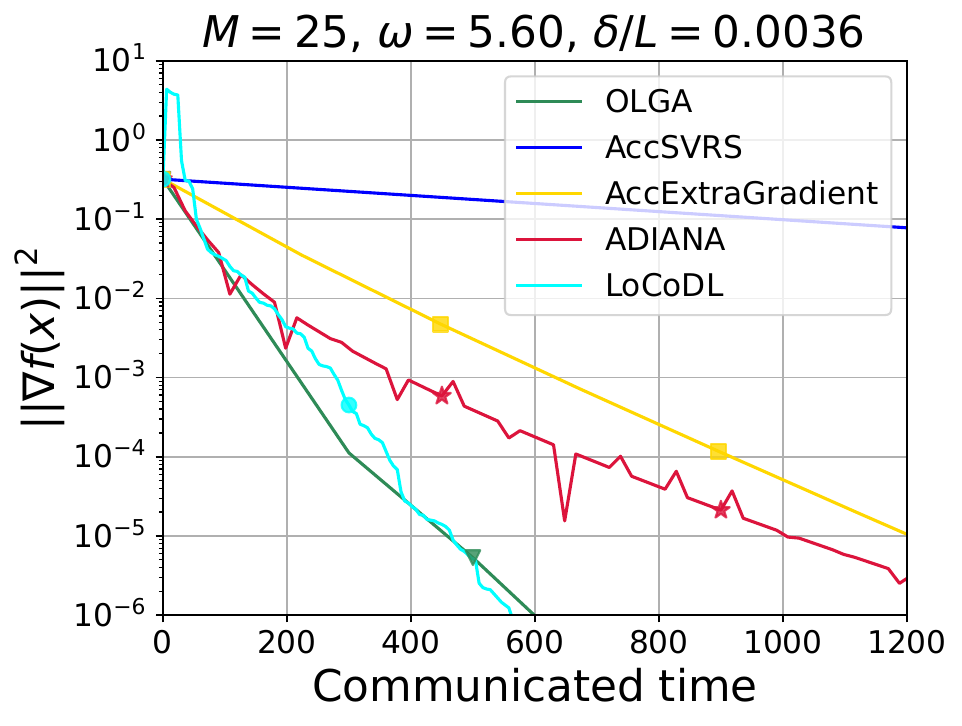}
   \end{subfigure}\\
   \begin{subfigure}{0.310\textwidth}
       \centering
       (a) $\omega = 112.00$
   \end{subfigure}
   \begin{subfigure}{0.310\textwidth}
       \centering
       (b) $\omega = 11.20$
   \end{subfigure}
   \begin{subfigure}{0.310\textwidth}
       \centering
       (c) $\omega = 5.6$
   \end{subfigure}
   \caption{Comparison of state-of-the-art distributed methods. The comparison is made on \eqref{eq:logloss} with $M=25$ and \texttt{mushrooms} dataset. The criterion is the communication time (\textit{CC-3}). For methods with compression we vary the power of compression $\omega$.}
\end{figure}

\begin{figure}[h!] % <---
   \begin{subfigure}{0.310\textwidth}
       \includegraphics[width=\linewidth]{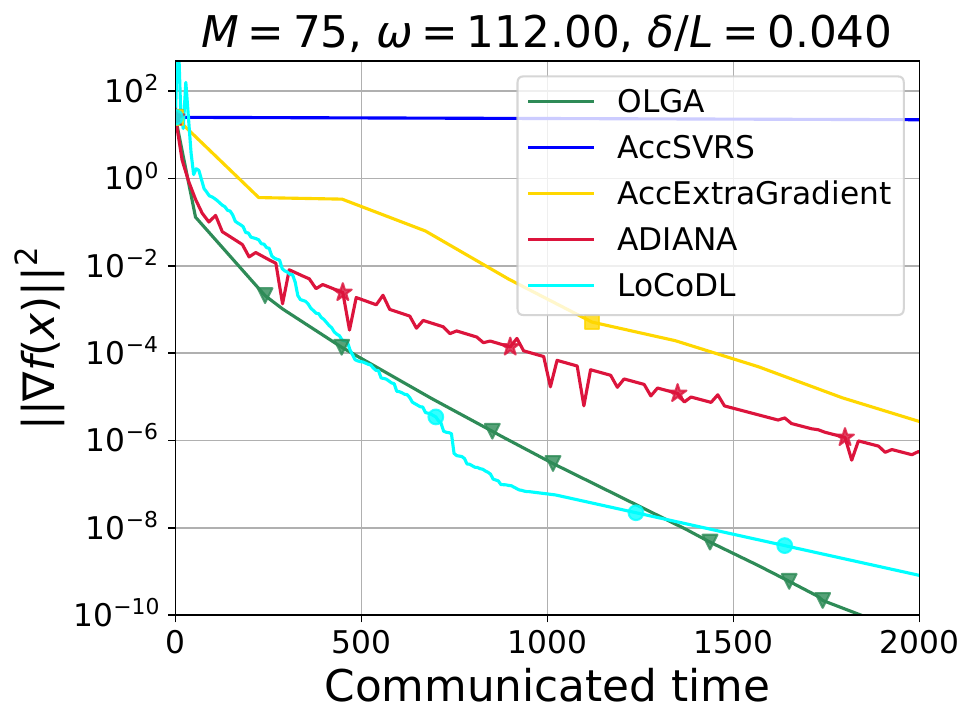}
   \end{subfigure}
   \begin{subfigure}{0.310\textwidth}
       \includegraphics[width=\linewidth]{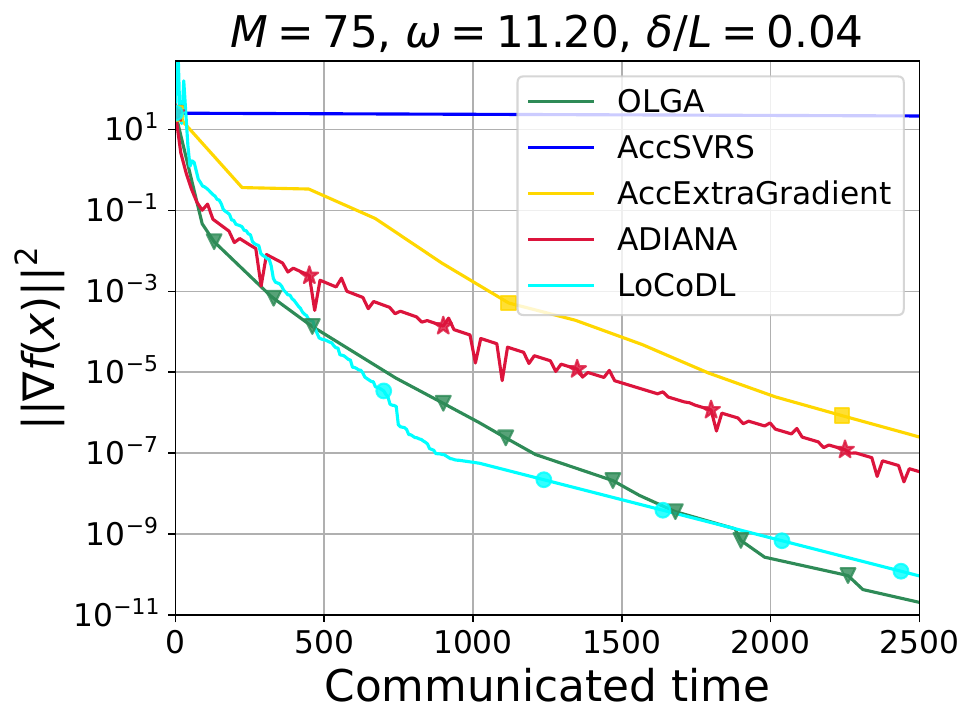}
   \end{subfigure}
   \begin{subfigure}{0.310\textwidth}
       \includegraphics[width=\linewidth]{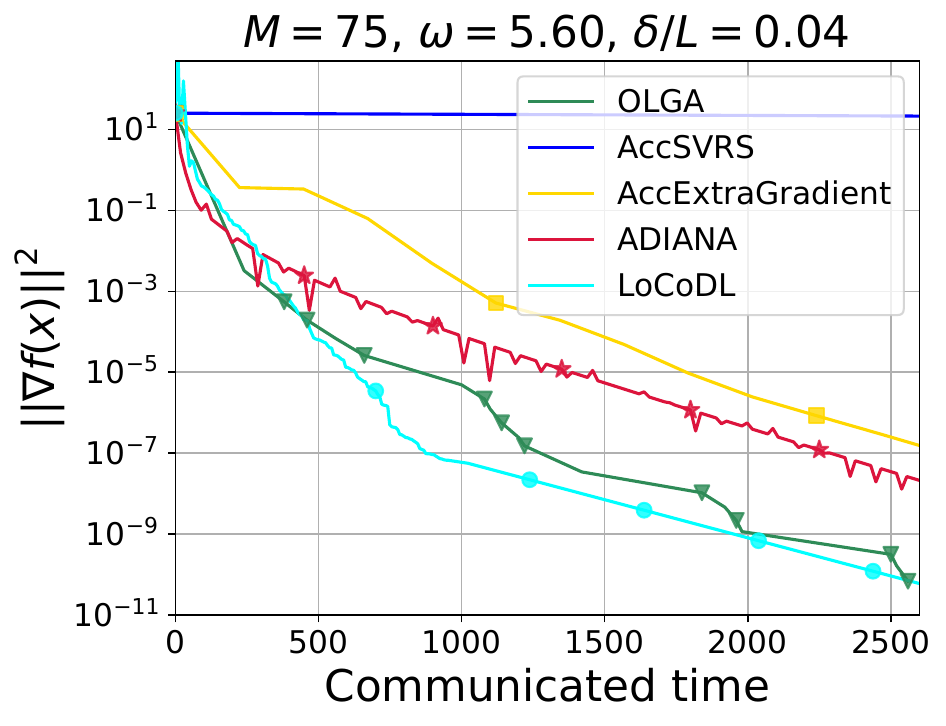}
   \end{subfigure}\\
   \begin{subfigure}{0.310\textwidth}
       \centering
       (a) $\omega = 112.00$
   \end{subfigure}
   \begin{subfigure}{0.310\textwidth}
       \centering
       (b) $\omega = 11.20$
   \end{subfigure}
   \begin{subfigure}{0.310\textwidth}
       \centering
       (c) $\omega = 5.6$
   \end{subfigure}
   \caption{Comparison of state-of-the-art distributed methods. The comparison is made on \eqref{eq:quadr} with $M=75$ and \texttt{mushrooms} dataset. The criterion is the communication time (\textit{CC-3}). For methods with compression we vary the power of compression $\omega$.}
\end{figure}

\begin{figure}[h!] % <---
   \begin{subfigure}{0.310\textwidth}
       \includegraphics[width=\linewidth]{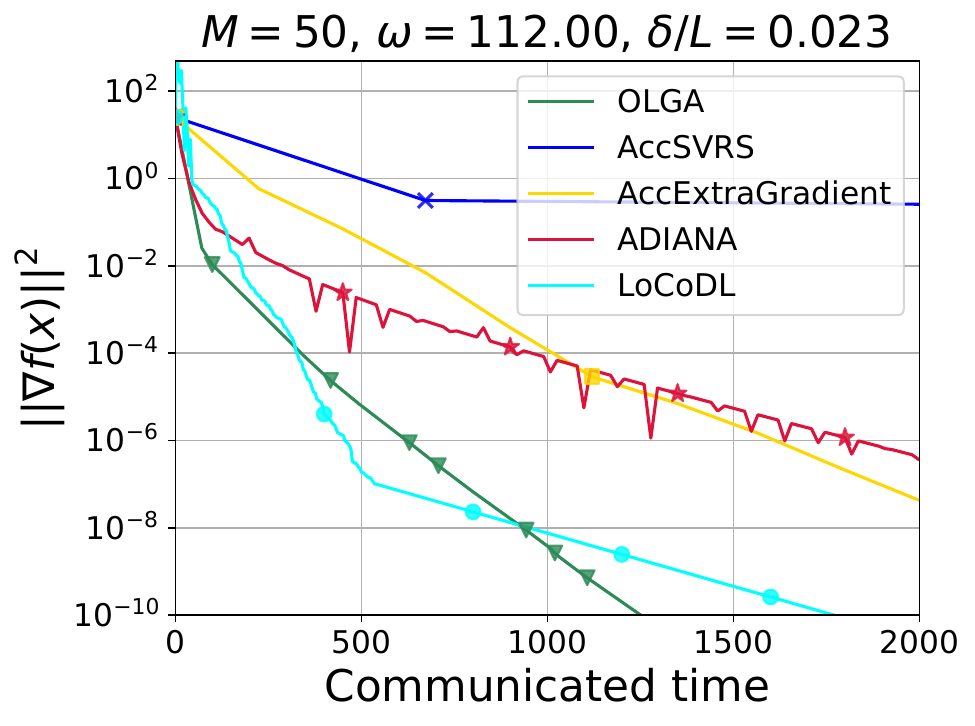}
   \end{subfigure}
   \begin{subfigure}{0.310\textwidth}
       \includegraphics[width=\linewidth]{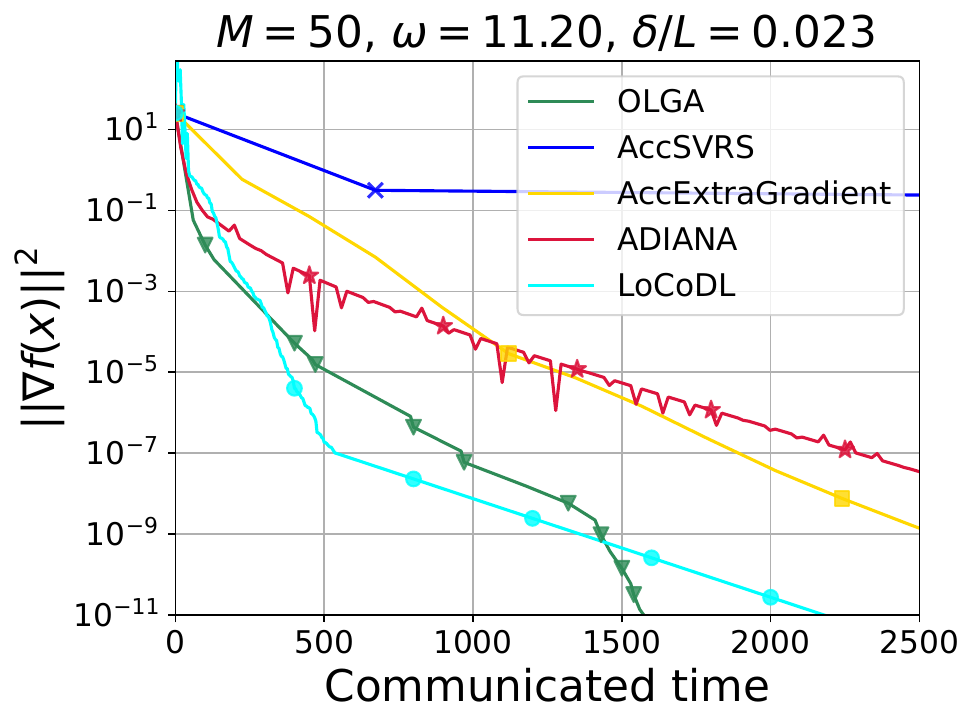}
   \end{subfigure}
   \begin{subfigure}{0.310\textwidth}
       \includegraphics[width=\linewidth]{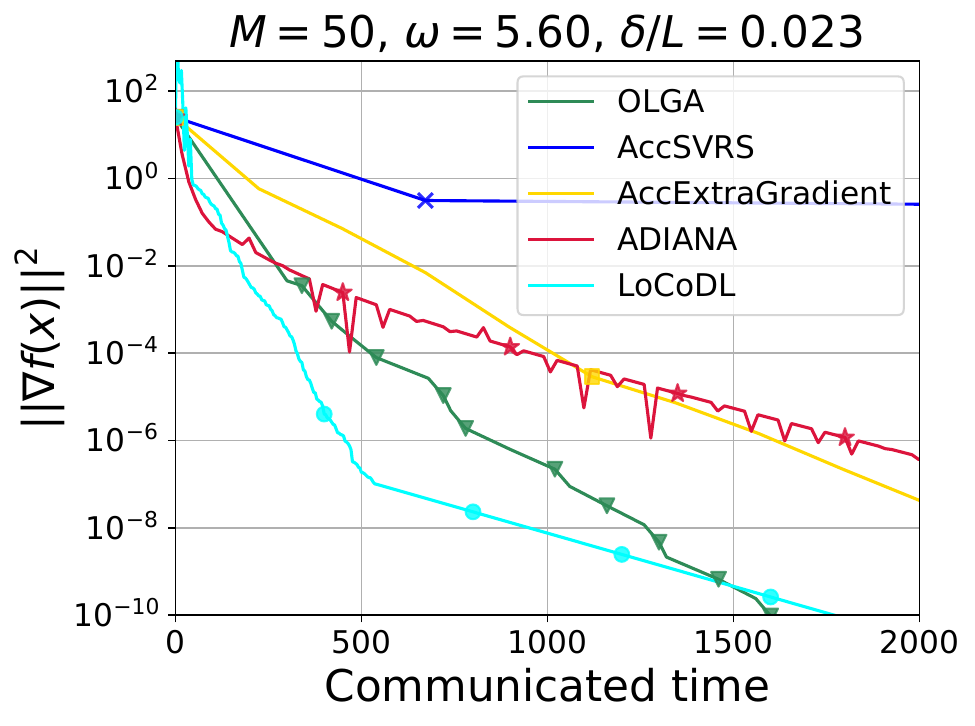}
   \end{subfigure}\\
   \begin{subfigure}{0.310\textwidth}
       \centering
       (a) $\omega = 112.00$
   \end{subfigure}
   \begin{subfigure}{0.310\textwidth}
       \centering
       (b) $\omega = 11.20$
   \end{subfigure}
   \begin{subfigure}{0.310\textwidth}
       \centering
       (c) $\omega = 5.6$
   \end{subfigure}
   \caption{Comparison of state-of-the-art distributed methods. The comparison is made on \eqref{eq:quadr} with $M=50$ and \texttt{mushrooms} dataset. The criterion is the communication time (\textit{CC-3}). For methods with compression we vary the power of compression $\omega$.}
\end{figure}

\begin{figure}[h!] % <---
   \begin{subfigure}{0.310\textwidth}
       \includegraphics[width=\linewidth]{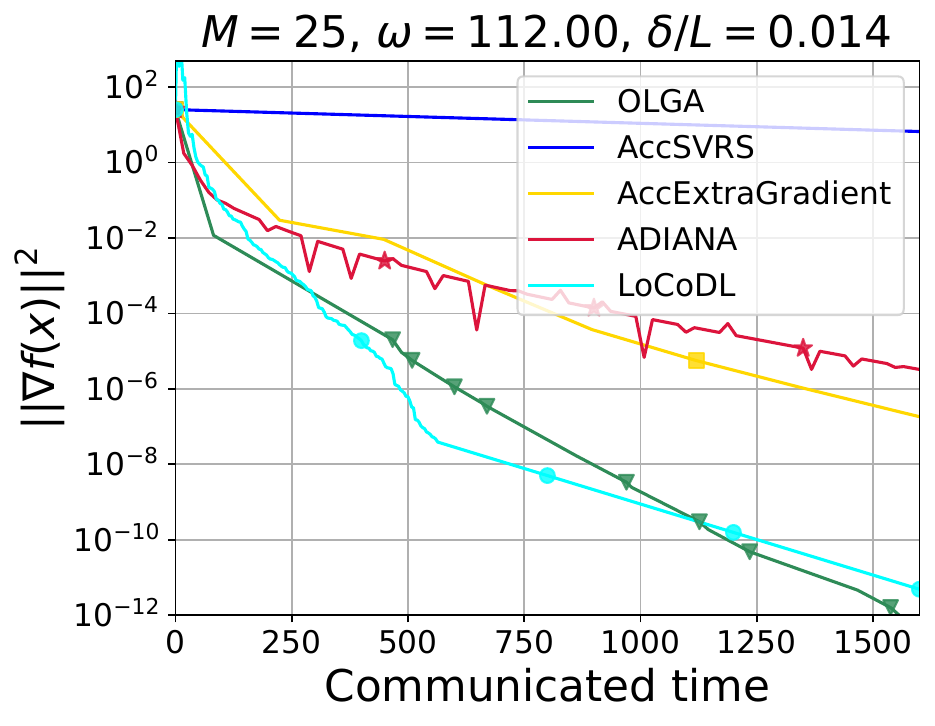}
   \end{subfigure}
   \begin{subfigure}{0.310\textwidth}
       \includegraphics[width=\linewidth]{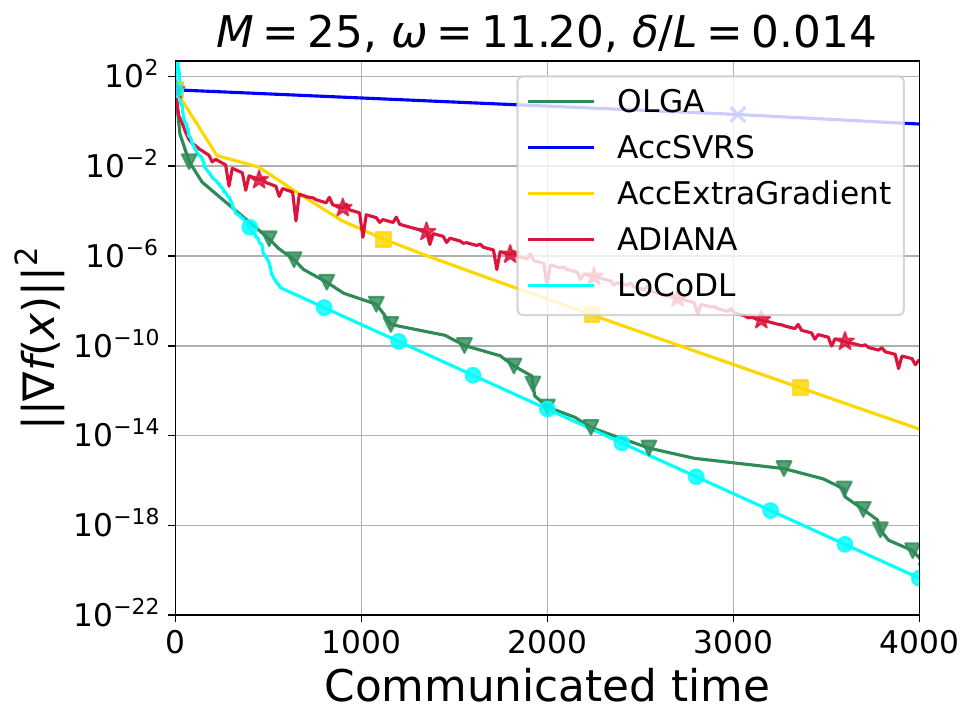}
   \end{subfigure}
   \begin{subfigure}{0.310\textwidth}
       \includegraphics[width=\linewidth]{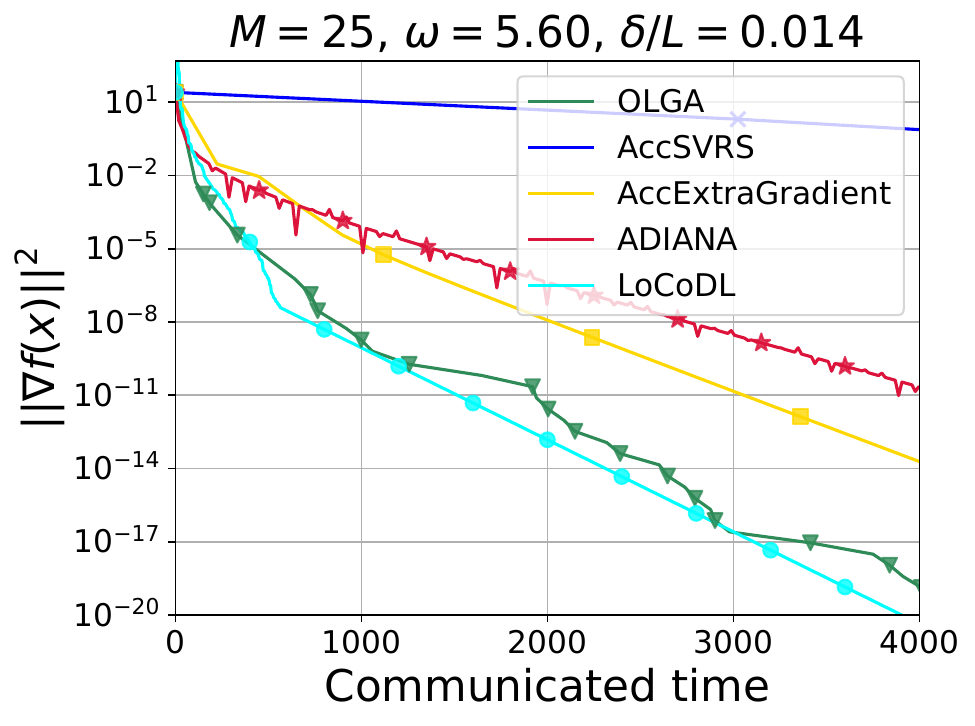}
   \end{subfigure}\\
   \begin{subfigure}{0.310\textwidth}
       \centering
       (a) $\omega = 112.00$
   \end{subfigure}
   \begin{subfigure}{0.310\textwidth}
       \centering
       (b) $\omega = 11.20$
   \end{subfigure}
   \begin{subfigure}{0.310\textwidth}
       \centering
       (c) $\omega = 5.6$
   \end{subfigure}
   \caption{Comparison of state-of-the-art distributed methods. The comparison is made on \eqref{eq:quadr} with $M=25$ and \texttt{mushrooms} dataset. The criterion is the communication time (\textit{CC-3}). For methods with compression we vary the power of compression $\omega$.}
   \label{fig:end_mushrooms}
\end{figure}

\begin{figure}[h!] % <---
   \begin{subfigure}{0.310\textwidth}
       \includegraphics[width=\linewidth]{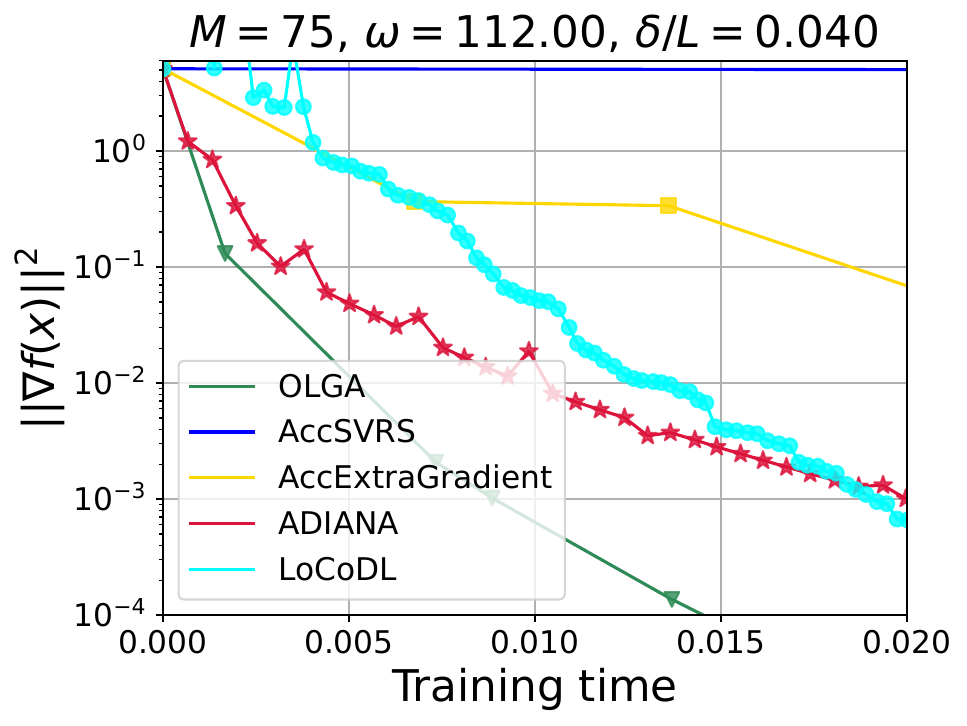}
   \end{subfigure}
   \begin{subfigure}{0.310\textwidth}
       \includegraphics[width=\linewidth]{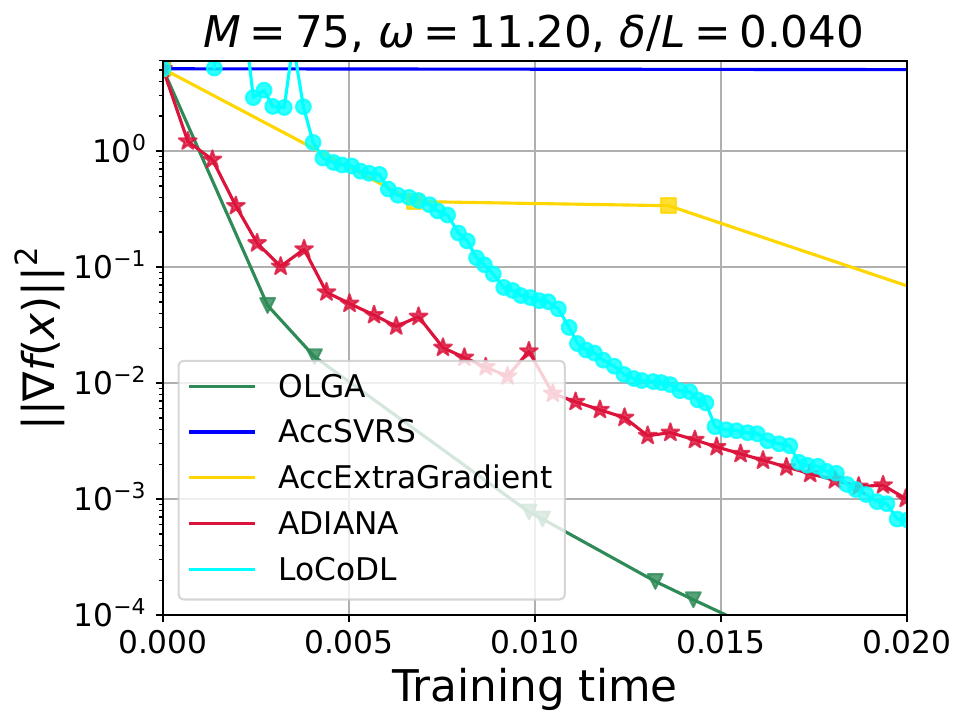}
   \end{subfigure}
   \begin{subfigure}{0.310\textwidth}
       \includegraphics[width=\linewidth]{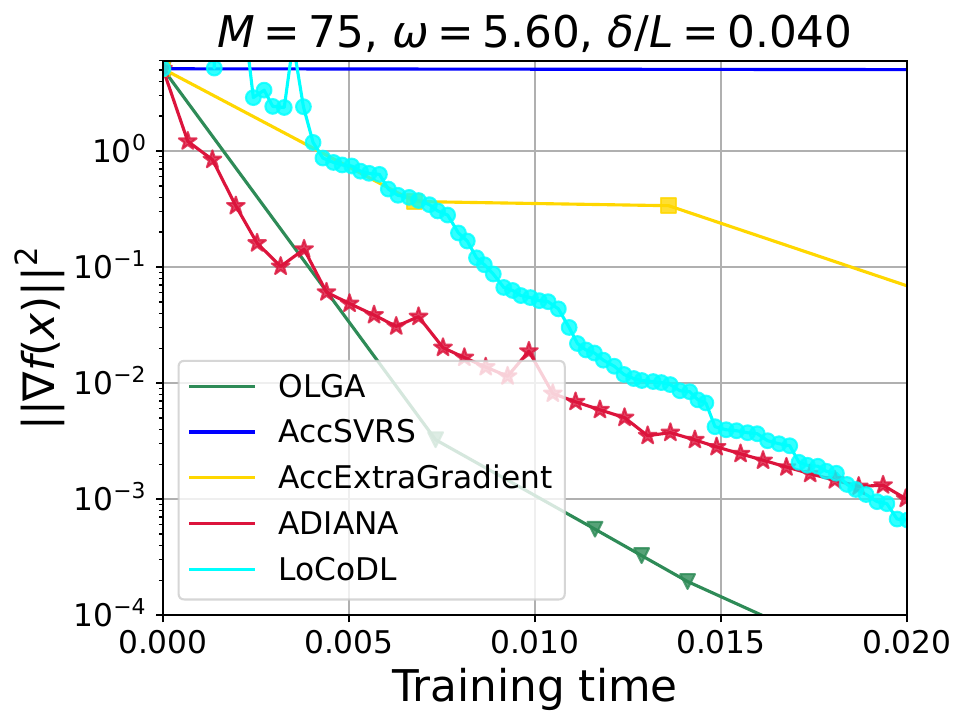}
   \end{subfigure}\\
   \begin{subfigure}{0.310\textwidth}
       \centering
       (a) $\omega = 112.00$
   \end{subfigure}
   \begin{subfigure}{0.310\textwidth}
       \centering
       (b) $\omega = 11.20$
   \end{subfigure}
   \begin{subfigure}{0.310\textwidth}
       \centering
       (c) $\omega = 5.6$
   \end{subfigure}
   \caption{Comparison of state-of-the-art distributed methods. The comparison is made on \eqref{eq:quadr} with $M=75$ and \texttt{mushrooms} dataset. The criterion is the training time on local cluster (fast connection). For methods with compression we vary the power of compression $\omega$.}
   \label{fig:begin_cluster}
\end{figure}

\begin{figure}[h!] % <---
   \begin{subfigure}{0.310\textwidth}
       \includegraphics[width=\linewidth]{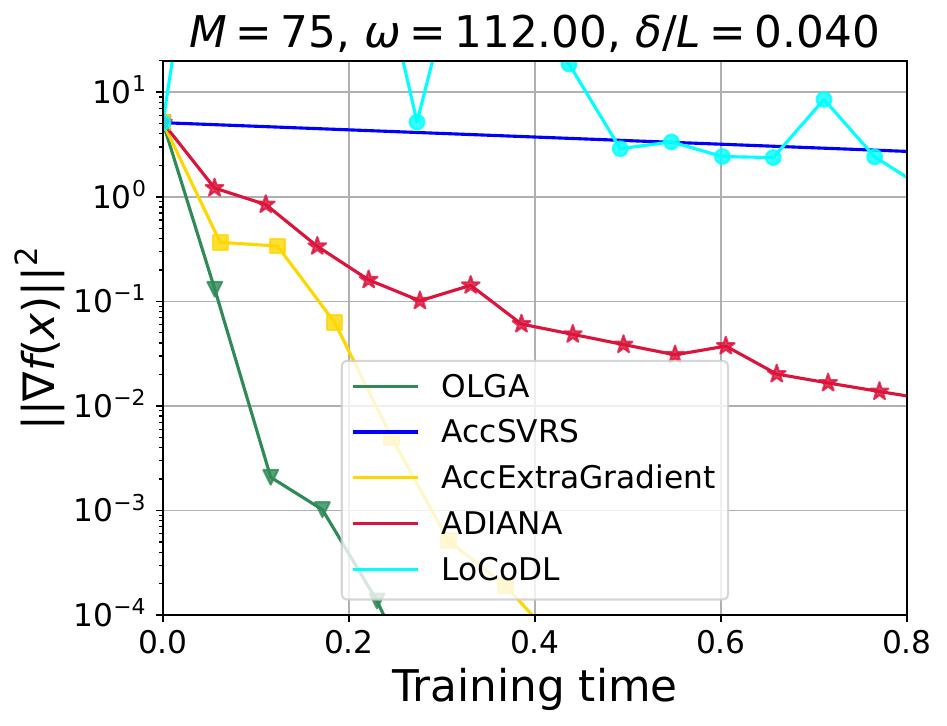}
   \end{subfigure}
   \begin{subfigure}{0.310\textwidth}
       \includegraphics[width=\linewidth]{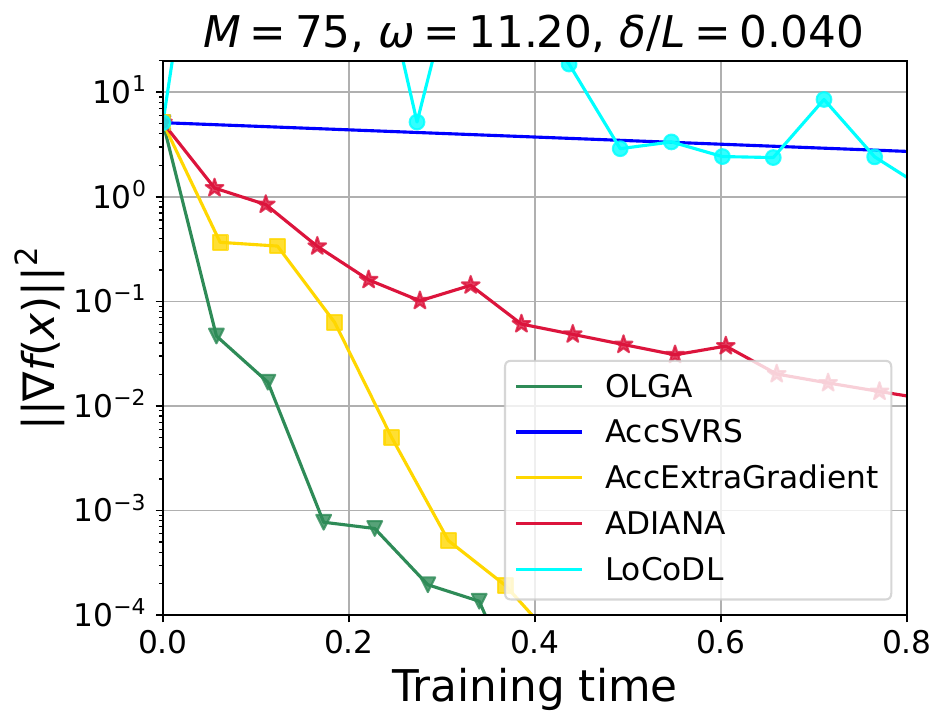}
   \end{subfigure}
   \begin{subfigure}{0.310\textwidth}
       \includegraphics[width=\linewidth]{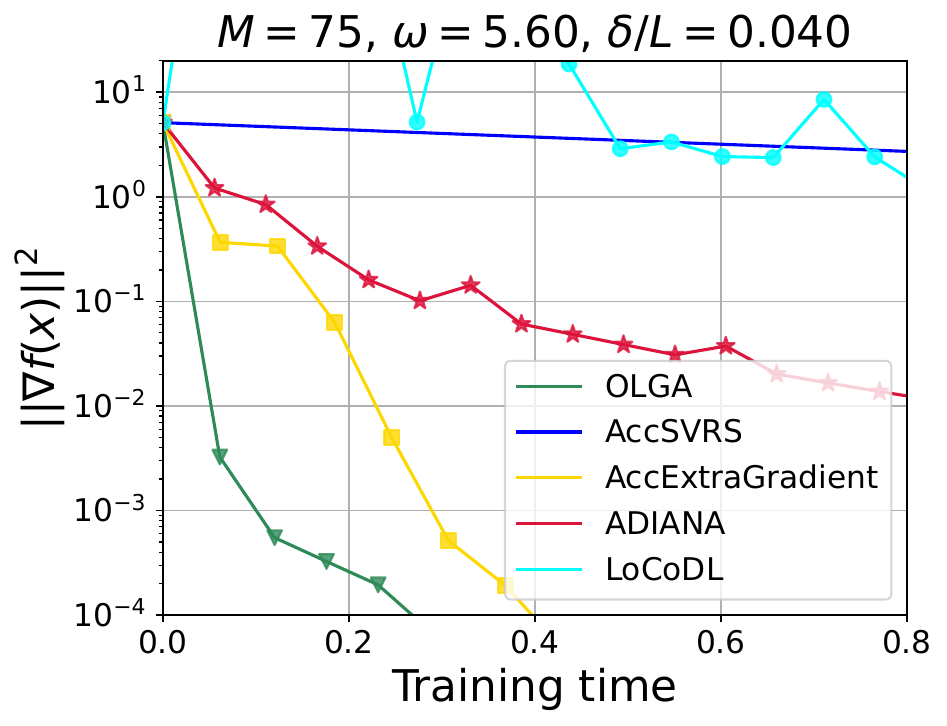}
   \end{subfigure}\\
   \begin{subfigure}{0.310\textwidth}
       \centering
       (a) $\omega = 112.00$
   \end{subfigure}
   \begin{subfigure}{0.310\textwidth}
       \centering
       (b) $\omega = 11.20$
   \end{subfigure}
   \begin{subfigure}{0.310\textwidth}
       \centering
       (c) $\omega = 5.6$
   \end{subfigure}
   \caption{Comparison of state-of-the-art distributed methods. The comparison is made on \eqref{eq:quadr} with $M=75$ and \texttt{mushrooms} dataset. The criterion is the training time on remote CPUs (slow connection). For methods with compression we vary the power of compression $\omega$.}
\end{figure}

\begin{figure}[h!] % <---
   \begin{subfigure}{0.310\textwidth}
       \includegraphics[width=\linewidth]{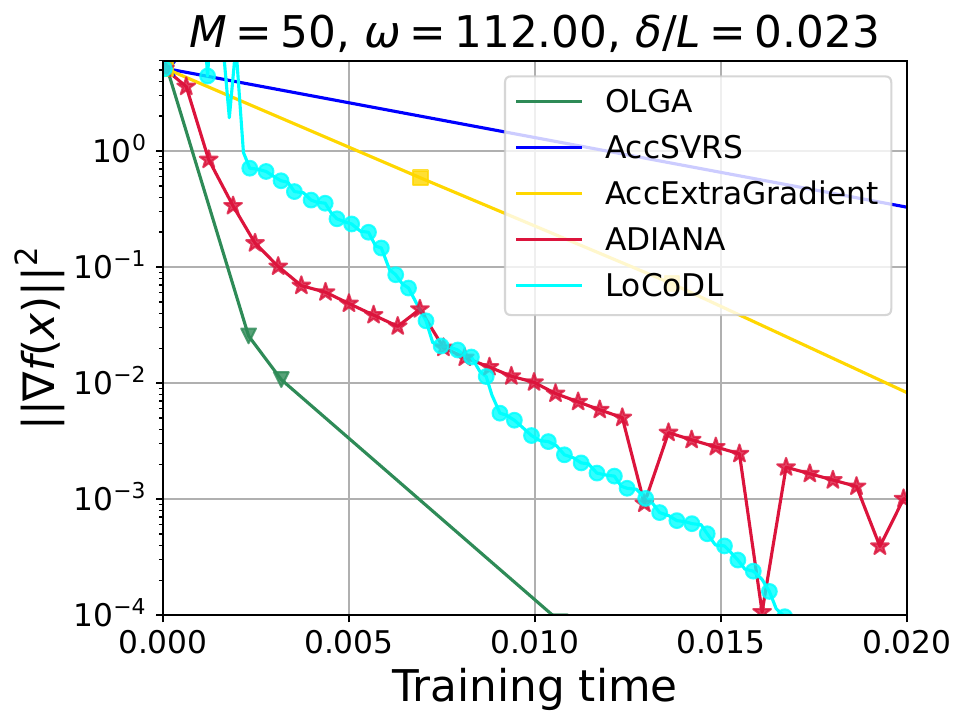}
   \end{subfigure}
   \begin{subfigure}{0.310\textwidth}
       \includegraphics[width=\linewidth]{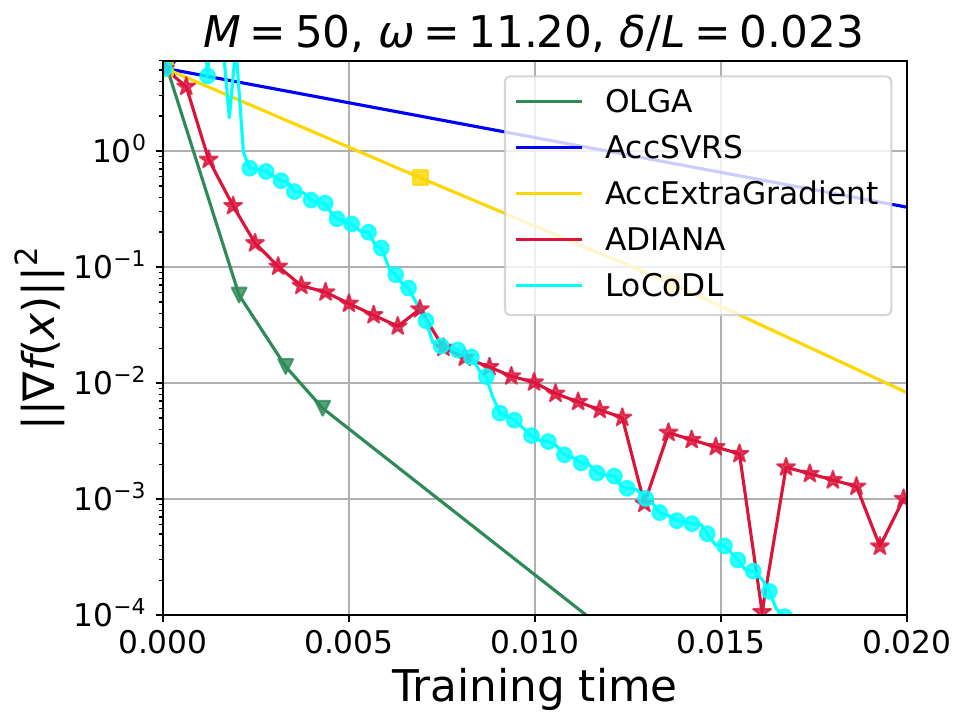}
   \end{subfigure}
   \begin{subfigure}{0.310\textwidth}
       \includegraphics[width=\linewidth]{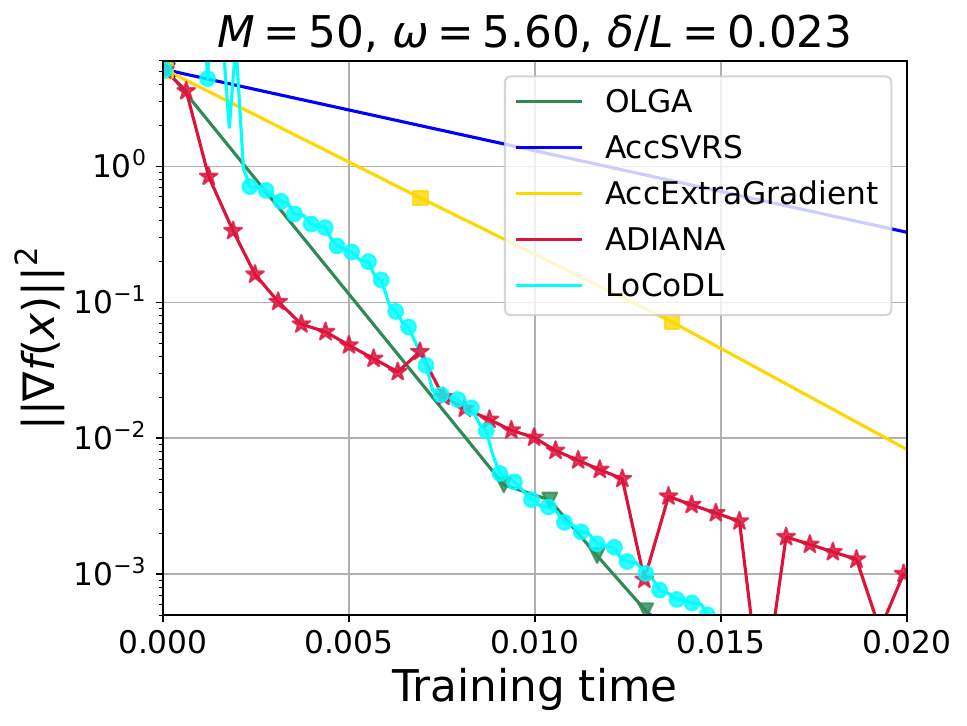}
   \end{subfigure}\\
   \begin{subfigure}{0.310\textwidth}
       \centering
       (a) $\omega = 112.00$
   \end{subfigure}
   \begin{subfigure}{0.310\textwidth}
       \centering
       (b) $\omega = 11.20$
   \end{subfigure}
   \begin{subfigure}{0.310\textwidth}
       \centering
       (c) $\omega = 5.6$
   \end{subfigure}
   \caption{Comparison of state-of-the-art distributed methods. The comparison is made on \eqref{eq:quadr} with $M=50$ and \texttt{mushrooms} dataset. The criterion is the training time on local cluster (fast connection). For methods with compression we vary the power of compression $\omega$.}
\end{figure}

\begin{figure}[h!] % <---
   \begin{subfigure}{0.310\textwidth}
       \includegraphics[width=\linewidth]{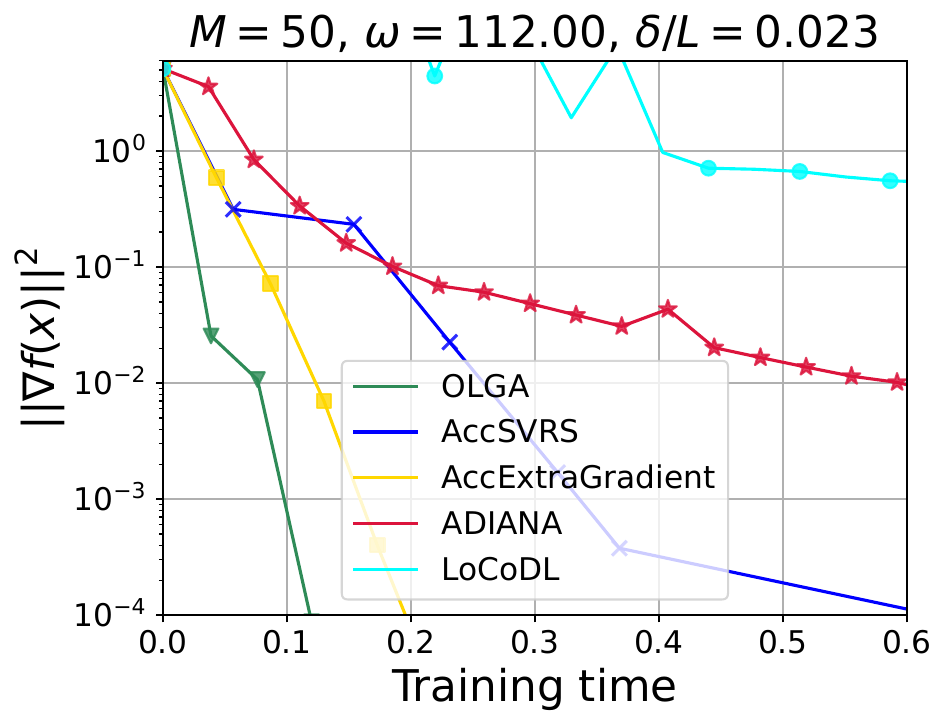}
   \end{subfigure}
   \begin{subfigure}{0.310\textwidth}
       \includegraphics[width=\linewidth]{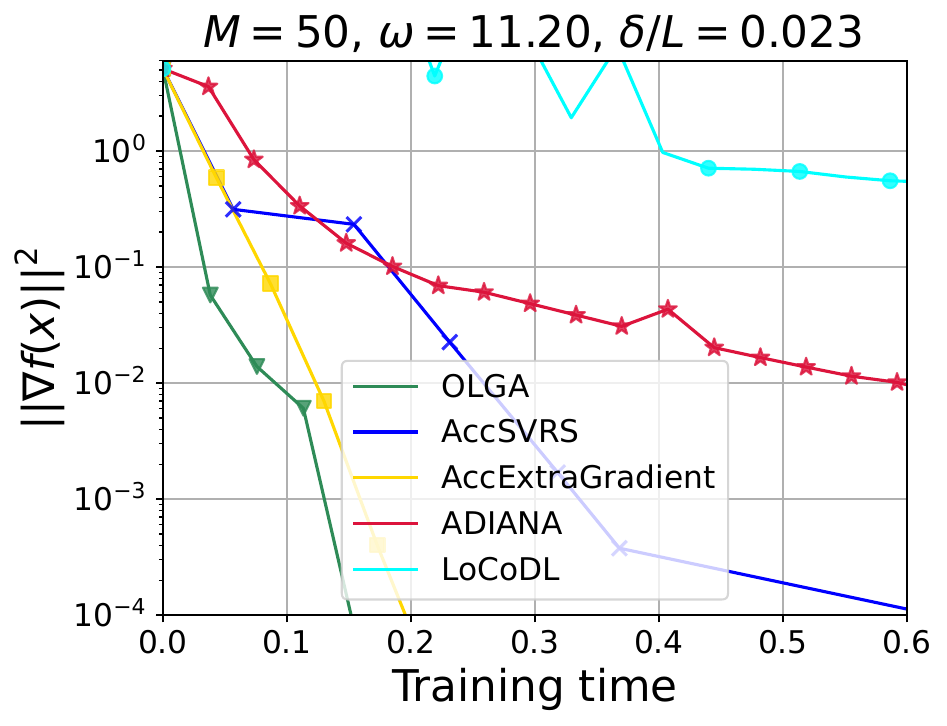}
   \end{subfigure}
   \begin{subfigure}{0.310\textwidth}
       \includegraphics[width=\linewidth]{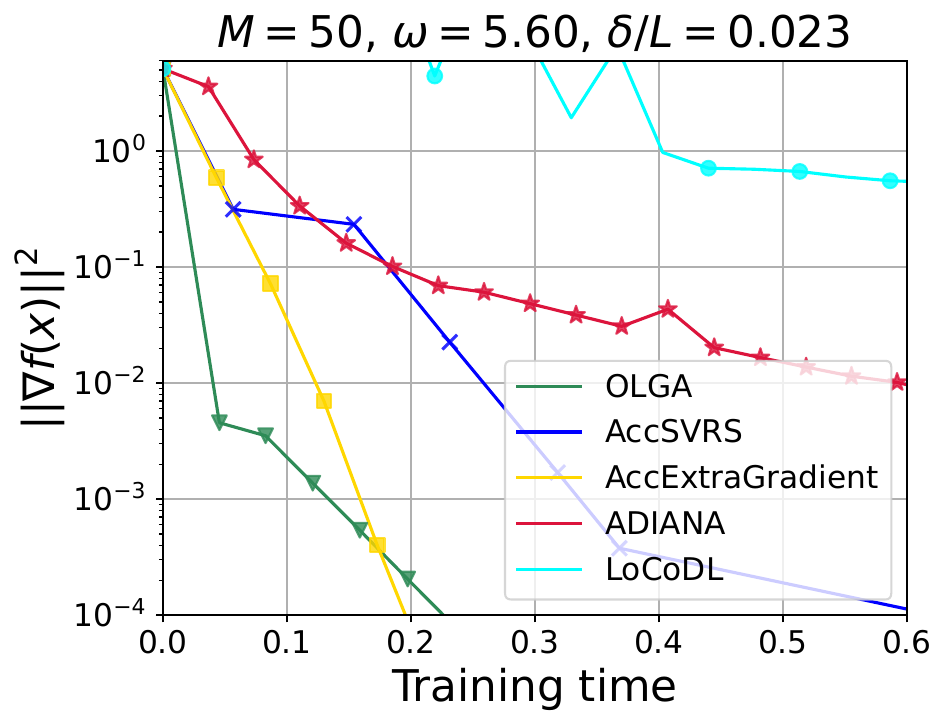}
   \end{subfigure}\\
   \begin{subfigure}{0.310\textwidth}
       \centering
       (a) $\omega = 112.00$
   \end{subfigure}
   \begin{subfigure}{0.310\textwidth}
       \centering
       (b) $\omega = 11.20$
   \end{subfigure}
   \begin{subfigure}{0.310\textwidth}
       \centering
       (c) $\omega = 5.6$
   \end{subfigure}
   \caption{Comparison of state-of-the-art distributed methods. The comparison is made on \eqref{eq:quadr} with $M=50$ and \texttt{mushrooms} dataset. The criterion is the training time on remote CPUs (slow connection). For methods with compression we vary the power of compression $\omega$.}
\end{figure}

\begin{figure}[h!] % <---
   \begin{subfigure}{0.310\textwidth}
       \includegraphics[width=\linewidth]{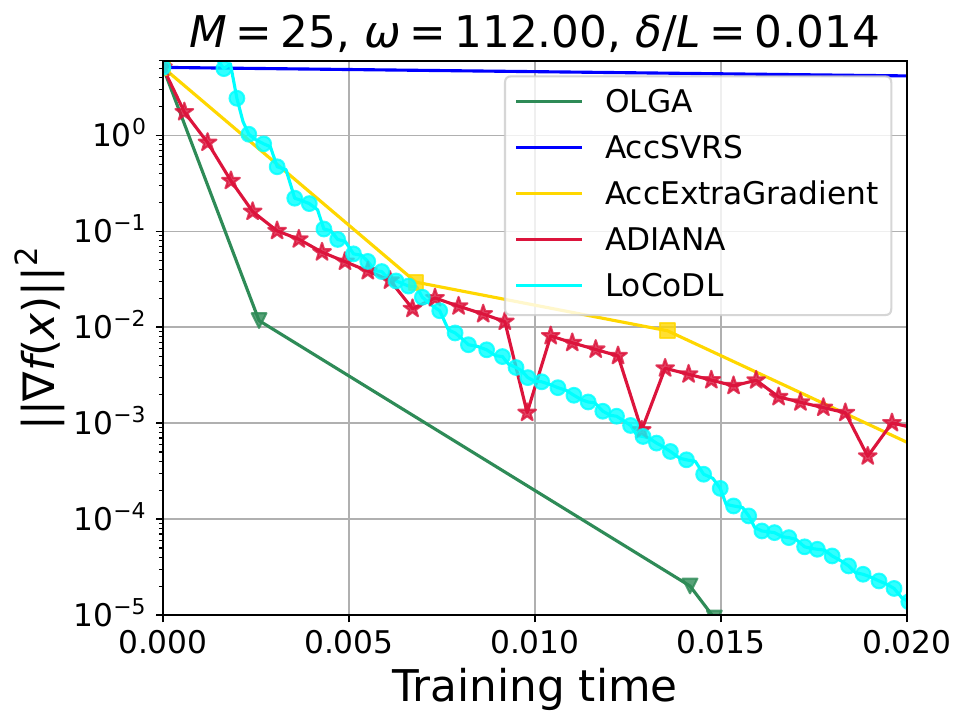}
   \end{subfigure}
   \begin{subfigure}{0.310\textwidth}
       \includegraphics[width=\linewidth]{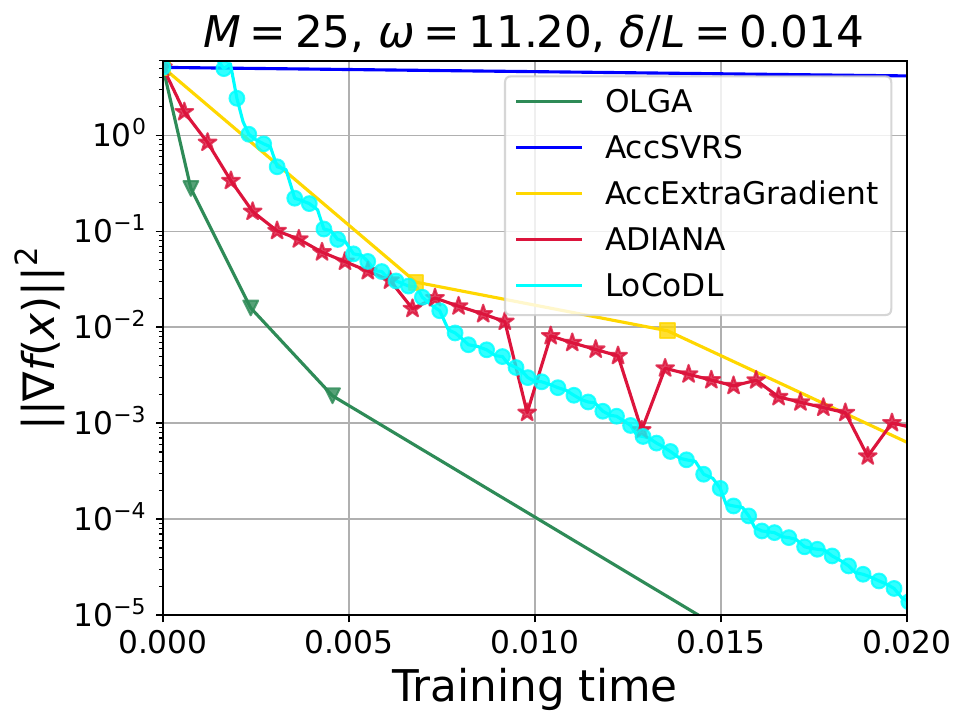}
   \end{subfigure}
   \begin{subfigure}{0.310\textwidth}
       \includegraphics[width=\linewidth]{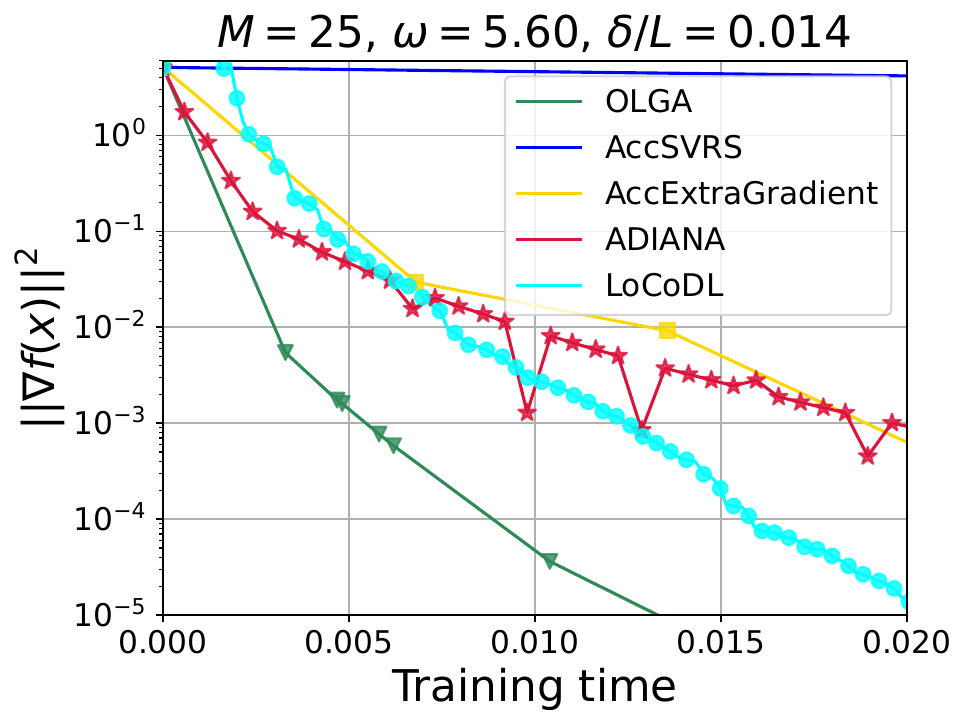}
   \end{subfigure}\\
   \begin{subfigure}{0.310\textwidth}
       \centering
       (a) $\omega = 112.00$
   \end{subfigure}
   \begin{subfigure}{0.310\textwidth}
       \centering
       (b) $\omega = 11.20$
   \end{subfigure}
   \begin{subfigure}{0.310\textwidth}
       \centering
       (c) $\omega = 5.6$
   \end{subfigure}
   \caption{Comparison of state-of-the-art distributed methods. The comparison is made on \eqref{eq:quadr} with $M=25$ and \texttt{mushrooms} dataset. The criterion is the training time on local cluster (fast connection). For methods with compression we vary the power of compression $\omega$.}
\end{figure}

\begin{figure}[h!] % <---
   \begin{subfigure}{0.310\textwidth}
       \includegraphics[width=\linewidth]{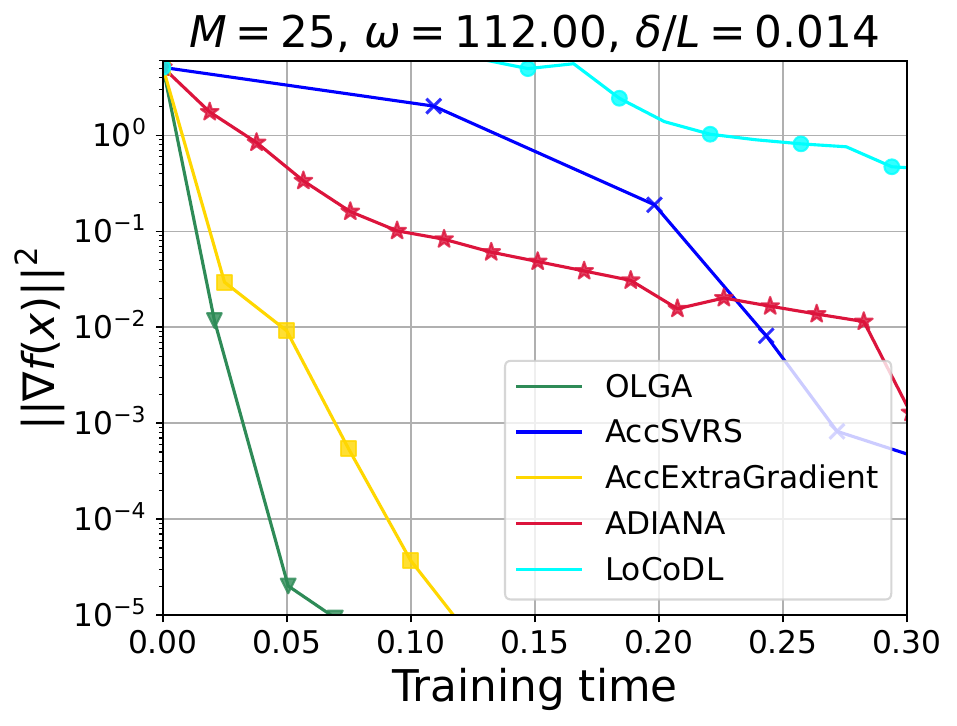}
   \end{subfigure}
   \begin{subfigure}{0.310\textwidth}
       \includegraphics[width=\linewidth]{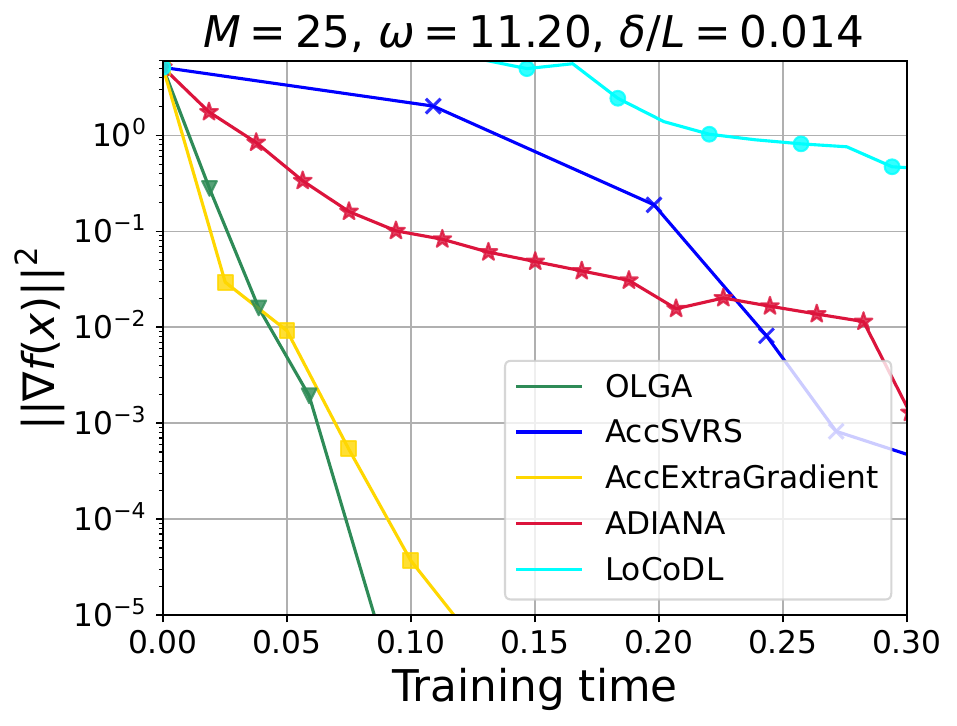}
   \end{subfigure}
   \begin{subfigure}{0.310\textwidth}
       \includegraphics[width=\linewidth]{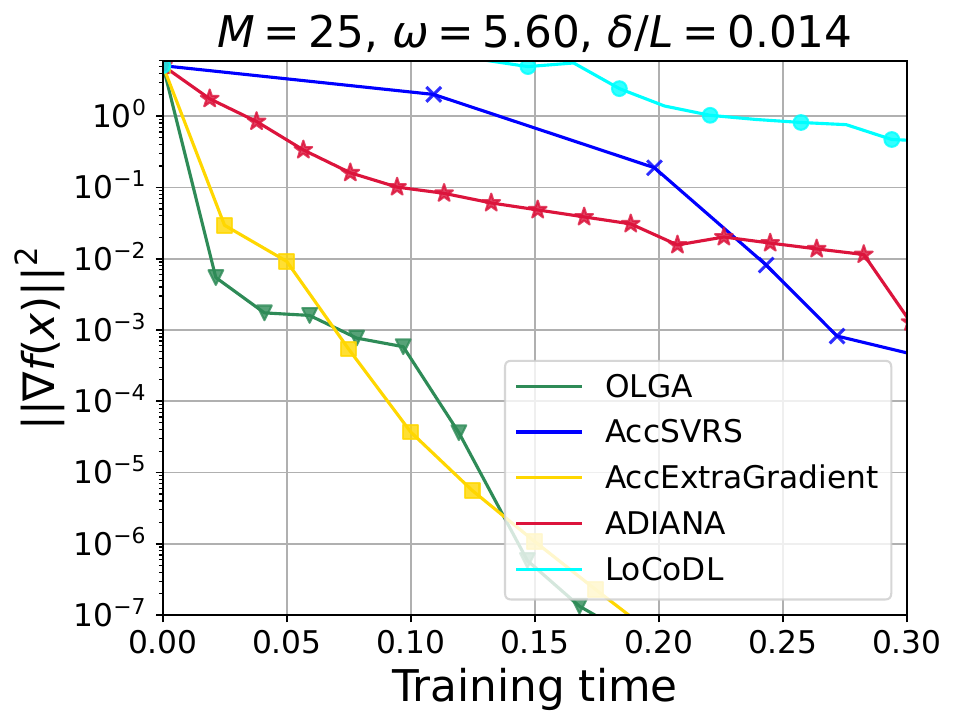}
   \end{subfigure}\\
   \begin{subfigure}{0.310\textwidth}
       \centering
       (a) $\omega = 112.00$
   \end{subfigure}
   \begin{subfigure}{0.310\textwidth}
       \centering
       (b) $\omega = 11.20$
   \end{subfigure}
   \begin{subfigure}{0.310\textwidth}
       \centering
       (c) $\omega = 5.6$
   \end{subfigure}
   \caption{Comparison of state-of-the-art distributed methods. The comparison is made on \eqref{eq:quadr} with $M=25$ and \texttt{mushrooms} dataset. The criterion is the training time on remote CPUs (slow connection). For methods with compression we vary the power of compression $\omega$.}
\end{figure}

\begin{figure}[h!] % <---
   \begin{subfigure}{0.310\textwidth}
       \includegraphics[width=\linewidth]{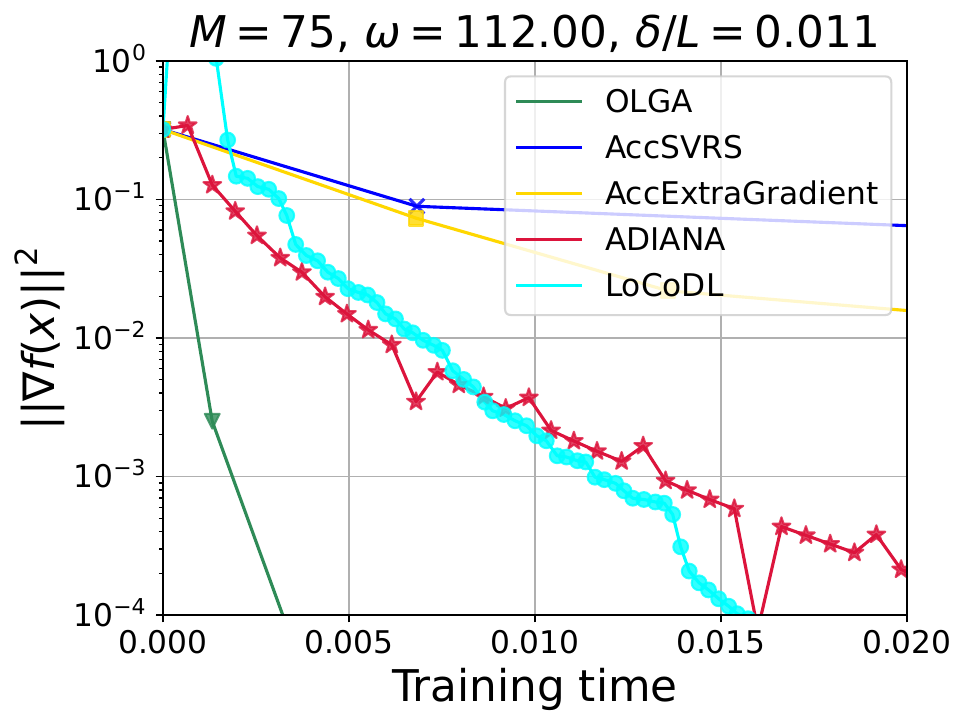}
   \end{subfigure}
   \begin{subfigure}{0.310\textwidth}
       \includegraphics[width=\linewidth]{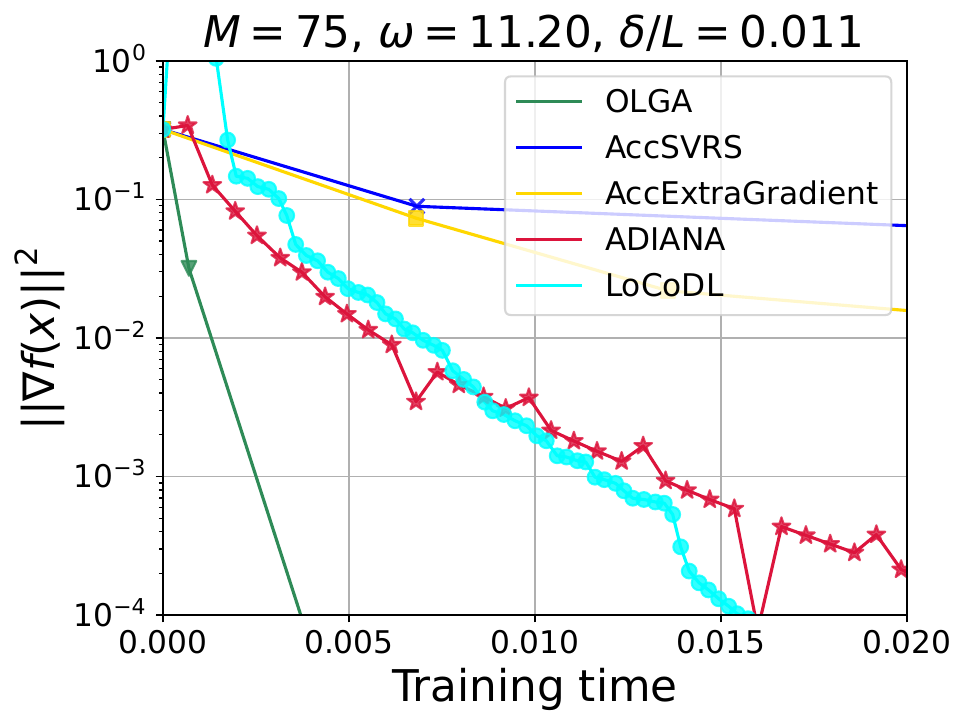}
   \end{subfigure}
   \begin{subfigure}{0.310\textwidth}
       \includegraphics[width=\linewidth]{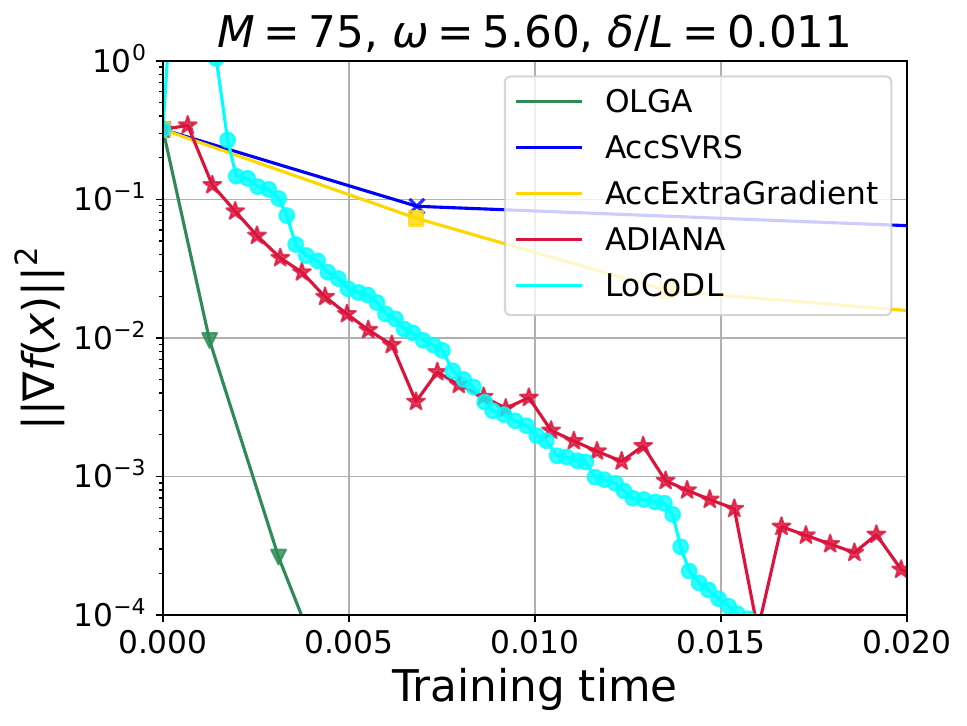}
   \end{subfigure}\\
   \begin{subfigure}{0.310\textwidth}
       \centering
       (a) $\omega = 112.00$
   \end{subfigure}
   \begin{subfigure}{0.310\textwidth}
       \centering
       (b) $\omega = 11.20$
   \end{subfigure}
   \begin{subfigure}{0.310\textwidth}
       \centering
       (c) $\omega = 5.6$
   \end{subfigure}
   \caption{Comparison of state-of-the-art distributed methods. The comparison is made on \eqref{eq:logloss} with $M=75$ and \texttt{mushrooms} dataset. The criterion is the training time on local cluster (fast connection). For methods with compression we vary the power of compression $\omega$.}
\end{figure}

\begin{figure}[h!] % <---
   \begin{subfigure}{0.310\textwidth}
       \includegraphics[width=\linewidth]{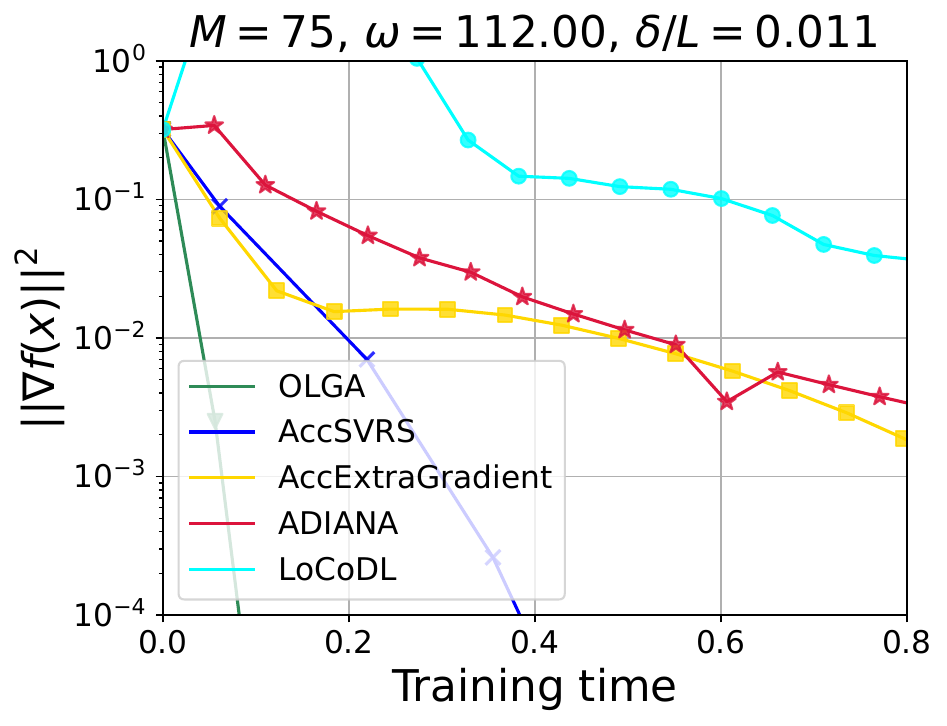}
   \end{subfigure}
   \begin{subfigure}{0.310\textwidth}
       \includegraphics[width=\linewidth]{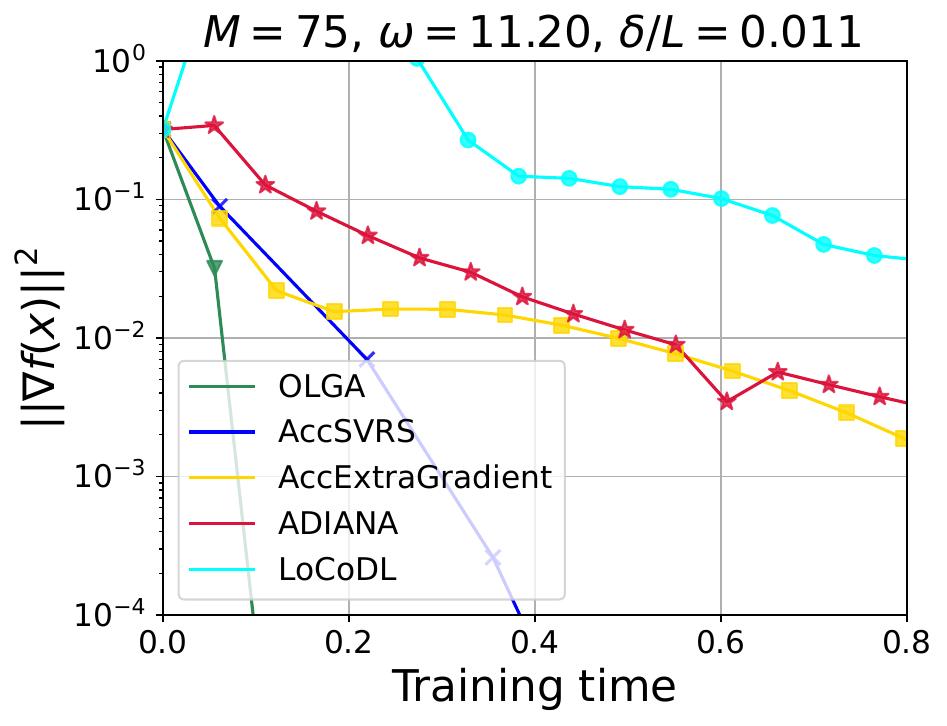}
   \end{subfigure}
   \begin{subfigure}{0.310\textwidth}
       \includegraphics[width=\linewidth]{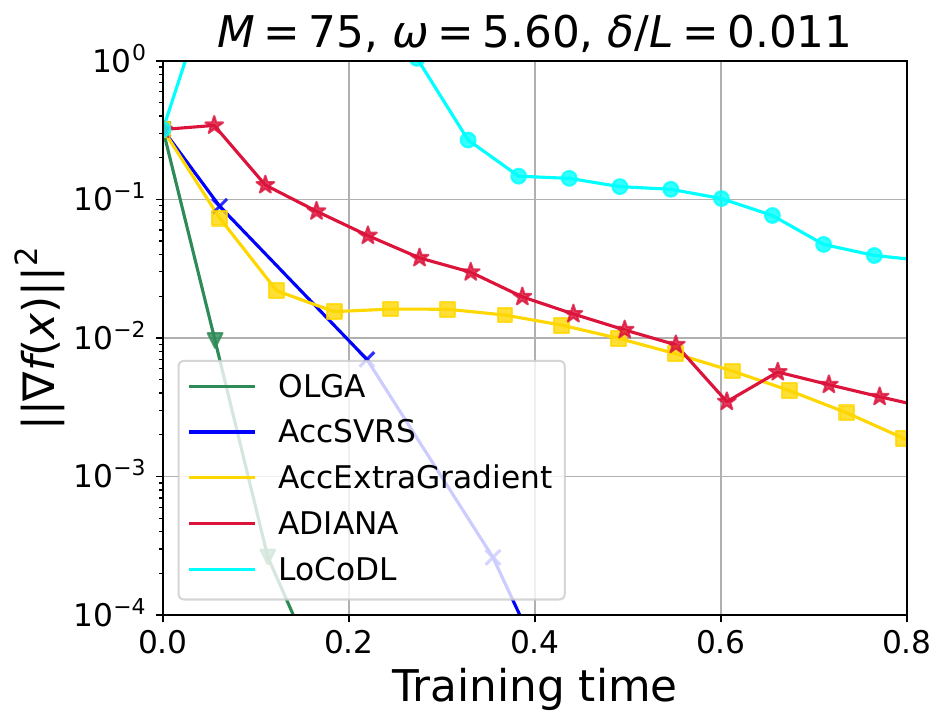}
   \end{subfigure}\\
   \begin{subfigure}{0.310\textwidth}
       \centering
       (a) $\omega = 112.00$
   \end{subfigure}
   \begin{subfigure}{0.310\textwidth}
       \centering
       (b) $\omega = 11.20$
   \end{subfigure}
   \begin{subfigure}{0.310\textwidth}
       \centering
       (c) $\omega = 5.6$
   \end{subfigure}
   \caption{Comparison of state-of-the-art distributed methods. The comparison is made on \eqref{eq:logloss} with $M=75$ and \texttt{mushrooms} dataset. The criterion is the training time on remote CPUs (slow connection). For methods with compression we vary the power of compression $\omega$.}
\end{figure}

\begin{figure}[h!] % <---
   \begin{subfigure}{0.310\textwidth}
       \includegraphics[width=\linewidth]{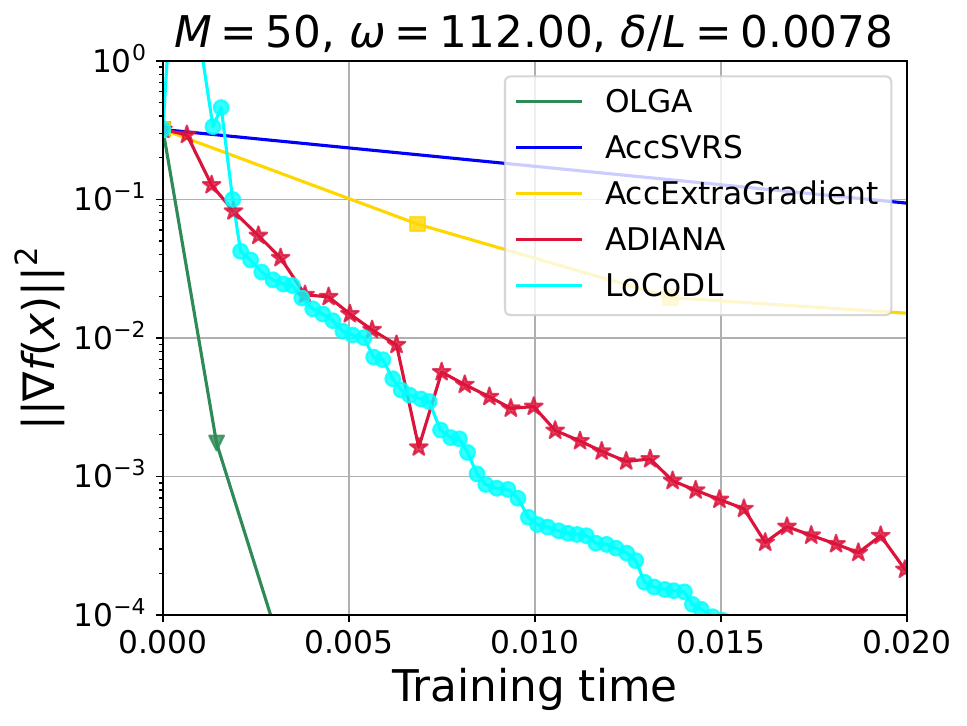}
   \end{subfigure}
   \begin{subfigure}{0.310\textwidth}
       \includegraphics[width=\linewidth]{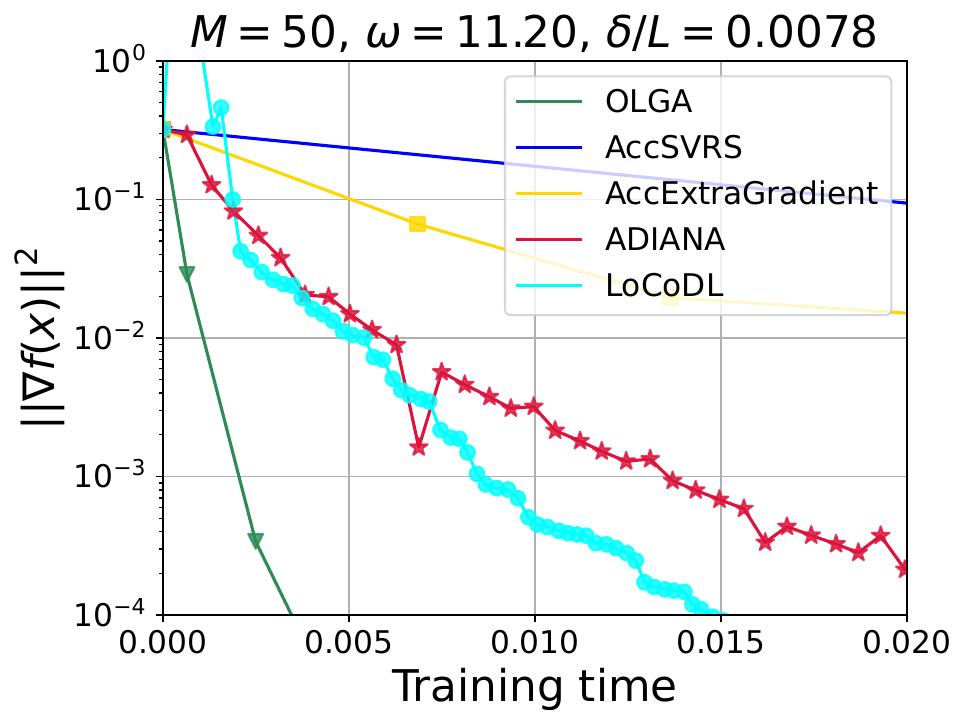}
   \end{subfigure}
   \begin{subfigure}{0.310\textwidth}
       \includegraphics[width=\linewidth]{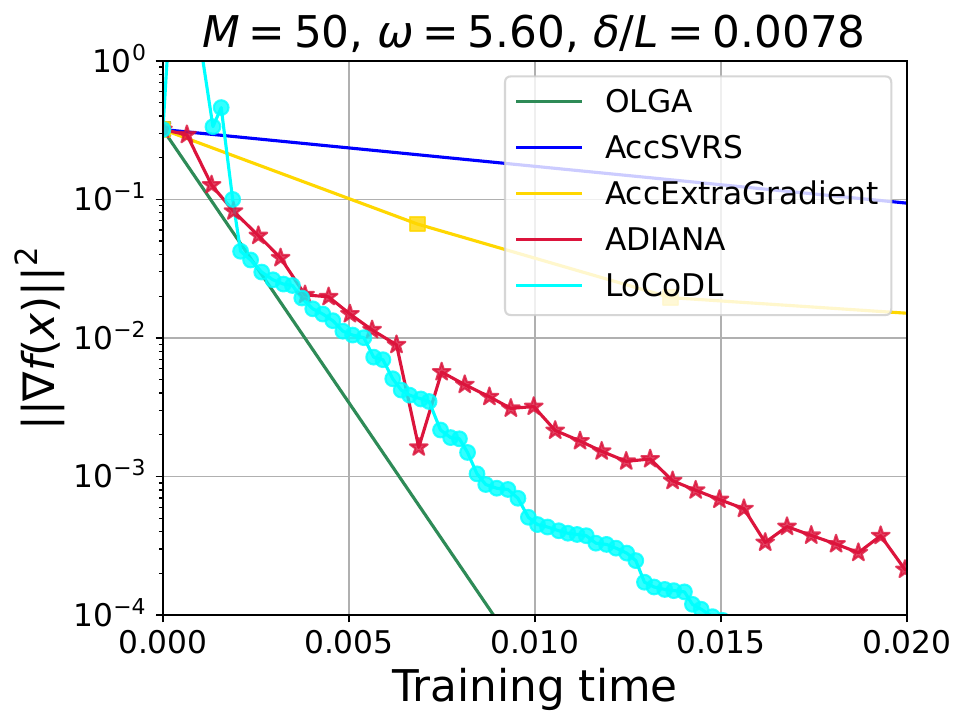}
   \end{subfigure}\\
   \begin{subfigure}{0.310\textwidth}
       \centering
       (a) $\omega = 112.00$
   \end{subfigure}
   \begin{subfigure}{0.310\textwidth}
       \centering
       (b) $\omega = 11.20$
   \end{subfigure}
   \begin{subfigure}{0.310\textwidth}
       \centering
       (c) $\omega = 5.6$
   \end{subfigure}
   \caption{Comparison of state-of-the-art distributed methods. The comparison is made on \eqref{eq:logloss} with $M=50$ and \texttt{mushrooms} dataset. The criterion is the training time on local cluster (fast connection). For methods with compression we vary the power of compression $\omega$.}
\end{figure}

\begin{figure}[h!] % <---
   \begin{subfigure}{0.310\textwidth}
       \includegraphics[width=\linewidth]{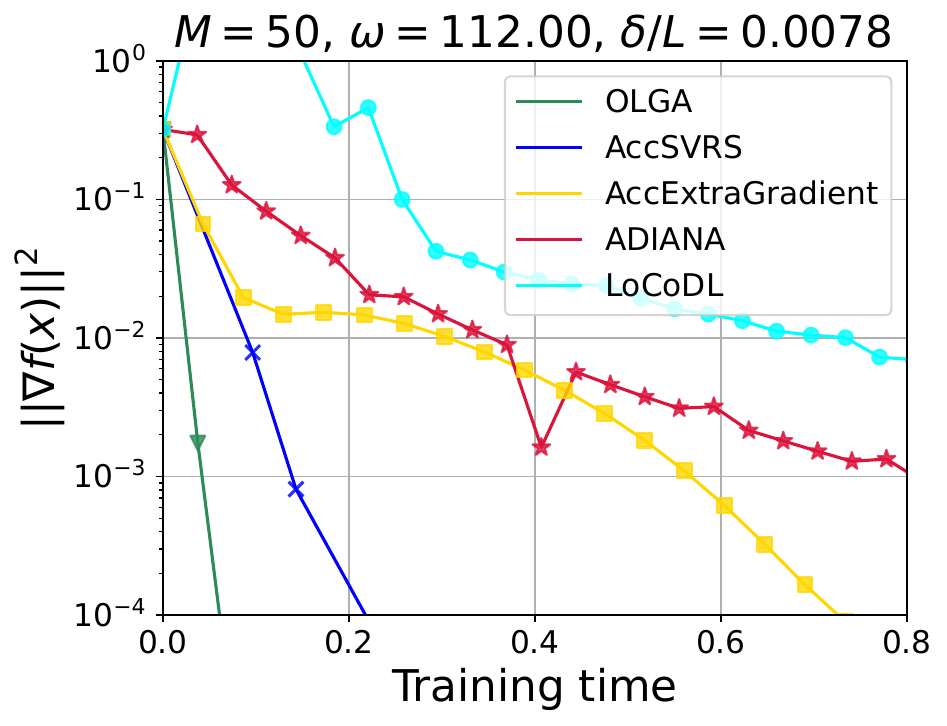}
   \end{subfigure}
   \begin{subfigure}{0.310\textwidth}
       \includegraphics[width=\linewidth]{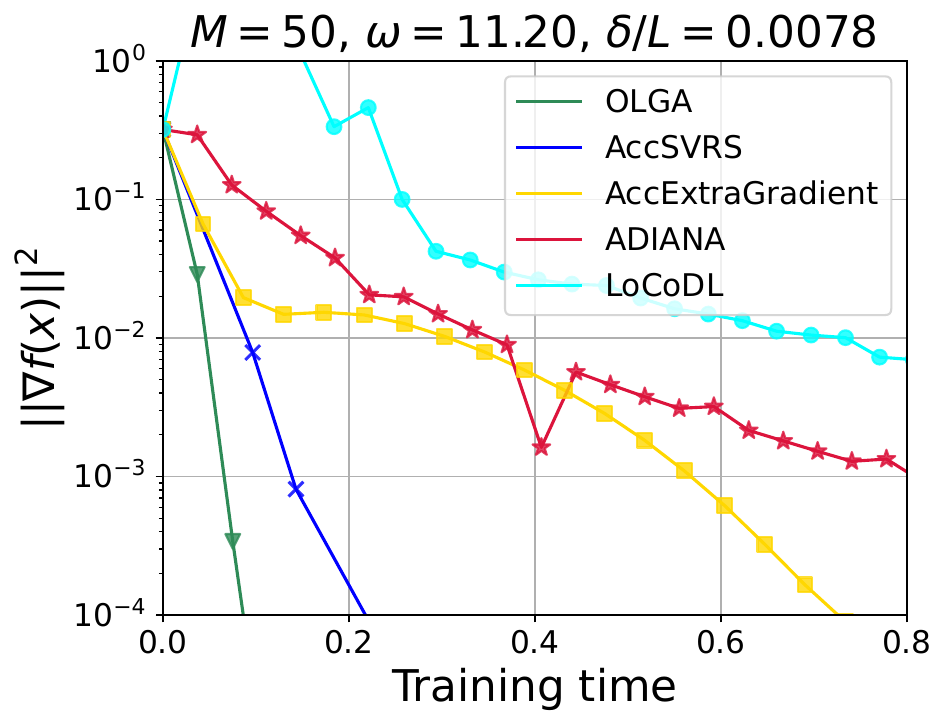}
   \end{subfigure}
   \begin{subfigure}{0.310\textwidth}
       \includegraphics[width=\linewidth]{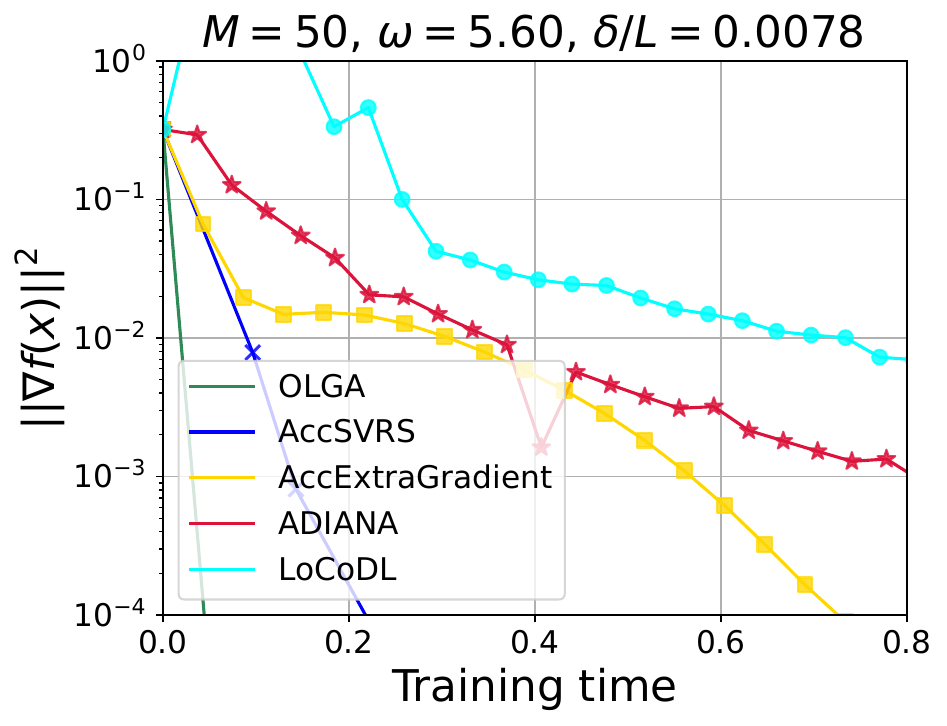}
   \end{subfigure}\\
   \begin{subfigure}{0.310\textwidth}
       \centering
       (a) $\omega = 112.00$
   \end{subfigure}
   \begin{subfigure}{0.310\textwidth}
       \centering
       (b) $\omega = 11.20$
   \end{subfigure}
   \begin{subfigure}{0.310\textwidth}
       \centering
       (c) $\omega = 5.6$
   \end{subfigure}
   \caption{Comparison of state-of-the-art distributed methods. The comparison is made on \eqref{eq:logloss} with $M=50$ and \texttt{mushrooms} dataset. The criterion is the training time on remote CPUs (slow connection). For methods with compression we vary the power of compression $\omega$.}
\end{figure}

\begin{figure}[h!] % <---
   \begin{subfigure}{0.310\textwidth}
       \includegraphics[width=\linewidth]{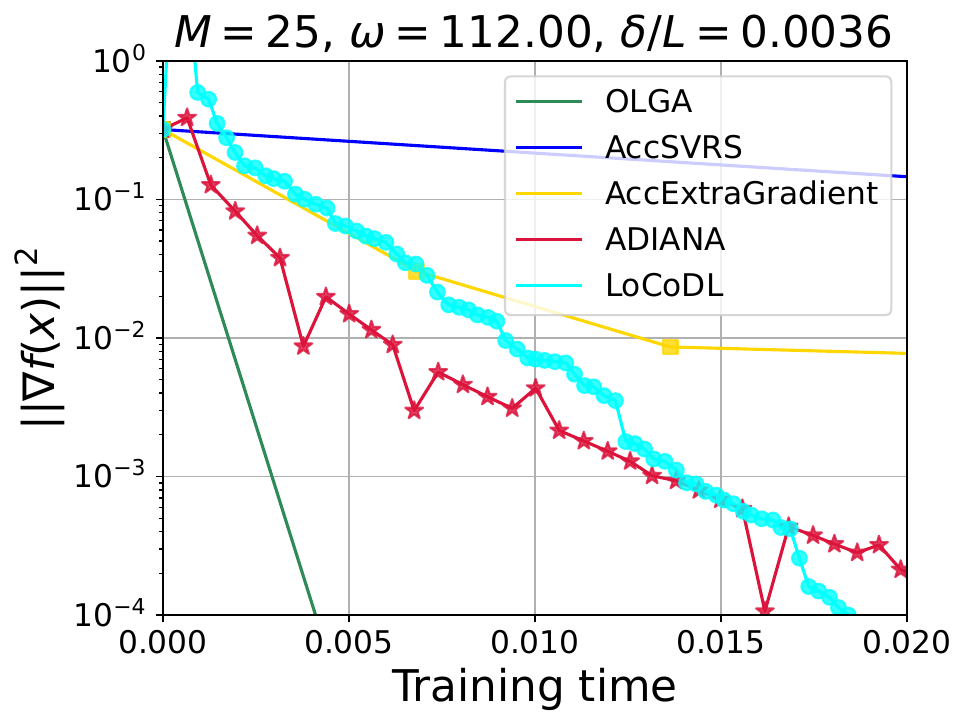}
   \end{subfigure}
   \begin{subfigure}{0.310\textwidth}
       \includegraphics[width=\linewidth]{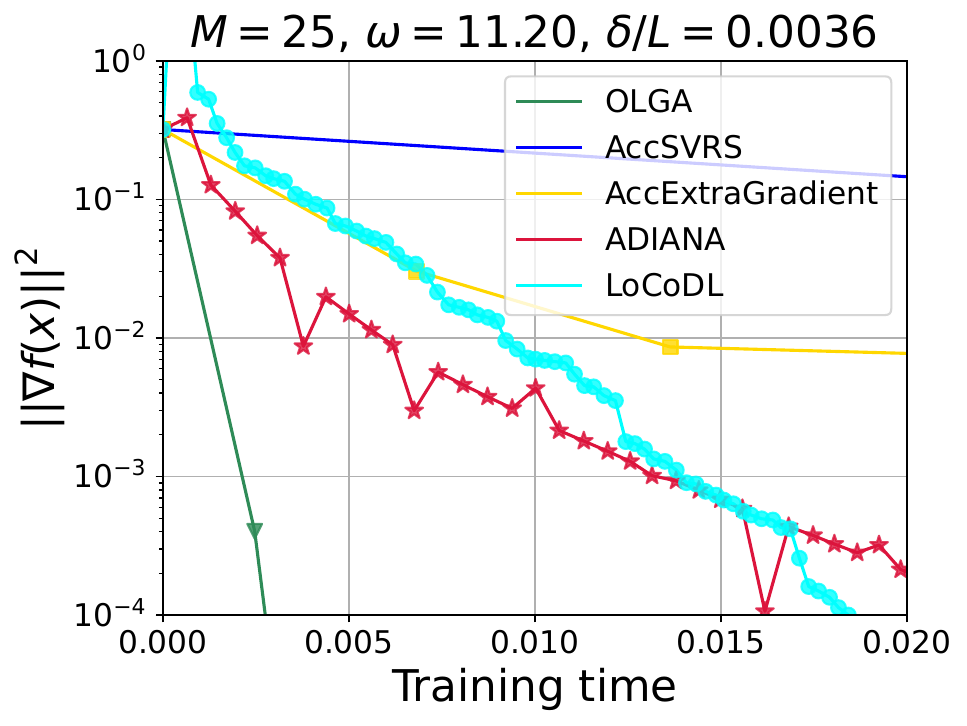}
   \end{subfigure}
   \begin{subfigure}{0.310\textwidth}
       \includegraphics[width=\linewidth]{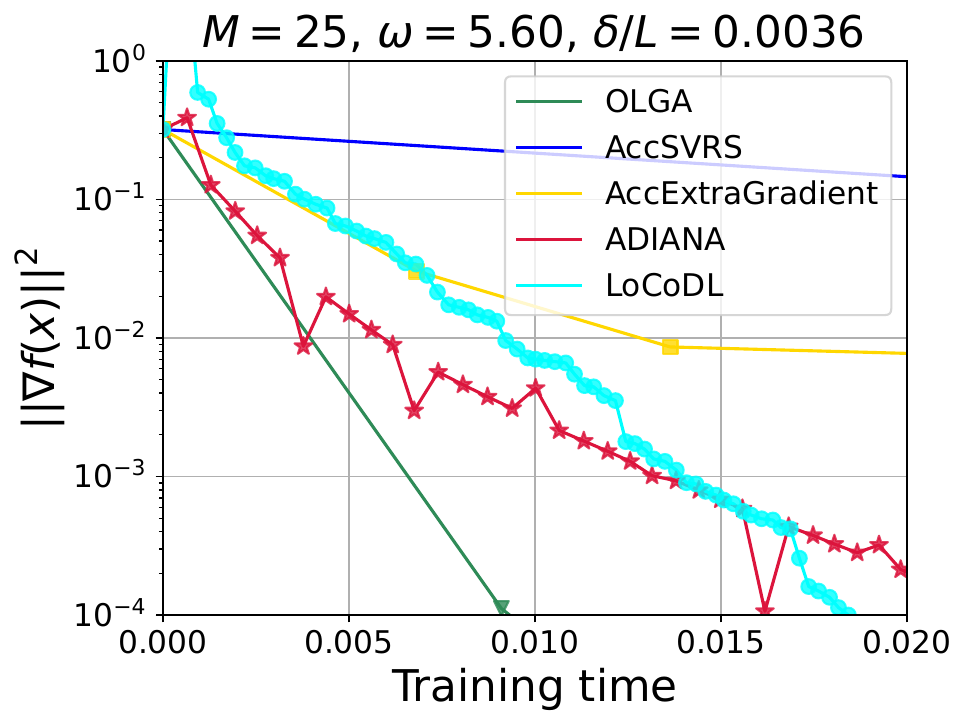}
   \end{subfigure}\\
   \begin{subfigure}{0.310\textwidth}
       \centering
       (a) $\omega = 112.00$
   \end{subfigure}
   \begin{subfigure}{0.310\textwidth}
       \centering
       (b) $\omega = 11.20$
   \end{subfigure}
   \begin{subfigure}{0.310\textwidth}
       \centering
       (c) $\omega = 5.6$
   \end{subfigure}
   \caption{Comparison of state-of-the-art distributed methods. The comparison is made on \eqref{eq:logloss} with $M=25$ and \texttt{mushrooms} dataset. The criterion is the training time on local cluster (fast connection). For methods with compression we vary the power of compression $\omega$.}
\end{figure}

\begin{figure}[h!] % <---
   \begin{subfigure}{0.310\textwidth}
       \includegraphics[width=\linewidth]{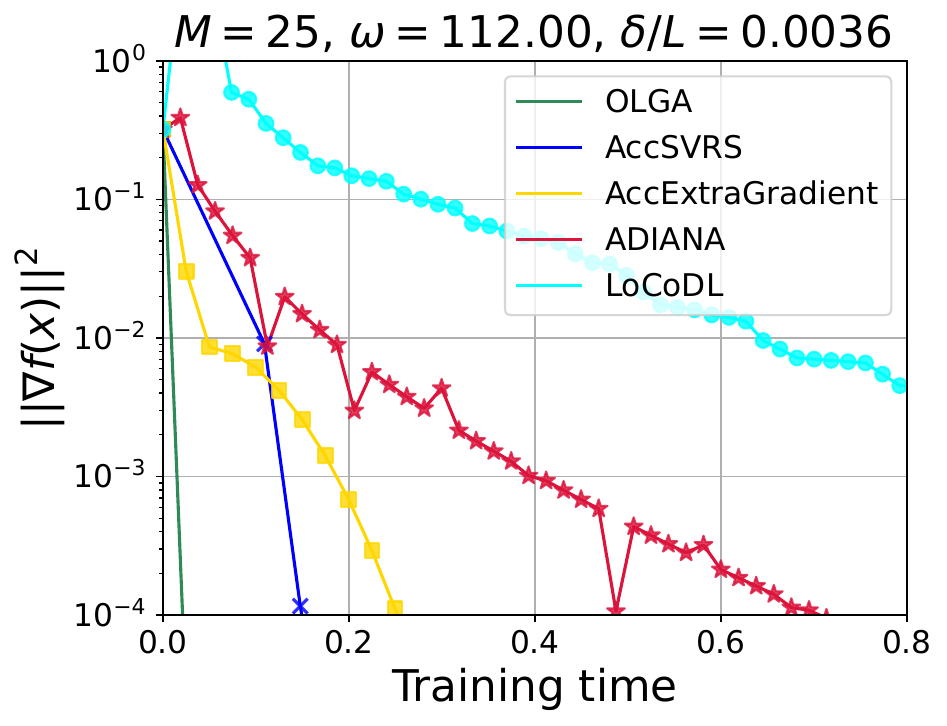}
   \end{subfigure}
   \begin{subfigure}{0.310\textwidth}
       \includegraphics[width=\linewidth]{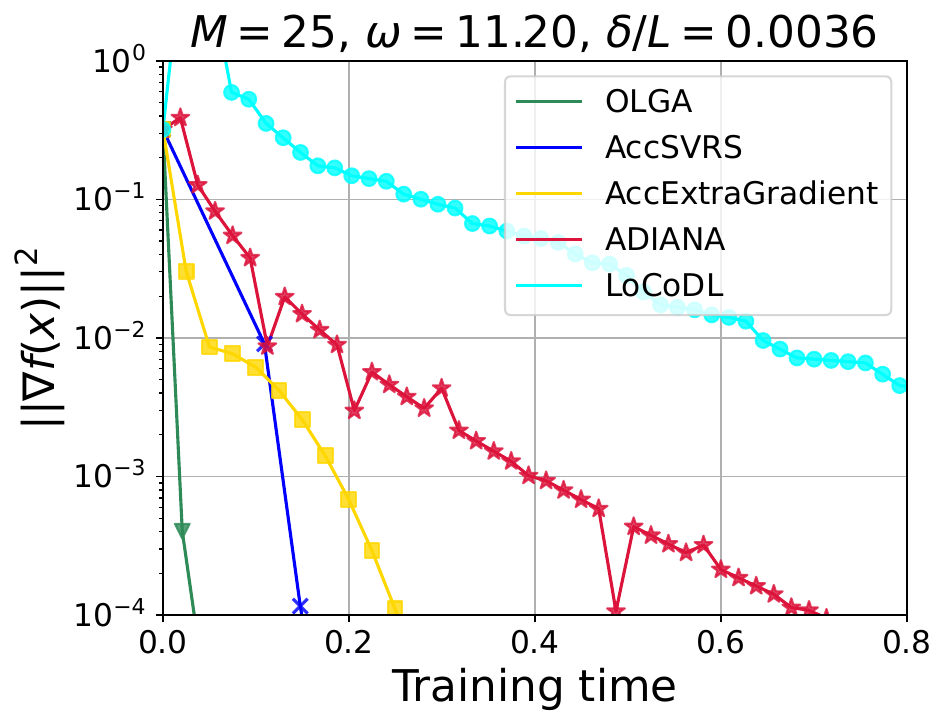}
   \end{subfigure}
   \begin{subfigure}{0.310\textwidth}
       \includegraphics[width=\linewidth]{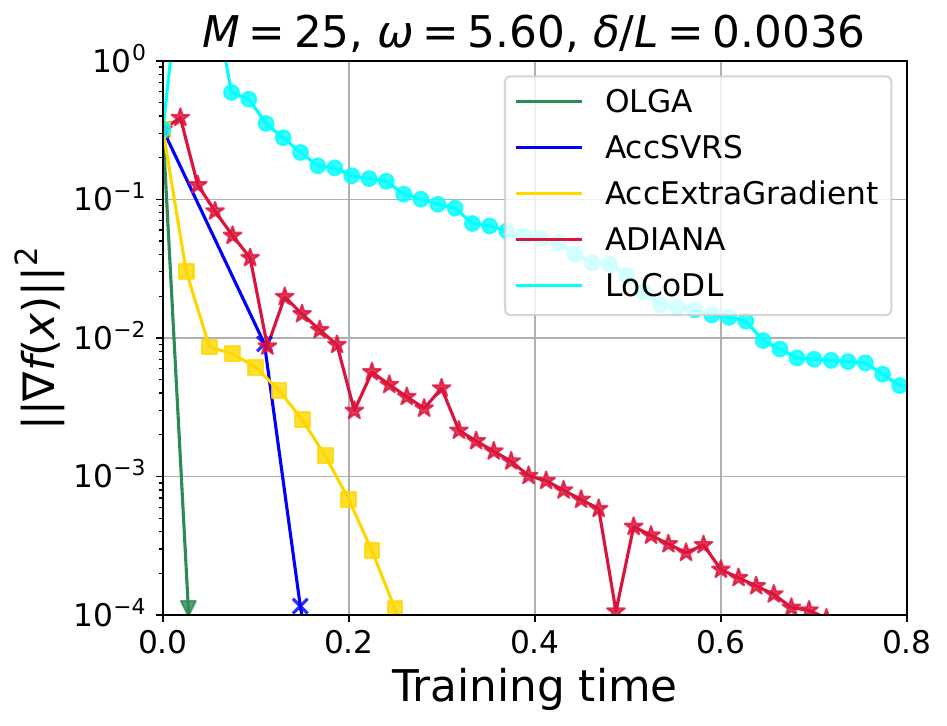}
   \end{subfigure}\\
   \begin{subfigure}{0.310\textwidth}
       \centering
       (a) $\omega = 112.00$
   \end{subfigure}
   \begin{subfigure}{0.310\textwidth}
       \centering
       (b) $\omega = 11.20$
   \end{subfigure}
   \begin{subfigure}{0.310\textwidth}
       \centering
       (c) $\omega = 5.6$
   \end{subfigure}
   \caption{Comparison of state-of-the-art distributed methods. The comparison is made on \eqref{eq:logloss} with $M=25$ and \texttt{mushrooms} dataset. The criterion is the training time on remote CPUs (slow connection). For methods with compression we vary the power of compression $\omega$.}
\end{figure}

\begin{figure}[h!] % <---
   \begin{subfigure}{0.310\textwidth}
       \includegraphics[width=\linewidth]{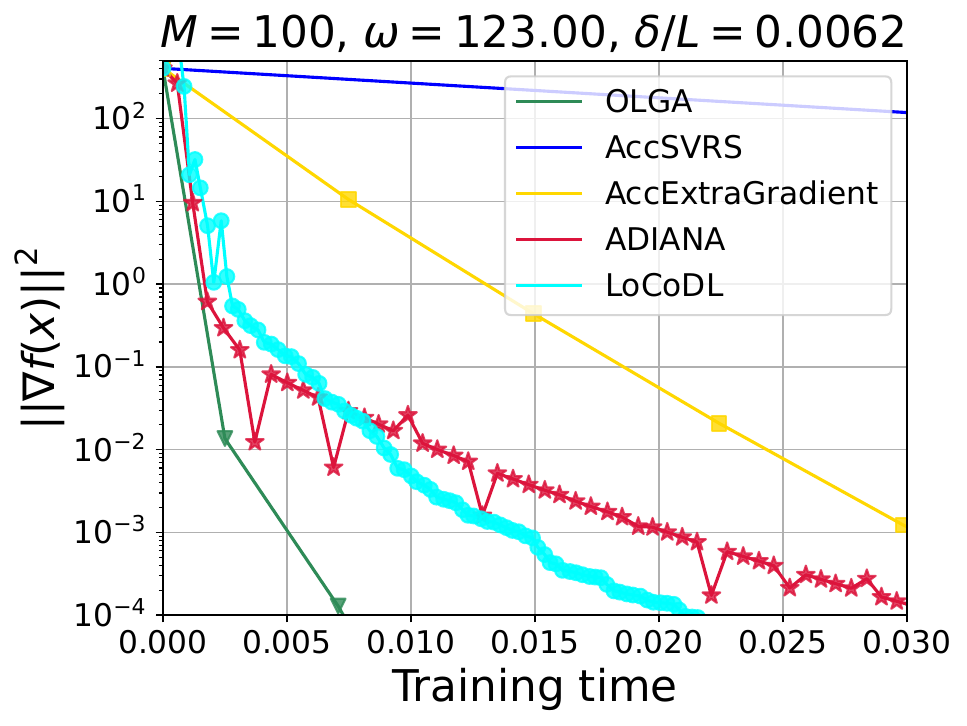}
   \end{subfigure}
   \begin{subfigure}{0.310\textwidth}
       \includegraphics[width=\linewidth]{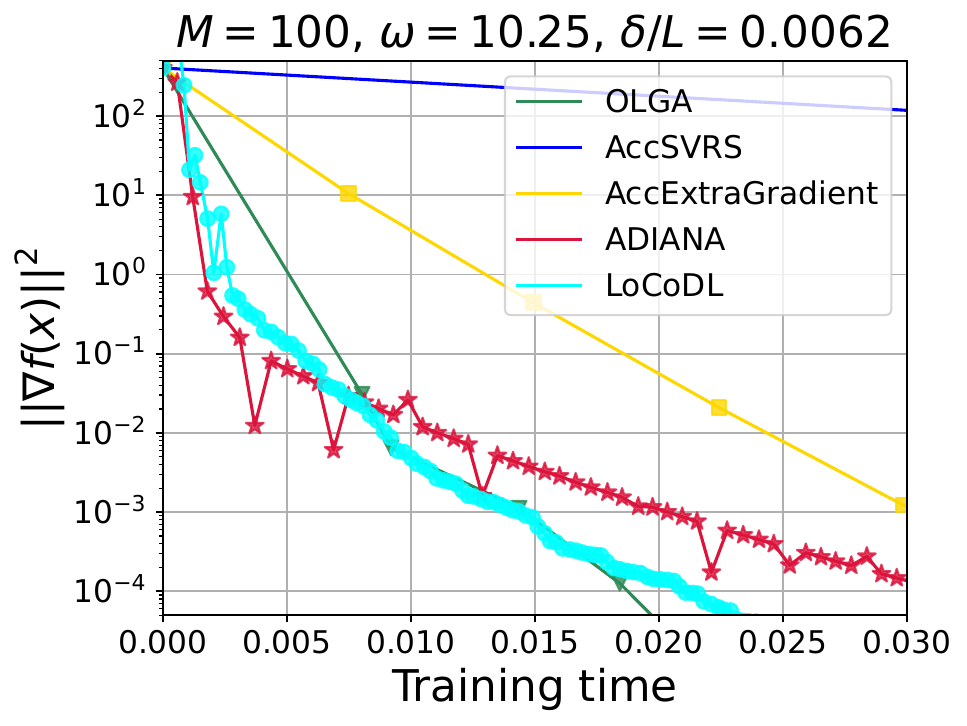}
   \end{subfigure}
   \begin{subfigure}{0.310\textwidth}
       \includegraphics[width=\linewidth]{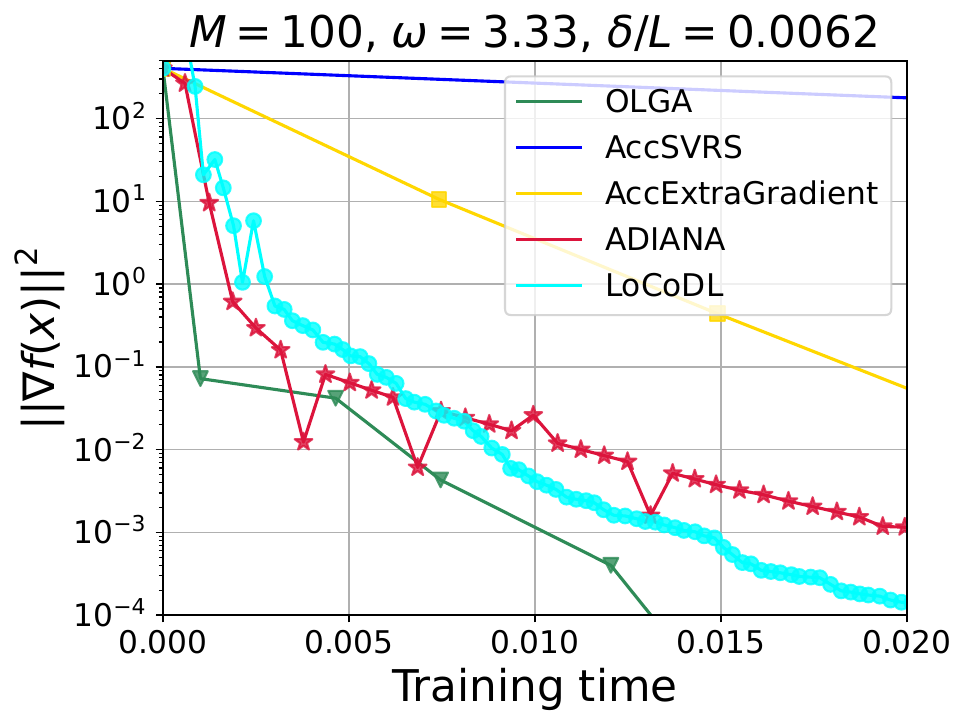}
   \end{subfigure}\\
   \begin{subfigure}{0.310\textwidth}
       \centering
       (a) $\omega = 123.00$
   \end{subfigure}
   \begin{subfigure}{0.310\textwidth}
       \centering
       (b) $\omega = 10.25$
   \end{subfigure}
   \begin{subfigure}{0.310\textwidth}
       \centering
       (c) $\omega = 3.33$
   \end{subfigure}
   \caption{Comparison of state-of-the-art distributed methods. The comparison is made on \eqref{eq:quadr} with $M=100$ and \texttt{a9a} dataset. The criterion is the training time on local cluster (fast connection). For methods with compression we vary the power of compression $\omega$.}
\end{figure}

\begin{figure}[h!] % <---
   \begin{subfigure}{0.310\textwidth}
       \includegraphics[width=\linewidth]{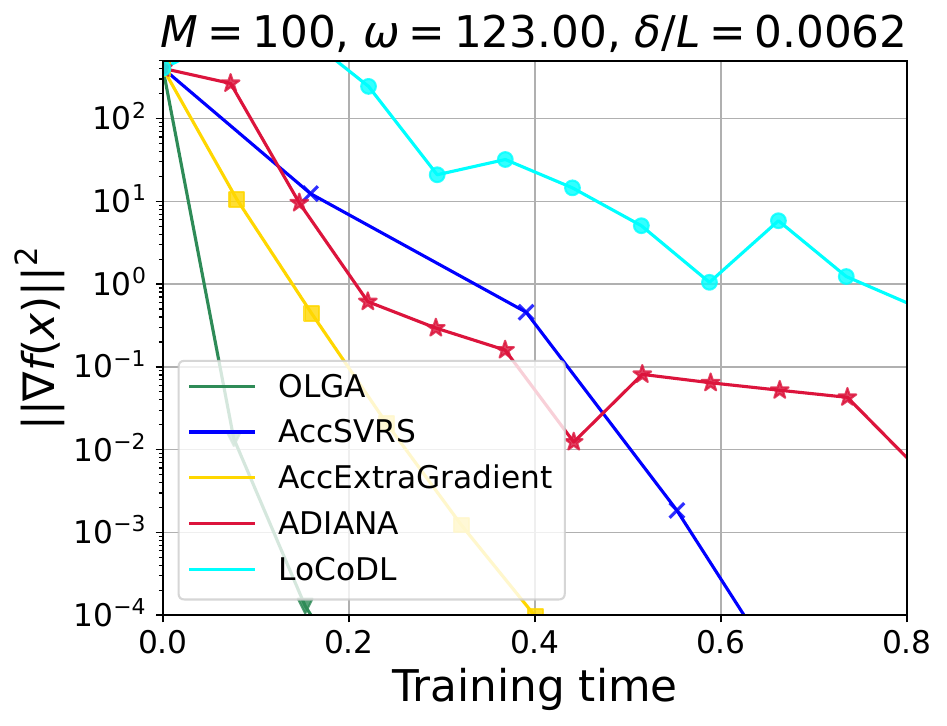}
   \end{subfigure}
   \begin{subfigure}{0.310\textwidth}
       \includegraphics[width=\linewidth]{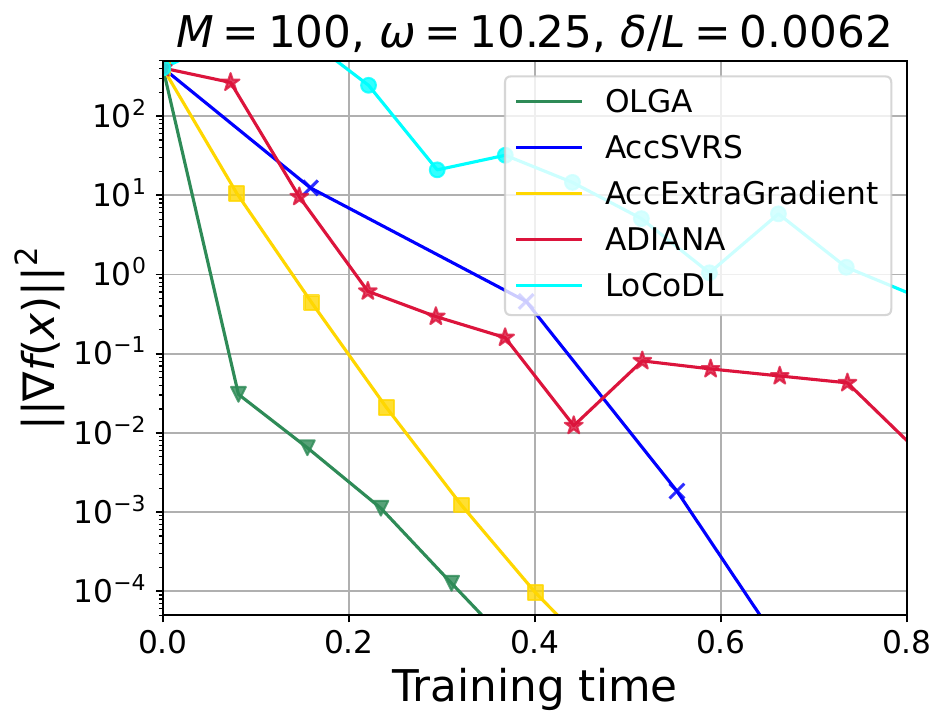}
   \end{subfigure}
   \begin{subfigure}{0.310\textwidth}
       \includegraphics[width=\linewidth]{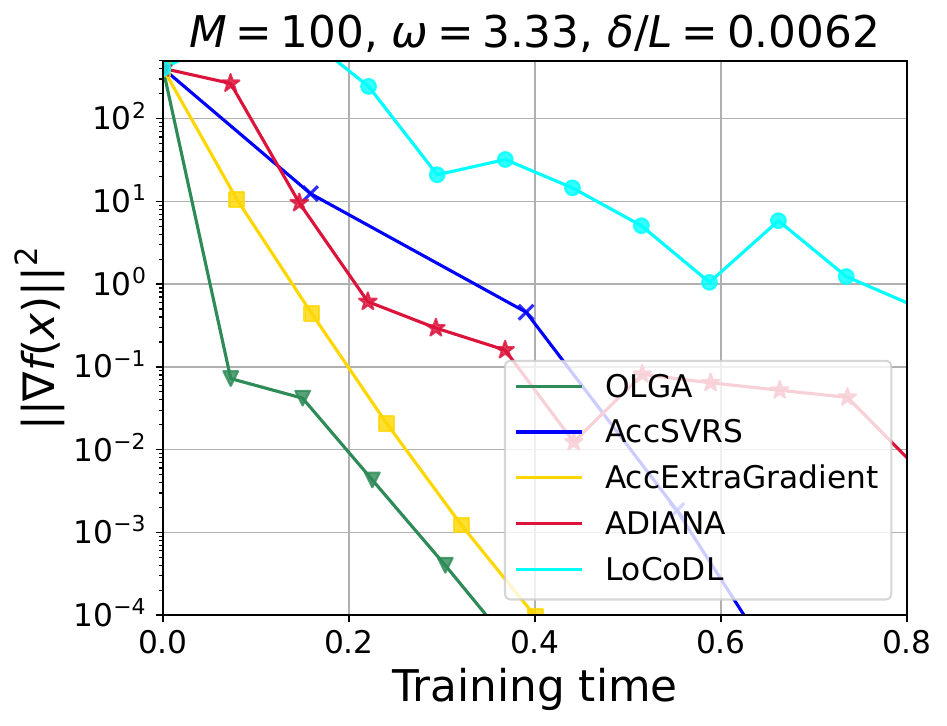}
   \end{subfigure}\\
   \begin{subfigure}{0.310\textwidth}
       \centering
       (a) $\omega = 123.00$
   \end{subfigure}
   \begin{subfigure}{0.310\textwidth}
       \centering
       (b) $\omega = 10.25$
   \end{subfigure}
   \begin{subfigure}{0.310\textwidth}
       \centering
       (c) $\omega = 3.33$
   \end{subfigure}
   \caption{Comparison of state-of-the-art distributed methods. The comparison is made on \eqref{eq:quadr} with $M=100$ and \texttt{a9a} dataset. The criterion is the training time on remote CPUs (slow connection). For methods with compression we vary the power of compression $\omega$.}
\end{figure}

\begin{figure}[h!] % <---
   \begin{subfigure}{0.310\textwidth}
       \includegraphics[width=\linewidth]{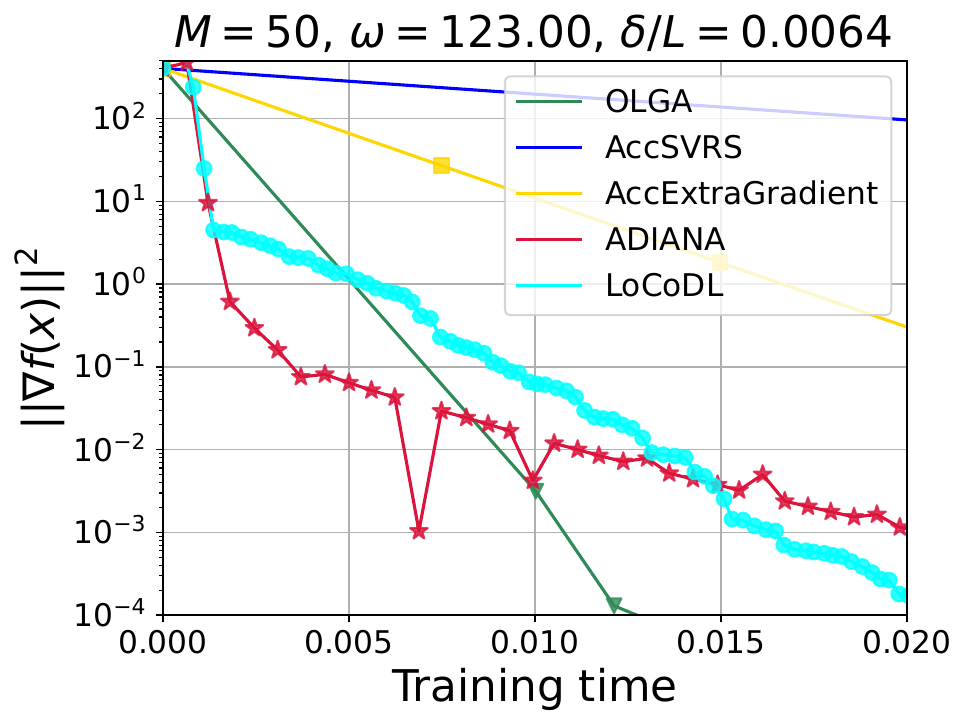}
   \end{subfigure}
   \begin{subfigure}{0.310\textwidth}
       \includegraphics[width=\linewidth]{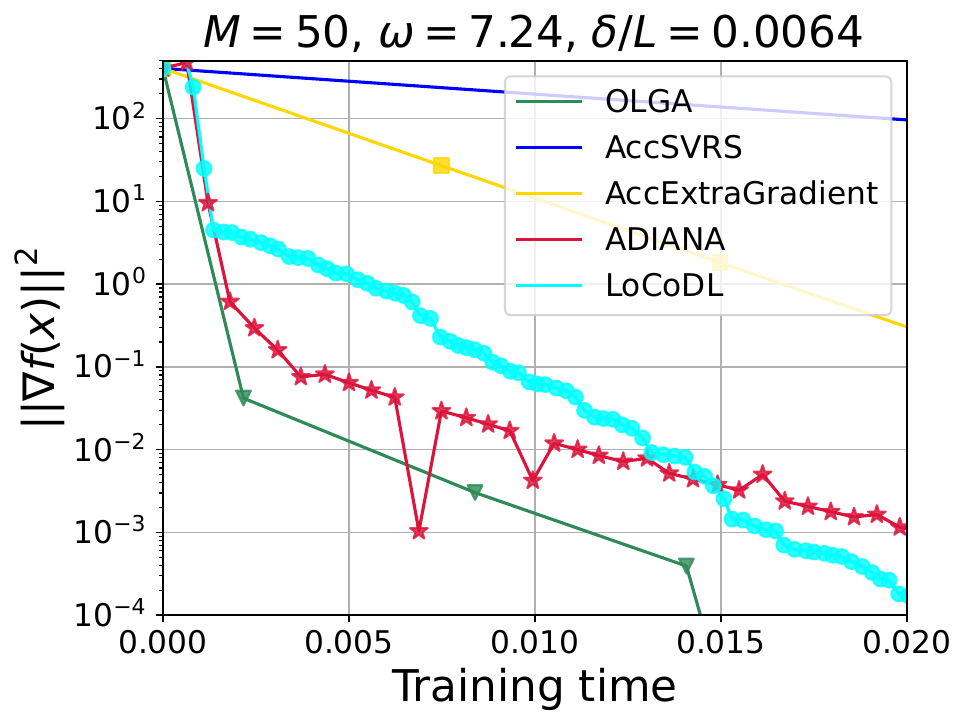}
   \end{subfigure}
   \begin{subfigure}{0.310\textwidth}
       \includegraphics[width=\linewidth]{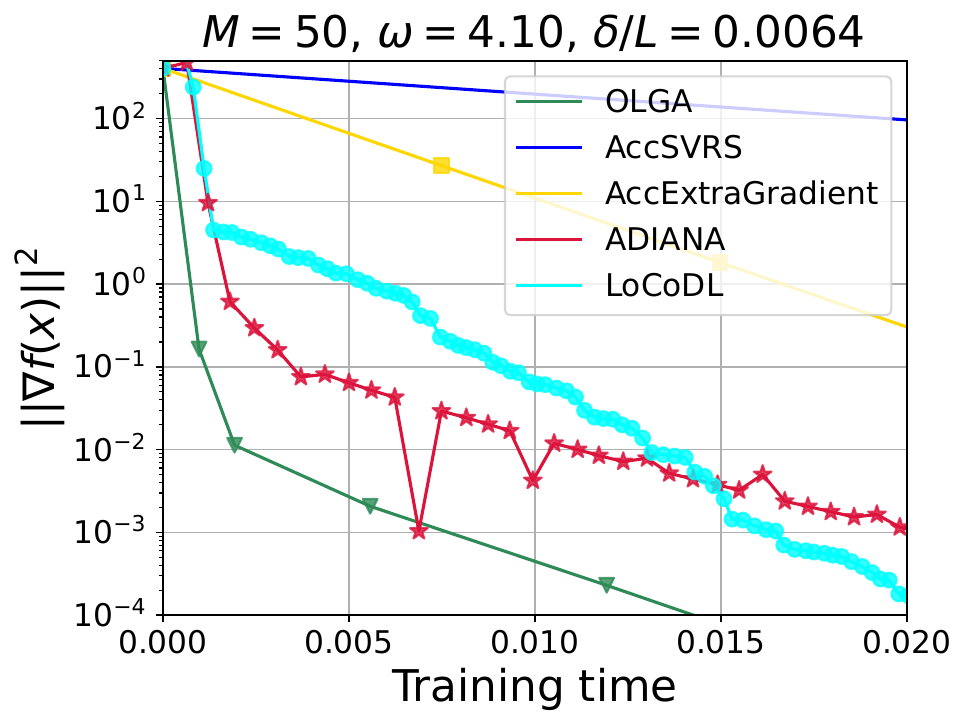}
   \end{subfigure}\\
   \begin{subfigure}{0.310\textwidth}
       \centering
       (a) $\omega = 123.00$
   \end{subfigure}
   \begin{subfigure}{0.310\textwidth}
       \centering
       (b) $\omega = 7.24$
   \end{subfigure}
   \begin{subfigure}{0.310\textwidth}
       \centering
       (c) $\omega = 4.10$
   \end{subfigure}
   \caption{Comparison of state-of-the-art distributed methods. The comparison is made on \eqref{eq:quadr} with $M=50$ and \texttt{a9a} dataset. The criterion is the training time on local cluster (fast connection). For methods with compression we vary the power of compression $\omega$.}
\end{figure}

\begin{figure}[h!] % <---
   \begin{subfigure}{0.310\textwidth}
       \includegraphics[width=\linewidth]{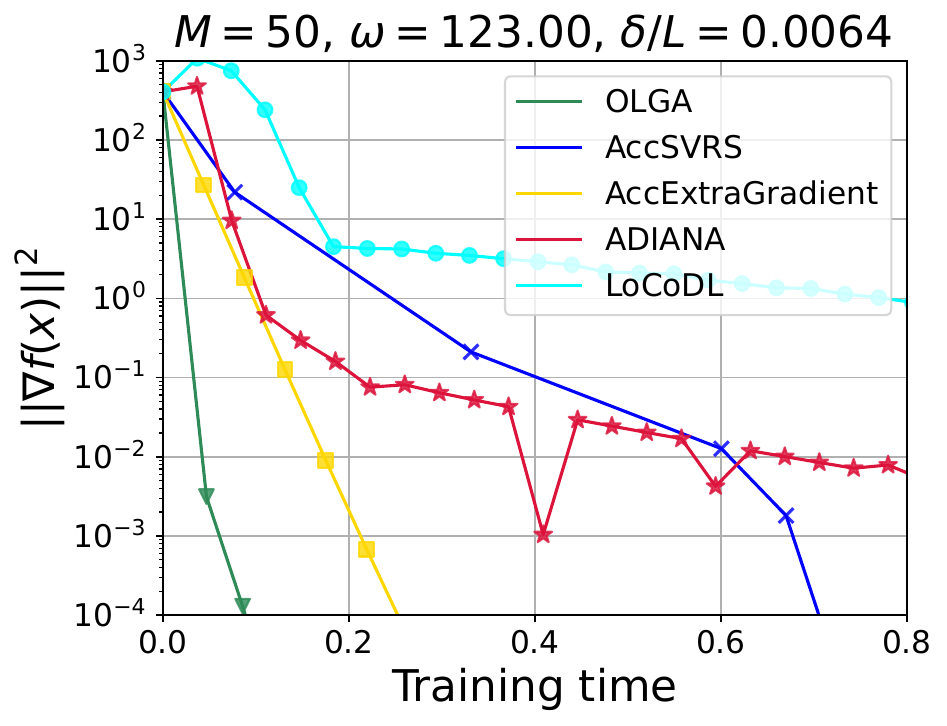}
   \end{subfigure}
   \begin{subfigure}{0.310\textwidth}
       \includegraphics[width=\linewidth]{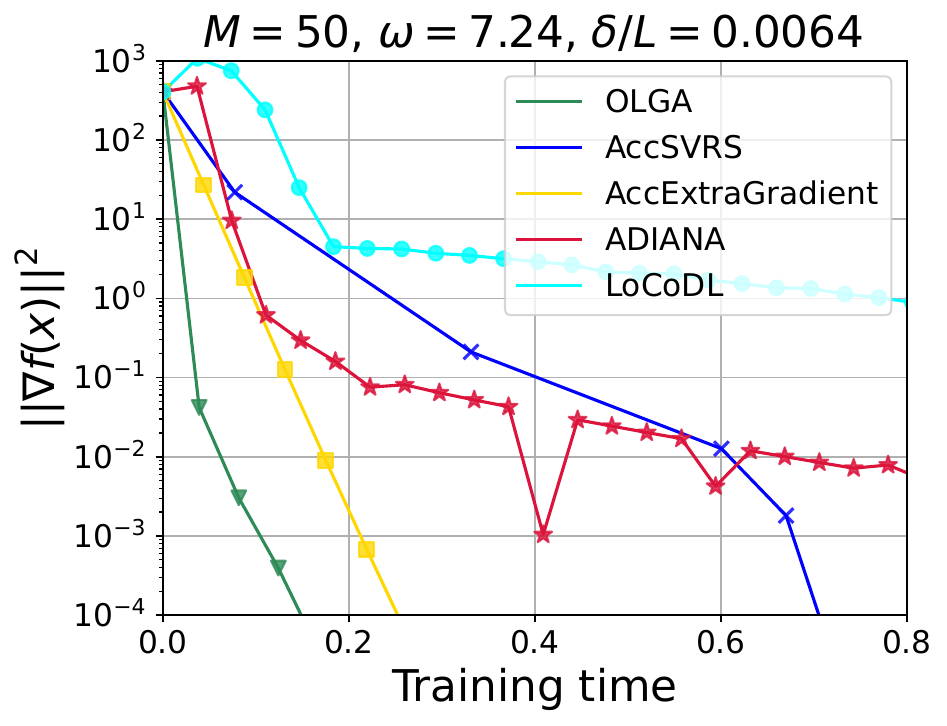}
   \end{subfigure}
   \begin{subfigure}{0.310\textwidth}
       \includegraphics[width=\linewidth]{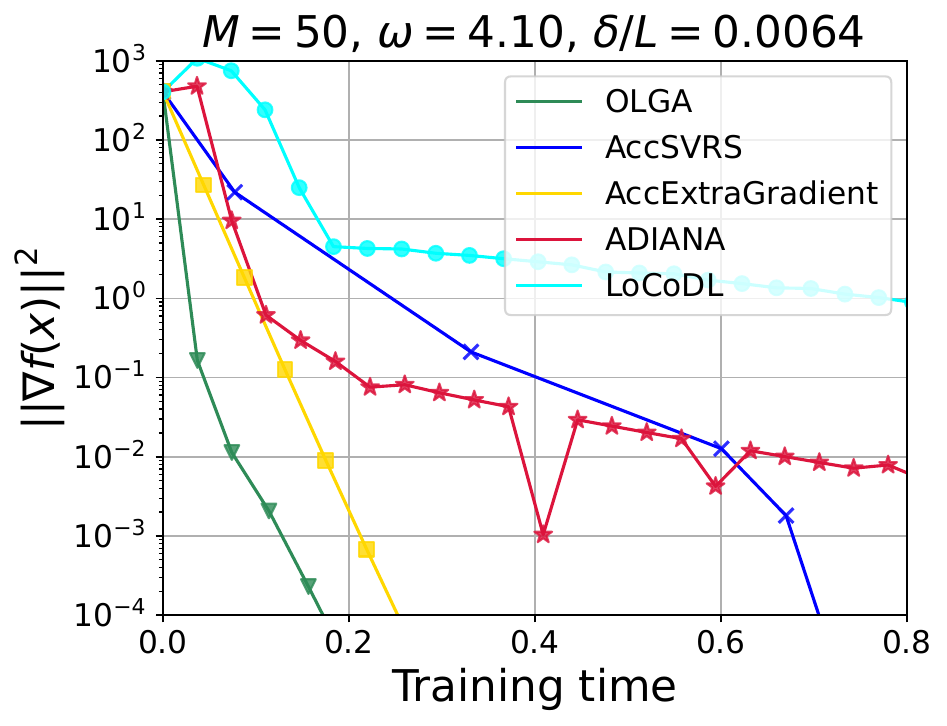}
   \end{subfigure}\\
   \begin{subfigure}{0.310\textwidth}
       \centering
       (a) $\omega = 123.00$
   \end{subfigure}
   \begin{subfigure}{0.310\textwidth}
       \centering
       (b) $\omega = 7.24$
   \end{subfigure}
   \begin{subfigure}{0.310\textwidth}
       \centering
       (c) $\omega = 4.10$
   \end{subfigure}
   \caption{Comparison of state-of-the-art distributed methods. The comparison is made on \eqref{eq:quadr} with $M=50$ and \texttt{a9a} dataset. The criterion is the training time on remote CPUs (slow connection). For methods with compression we vary the power of compression $\omega$.}
\end{figure}

\begin{figure}[h!] % <---
   \begin{subfigure}{0.310\textwidth}
       \includegraphics[width=\linewidth]{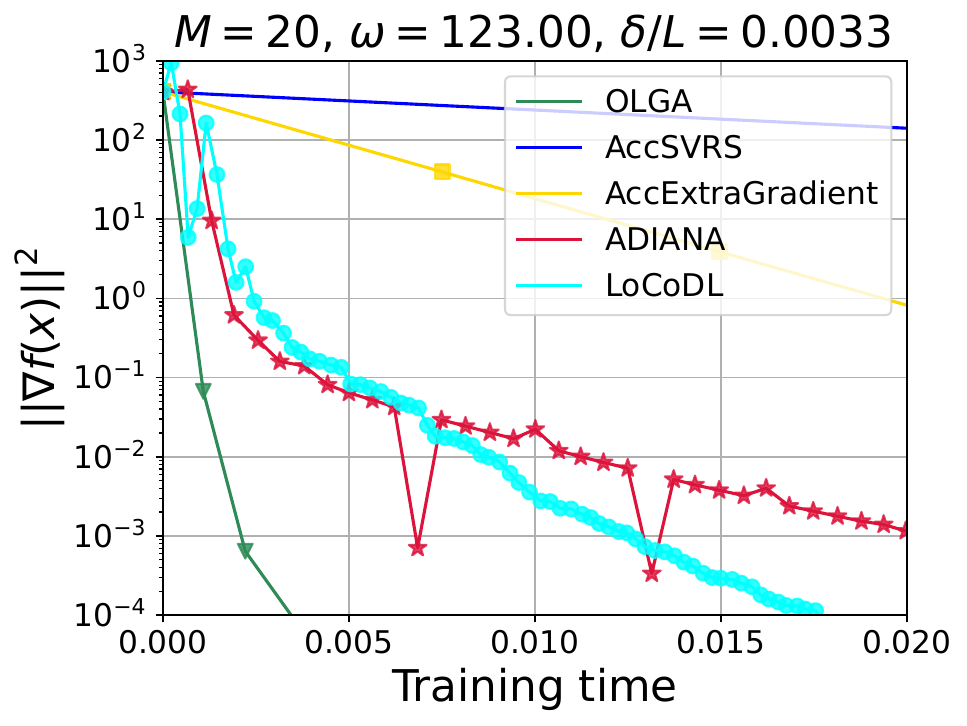}
   \end{subfigure}
   \begin{subfigure}{0.310\textwidth}
       \includegraphics[width=\linewidth]{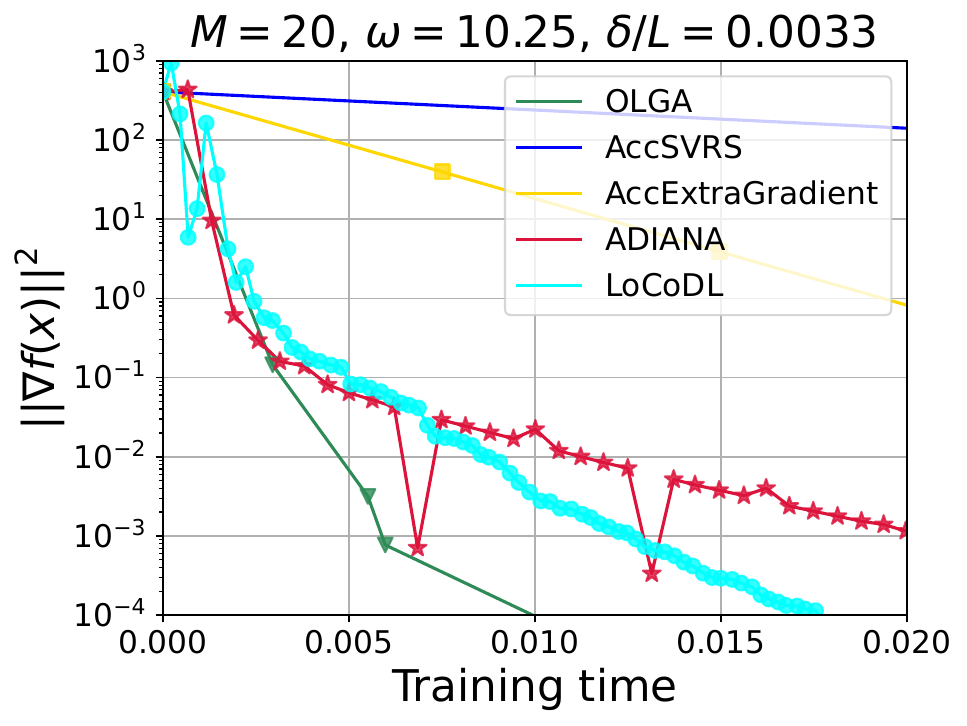}
   \end{subfigure}
   \begin{subfigure}{0.310\textwidth}
       \includegraphics[width=\linewidth]{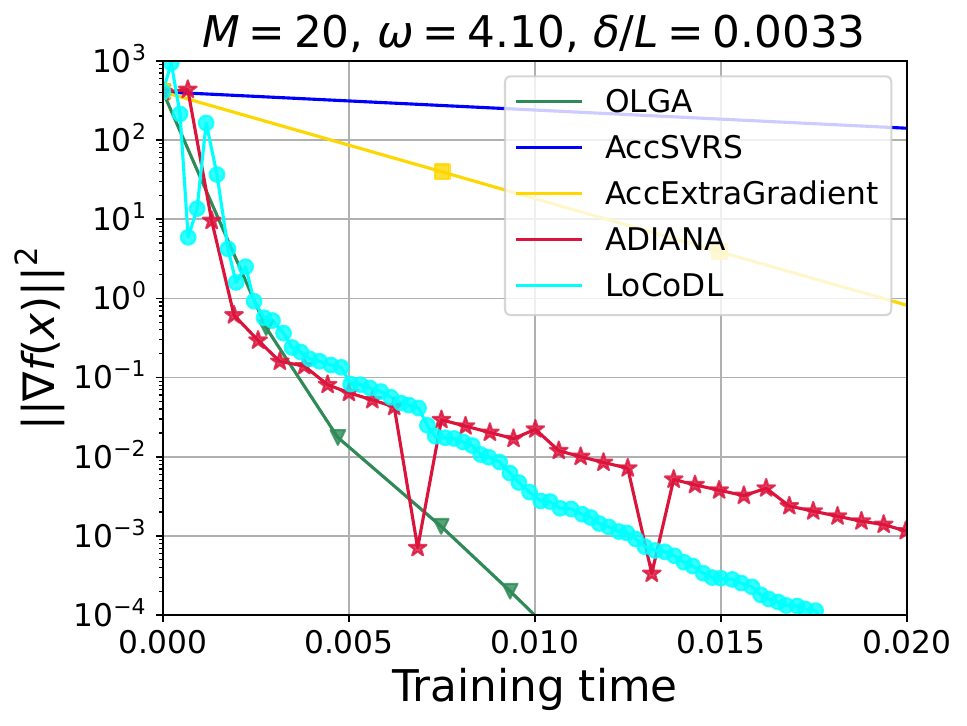}
   \end{subfigure}\\
   \begin{subfigure}{0.310\textwidth}
       \centering
       (a) $\omega = 123.00$
   \end{subfigure}
   \begin{subfigure}{0.310\textwidth}
       \centering
       (b) $\omega = 10.25$
   \end{subfigure}
   \begin{subfigure}{0.310\textwidth}
       \centering
       (c) $\omega = 4.10$
   \end{subfigure}
   \caption{Comparison of state-of-the-art distributed methods. The comparison is made on \eqref{eq:quadr} with $M=20$ and \texttt{a9a} dataset. The criterion is the training time on local cluster (fast connection). For methods with compression we vary the power of compression $\omega$.}
\end{figure}

\begin{figure}[h!] % <---
   \begin{subfigure}{0.310\textwidth}
       \includegraphics[width=\linewidth]{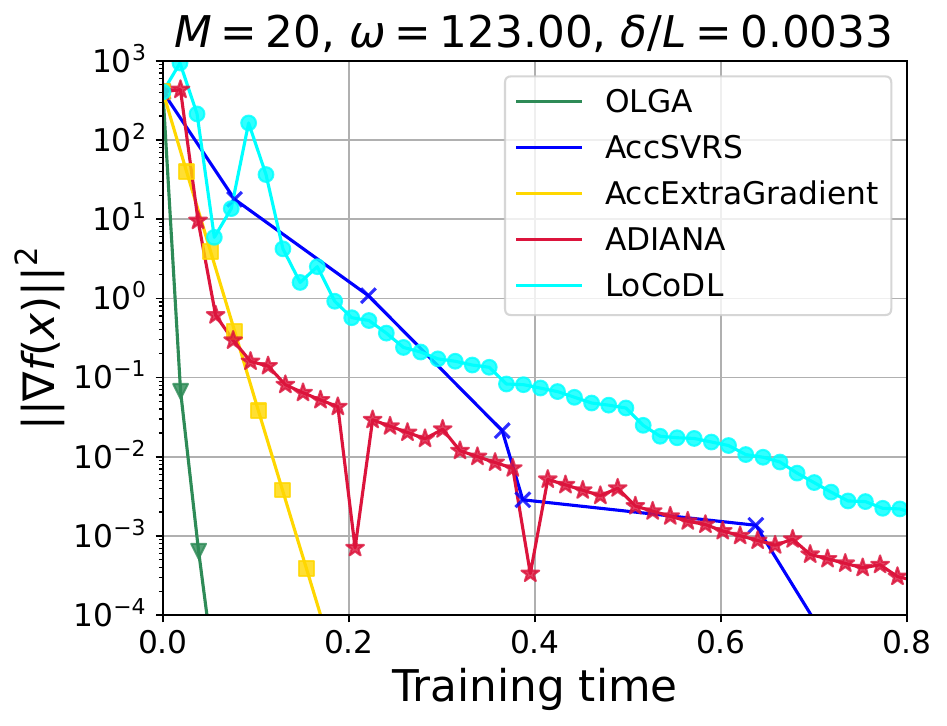}
   \end{subfigure}
   \begin{subfigure}{0.310\textwidth}
       \includegraphics[width=\linewidth]{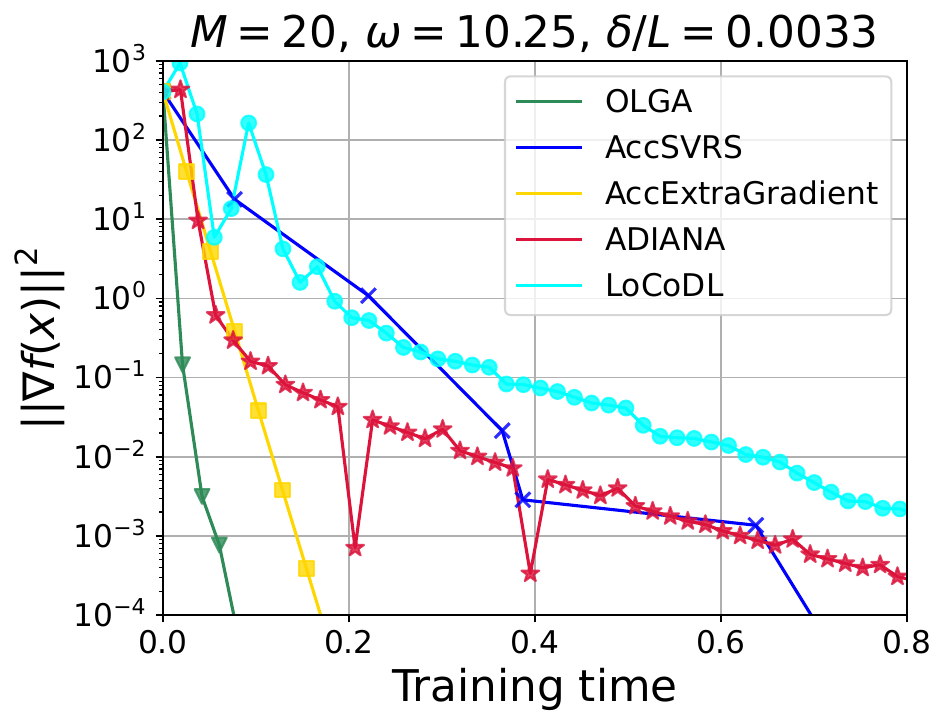}
   \end{subfigure}
   \begin{subfigure}{0.310\textwidth}
       \includegraphics[width=\linewidth]{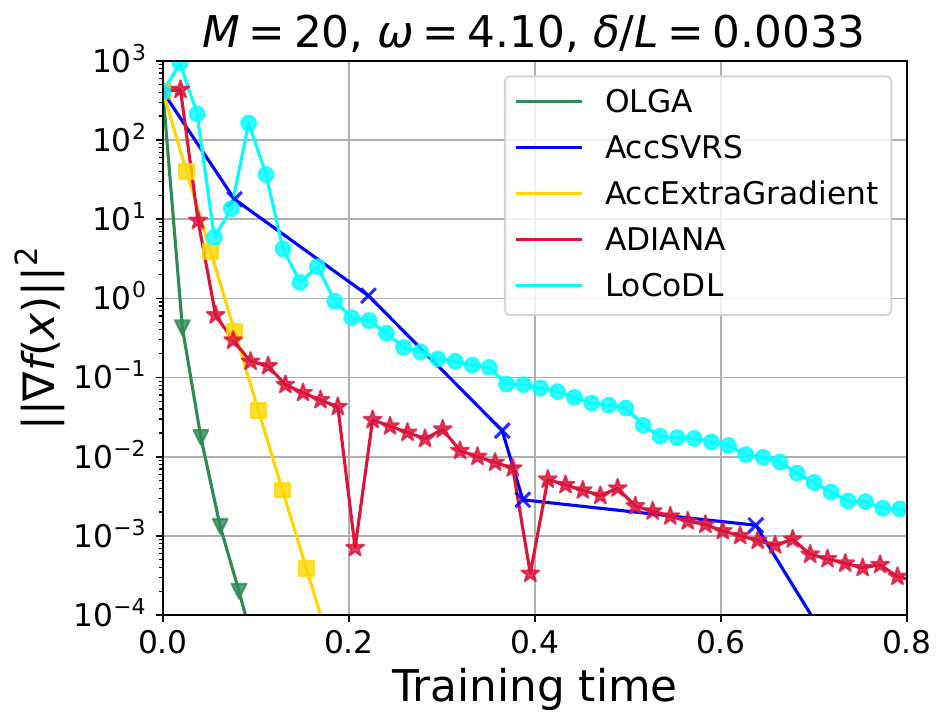}
   \end{subfigure}\\
   \begin{subfigure}{0.310\textwidth}
       \centering
       (a) $\omega = 123.00$
   \end{subfigure}
   \begin{subfigure}{0.310\textwidth}
       \centering
       (b) $\omega = 10.25$
   \end{subfigure}
   \begin{subfigure}{0.310\textwidth}
       \centering
       (c) $\omega = 4.10$
   \end{subfigure}
   \caption{Comparison of state-of-the-art distributed methods. The comparison is made on \eqref{eq:quadr} with $M=20$ and \texttt{a9a} dataset. The criterion is the training time on remote CPUs (slow connection). For methods with compression we vary the power of compression $\omega$.}
\end{figure}

\begin{figure}[h!] % <---
   \begin{subfigure}{0.310\textwidth}
       \includegraphics[width=\linewidth]{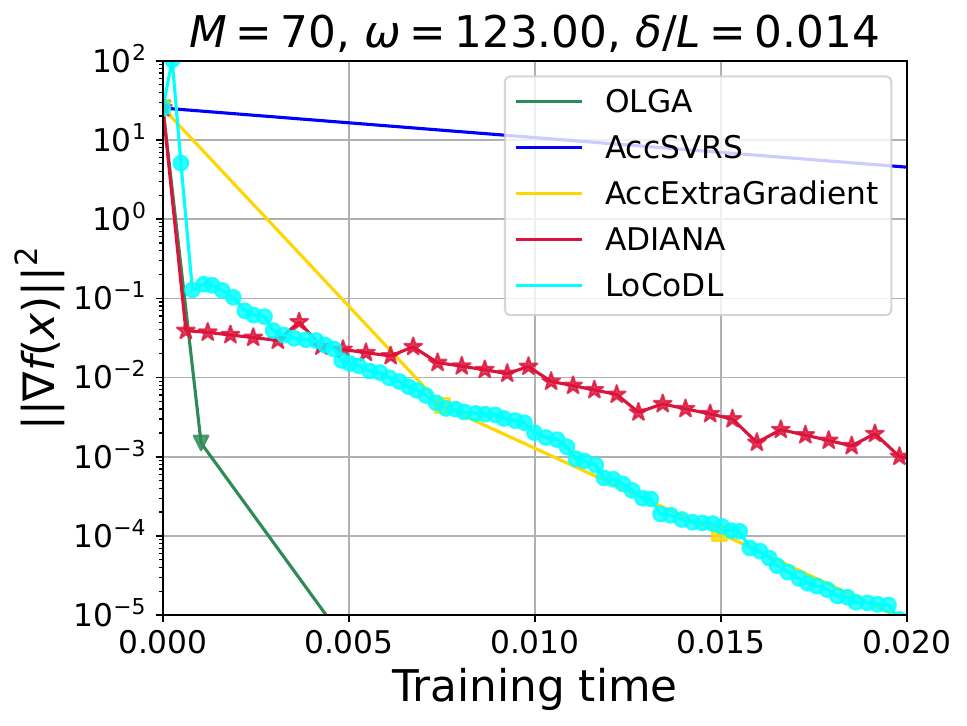}
   \end{subfigure}
   \begin{subfigure}{0.310\textwidth}
       \includegraphics[width=\linewidth]{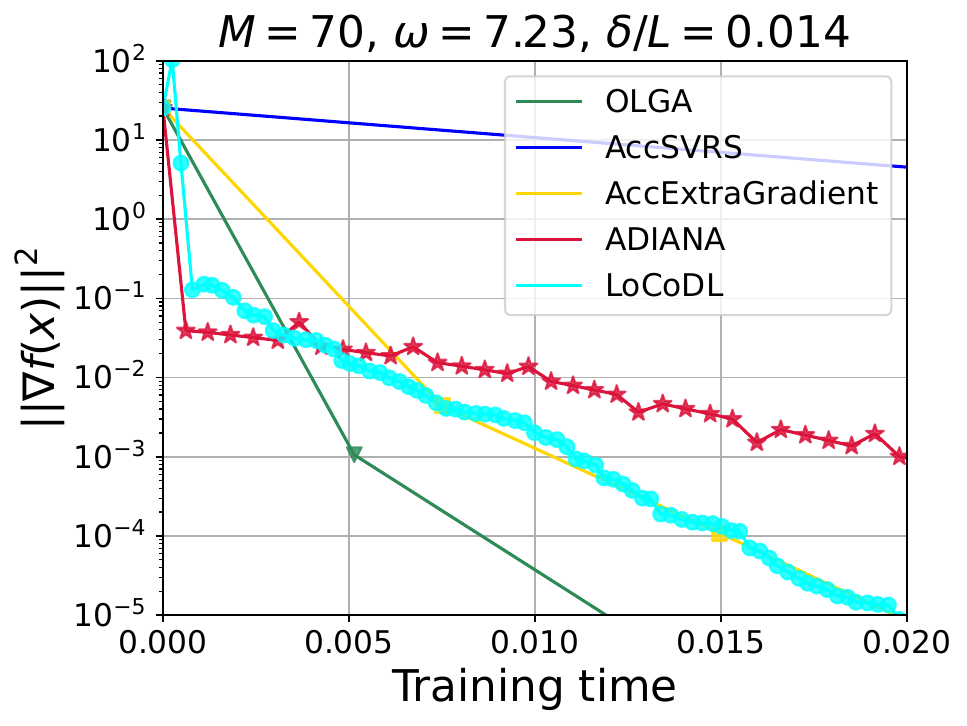}
   \end{subfigure}
   \begin{subfigure}{0.310\textwidth}
       \includegraphics[width=\linewidth]{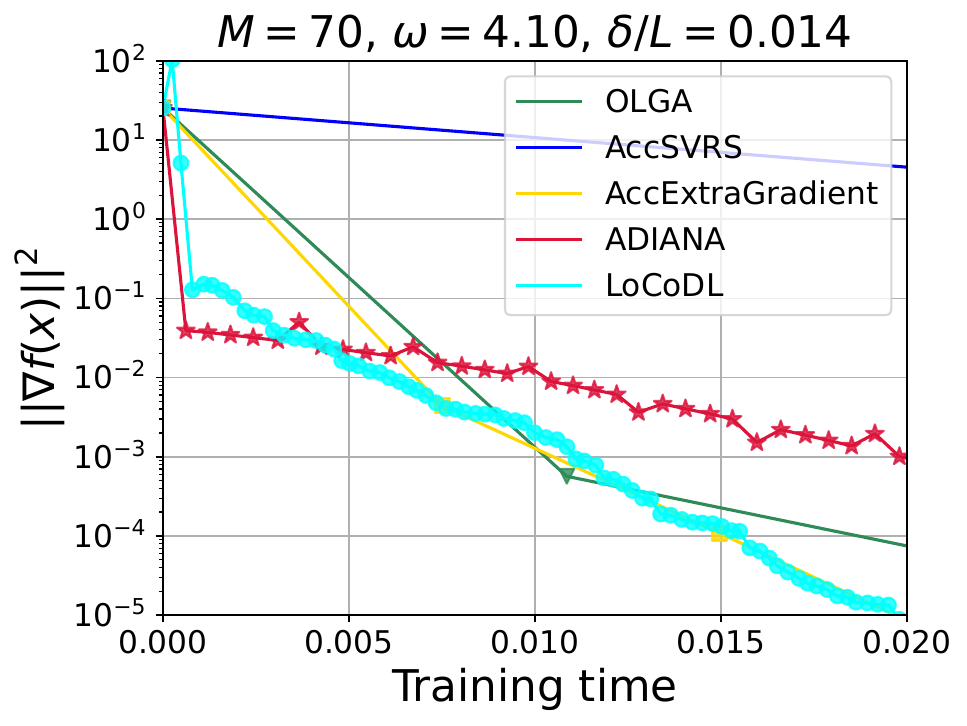}
   \end{subfigure}\\
   \begin{subfigure}{0.310\textwidth}
       \centering
       (a) $\omega = 123.00$
   \end{subfigure}
   \begin{subfigure}{0.310\textwidth}
       \centering
       (b) $\omega = 7.23$
   \end{subfigure}
   \begin{subfigure}{0.310\textwidth}
       \centering
       (c) $\omega = 4.10$
   \end{subfigure}
   \caption{Comparison of state-of-the-art distributed methods. The comparison is made on \eqref{eq:logloss} with $M=70$ and \texttt{a9a} dataset. The criterion is the training time on local cluster (fast connection). For methods with compression we vary the power of compression $\omega$.}
\end{figure}

\begin{figure}[h!] % <---
   \begin{subfigure}{0.310\textwidth}
       \includegraphics[width=\linewidth]{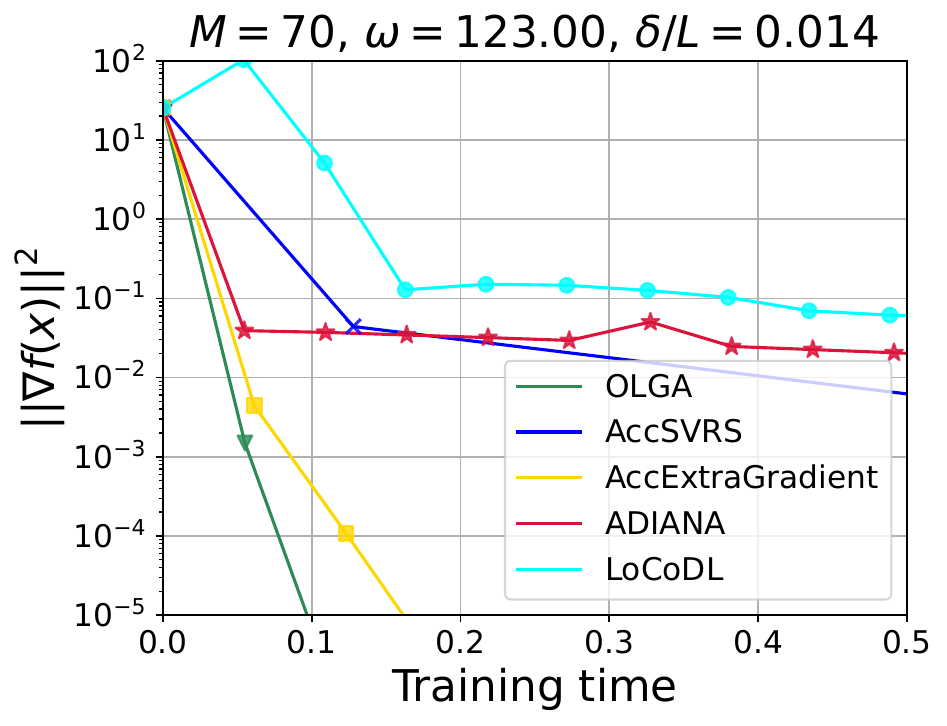}
   \end{subfigure}
   \begin{subfigure}{0.310\textwidth}
       \includegraphics[width=\linewidth]{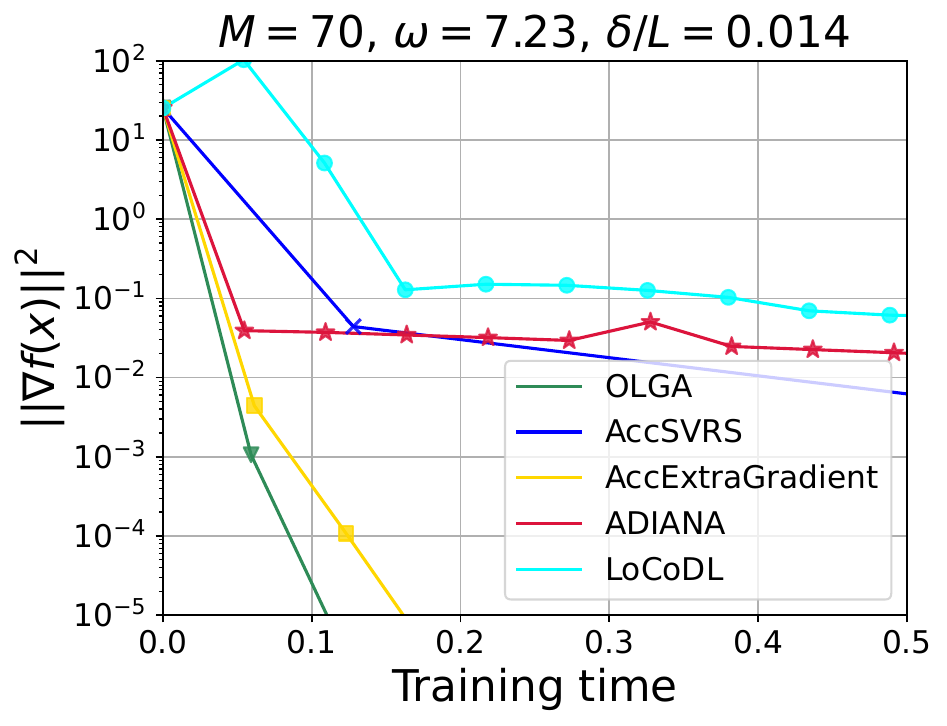}
   \end{subfigure}
   \begin{subfigure}{0.310\textwidth}
       \includegraphics[width=\linewidth]{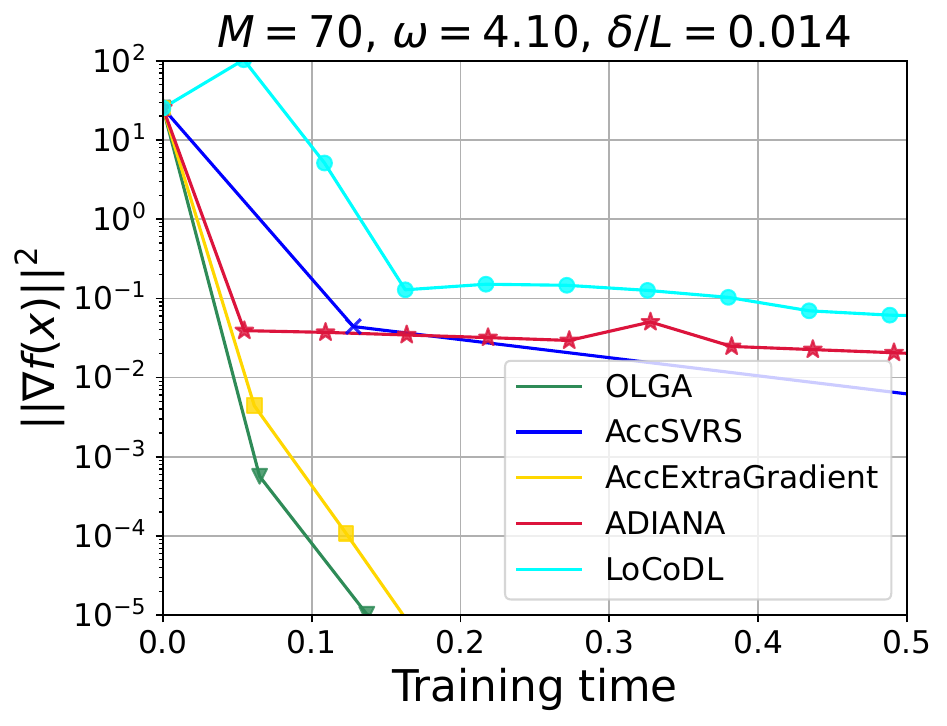}
   \end{subfigure}\\
   \begin{subfigure}{0.310\textwidth}
       \centering
       (a) $\omega = 123.00$
   \end{subfigure}
   \begin{subfigure}{0.310\textwidth}
       \centering
       (b) $\omega = 7.23$
   \end{subfigure}
   \begin{subfigure}{0.310\textwidth}
       \centering
       (c) $\omega = 4.10$
   \end{subfigure}
   \caption{Comparison of state-of-the-art distributed methods. The comparison is made on \eqref{eq:logloss} with $M=70$ and \texttt{a9a} dataset. The criterion is the training time on remote CPUs (slow connection). For methods with compression we vary the power of compression $\omega$.}
\end{figure}

\begin{figure}[h!] % <---
   \begin{subfigure}{0.310\textwidth}
       \includegraphics[width=\linewidth]{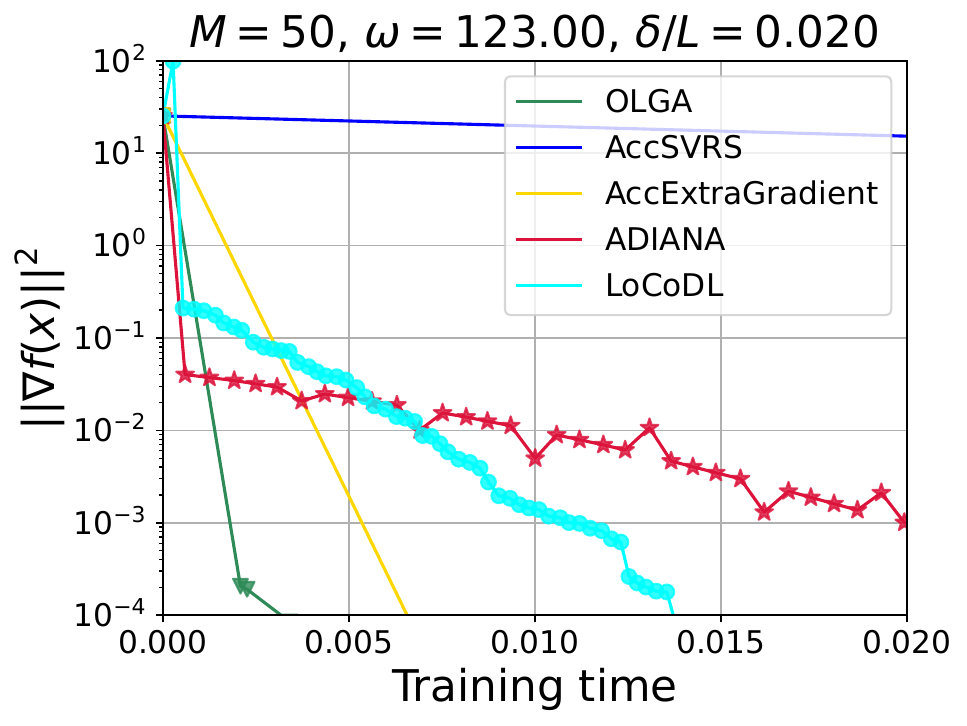}
   \end{subfigure}
   \begin{subfigure}{0.310\textwidth}
       \includegraphics[width=\linewidth]{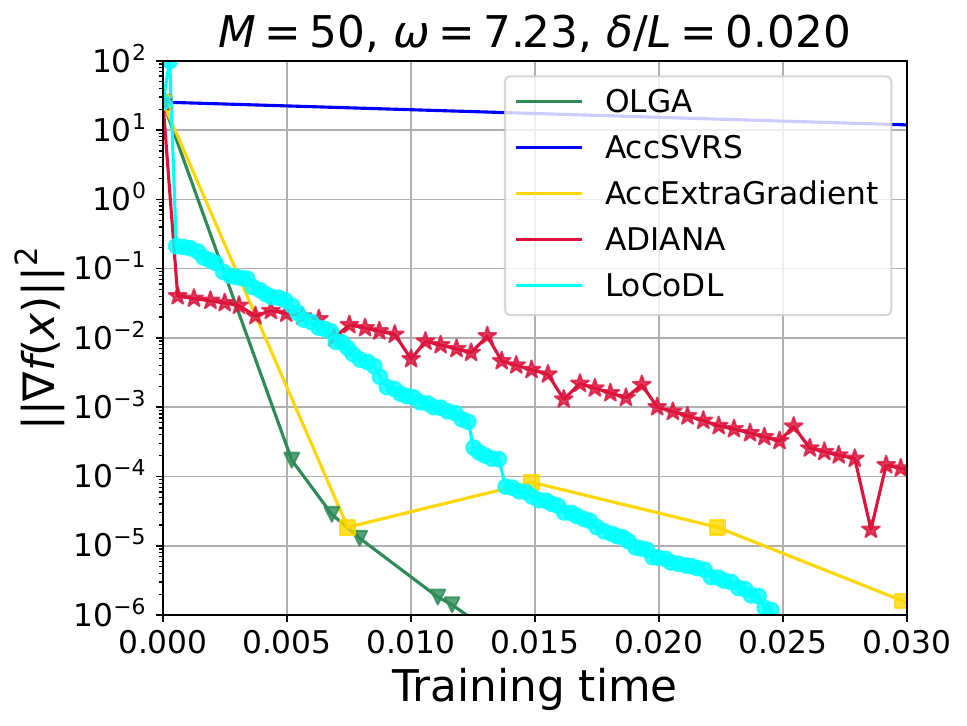}
   \end{subfigure}
   \begin{subfigure}{0.310\textwidth}
       \includegraphics[width=\linewidth]{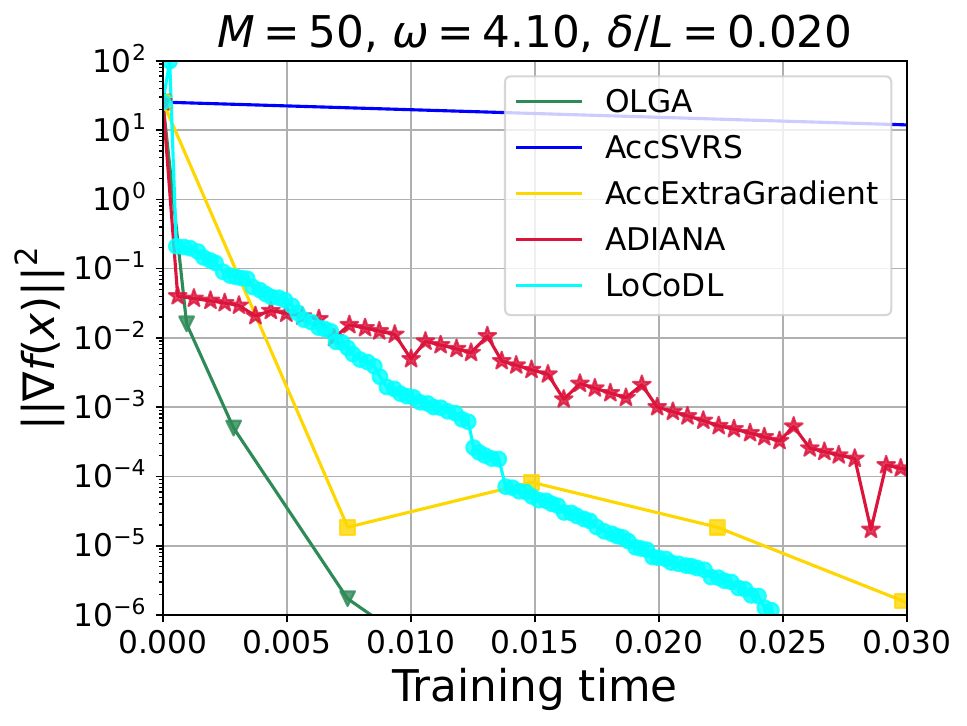}
   \end{subfigure}\\
   \begin{subfigure}{0.310\textwidth}
       \centering
       (a) $\omega = 123.00$
   \end{subfigure}
   \begin{subfigure}{0.310\textwidth}
       \centering
       (b) $\omega = 7.23$
   \end{subfigure}
   \begin{subfigure}{0.310\textwidth}
       \centering
       (c) $\omega = 4.10$
   \end{subfigure}
   \caption{Comparison of state-of-the-art distributed methods. The comparison is made on \eqref{eq:logloss} with $M=50$ and \texttt{a9a} dataset. The criterion is the training time on local cluster (fast connection). For methods with compression we vary the power of compression $\omega$.}
\end{figure}

\begin{figure}[h!] % <---
   \begin{subfigure}{0.310\textwidth}
       \includegraphics[width=\linewidth]{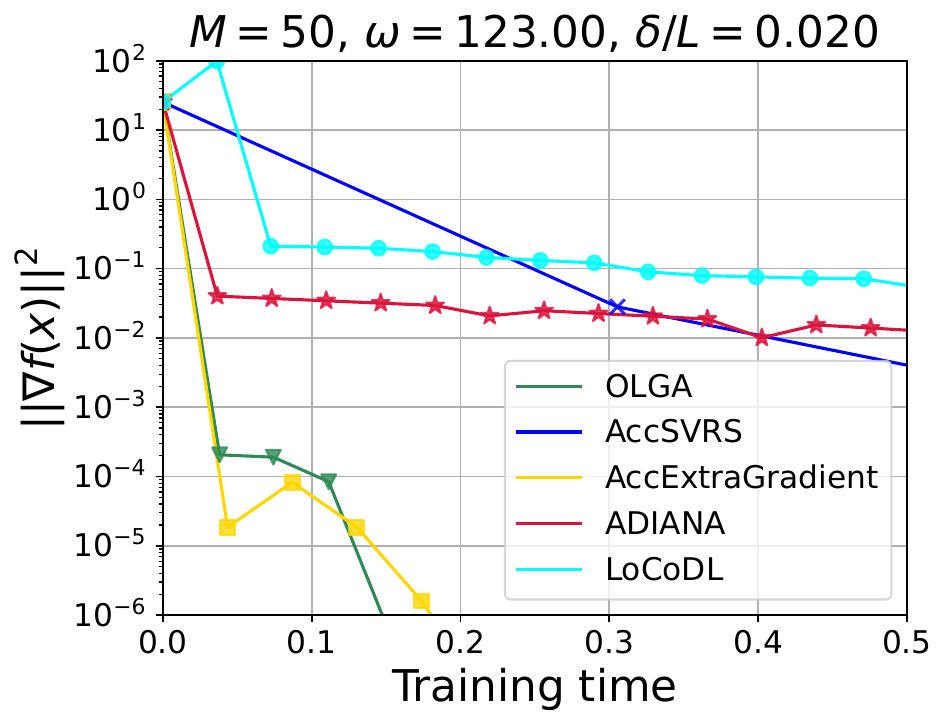}
   \end{subfigure}
   \begin{subfigure}{0.310\textwidth}
       \includegraphics[width=\linewidth]{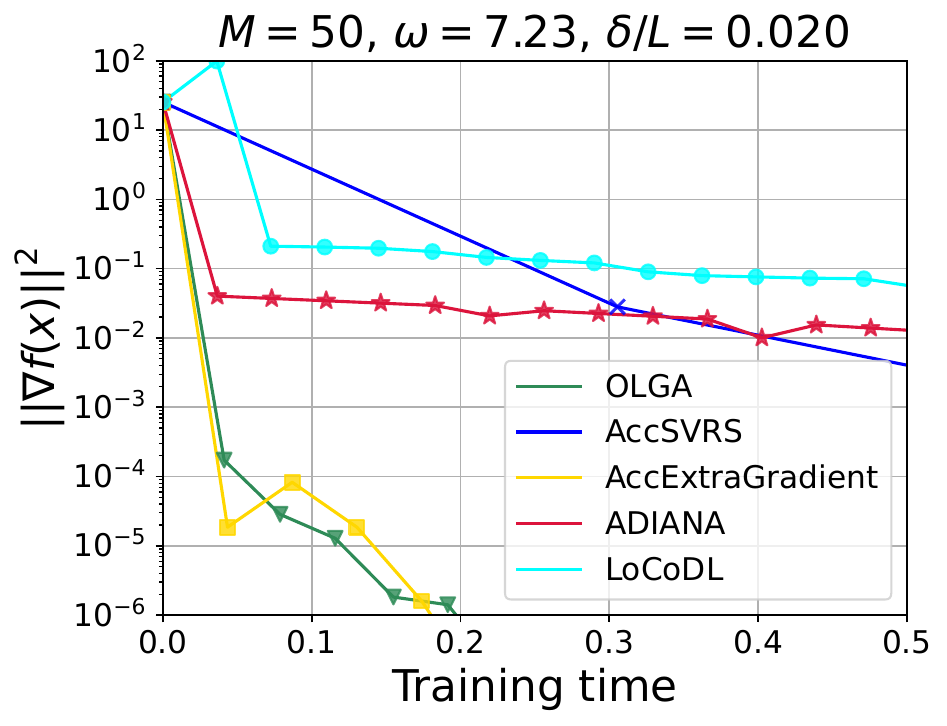}
   \end{subfigure}
   \begin{subfigure}{0.310\textwidth}
       \includegraphics[width=\linewidth]{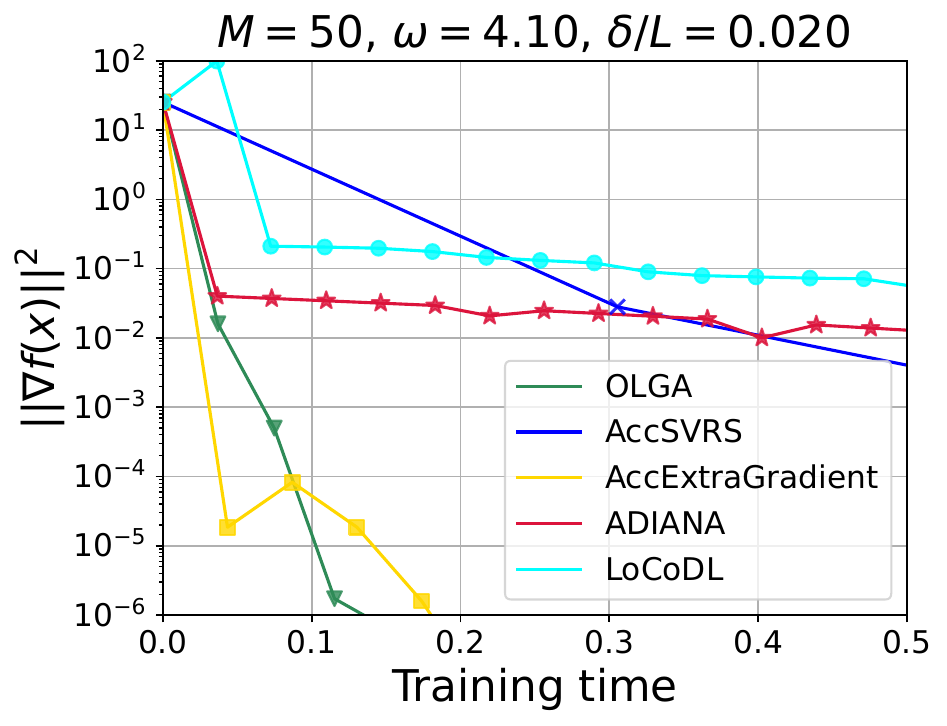}
   \end{subfigure}\\
   \begin{subfigure}{0.310\textwidth}
       \centering
       (a) $\omega = 123.00$
   \end{subfigure}
   \begin{subfigure}{0.310\textwidth}
       \centering
       (b) $\omega = 7.23$
   \end{subfigure}
   \begin{subfigure}{0.310\textwidth}
       \centering
       (c) $\omega = 4.10$
   \end{subfigure}
   \caption{Comparison of state-of-the-art distributed methods. The comparison is made on \eqref{eq:logloss} with $M=50$ and \texttt{a9a} dataset. The criterion is the training time on remote CPUs (slow connection). For methods with compression we vary the power of compression $\omega$.}
\end{figure}

\begin{figure}[h!] % <---
   \begin{subfigure}{0.310\textwidth}
       \includegraphics[width=\linewidth]{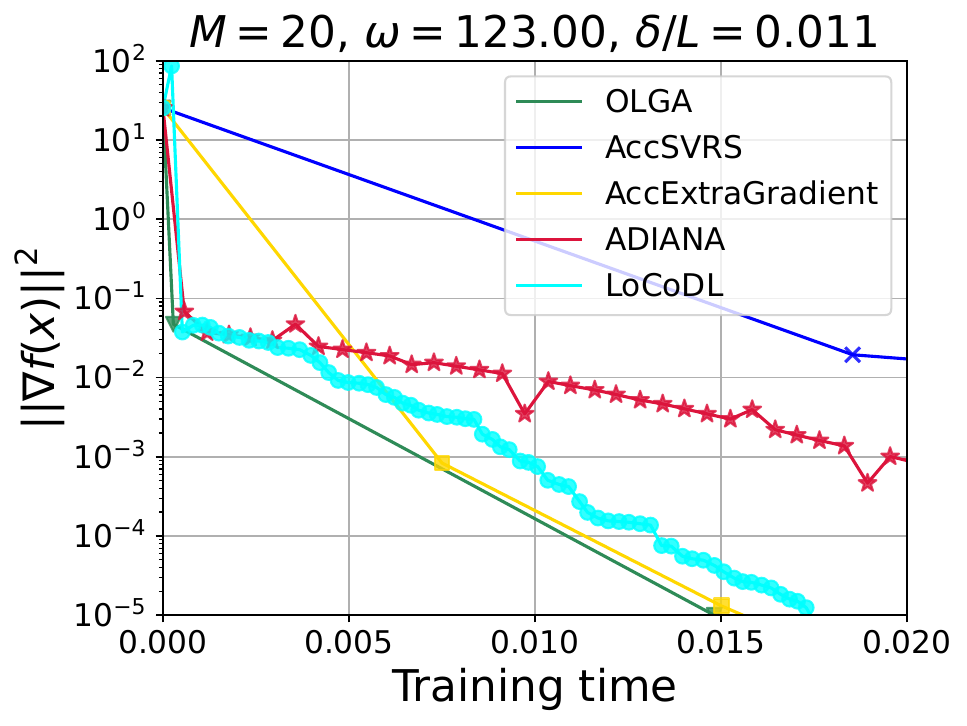}
   \end{subfigure}
   \begin{subfigure}{0.310\textwidth}
       \includegraphics[width=\linewidth]{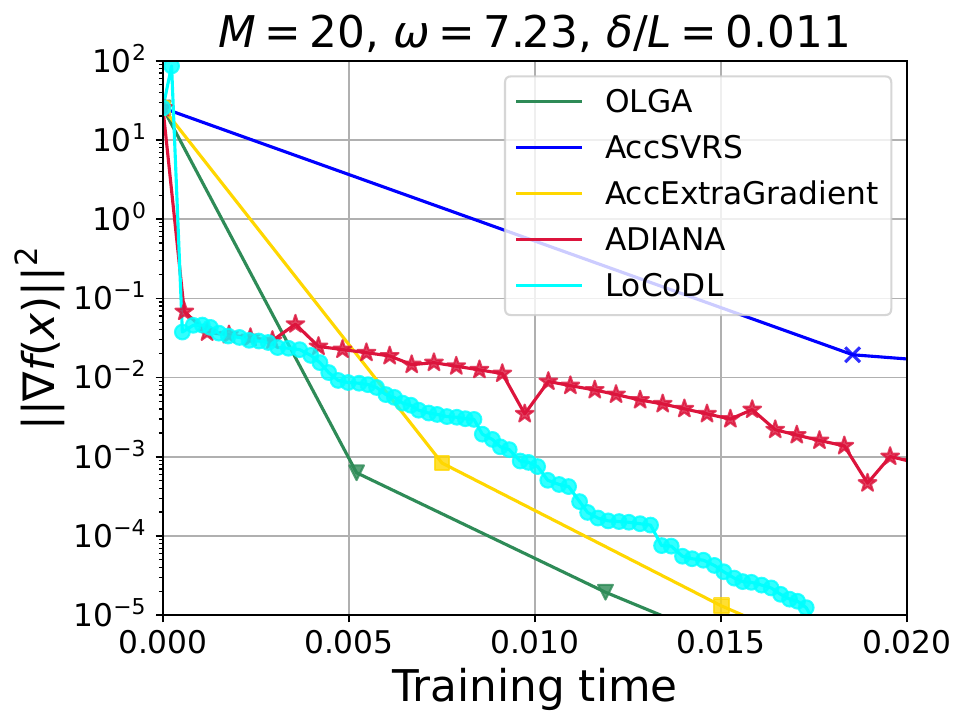}
   \end{subfigure}
   \begin{subfigure}{0.310\textwidth}
       \includegraphics[width=\linewidth]{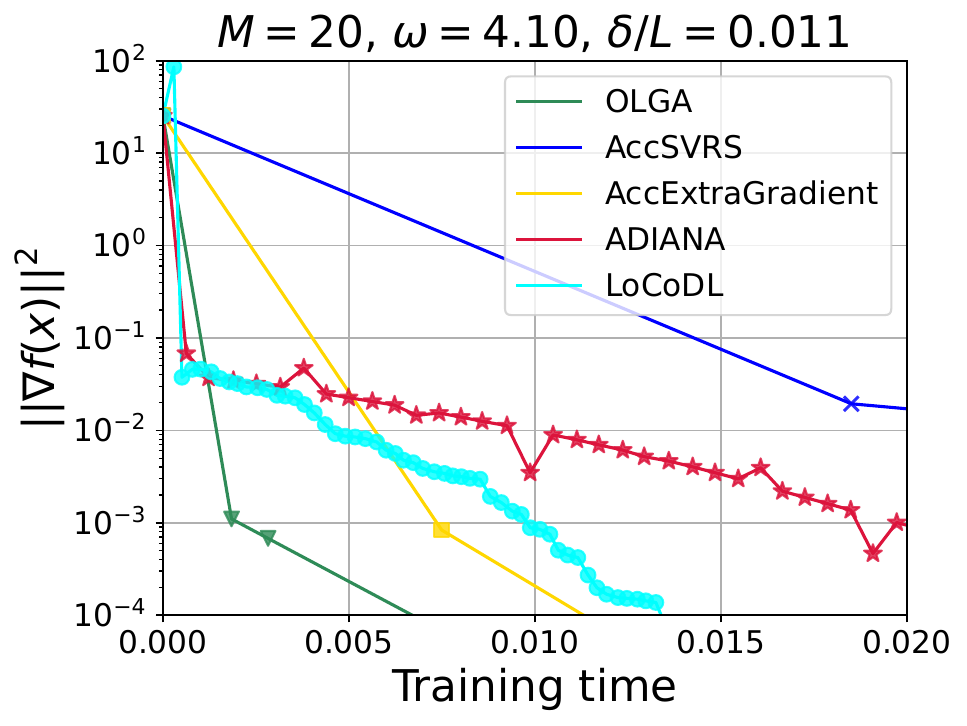}
   \end{subfigure}\\
   \begin{subfigure}{0.310\textwidth}
       \centering
       (a) $\omega = 123.00$
   \end{subfigure}
   \begin{subfigure}{0.310\textwidth}
       \centering
       (b) $\omega = 7.23$
   \end{subfigure}
   \begin{subfigure}{0.310\textwidth}
       \centering
       (c) $\omega = 4.10$
   \end{subfigure}
   \caption{Comparison of state-of-the-art distributed methods. The comparison is made on \eqref{eq:logloss} with $M=20$ and \texttt{a9a} dataset. The criterion is the training time on local cluster (fast connection). For methods with compression we vary the power of compression $\omega$.}
\end{figure}

\begin{figure}[h!] % <---
   \begin{subfigure}{0.310\textwidth}
       \includegraphics[width=\linewidth]{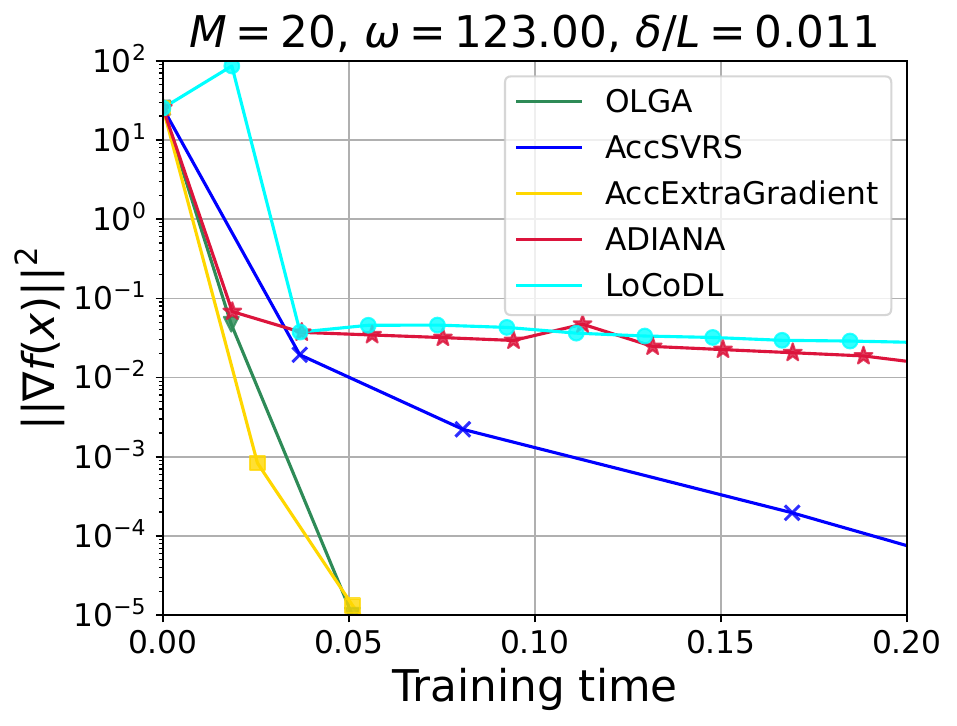}
   \end{subfigure}
   \begin{subfigure}{0.310\textwidth}
       \includegraphics[width=\linewidth]{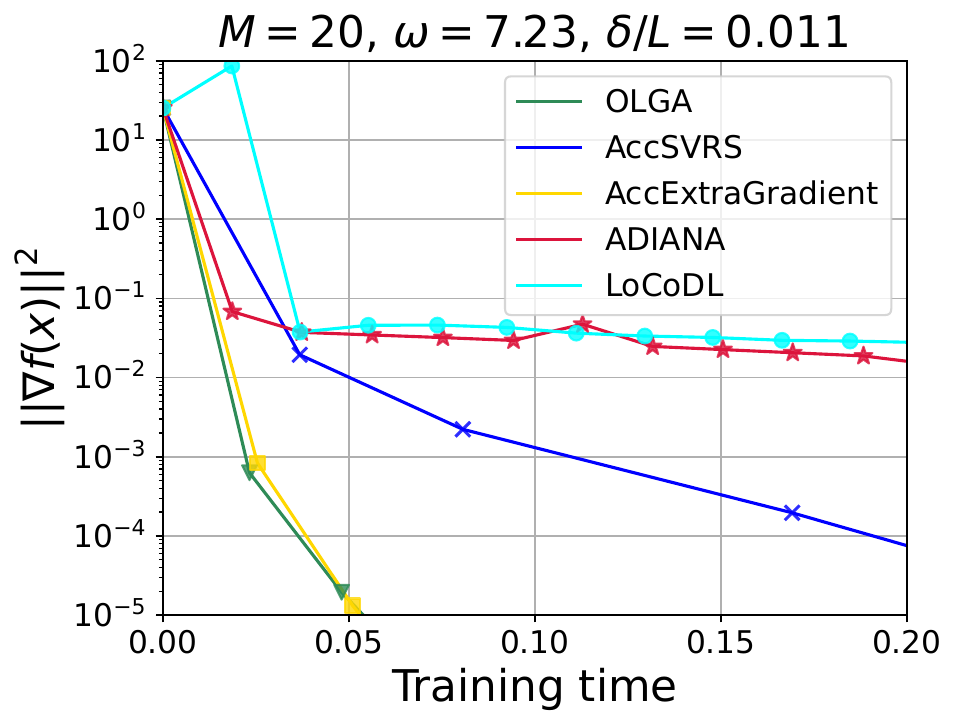}
   \end{subfigure}
   \begin{subfigure}{0.310\textwidth}
       \includegraphics[width=\linewidth]{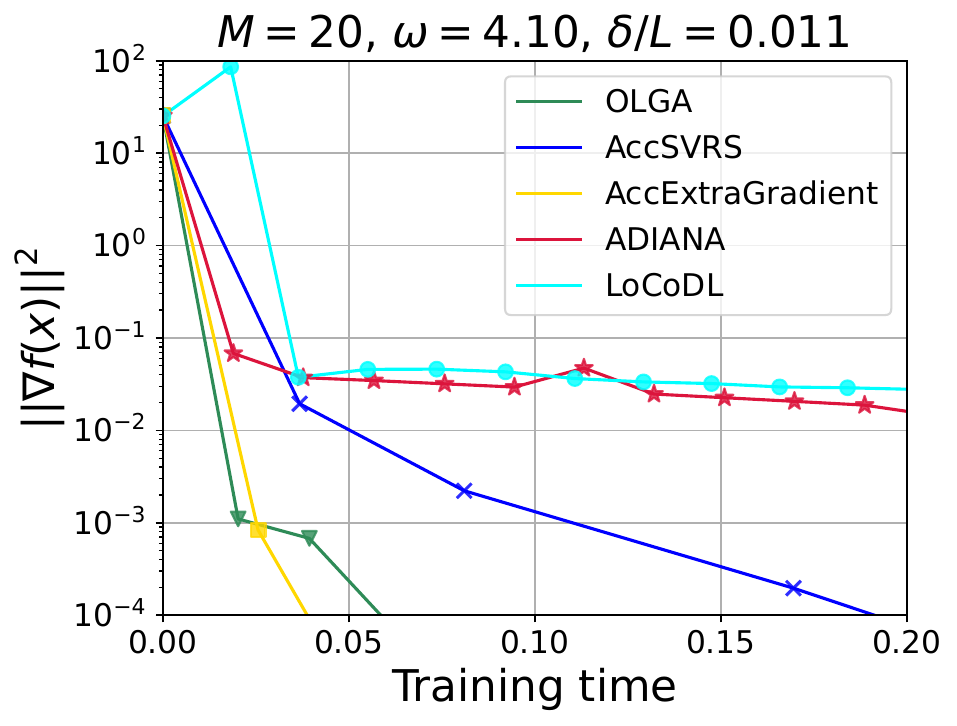}
   \end{subfigure}\\
   \begin{subfigure}{0.310\textwidth}
       \centering
       (a) $\omega = 123.00$
   \end{subfigure}
   \begin{subfigure}{0.310\textwidth}
       \centering
       (b) $\omega = 7.23$
   \end{subfigure}
   \begin{subfigure}{0.310\textwidth}
       \centering
       (c) $\omega = 4.10$
   \end{subfigure}
   \caption{Comparison of state-of-the-art distributed methods. The comparison is made on \eqref{eq:logloss} with $M=20$ and \texttt{a9a} dataset. The criterion is the training time on remote CPUs (slow connection). For methods with compression we vary the power of compression $\omega$.}
   \label{fig:end_cluster}
\end{figure}

\end{document}